\newtheorem{theorem}{Theorem}
\newtheorem{lemma}[theorem]{Lemma}
\newtheorem{corollary}[theorem]{Corollary}
\newtheorem{remark}[theorem]{Remark}
\newtheorem{definition}[theorem]{Definition}
\newtheorem{question}[theorem]{Question}
\newtheorem{example}[theorem]{Example}
\numberwithin{equation}{section}
\numberwithin{theorem}{section}
\newcommand{\R}{\mathbb{R}}
\newcommand{\D}{\mathbb{D}}
\newcommand{\T}{\mathbb{T}}
\newcommand{\C}{\mathbb{C}}
\newcommand{\Z}{\mathbb{Z}}
\newcommand{\N}{\mathbb N}
\newcommand{\Arg}{\operatorname{Arg}}
\renewcommand{\Pr}{\mathbb{P}}
\newcommand{\Exp}{\mathbb{E}}
\newcommand{\filt}{\mathscr{F}}
\newcommand{\Gfilt}{\mathscr{G}}
\DeclareDocumentCommand \one { o }
{%
  \IfNoValueTF {#1}
  {\mathbf{1}  }
  {\mathbf{1}\left\{ {#1} \right\} }%
}
\newcommand{\Beta}{\operatorname{Beta}}
\newcommand{\lawequals}{\overset{d}{=} }
\DeclareDocumentCommand{\Prto} {o O{\infty} } {
  \IfNoValueTF {#1}
  {\overset{p}{\longrightarrow}}
  { \xrightarrow[ #1 \to #2]{p }}
}
\DeclareDocumentCommand{\Asto} {o} {
  \IfNoValueTF {#1}
  {\overset{\operatorname{a.s.}}{\longrightarrow}}
  {
    \xrightarrow[ #1 \to \infty]{\operatorname{a.s.} }
  }
}
\DeclareDocumentCommand{\Mgfto} {o} {
  \IfNoValueTF {#1}
  {\overset{\operatorname{mgf}}{\longrightarrow}}
  { \xrightarrow[ #1 \to \infty]{\operatorname{mgf} }}
}
\DeclareDocumentCommand{\Wkto} {o} {
  \IfNoValueTF {#1}
  {\overset{d}{\longrightarrow}}
  { \xrightarrow[ #1 \to \infty]{d }}
}
\newcommand{\footremember}[2]{
    \footnote{#2}
    \newcounter{#1}
    \setcounter{#1}{\value{footnote}}
}
\title{Secular Coefficients and the Holomorphic Multiplicative Chaos}
\author{
  Joseph Najnudel\footremember{a}{School of Mathematics, University of Bristol, BS8 1UG, United Kingdom. \textit{joseph.najnudel@bristol.ac.uk}}
  \and Elliot Paquette\footremember{b}{Department of Mathematics and Statistics, McGill University, Montreal, Quebec, Canada. \textit{elliot.paquette@mcgill.ca}}
  \and Nick Simm\footremember{c}{Department of Mathematics, University of Sussex, BN1 9RH, United Kingdom. \textit{n.j.simm@sussex.ac.uk}}
  }
 \date{} 
\begin{document}

\maketitle
\begin{abstract}
  We study the \textit{secular coefficients} of $N \times N$ random unitary matrices $U_{N}$ drawn from the Circular $\beta$-Ensemble, which are defined as the coefficients of $\{z^n\}$ in the characteristic polynomial $\det(1-zU_{N}^{*})$. When $\beta > 4$ we obtain a new class of limiting distributions that arise when both $n$ and $N$ tend to infinity simultaneously. We solve an open problem of Diaconis and Gamburd \cite{DiaconisGamburd} by showing that for $\beta=2$, the middle coefficient of degree $n=\lfloor\frac{N}{2}\rfloor$ tends to zero 
  as $N \to \infty$. We show how the theory of Gaussian multiplicative chaos (GMC) plays a prominent role in these problems and in the explicit description of the obtained limiting distributions. We extend the remarkable \textit{magic square formula} of \cite{DiaconisGamburd} for the moments of secular coefficients to all $\beta>0$ and analyse the asymptotic behaviour of the moments. We obtain estimates on the order of magnitude of the secular coefficients for all $\beta > 0,$ and these estimates are sharp when $\beta \geq 2$.  These insights motivated us to introduce a new stochastic object associated with the secular coefficients, which we call \textit{Holomorphic Multiplicative Chaos (HMC)}. Viewing the HMC as a random distribution, we prove a sharp result about its regularity in an appropriate Sobolev space. Our proofs expose and exploit several novel connections with other areas, including random permutations, Tauberian theorems and combinatorics.
\\\\\\\\

\end{abstract}

\section{Introduction}
Let $\beta>0$ be a fixed parameter and consider the joint distribution on $N$ points
\begin{equation}
  \text{C$\beta$E}_N(\vartheta_1,\ldots,\vartheta_N) \propto \prod_{1 \leq k < j \leq N}|e^{i\vartheta_k}-e^{i\vartheta_j}|^{\beta} \label{betaens}
\end{equation}
where $\vartheta_{j} \in [0,2\pi)$ for all $j=1,\ldots,N$. This is known as the \textit{Circular $\beta$-Ensemble}. When $\beta=2$ this distribution arises as the law of the eigenvalues of a Haar distributed unitary matrix and is better known as the CUE (Circular Unitary Ensemble). When $\beta \neq 2$ it also arises from the eigenvalue distribution of certain random matrix models, see e.g. \cite{KillipNenciu}. Therefore it makes sense to consider the characteristic polynomial
\begin{equation}
  \chi_{N}(z) = \prod_{j=1}^{N}(1-ze^{-i\vartheta_j}), \label{charpoly}
\end{equation}
which would have the representation $\det(1-zU^*_{N})$ if $\{e^{i\vartheta_j} : 1 \leq j \leq N\}$ were the eigenvalues of a matrix $U_N.$

The goal of this paper is to formulate and solve a new class of probabilistic and combinatorial questions associated with such characteristic polynomials. Our main quantities of interest will be the so-called \textit{secular coefficients}, defined most simply as the coefficients $c^{(N)}_{n}$ in the expansion of \eqref{charpoly} in its Fourier basis,
\begin{equation}
\chi_{N}(z) = \sum_{n=0}^{N}c^{(N)}_{n}z^{n}. \label{chisec}
\end{equation}
In a remarkable paper \cite{DiaconisGamburd}, Diaconis and Gamburd studied these coefficients in the $\beta=2$ setting. They showed that the joint moments of $\{c^{(N)}_{n}\}_{n\geq1}$ are related to the enumeration of combinatorial objects known as \textit{magic squares} -- integer valued square matrices with prescribed row and column sums. For example, when $N \geq nk$, the moment $\mathbb{E}(|c^{(N)}_{n}|^{2k})$ is equal to the number of $k \times k$ magic squares whose rows and columns all sum up to $n$. In general, combinatorial results on magic squares imply that this quantity has order $n^{(k-1)^{2}}$ as $n \to \infty$, with a multiplicative constant given by the volume of the $k^{\mathrm{th}}$ Birkhoff polytope. The determination of these volumes remains a well studied and challenging problem in the combinatorics community. They have been explicitly computed only for $k \leq 10$ and this has required the use of high performance computers.  See \cite{BeckPixton, deLoeraLiuYoshida, CanfieldMcKay} for this and other perspectives, and for number theoretical applications, see \cite{ConreyGamburd, KRRGR}.

Given the rich combinatorial structure associated with the moments of $c^{(N)}_{n}$, there is a natural probabilistic question that comes to mind: 
  Is there a commensurately richly structured probabilistic object to which $c^{(N)}_{n}$ converges as $N \to \infty$?
What if $N$ and $n$ tend to infinity together, in a suitable way? This problem is mentioned in the same paper of Diaconis and Gamburd (see the discussion below Proposition $4$ of \cite{DiaconisGamburd}), however to our knowledge the answer has remained out of reach conjecturally or otherwise, now for almost 15 years. The purpose of this paper is to begin closing this gap, particularly in the general context of the $\beta$-ensembles \eqref{betaens}.

There is one exception where a limiting distribution for $c^{(N)}_{n}$ can be obtained with relative ease, namely when the index $n$ remains finite and we let the matrix size $N \to \infty$, as discussed in \cite{DiaconisGamburd} for $\beta=2$. By the Newton formula, which relates the elementary and power sum symmetric functions, each $c^{(N)}_{n}$ has a finite polynomial dependence on the first $n$ power traces, $p_{k} := \mathrm{Tr}(U_{N}^{k})$, via:
\begin{equation}
c^{(N)}_{n} = \frac{1}{n!}\,\mathrm{det}\begin{pmatrix} p_{1} & 1 & 0 & \ldots & 0\\
p_{2} & p_{1} & 2 & \ldots & 0\\
\vdots & \vdots & \vdots & \ddots & \vdots \\
p_{n-1} & p_{n-2} & p_{n-3} & \ldots & n-1\\
p_{n} & p_{n-1} & p_{n-2} & \ldots & p_{1}\end{pmatrix}. \label{newtonform}
\end{equation}
The distributional convergence of these power traces was famously studied by Diaconis and Shahshahani \cite{DS94} for $\beta=2$ and by Jiang and Matsumoto \cite{JM15} for general $\beta>0$, see also \cite{ChhaibiNajnudel}. These authors show that for any fixed $n$ we have the joint convergence in law
\begin{equation}
\{\mathrm{Tr}(U_{N}^{k})\}_{k=1}^{n} \overset{d}{\longrightarrow} \sqrt{\frac{2}{\beta}}\{\sqrt{k}\mathcal{N}_{k}\}_{k=1}^{n}, \qquad N \to \infty, \label{dsconv}
\end{equation}
where $\{\mathcal{N}_{k}\}_{k=1}^{n}$ are {i.i.d.}\ standard complex normal random variables, \textit{i.e.} the real and imaginary parts of $\mathcal{N}_{k}$ are independent normal random variables such that
\begin{equation}
  \Exp(\mathcal{N}_k) = 0,
  \quad
  \Exp (\mathcal{N}_k^{2}) = 0,
  \quad
  \text{and}
  \quad
  \Exp(|\mathcal{N}_k|^2) = 1. \label{iidnk}
\end{equation}
Therefore \eqref{dsconv} implies that \textit{for fixed $n$}, the sequence of random variables $c^{(N)}_{n}$ converges as $N \to \infty$ to a limit random variable $c_{n}$. Furthermore, each $c_{n}$ is explicitly characterized through a polynomial dependence on a family of {i.i.d.}\ Gaussian random variables via formulas \eqref{dsconv} and \eqref{newtonform}.

In contrast, the situation where the degree $n \to \infty$ turns out to be much more challenging. To give a flavour of the results we obtain, we provide here the limiting distribution in the case $\beta>4$; see Section \ref{sec:mainres} for further results and discussion. 
\begin{theorem}
  Let $N=N_n$ be a sequence such that $N \to \infty$ as $n \to \infty$ and such that $n/N \to 0$. Let $\mathcal{Z}$ denote a standard complex normal random variable and $\mathcal{E}(1)$ denote the standard exponential random variable with parameter $1$, sampled independently. Then for any $\beta>4$, we have the convergence in distribution
  \begin{equation}
    \frac{c_{n}^{(N)}}{\sqrt{\Exp(|c_n^{(N)}|^2)}} \Wkto[n] \frac{\mathcal{Z}\,\mathcal{E}(1)^{-\frac{1}{\beta}}}{\sqrt{\Gamma\left(1-\frac{2}{\beta}\right)}}, \label{limthm}
  \end{equation}
  where $\Gamma(z)$ is the Gamma function.
  \label{th:l2phase-N-intro}
 \end{theorem}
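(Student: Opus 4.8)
The plan is to identify the secular coefficient $c_n^{(N)}$, in the regime $n\to\infty$ with $n/N\to 0$, with the $n$-th Fourier coefficient of a ``holomorphic multiplicative chaos'' built from the field $\sum_{k\ge 1} \sqrt{2/\beta}\,\sqrt{k}^{-1}\mathrm{Tr}(U_N^k) z^k$, and then to show that for $\beta>4$ this coefficient is asymptotically a product of an independent complex Gaussian and a power of an exponential random variable. Concretely, I would start from the exponential generating identity $\chi_N(z)=\exp\bigl(-\sum_{k\ge 1}\tfrac{1}{k}\mathrm{Tr}(U_N^k)z^k\bigr)$, so that $c_n^{(N)}$ is the $n$-th Taylor coefficient of $\exp(G_N(z))$ where $G_N(z)=-\sum_k \tfrac1k \mathrm{Tr}(U_N^k)z^k$. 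Using the convergence \eqref{dsconv} of the traces to independent complex Gaussians $\sqrt{2/\beta}\sqrt k\,\mathcal N_k$, the natural limiting object is the $n$-th coefficient of $\exp\bigl(\sqrt{2/\beta}\sum_k \tfrac{1}{\sqrt k}\mathcal N_k z^k\bigr)$, i.e.\ of the HMC. The first genuine step is therefore a \emph{quantitative comparison}: show that replacing the low-frequency traces $\mathrm{Tr}(U_N^k)$, $k\le K$, by their Gaussian limits produces an error in $c_n^{(N)}/\sqrt{\Exp|c_n^{(N)}|^2}$ that is negligible as first $n\to\infty$ and then $K\to\infty$; here the hypothesis $n/N\to 0$ enters to control the joint moments of the first $K$ traces (via the magic-square / Jiang–Matsumoto asymptotics already available in the paper).

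The heart of the argument is then the analysis of the coefficient extraction. Write $\exp(G)= \exp(G^{\le K})\exp(G^{>K})$ where $G^{\le K}$ collects the $k\le K$ modes and $G^{>K}$ the rest. Extracting the $z^n$ coefficient gives a convolution, and I would argue that for $\beta>4$ the dominant contribution comes from taking \emph{one} factor of $e^{G^{>K}}$ at high frequency near $n$ — essentially a single term $\tfrac{1}{\sqrt{k}}\mathcal N_k$ with $k$ close to $n$ — multiplied by the ``total mass'' $\int_{\T} e^{2\mathrm{Re}\,G^{\le K}(e^{i\theta})}\,\frac{d\theta}{2\pi}$ coming from the low modes. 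The reason $\beta>4$ is the relevant threshold is that this total mass is precisely (the $L^1$ norm of) a subcritical Gaussian multiplicative chaos with parameter $\gamma^2 = 4/\beta < 1$, which is finite and whose law, suitably normalized, is what produces the $\mathcal E(1)^{-1/\beta}$ factor: the key input is that the GMC mass $\mathcal M_\gamma(\T)$ on the circle, for $\gamma^2=4/\beta$, has the property that $c_n^{(N)}$ behaves like $\mathcal N_{k^*}\cdot \mathcal M_\gamma(\T)^{1/2}$ in distribution up to lower order, with the normalization $\Exp|c_n^{(N)}|^2 \sim \Gamma(1-2/\beta)\,\Exp \mathcal M_\gamma(\T)$ or similar. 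To turn $\mathcal M_\gamma(\T)$ into $\mathcal E(1)^{-2/\beta}$ one invokes the explicit distributional identity for the total mass of circular GMC — for $\gamma^2<1$ it is (a constant times) an inverse power of a Gamma-type variable — which I would either cite or derive from the Fyodorov–Bouchaud / Remy-type formula; that the exponent is $-1/\beta = -\gamma^2/4$ matches the general GMC scaling. The independence of $\mathcal Z$ from $\mathcal E(1)$ reflects the asymptotic independence of the single high-frequency Gaussian mode $\mathcal N_{k^*}$ from the low-mode total mass.

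Making the ``one high-frequency term dominates'' heuristic rigorous is where I expect the main obstacle to lie. One has to show: (i) the probability that two or more high-frequency modes contribute at comparable size is negligible (this is where $\beta>4$, equivalently $\gamma^2<1$, is essential — for $\gamma^2\ge 1$ the chaos becomes critical/supercritical and multiple modes conspire, changing the answer); (ii) the cross terms in $|c_n^{(N)}|^2$ between the designated dominant term and the remainder vanish after normalization; (iii) one can exchange the limits $n\to\infty$ and $K\to\infty$, which requires uniform-in-$K$ tail bounds on $\mathcal M_\gamma^{\le K}(\T)$ and its convergence to $\mathcal M_\gamma(\T)$ — standard GMC martingale convergence, but needs to be married to the combinatorial moment bounds. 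I would organize this via a second-moment / conditional-variance computation: condition on the low modes $\{\mathcal N_k\}_{k\le K}$, so that conditionally $c_n^{(N)}$ is approximately a centred complex Gaussian with variance $\approx \tfrac{2}{\beta}\cdot\tfrac{1}{k^*}\cdot(\text{something})\cdot \mathcal M_\gamma^{\le K}(\T)$; show the conditional variance concentrates appropriately; and deduce the mixed-Gaussian limit $\mathcal Z\cdot(\text{variance})^{1/2}$ by a conditional CLT. The final identification of the limiting normalized variance with $\mathcal E(1)^{-2/\beta}/\Gamma(1-2/\beta)$ then follows from the GMC total-mass law together with the moment asymptotics for $\Exp|c_n^{(N)}|^2$ established earlier in the paper.
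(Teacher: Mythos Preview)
Your heuristic is essentially the paper's: reduce $c_n^{(N)}$ to the HMC coefficient $c_n$, decompose $c_n$ as (a high-frequency Gaussian) $\times$ (something built from low modes), apply a conditional/martingale CLT whose bracket process converges to the GMC total mass, and then invoke the Fyodorov--Bouchaud/Remy law \eqref{totalmasslaw} to obtain $\mathcal{E}(1)^{-1/\beta}$. So the architecture is right. But several of your supporting claims are off, and the paper's implementation differs in ways worth flagging.

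First, your explanation of the threshold $\beta>4$ is incorrect. With $\theta=2/\beta$ and $\theta=\gamma^2/4$, the condition $\beta>4$ is $\theta<\tfrac12$, i.e.\ $\gamma^2<2$, not $\gamma^2<1$; and this is the \emph{$L^2$-phase} of the GMC (finite second moment of the mass), not the subcritical/critical boundary, which is at $\theta=1$. The restriction enters because the paper proves convergence of the bracket process by an $L^2$ argument (Theorem~\ref{th:thml2convergence}), which needs $\sum_k\binom{k+\theta-1}{\theta-1}^2<\infty$, i.e.\ $\theta<\tfrac12$. Your ``multiple modes conspire for $\gamma^2\ge 1$'' story does not match what actually obstructs the proof.

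Second, conditioning on $\{\mathcal{N}_k\}_{k\le K}$ with $K$ \emph{fixed} and then sending $K\to\infty$ after $n\to\infty$ is awkward: conditionally on the low modes, $c_n$ is a nonlinear polynomial in the high modes, not Gaussian. The paper instead uses a cutoff that \emph{scales with $n$}: it isolates the largest part of the composition in \eqref{cnsum}, shows via the Ewens sampling formula (Section~\ref{sec:ewens}, Lemma~\ref{le:martingalerep}) that with small $L^2$-error there is a unique such part of size $\ge\delta n$, and obtains the exact martingale $\tilde c_n^{(\delta)}=\sum_{q=\lfloor\delta n\rfloor}^n \mathcal{N}_q\sqrt{\theta/q}\,c_{n-q,q-1}$ with increments that \emph{are} conditionally Gaussian. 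The Ewens connection (which you do not mention) is precisely the tool that makes your step~(i) --- ``the probability that two or more high-frequency modes contribute is negligible'' --- go through.

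Third, for the transfer from $c_n$ to $c_n^{(N)}$ the paper does not use magic-square moment comparisons. It uses the Verblunsky/Szeg\H{o} recursion and the resulting martingale $\mathscr{M}_{n,N}=\Exp[c_n\,\vert\,\filt_N]$ (Section~\ref{sec:cbe}) to prove the quantitative $L^2$ bound $\Exp|c_n-c_n^{(N)}|^2\le C_\theta n^\theta/N$ (Lemma~\ref{lem:secdiff}), after which $n/N\to 0$ and Slutsky finish the job. This is both sharper and cleaner than trying to pass through trace-moment asymptotics.
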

A quick computation shows that the right-hand side of \eqref{limthm} has finite moments of order $2k$ if and only if $\frac{2k}{\beta} < 1$. Our proof makes explicit use of the second moment method ($k=2$ in this context) and this gives rise to the restriction $\beta>4$ in the statement of Theorem \ref{th:l2phase-N-intro}. We believe that \eqref{limthm} persists to any $\beta>2$ and even the case $\beta=2$ after suitably re-normalizing on both sides of \eqref{limthm}. 

At the level of tightness, we are able to remove the restriction on $\beta$ and our results hold for any $\beta>0$. We can also relax the condition $n/N \to 0$ stated in Theorem \ref{th:l2phase-N-intro}. As an example, we have the following particular case that resolves a problem of Diaconis and Gamburd \cite{DiaconisGamburd} on the middle secular coefficient, defined by setting $n=\lfloor \frac{N}{2} \rfloor$.
\begin{theorem}
Let $\beta=2$ and set $w_{N} = \log(1+N)^{-1/4}$. Then we have that $\{c^{(N)}_{\lfloor \frac{N}{2} \rfloor}/w_{N}\}_{N \geq 1}$ and $\{w_{N}/c^{(N)}_{\lfloor \frac{N}{2} \rfloor}\}_{N \geq 1}$ are both tight families of random variables. In particular $c^{(N)}_{\lfloor \frac{N}{2} \rfloor} \overset{d}{\to} 0$ as $N \to \infty$. 
More generally, when $n_{N}\to \infty$ in such a way that $2n_N \leq N,$ 
 $c^{(N)}_{n_N} \overset{d}{\to} 0$ as $N \to \infty.$
\label{th:mid-coefficient}
\end{theorem}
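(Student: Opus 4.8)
The plan is to prove tightness of both $\{c^{(N)}_{n_N}/w_N\}$ and $\{w_N/c^{(N)}_{n_N}\}$ in the regime $2 n_N \le N$ (with $n_N \to \infty$) by a two-sided second-moment argument, and then to deduce the convergence in distribution to $0$ as a corollary. Tightness of $\{c^{(N)}_{n_N}/w_N\}$ is the easier half: it suffices to show $\Exp(|c^{(N)}_{n_N}|^2) = O(w_N^2) = O((\log N)^{-1/2})$ and apply Markov's inequality. For $\beta=2$ and $2n \le N$ the magic-square formula of Diaconis and Gamburd gives $\Exp(|c^{(N)}_{n}|^2)$ exactly (for $n \le N$ it equals $1$, but the relevant point is the behaviour once normalised); more precisely one should track the $n$-dependence through the exact formula or through the asymptotic analysis of the moments referred to in the ``order of magnitude'' results quoted above, which for $\beta=2$ predicts $\Exp(|c^{(N)}_{n}|^2) \asymp (\log n)^{-1/2}$ uniformly for $n$ up to order $N$ — matching $w_N^2$ when $n \asymp N$. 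So the first step is to pin down the sharp two-sided bound $\Exp(|c^{(N)}_{n}|^2) \asymp (\log(1+n))^{-1/2}$ valid throughout $n \le N/2$.

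The harder half is the lower tail: ruling out that $c^{(N)}_{n_N}$ is much smaller than $w_N$, i.e.\ tightness of $w_N/c^{(N)}_{n_N}$. The natural tool is the Paley--Zygmund inequality, which requires an upper bound on $\Exp(|c^{(N)}_{n}|^4)$ of the same order as $\big(\Exp(|c^{(N)}_{n}|^2)\big)^2 \asymp (\log n)^{-1}$. This is exactly a fourth-moment (``$k=2$'') computation, and here the extended magic-square formula for general $\beta$ specialised to $\beta=2$, together with the asymptotic analysis of these moments, should yield $\Exp(|c^{(N)}_{n}|^4) = O((\log n)^{-1})$ for $2n \le N$; combined with a matching lower bound on $\Exp(|c^{(N)}_{n}|^2)$ this gives, via Paley--Zygmund, a constant $\delta>0$ and $\varepsilon>0$ with $\Pr\big(|c^{(N)}_{n}| \ge \varepsilon w_N\big) \ge \delta$ uniformly in $N$. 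That uniform lower bound on the probability of the event $\{|c^{(N)}_{n}/w_N| \ge \varepsilon\}$ does not by itself give tightness of the reciprocal; to get that one needs the stronger statement that $|c^{(N)}_{n}/w_N|$ is bounded below in probability, which should follow by applying Paley--Zygmund not to $c^{(N)}_n$ directly but to a truncation, or by combining the fourth-moment control with an anticoncentration input showing that the law of $|c^{(N)}_n/w_N|^2$ cannot pile up near $0$. Concretely, I would show $\Pr(|c^{(N)}_n| \le t\, w_N) \to 0$ as $t \to 0$ uniformly in $N$, using that $\Exp(|c^{(N)}_n|^2 \one[|c^{(N)}_n| \le t w_N]) \le t^2 w_N^2$ while $\Exp(|c^{(N)}_n|^2 \one[|c^{(N)}_n| > R w_N]) \le R^{-2} w_N^{-2}\Exp(|c^{(N)}_n|^4) = O(R^{-2})$, so the bulk of the second moment sits in a window $[t w_N, R w_N]$, forcing the probability of that window to be bounded below.

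The main obstacle I anticipate is the sharp \emph{two-sided} control of the second and fourth moments uniformly in the joint regime $n \asymp N$ (not just fixed $n$ or $n \ll N$): the clean magic-square identities hold only when $N$ is large relative to $n k$, and for $n = \lfloor N/2 \rfloor$ one is squarely outside that range, so one must instead use the exact moment formula valid for all $n \le N$ (the extension of the magic-square formula announced in the abstract) and extract its asymptotics — presumably via a generating-function or Tauberian argument, as the abstract hints — with enough uniformity to cover $n$ growing proportionally to $N$. Once the moment asymptotics $\Exp(|c^{(N)}_n|^2) \asymp (\log n)^{-1/2}$ and $\Exp(|c^{(N)}_n|^4) = O((\log n)^{-1})$ are in hand uniformly for $2 n \le N$, the tightness of both families and hence $c^{(N)}_{n_N} \Wkto 0$ follow by the routine Markov/Paley--Zygmund bookkeeping sketched above; the special case $n = \lfloor N/2 \rfloor$ with the explicit $w_N = (\log(1+N))^{-1/4}$ is then just the substitution $n \asymp N$.
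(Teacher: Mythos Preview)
Your proposal rests on a false premise about the moments at $\beta=2$. The paper states explicitly (just after Theorem~\ref{th:mid-coefficient}) that $\Exp(|c_n^{(N)}|^2)=1$ identically for $\theta=1$; indeed \eqref{haake2nd} with $\theta=1$ gives this, and \eqref{4thmom} gives $\Exp(|c_n|^4)=n+1$. So your claimed asymptotics $\Exp(|c_n^{(N)}|^2)\asymp(\log n)^{-1/2}$ and $\Exp(|c_n^{(N)}|^4)=O((\log n)^{-1})$ are both wrong, and the whole Markov/Paley--Zygmund scheme collapses: Markov on the second moment gives only tightness of $c_n^{(N)}$ itself (not of $c_n^{(N)}/w_N$), and Paley--Zygmund with these moments yields nothing. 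This is precisely the ``better than square-root cancellation'' phenomenon the paper highlights: the true scale $w_N$ is invisible to integer moments.

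What the paper does instead is genuinely different on both sides. For the upper bound (Lemma~\ref{lem:limpmoment}, Theorem~\ref{thm:trueCBEbound}) it conditions on $\Gfilt_q$ with $q\approx n/\log n$, obtaining via \eqref{eq:unibound} the bound $\Exp(|c_n|^2\mid\Gfilt_q)\le |c_{n,q}|^2 + C\,F_q$ where $F_q$ is a normalised partial mass of the \emph{critical} GMC; it then takes a \emph{fractional} moment $\Exp(|c_n|^{2p})$ with $p<1$, using that $\Exp(F_q^p)$ is bounded (critical GMC theory, \cite{JunnilaSaksman}) while $\Exp F_q=1$ would be useless. The transfer to $c_n^{(N)}$ uses that $\mathscr{M}_{n,N}=\Exp[c_n\mid\filt_N]$ is a conditional expectation, so Jensen controls its $2p$-th moment by that of $c_n$. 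For the lower bound (Theorems~\ref{thm:antitank1}--\ref{thm:antitank2}) the paper exploits the decomposition $c_n=c_{n,\lfloor n/2\rfloor}+\sum_{q<n/2}\mathcal{N}_{n-q}\sqrt{\theta/(n-q)}\,c_q$, which makes $c_n$ \emph{conditionally Gaussian} given $\Gfilt_{\lfloor n/2\rfloor}$ with conditional variance $\asymp n^{-1}\sum_{q<n/2}|c_q|^2\asymp w_n^2\,\mathcal{M}_{1,n_0}$; anticoncentration then comes from the bounded Gaussian density together with tightness of $1/\mathcal{M}_{1,n_0}$ (Lemma~\ref{lem:condvarapprox}), and is passed to $c_n^{(N)}$ via the $L^2$ comparison of Lemma~\ref{lem:secdiff}. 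None of this is accessible through integer moments of $c_n$ alone.
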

Note that Theorem \ref{th:mid-coefficient} holds despite the fact that $\mathbb{E}(|c_{n}^{(N)}|^{2})=1$ identically. A similar phenomenon has been observed in a number theoretical context, namely in the theory of random multiplicative functions and `better than square root cancellation' \cite{harper}. In the regime $0 < \beta < 2$, we will establish a similar class of tightness results which show that the normalization $\sqrt{\mathbb{E}(|c_{n}^{(N)}|^{2})}$ overestimates the correct order of magnitude for the coefficients, see Section \ref{sec:cbe-results}.

\subsection{The holomorphic multiplicative chaos}

Statistical properties of random matrix characteristic polynomials have attracted considerable interest recently, in large part due to an intimate relationship to \textit{logarithmically correlated Gaussian fields} and \textit{Gaussian multiplicative chaos} (GMC), see \cite{DRSV17} and \cite{RhodesVargasSurvey} for general background on these topics. The connection to characteristic polynomials of random matrices arose quite recently in an influential work of Fyodorov, Hiary and Keating \cite{FHK12, FK14}. In particular the attempts to prove the conjectures in \cite{FHK12,FK14}, which are still unresolved in full generality, have motivated a number of recent studies on characteristic polynomials of random matrices, for a non-exhaustive list see e.g. \cite{ABB17,BWW, ChhaibiNajnudel, CMN, LOS18,NSW, PaquetteZeitouni,  Webb}, and also on various parallel questions concerning the Riemann zeta function \cite{ABBRS,ABR20, NajnudelZeta, SaksmanWebb}. We will show how the GMC theory also plays a prominent role in the analysis of the secular coefficients $c^{(N)}_{n}$ in \eqref{chisec}.

To describe formally how a log-correlated Gaussian field can arise from the characteristic polynomial \eqref{charpoly}, we expand the logarithm as
\begin{equation}
\log \chi_{N}(z) = -\sum_{k=1}^{\infty}\frac{z^{k}}{k}\mathrm{Tr}(U_{N}^{-k}). \label{logchi}
\end{equation}
By the convergence \eqref{dsconv} we can identify a candidate limiting Gaussian field by replacing the power traces $\mathrm{Tr}(U_{N}^{-k})$ with $\sqrt{\frac{2}{\beta}}\sqrt{k}\mathcal{N}_{k}$, where $\mathcal{N}_{k}$ are {i.i.d.}\ standard complex Gaussian variables as in \eqref{iidnk} and for the sake of simplicity we ignore the minus sign in \eqref{logchi}, noting the rotational invariance of each $\mathcal{N}_{k}$. Now let $G^{\C}$ be the Gaussian analytic function on the unit disc $\D$ 
\begin{equation}
  G^{\C}(z) = \sum_{k=1}^\infty \frac{z^k}{\sqrt{k}}\mathcal{N}_k,
  \label{eq:GAF}
\end{equation}
so that we expect $\chi_{N}(z)$ to be close to $e^{\sqrt{\frac{2}{\beta}}G^{\C}(z)}$ in a suitable sense. The covariance of the field $G^{\C}$ follows from the simple i.i.d.\ structure of the variables $\mathcal{N}_{k}$ as
\[
  \Exp[ G^{\C}(w){G^{\C}(z)} ] = 0
  \quad\text{and}\quad
  \Exp[ G^{\C}(w) \overline{G^{\C}(z)} ] = -\log(1-w\overline{z}),
\]
from which it follows that $G = 2\Re G^{\C}$ and $H = 2 \Im G^{\C}$ are identically distributed Gaussian fields with
\begin{equation}\label{eq:RealG}
  \Exp[ G(w){G(z)} ] 
  =
  -2\log|1-w \overline{z}|
  \quad\text{and}\quad
  \Exp[ G(w){H(z)} ]  
  =
  -2\Arg(1-w \overline{z}),
\end{equation}
where we take the principal branch of the argument. In particular, when defined on the unit circle $|z|=|w|=1$, the real valued field $G$ is a prototypical example of a log-correlated Gaussian field (see e.g. \cite{HKO01} where it first appeared explicitly).


For trigonometric polynomials $\phi$ if we let $z \mapsto \phi(z)$ for $z \in \D$ be the continuous harmonic extension to $\D,$ we can define the random distribution
\begin{equation}
  ( \mathrm{HMC}_\theta, \phi)
  \coloneqq
  \lim_{r \to 1} 
  \frac{1}{2\pi}
  \int_0^{2\pi} 
  e^{\sqrt{\theta} G^{\C}(re^{i\vartheta})}
  \overline{\phi(r\vartheta)} d\vartheta,
  \label{eq:hmc}
\end{equation}
which we will call the \emph{holomorphic multiplicative chaos} or HMC, where from now on we will adopt the notation
\begin{equation}
\theta := \frac{2}{\beta}.
\end{equation}
For the moment, we comment that for trigonometric polynomials, the existence of this limit is trivial and is sufficient to uniquely define the HMC.  In particular, if we take $\phi(\vartheta)=e^{i n \vartheta}$ for $n \in \N_0,$ then using analyticity and Cauchy's theorem the limit is just the $n$-th coefficient in the power series expansion of $e^{\sqrt{\theta} G^{\C}(z)}$ at $z=0.$  We define for $n \in \N_0$ the Fourier coefficient of the HMC
\begin{equation}\label{eq:cn}
  c_n \coloneqq ( \mathrm{HMC}_\theta, \vartheta \mapsto  e^{i n\vartheta}).
\end{equation}
Equivalently, the coefficients $c_{n}$ can be extracted from a generating function by the formula
\begin{equation}
c_{n} = [z^{n}]\,e^{\sqrt{\theta}G^{\mathbb{C}}(z)} = [z^{n}]\,\mathrm{exp}\left(\sqrt{\theta}\sum_{k=1}^{\infty}\frac{z^{k}}{\sqrt{k}}\,\mathcal{N}_{k}\right), \label{cngen}
\end{equation}
where the notation $[z^{n}]\,h(z)$ denotes the coefficient of $z^{n}$ in the power series expansion of $h(z)$ around the point $z=0$. We could as well define this for $n \in \Z,$ but for negative integers, this would be $0$.

The HMC is in some sense the distributional limit of the characteristic polynomial $\chi_N$ inside the unit circle.
The following is shown in \cite{ChhaibiNajnudel}:
\begin{theorem}[Proposition 3.1 of \cite{ChhaibiNajnudel}] \label{thm:hmc}
  For any $\beta > 0$ it is possible to define $\chi_{N}$ and the field $G^{\mathbb{C}}$ on a single probability space in such a way that for any $r \in (0,1)$,
  \[
    \sup_{|z| \leq r} \bigl|\chi_N(z)-e^{\sqrt{\theta} G^{\C}(z)}\bigr| \Asto[N] 0.
  \]
  The convergence also holds in $L^p$ for any $p \geq 1.$
\end{theorem}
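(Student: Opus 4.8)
The plan is to follow \cite{ChhaibiNajnudel}: realise all the $\chi_N$ on one probability space through the Verblunsky-coefficient description of the C$\beta$E, turn $N\mapsto\log\chi_N(z)$ into a martingale for each fixed $z\in\D$, deduce a.s.\ locally uniform convergence of $\chi_N$ to some nonvanishing analytic limit, and finally identify the limit as $e^{\sqrt\theta G^{\C}}$ using the Diaconis--Shahshahani/Jiang--Matsumoto convergence \eqref{dsconv}. Concretely, using the Killip--Nenciu \cite{KillipNenciu} description of the C$\beta$E in its consistent ``deformed Verblunsky'' form, one obtains on a single space, with a filtration $(\filt_N)$, a family of characteristic polynomials with $\chi_N(0)=1$ satisfying a recursion $\chi_{N+1}(z)=\chi_N(z)\bigl(1-\overline{\gamma_N}\,\Psi_N(z)\bigr)$ in which $\gamma_N$ is independent of $\filt_N$ and rotationally invariant, while $\Psi_N$ is $\filt_N$-measurable, analytic on $\D$ with $|\Psi_N(z)|\le|z|$. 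Expanding $\log(1-\overline{\gamma_N}\Psi_N(z))$ in powers of $\overline{\gamma_N}$ and using $\Exp[\overline{\gamma_N}^{\,m}\mid\filt_N]=0$ for all $m\ge1$, we get $\Exp[\log\chi_{N+1}(z)-\log\chi_N(z)\mid\filt_N]=0$, so both $N\mapsto\log\chi_N(z)$ and $N\mapsto\chi_N(z)$ are martingales.

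The next step is uniform-in-$N$ moment control. Writing $\log\chi_N(z)=-\sum_{k\ge1}\frac{z^k}{k}\operatorname{Tr}(U_N^{-k})$ and using that the power traces have all moments bounded uniformly in $N$ (a quantitative companion of \eqref{dsconv}, with $\|\operatorname{Tr}(U_N^{-k})\|_{L^q}\le C_q\sqrt k$ uniformly in $N$), we obtain $\sup_N\Exp|\log\chi_N(z)|^q<\infty$ for every $q\ge1$ and every $z\in\D$. Since $|\log\chi_N|$ and $|\chi_N|^q$ are subharmonic on $\D$, a harmonic-majorant estimate on a circle $|z|=r'\in(r,1)$ upgrades this to $\sup_N\Exp[\sup_{|z|\le r}|\log\chi_N(z)|^q]<\infty$; combined with Doob's maximal inequality applied pointwise in $z$ and the fact that $\Exp|\chi_N(z)|^q$ stays bounded as $N\to\infty$ for each $|z|<1$ (which follows from the magic-square-type moment formulas for the secular coefficients), one also gets $\Exp\bigl[\sup_N\sup_{|z|\le r}|\chi_N(z)|^q\bigr]<\infty$ for every $r<1$ and $q\ge1$. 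In particular $\{\log\chi_N\}_N$ is a.s.\ a normal family on $\D$.

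For each fixed $z$ the martingale $\log\chi_N(z)$ is $L^2$-bounded, hence converges a.s.; applying this on a countable dense subset of $\D$ and invoking the Vitali--Porter theorem (using the normal-family property), there is an a.s.\ analytic function $\mathcal L$ on $\D$ with $\log\chi_N\to\mathcal L$ locally uniformly a.s. Reading off Taylor coefficients gives $\operatorname{Tr}(U_N^{-k})\to T_k$ a.s.\ with $\mathcal L(z)=-\sum_k\frac{T_k}{k}z^k$, and \eqref{dsconv} forces the joint law of $(T_k)_{k\ge1}$ to be that of $\sqrt\theta(\sqrt k\,\mathcal N_k)_{k\ge1}$ with the $\mathcal N_k$ i.i.d.\ standard complex Gaussians; hence, setting $\mathcal N_k:=-T_k/(\sqrt\theta\sqrt k)$, the function $G^{\C}(z):=\sum_k\frac{z^k}{\sqrt k}\mathcal N_k$ is exactly the Gaussian analytic function \eqref{eq:GAF} and $\mathcal L=\sqrt\theta\,G^{\C}$. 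Exponentiating a locally uniformly convergent sequence of holomorphic functions gives $\chi_N\to e^{\sqrt\theta G^{\C}}$ locally uniformly a.s., which is the a.s.\ statement. For the $L^p$ version, the a.s.\ convergence $\sup_{|z|\le r}|\chi_N-e^{\sqrt\theta G^{\C}}|\to0$ together with the bound $\sup_N\Exp[\sup_{|z|\le r}|\chi_N|^{p+1}]<\infty$ (and the analogous bound for the limit via Fatou) shows that $\{\sup_{|z|\le r}|\chi_N-e^{\sqrt\theta G^{\C}}|^p\}_N$ is uniformly integrable, so the convergence also holds in $L^p$.

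I expect the main obstacle to be the uniform-in-$N$ estimates of the second paragraph, and in particular the input that $\Exp|\chi_N(z)|^q$ remains bounded as $N\to\infty$ for each fixed $|z|<1$: this is exactly where the combinatorics of the secular-coefficient moments (the magic-square formulas of \cite{DiaconisGamburd} and their asymptotics) enters, and it is what prevents the logarithmic blow-up of $\chi_N$ on the unit circle from propagating inside the disc. Granting those bounds, the remaining ingredients --- the martingale property, the Vitali--Porter step, and the uniform-integrability argument --- are comparatively routine.
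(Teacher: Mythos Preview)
The paper does not prove this theorem; it simply quotes it from \cite{ChhaibiNajnudel}, and the relevant construction is only recapitulated later in Section~\ref{sec:cbe} (see \eqref{eq:szego}--\eqref{eq:PhiNbnd}). Your sketch follows the \cite{ChhaibiNajnudel} strategy faithfully: the Verblunsky coupling, the martingale property of (the logarithm of) the polynomials, a.s.\ pointwise convergence plus a normal-family/Vitali argument, identification of the limit via \eqref{dsconv}, and uniform integrability for the $L^p$ upgrade. That is the right architecture.

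The one place where your argument departs from what is actually done, and where it is shakier, is the moment input. In \cite{ChhaibiNajnudel} (and as quoted in \eqref{eq:PhiNbnd}) the bound $\Exp|\Phi_N^*(z)|^p\le(1-|z|^2)^{-\theta p^2/4}$ is obtained \emph{directly} from the Szeg\H{o} recursion and the explicit $\Beta$ law of $|\alpha_N|^2$, by computing $\Exp[|\Phi_{N+1}^*(z)|^p\mid\filt_N]$ and telescoping. Your route via trace moments requires a uniform-in-$N$ bound of the type $\|\operatorname{Tr}(U_N^{-k})\|_{L^q}\le C_q\sqrt k$, which is stronger than the convergence statement \eqref{dsconv} and, for general $\beta$, is not something you can simply read off from \cite{JM15}. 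Your alternative justification, that $\Exp|\chi_N(z)|^q$ stays bounded ``by the magic-square-type moment formulas'', is circular for $\theta\neq1$: the general-$\theta$ magic-square identity is Theorem~\ref{thm:magic} of the present paper, and its proof already presupposes the Gaussian limit you are trying to construct. So keep the plan, but replace the second paragraph's moment estimates by the direct recursion computation leading to \eqref{eq:PhiNbnd}; everything downstream then goes through as you describe.
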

\noindent Then as a corollary of Theorem \ref{thm:hmc} and Cauchy's integral formula, on the probability space therein, each $c_n^{(N)} \Asto[N] c_n.$  We shall continue to refer to $\left\{ c_n \right\}$ as the secular coefficients of the $\mathrm{HMC}_\theta$.

While $\mathrm{HMC}_\theta$ is uniquely determined by its Fourier coefficients and exists for all $\theta > 0$, we will characterize its regularity in the Sobolev sense.
We define the Sobolev norms for any $s \in \R$ on the trigonometric polynomials on $\T$ by 
\begin{equation}\label{eq:sobolevnorms}
  \| f \|_{H^s}^2 = \sum_{n \in \Z} (1+n^2)^{s}|\widehat{f}(n)|^2,
  \quad\text{where}
  \quad 
  \widehat{f}(n) = \frac{1}{2\pi}\int_0^{2\pi} f(e^{i\vartheta})e^{-in \vartheta} d\vartheta.
\end{equation}
The Sobolev spaces $H^{s}$ for $s \in \R$ are the closures in the space of distributions of the trigonometric polynomials on $\T$ under these norms.  
For any $s \in \R$ the norms defined in \eqref{eq:sobolevnorms} are well--defined for all functions $f \in L^2,$ with the understanding that they may be infinite, and for $s \geq 0$ the space $H^s$ is precisely the subspace of $L^2$ for which $\|\cdot\|_{H^s} < \infty.$  The formulas for the norms \eqref{eq:sobolevnorms} extend generally to the space of distributions on $\T$ by replacing $\hat f(n) \coloneqq (f, \vartheta \mapsto e^{in\vartheta}).$  For any $s \in \R,$ the pair $H^{s}$ and $H^{-s}$ are dual spaces with respect to the natural inner product on $\T.$

\begin{theorem}\label{thm:hmcregularity}  
  Define
  \[
    s_\theta \coloneqq \begin{cases}
      -\frac{\theta}{2}, & \text{ if }\theta \leq 1, \\
      - \sqrt{\theta} + \frac{1}{2}, & \text{ if }\theta > 1. \\
    \end{cases}
  \]
  Then for any $\theta > 0,$ $\mathrm{HMC}_\theta$ is in $H^{s}$ almost surely for any $s < s_\theta$ and it is almost surely not in $H^s$ for any $s > s_\theta.$
\end{theorem}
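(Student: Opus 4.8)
**The plan is to analyze the Sobolev regularity through the second moments of the Fourier coefficients $c_n$, combined with a Borel–Cantelli / concentration argument to pin down the almost-sure behavior.** The starting point is the identity $\|\mathrm{HMC}_\theta\|_{H^s}^2 = \sum_{n \geq 0} (1+n^2)^s |c_n|^2$, so the question reduces to understanding the growth of $|c_n|^2$ as $n \to \infty$. I would first compute $\Exp|c_n|^2$. From the generating-function representation \eqref{cngen}, $\Exp|c_n|^2$ equals the coefficient of $z^n \bar z^n$ (after taking the diagonal) in $\Exp e^{\sqrt\theta G^{\C}(z) + \sqrt\theta \overline{G^{\C}(w)}} = \exp(-\theta \log(1-z\bar w)) = (1-z\bar w)^{-\theta}$, evaluated on the diagonal; that is, $\Exp|c_n|^2 = [x^n](1-x)^{-\theta} = \binom{n+\theta-1}{n} \sim \frac{n^{\theta-1}}{\Gamma(\theta)}$. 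Thus $\sum_n (1+n^2)^s \Exp|c_n|^2 \asymp \sum_n n^{2s + \theta - 1}$, which is finite iff $2s + \theta - 1 < -1$, i.e. $s < -\theta/2$. This immediately gives that $\mathrm{HMC}_\theta \in H^s$ a.s. for $s < -\theta/2$ by Fubini, and — crucially — that it is \emph{not} in $H^s$ for $s > -\theta/2$ when $\theta \leq 1$, provided one rules out miraculous cancellation; for the latter one can use a second-moment lower bound, e.g. Paley–Zygmund, or the fact (from the magic-square / fourth-moment analysis elsewhere in the paper) that $|c_n|$ does not decay much faster than its $L^2$ size when $\theta \le 1$. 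This settles the theorem entirely in the regime $\theta \leq 1$.

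**For $\theta > 1$ the second-moment computation is no longer sharp**, because $c_n$ has heavy tails (indeed, by the intuition behind Theorem \ref{th:l2phase-N-intro} and the exponential-factor heuristic, $c_n$ behaves like $n^{(\theta-1)/2}$ times a random variable with only $\lfloor \beta/2 \rfloor = \lfloor 1/\theta \cdot \text{(something)}\rfloor$ finite moments), so the typical size of $|c_n|^2$ is much smaller than its mean. Here I would instead work with the typical (almost-sure) magnitude of $|c_n|$. The key input is a \emph{subcritical GMC / multiplicative-chaos estimate}: the partial sums $G^{\C}_m(z) = \sum_{k=1}^m \frac{z^k}{\sqrt k}\mathcal N_k$ of the field, and the observation that $c_n = [z^n] e^{\sqrt\theta G^{\C}(z)}$ is governed by the behavior of $e^{\sqrt\theta G^{\C}}$ near the unit circle at scale $1 - |z| \sim 1/n$. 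Concretely, using Cauchy's formula on the circle of radius $r_n = 1 - 1/n$,
\[
  c_n = \frac{1}{2\pi r_n^n} \int_0^{2\pi} e^{\sqrt\theta G^{\C}(r_n e^{i\vartheta})} e^{-in\vartheta}\, d\vartheta,
\]
and $r_n^{-n} \asymp 1$, so $|c_n| \lesssim \frac{1}{2\pi}\int_0^{2\pi} e^{\sqrt\theta \Re G^{\C}(r_n e^{i\vartheta})}\, d\vartheta = \frac{1}{2\pi}\int_0^{2\pi} e^{\frac{\sqrt\theta}{2} G(r_n e^{i\vartheta})}\, d\vartheta$, which is (a regularized version of) the total mass of the GMC measure with parameter $\gamma = \sqrt\theta/2$ — wait, here one must be careful about the normalization of $G$ in \eqref{eq:RealG}, which has variance $-2\log|1-w\bar z|$, so $\frac{\sqrt\theta}{2} G$ has the GMC parameter $\gamma$ with $\gamma^2 = \theta/4 \cdot 2 = \theta/2$; in any case, the relevant GMC exists (is subcritical) precisely when the effective parameter is below the critical value, and the total-mass moments scale like a power of $n$. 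The precise claim I would establish is that, almost surely, $|c_n| = n^{o(1)} \cdot n^{(\sqrt\theta - 1)/2 \cdot ?}$ — more carefully, that $\limsup_n \frac{\log|c_n|}{\log n} = $ some explicit exponent $\rho_\theta$, and then $\sum_n n^{2s} |c_n|^2 < \infty$ a.s. iff $2s + 2\rho_\theta < -1$, yielding $s_\theta = -\rho_\theta - \tfrac12 = -\sqrt\theta + \tfrac12$ after matching constants. Determining $\rho_\theta$ is where the GMC moment estimates (existence of moments of the total mass up to order $4/\gamma^2$, and multifractal scaling of the mass of small arcs) enter, together with a union bound over $n$ and a chaining argument to control $\sup_\vartheta$ versus the integral.

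**The sharp lower bound in the $\theta > 1$ regime is the main obstacle.** The upper bound ``$c_n$ is not too large'' follows fairly directly from the GMC total-mass estimate sketched above plus Borel–Cantelli applied along a geometric subsequence of $n$'s and monotonicity/interpolation in between. The lower bound — showing $\mathrm{HMC}_\theta \notin H^s$ for $s > s_\theta$, i.e. that $\sum n^{2s}|c_n|^2 = \infty$ a.s. — requires showing that $|c_n|$ is genuinely of order $n^{\rho_\theta - o(1)}$ infinitely often (in fact for a positive-density set of $n$), which cannot come from the first moment (too large) and cannot come from the second moment (dominated by rare events). The natural approach is a \emph{conditional second-moment / small-ball argument}: restrict attention to the event that the GMC mass on a single small arc of length $1/n$ near a chosen point is of order its typical size $n^{-\gamma^2/2}\cdot(\text{typical GMC value})$, show this contributes $\gtrsim n^{\rho_\theta}$ to $|c_n|$ with probability bounded below, and then either (a) use a Paley–Zygmund argument on $\sum_{n \in [N, 2N]} n^{2s}|c_n|^2$ to show it is large with non-vanishing probability and invoke a zero–one law (the event lies in the tail $\sigma$-algebra of $(\mathcal N_k)$), or (b) directly couple to a known result about the regularity of subcritical GMC on the circle (the GMC measure is a.s. in $H^{-\epsilon}$ but not better in a precise multifractal sense) via the fact that $c_n$ are, up to the harmless factor $r_n^{-n}$ and an error from replacing $G^{\C}$ by $\Re G^{\C}$, the Fourier coefficients of the boundary GMC. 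Option (b) is cleanest if the requisite sharp GMC-regularity statement is available in the literature in the form needed; otherwise option (a), combined with the Kolmogorov zero–one law for the i.i.d. sequence $(\mathcal N_k)$, gives the result with fully self-contained estimates. Either way, the crux is matching the multifractal exponent of the subcritical GMC on the circle to the claimed value $s_\theta = -\sqrt\theta + \tfrac12$, and verifying that the exponential factor $e^{\sqrt\theta\, \mathrm{i}\,\Im G^{\C}}$ does not destroy the lower bound — a phase-cancellation issue analogous to the one behind Theorem \ref{th:mid-coefficient}, which one handles by noting that on a short enough arc the imaginary part is essentially constant and contributes only a unit-modulus multiplicative factor.
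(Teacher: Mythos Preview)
Your proposal has a genuine gap in the lower bound for $\theta\le 1$, and is too schematic to constitute a proof for $\theta>1$.

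For $\theta\le 1$: the upper bound via $\Exp|c_n|^2\sim n^{\theta-1}/\Gamma(\theta)$ and Fubini is correct and is what the paper does. But your lower bound (``rule out miraculous cancellation'' via Paley--Zygmund or the fourth-moment analysis) breaks down precisely in the range $\theta\in[\tfrac12,1]$: there $\Exp|c_n|^4/(\Exp|c_n|^2)^2\to\infty$ (logarithmically at $\theta=\tfrac12$, polynomially for $\theta\in(\tfrac12,1)$), so Paley--Zygmund on individual coefficients gives nothing. Worse, at $\theta=1$ the paper \emph{proves} that $|c_n|$ is genuinely much smaller than its $L^2$ norm (this is Theorem~\ref{th:mid-coefficient}), so your heuristic ``$|c_n|$ does not decay much faster than its $L^2$ size'' is simply false at the level of individual coefficients. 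The paper avoids this entirely: it never estimates single $|c_n|$ from below. Instead it uses Parseval, $\sum_n |c_n|^2 r^{2n}=\tfrac{1}{2\pi}\int_0^{2\pi}e^{\sqrt\theta G(re^{i\vartheta})}d\vartheta$, together with the elementary bound $(1+n^2)^s\ge C_s|\log r|^{-2s}r^{2n}$, to get $A_{s,\theta}\ge C_s|\log r|^{-2s}\int e^{\sqrt\theta G(re^{i\vartheta})}d\vartheta$. Since $(1-r^2)^\theta$ (times a $\sqrt{\log}$ at $\theta=1$) times this integral converges in probability to the positive GMC mass, the right-hand side blows up for $s>-\theta/2$.

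For $\theta>1$: your sketch has the right intuition (the second moment overestimates) but no concrete mechanism; you leave the exponent $\rho_\theta$ undefined, get tangled in the GMC normalization, and your Cauchy-formula bound $|c_n|\lesssim\int e^{\sqrt\theta G/2}$ is weaker than the Parseval identity and does not by itself yield the threshold $-\sqrt\theta+\tfrac12$. The paper again works with the partition function $\int e^{\sqrt\theta G(re^{i\vartheta})}d\vartheta$ rather than with individual $c_n$. For the upper bound it dominates $A_{s,\theta}$ by a dyadic sum $\sum_u 2^{2su}\int e^{\sqrt\theta G((1-2^{-u})e^{i\vartheta})}d\vartheta$, restricts to the Borel--Cantelli event that $\max_{\vartheta\in 2\pi\Z/2^u}G((1-2^{-u})e^{i\vartheta})\le 2u\log 2+10\log u$, and uses Girsanov to show the truncated expectation is $O(2^{u(2\sqrt\theta-1)}u^{m_\theta})$. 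For the lower bound it uses a clean harmonic-analysis trick---Jensen applied to the Poisson integral of the harmonic function $G$---to get $e^{\sqrt\theta\sup_\vartheta G(r^2e^{i\vartheta})}\lesssim(1-r)^{-1}\int e^{\sqrt\theta G(re^{i\vartheta})}d\vartheta$, hence $A_{s,\theta}\gtrsim(1-r)^{1-2s}e^{\sqrt\theta\sup_\vartheta G(r^2e^{i\vartheta})}$, and then invokes known results on the maximum of log-correlated fields. Neither your Paley--Zygmund-on-blocks plan nor the phase-cancellation discussion is needed.
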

\begin{proof}
See Section \ref{sec:regular}.
\end{proof}
\noindent 
We remark that in the variable $\theta = \frac{\gamma^{2}}{4}$ this Theorem states that $\mathrm{HMC}_\theta$ is in $H^{s}$ provided that 
\begin{equation}
-2s+1 > \begin{cases} 1+\frac{\gamma^{2}}{4}, & \gamma < 2\\
 \gamma, & \gamma > 2.\end{cases}
\label{freezingtran} \end{equation} 
The threshold on the right-hand side of \eqref{freezingtran} is familiar from the study of the \textit{free energy} in various other log-correlated models (see e.g.\ \cite{FyodorovBouchaud}). 

\subsection{The combinatorial structure of the moments}
\label{sec:mainres}
We begin by discussing the moments of the secular coefficients. As we already mentioned, when $\beta=2$ (or $\theta=1$), Diaconis and Gamburd \cite{DiaconisGamburd} characterize these moments in terms of the enumeration of magic squares. We will state a result that generalizes this characterization to arbitrary $\beta>0$.
\begin{definition}
\label{def:magicsq}
  A magic square of size $k$ with row sums $\mu = (\mu_1,\ldots,\mu_k)$ and column sums $\nu = (\nu_1,\ldots,\nu_k)$ is a $k \times k$ matrix $A$ with entries in $\N_0$ and with the property that
  \begin{equation}
    \begin{split}
      \sum_{j=1}^{k}A_{ij} &= \mu_{i}, \qquad i=1,\ldots,k,\\
      \sum_{i=1}^{k}A_{ij} &= \nu_{j}, \qquad j=1,\ldots,k.
    \end{split}
  \end{equation}
\end{definition}
Throughout the article we will use the notation $\mathrm{Mag}_{\mu,\nu}$ to denote the collection of all such $k \times k$ magic squares. We recall that non-negative integer vectors  $\mu$ are called \emph{compositions} in the combinatorics literature, while the entries $\mu_{1},\ldots,\mu_{k}$ are known as \emph{parts}.

\begin{theorem}\label{thm:magic}
  Let $\mu$ and $\nu$ be any compositions with $k$ parts. Then for any $k \in \mathbb{N}$ and $\theta>0$, we have
  \begin{equation}
    \mathbb{E}\left(\prod_{j=1}^{k}c_{\mu_j}\overline{c_{\nu_j}}\right) = \sum_{A \in \mathrm{Mag}_{\mu,\nu}}\prod_{1\leq i, j \leq k}\binom{A_{ij}+\theta-1}{A_{ij}}. \label{genbetmoms}
  \end{equation}
\end{theorem}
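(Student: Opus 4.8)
The plan is to compute the moment $\Exp\bigl(\prod_{j=1}^k c_{\mu_j}\overline{c_{\nu_j}}\bigr)$ directly from the generating-function representation \eqref{cngen}. Writing $f(z) = e^{\sqrt{\theta}G^{\mathbb C}(z)}$, we have $c_n = [z^n]f(z)$, and a product $\prod_{j=1}^k c_{\mu_j} = [z_1^{\mu_1}\cdots z_k^{\mu_k}]\prod_{j=1}^k f(z_j)$. So the left-hand side of \eqref{genbetmoms} is the coefficient of $z_1^{\mu_1}\cdots z_k^{\mu_k}\overline{w_1}^{\nu_1}\cdots\overline{w_k}^{\nu_k}$ in $\Exp\bigl[\prod_j f(z_j)\prod_j \overline{f(w_j)}\bigr]$. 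The first step is therefore to compute this mixed expectation as an explicit analytic function of the $z_j, w_j$. Since $\sqrt\theta\sum_j G^{\mathbb C}(z_j) - \sqrt\theta\sum_j \overline{G^{\mathbb C}(w_j)}$ is (for fixed $|z_j|,|w_j|<1$) a complex Gaussian, one uses the Laplace-transform / Wick formula $\Exp e^{X} = e^{\Exp X + \frac12\Exp(X-\Exp X)^2}$ together with the covariances $\Exp[G^{\mathbb C}(u)G^{\mathbb C}(v)]=0$ and $\Exp[G^{\mathbb C}(u)\overline{G^{\mathbb C}(v)}]=-\log(1-u\bar v)$. The holomorphic–holomorphic and antiholomorphic–antiholomorphic terms vanish, and the cross terms produce
\[
  \Exp\Bigl[\prod_{j=1}^k f(z_j)\prod_{j=1}^k \overline{f(w_j)}\Bigr]
  = \prod_{1\le i,j\le k} (1 - z_i \overline{w_j})^{-\theta},
\]
valid for $|z_i|,|w_j|<1$ (interchanging the expectation with the limit $r\to1$ is justified by the $L^p$ convergence in Theorem \ref{thm:hmc}, or directly by dominated convergence on compact subsets of the polydisc).

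The second step is to extract the relevant Taylor coefficient. Expanding each factor by the generalized binomial series, $(1-x)^{-\theta} = \sum_{m\ge0}\binom{m+\theta-1}{m}x^m$, gives
\[
  \prod_{1\le i,j\le k}(1-z_i\overline{w_j})^{-\theta}
  = \sum_{(A_{ij})\in \N_0^{k\times k}} \Bigl(\prod_{i,j}\binom{A_{ij}+\theta-1}{A_{ij}}\Bigr)\prod_{i,j} z_i^{A_{ij}}\overline{w_j}^{A_{ij}}.
\]
Collecting powers, the monomial $z_1^{\mu_1}\cdots z_k^{\mu_k}\overline{w_1}^{\nu_1}\cdots\overline{w_k}^{\nu_k}$ appears exactly for those nonnegative integer matrices $A$ with $\sum_j A_{ij}=\mu_i$ for all $i$ and $\sum_i A_{ij}=\nu_j$ for all $j$ — that is, exactly for $A\in\mathrm{Mag}_{\mu,\nu}$ — and its coefficient is $\sum_{A\in\mathrm{Mag}_{\mu,\nu}}\prod_{i,j}\binom{A_{ij}+\theta-1}{A_{ij}}$, which is \eqref{genbetmoms}. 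Note that this also records the fact that moments mixing $c_\mu$ with $\overline{c_\nu}$ vanish unless $\sum\mu_i=\sum\nu_j$, reflecting the rotational invariance of $G^{\mathbb C}$.

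The only genuinely delicate point is the justification of passing the expectation through the $r\to1$ limit implicit in the definition \eqref{eq:hmc}–\eqref{cngen}, equivalently the interchange of $\Exp$ with coefficient extraction; everything else is a formal manipulation of absolutely convergent power series on the open polydisc. I would handle this either by working at radius $r<1$ throughout — where $f_r(z):=e^{\sqrt\theta G^{\mathbb C}(rz)}$ is a genuine analytic function with all moments finite, compute the identity for the coefficients of $f_r$, and then let $r\to1$ using that $[z^n]f_r \to [z^n]f = c_n$ in $L^2$ (indeed in every $L^p$) by Theorem \ref{thm:hmc} and Cauchy's formula on a fixed circle of radius $r_0\in(0,1)$ — or, more cheaply, by noting that the $c_n$ themselves are honest square-integrable random variables (their second moments being finite by the $\theta$-analogue of the magic-square count applied with $k=1$, or directly $\Exp|c_n|^2 = \binom{n+\theta-1}{n}<\infty$) and that for fixed $r_0<1$ the partial sums of $\sum c_n r_0^n z^n$ converge suitably. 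The combinatorial content of the theorem is thus entirely contained in the elementary binomial expansion; the analytic input is confined to the identity $\Exp[\prod f(z_i)\prod\overline{f(w_j)}] = \prod_{i,j}(1-z_i\overline{w_j})^{-\theta}$ and the legitimacy of extracting coefficients from it.
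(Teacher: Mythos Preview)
Your proof is correct and follows essentially the same route as the paper: compute the Gaussian expectation of the product of generating functions to obtain $\prod_{i,j}(1-z_i\overline{w_j})^{-\theta}$, expand each factor via the generalized binomial series, and read off the coefficient as a sum over $\mathrm{Mag}_{\mu,\nu}$. You are somewhat more explicit than the paper about justifying the interchange of expectation with coefficient extraction, but the combinatorial and analytic content is identical.
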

\noindent We give the proof in Section \ref{sec:moments} and furthermore provide a combinatorial connection to Jack functions. When $\beta=2$ ($\theta=1$) the right-hand side of \eqref{genbetmoms} reduces to the cardinality of $\mathrm{Mag}_{\mu,\nu}$, recovering the result of \cite{DiaconisGamburd} in the particular case $N=\infty$. Choosing the column and row sums all to equal $n$, we obtain an expression for the absolute $2k^{\mathrm{th}}$ moments 
\begin{equation}\label{eq:kthmom}
  \mathbb{E}(|c_{n}|^{2k}) = \sum_{A \in \mathrm{Mag}_{\pi,\pi}}\prod_{1 \leq i, j \leq k}\binom{A_{ij}+\theta-1}{A_{ij}}
\end{equation}
where $\pi$ is the composition in which $n$ appears $k$ times, i.e.\ $\mathrm{Mag}_{\pi,\pi}$ is the set of all magic squares of size $k \times k$ with all row and column sums equal to $n.$
We note the special case
\begin{equation}\label{eq:2ndmom}
  \mathbb{E}(|c_{n}|^{2}) = \binom{n+\theta-1}{\theta-1} \sim \frac{1}{\Gamma(\theta)}\,n^{\theta-1}.
\end{equation}
It turns out it is possible to use this formula and \eqref{eq:kthmom} to probe asymptotic behaviour of the moments. 
We have:
\begin{theorem}\label{thm:morris}
  For any positive integer $k$ and any $\theta >0$ so that $\theta k < 1,$
  \begin{equation}
    \lim_{n \to \infty}\frac{\mathbb{E}(|c_{n}|^{2k})}{\mathbb{E}(|c_{n}|^{2})^{k}} = \frac{\Gamma(1-k\theta)}{\Gamma(1-\theta)^{k}}\,k! \label{morrismoms}
  \end{equation}
\end{theorem}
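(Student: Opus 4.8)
The plan is to extract the $n\to\infty$ asymptotics of $\mathbb{E}(|c_n|^{2k})$ directly from the magic-square formula \eqref{eq:kthmom}, the organizing principle being that the only magic squares contributing at the order $n^{k(\theta-1)}$ of $\mathbb{E}(|c_n|^{2})^k\sim n^{k(\theta-1)}/\Gamma(\theta)^k$ (by \eqref{eq:2ndmom}) are those that are close to a scaled permutation matrix. Fix $\sigma\in S_k$; by permuting columns it is enough to treat $\sigma=\mathrm{id}$, so consider a magic square $A$ with all diagonal entries larger than $n/2$. Setting $\rho_i\coloneqq n-A_{ii}$ and letting $B$ be the off-diagonal part ($B_{ii}=0$, $B_{ij}=A_{ij}$ for $i\neq j$), the margin constraints say exactly that $B$ is a \emph{balanced} loopless nonnegative integer matrix, i.e.\ $\sum_j B_{ij}=\sum_j B_{ji}$ for each $i$, with $\rho_i=\sum_j B_{ij}<n/2$, and the weight factorizes as $\prod_i\binom{n-\rho_i+\theta-1}{n-\rho_i}\cdot\prod_{i\neq j}\binom{B_{ij}+\theta-1}{B_{ij}}$. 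For fixed $B$ one has $\binom{n-\rho_i+\theta-1}{n-\rho_i}\sim n^{\theta-1}/\Gamma(\theta)$, while on the region $\rho_i<n/2$ this factor is bounded by $Cn^{\theta-1}$ uniformly (using that $a\mapsto\binom{a+\theta-1}{a}$ is decreasing and regularly varying for $\theta<1$); dominated convergence then yields
\[
  \sum_{A\text{ near }nP_\sigma}\ \prod_{i,j}\binom{A_{ij}+\theta-1}{A_{ij}}\ \sim\ \frac{n^{k(\theta-1)}}{\Gamma(\theta)^k}\,W_k,
  \qquad
  W_k\coloneqq\sum_{B\text{ balanced, loopless}}\ \prod_{i\neq j}\binom{B_{ij}+\theta-1}{B_{ij}},
\]
the series $W_k$ converging precisely because $k\theta<1$ (the borderline contribution comes from $B$ with all entries of the same large size). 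Summing over the $k!$ permutations $\sigma$, which give equal contributions by the row/column symmetry of \eqref{eq:kthmom}, and dividing by $\mathbb{E}(|c_n|^2)^k$, reduces Theorem \ref{thm:morris} to the identity $W_k=\Gamma(1-k\theta)/\Gamma(1-\theta)^k$.

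To evaluate $W_k$, encode the balance condition by a constant term: weighting the entry $B_{ij}$ by $(u_i/u_j)^{B_{ij}}$ and using $\sum_{m\geq0}\binom{m+\theta-1}{m}t^m=(1-t)^{-\theta}$ gives
\[
  W_k=\mathrm{CT}_{u_1,\dots,u_k}\,\prod_{1\leq i\neq j\leq k}\bigl(1-u_i/u_j\bigr)^{-\theta}
  =\frac{1}{(2\pi)^k}\int_{[0,2\pi]^k}\prod_{1\leq i<j\leq k}\bigl|e^{i\phi_i}-e^{i\phi_j}\bigr|^{-2\theta}\,d\phi_1\cdots d\phi_k,
\]
which is a circular Selberg (Morris) integral, convergent iff $k\theta<1$ and equal to $\Gamma(1-k\theta)/\Gamma(1-\theta)^k$. (For $k=2$ this is just Gauss's evaluation $\sum_{m\geq0}\binom{m+\theta-1}{m}^2={}_2F_1(\theta,\theta;1;1)=\Gamma(1-2\theta)/\Gamma(1-\theta)^2$, and one can check the whole argument by hand from the exact identity $\mathbb{E}(|c_n|^4)=\sum_{a=0}^n\binom{a+\theta-1}{a}^2\binom{n-a+\theta-1}{n-a}^2$.)

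The crux — and the step I expect to be the main obstacle — is to show that the magic squares that are \emph{not} close to any scaled permutation matrix contribute only $o(n^{k(\theta-1)})$. A K\"onig/Hall argument shows that such a square has some row (or, transposing, column) with all entries at most $n/2$, forcing at least two of that row's entries to have size of order $n$. A naive ``(number of magic squares with margins $n$)\,$\times$\,(largest weight of a bad one)'' bound is far too lossy, so one instead peels off one row (or column) at a time, controlling the weight of the peeled slice through sums such as $\sum_{a_1+\dots+a_\ell=m}\prod_i\binom{a_i+\theta-1}{a_i}=\binom{m+\ell\theta-1}{m}$ and carrying along the remaining, now rectangular, weighted array; here $k\theta<1$ enters exactly to keep the resulting binomial factors summable. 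An alternative that avoids the combinatorial case analysis starts from the Gaussian representation $\mathbb{E}(|c_n|^{2k})=(2\pi)^{-2k}\int_{[0,2\pi]^{2k}}\prod_{a,b=1}^k(1-e^{i(\vartheta_a-\vartheta_b')})^{-\theta}e^{in(\sum_b\vartheta_b'-\sum_a\vartheta_a)}\,d\vartheta\,d\vartheta'$ and extracts the asymptotics by localizing at the Fisher--Hartwig singularities $\{\vartheta_a=\vartheta_b'\}$, where the merged singular behaviour again produces the Selberg integral and the combinatorial factor $k!$ comes from the $k!$ ways of matching the $\vartheta$'s to the $\vartheta'$'s; the analytic core, and the role of $k\theta<1$, are the same. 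I would complete the argument along the first route.
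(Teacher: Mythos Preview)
Your approach is essentially the same as the paper's: split the magic squares into those near a scaled permutation matrix and the rest, extract the $k!$ from the permutation symmetry, reparametrize the near-diagonal squares by the balanced loopless off-diagonal matrix $B$, and identify the resulting constant $W_k$ with the Morris integral via the constant-term/circle-integral computation. The evaluation of $W_k$ you give is exactly the paper's computation of \eqref{eq:as2}.

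The one place where the paper does something cleaner than your sketch is the error term. You propose a K\"onig/Hall argument plus row-peeling, but the statement ``some row has all entries at most $n/2$, forcing at least two of that row's entries to have size of order $n$'' is not quite right (if all entries of a row are $\le n/2$ and sum to $n$, you only get two \emph{positive} entries, not two of order $n$), and the peeling you outline would need more bookkeeping. The paper instead invokes Birkhoff--von~Neumann: every $A\in\mathrm{Mag}_{\pi,\pi}$ supports a permutation $P$ along which all entries are at least $n/k!$. After permuting so this is the diagonal, the $k$ diagonal factors are each $O((n/k!)^{\theta-1})$, and the off-diagonal part of $A$ is then a (range-restricted) element of exactly the same absolutely convergent series $W_k$ you already computed. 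Dominated convergence on that series, using that on $E_{\delta,n}$ the off-diagonal part is forced to be large, gives the $o(n^{k(\theta-1)})$ bound in one stroke, with no row-by-row peeling and no Fisher--Hartwig analysis.
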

\begin{proof}
See Section \ref{sec:moments}.
\end{proof}
It is a simple computation, recalling that $\theta = \frac{2}{\beta}$, that the moments on the right-hand side of \eqref{morrismoms} are precisely those of the limiting distribution in Theorem \ref{th:l2phase-N-intro}. We mention in passing that besides magic squares, which play an important role in describing $\mathrm{HMC}_\theta,$ the \emph{Ewens sampling formula}, which defines a classical distribution on random permutations, plays a prominent role in our analysis, see Section \ref{sec:ewens} for details.


\subsection{From magic squares back to multiplicative chaos}

Theorem \ref{thm:morris} is strongly reminiscent of the \emph{freezing transition} observed for moments of random energy models \cite{FyodorovBouchaud} (c.f.\ Remark \ref{rem:highermoment} for the behavior of a moment above the critical threshold). Indeed, in \cite{FyodorovBouchaud} it is shown that the \emph{Morris integral}
\begin{equation}\label{eq:morris}
  \frac{\Gamma(1-k\theta)}{\Gamma(1-\theta)^{k}}
  =
  \frac{1}{(2\pi)^k}\int\limits_{[0,2\pi]^k} \prod_{1 \leq  a < b \leq k} |e^{i\vartheta_a}-e^{i\vartheta_b}|^{-2\theta}
  d\vartheta,
\end{equation}
describes the appropriately rescaled moments of the partition function of the logarithmically correlated \emph{random energy model} on the unit circle in the high temperature phase
-- see \cite{FyodorovBouchaud} for details.  

The presence of the Morris integral in Theorem \ref{thm:morris} is moreover indicative that the theory of Gaussian multiplicative chaos will be relevant here. There is a substantial literature on this random measure (see \cite{RhodesVargasSurvey} for a general overview), but we will be concerned only with the following specific instance: for $\theta \in (0,1),$ 
\begin{equation}
  \mathrm{GMC}_\theta(d\vartheta) \coloneqq 
  \lim_{r \to 1} (1-r^2)^{\theta}| e^{\sqrt{\theta}G^{\C}(r e^{i\vartheta})}|^2 d\vartheta
  =\lim_{r \to 1} (1-r^2)^{\theta}e^{\sqrt{\theta}G(r e^{i\vartheta})} d\vartheta.
  \label{eq:gmc}
\end{equation}
The existence of this limit as a random measure is shown in \cite{JunnilaSaksman} (c.f.\ \cite[Proposition 3.1]{ChhaibiNajnudel}).  Moreover it is shown in \cite{Remy} that the total mass of this particular random measure has law characterized by the natural analytic continuation of the Morris integral, i.e.\ for any $p >0$ with $p \theta < 1,$
\begin{equation}\label{eq:mass} 
  \Exp\bigl(\mathcal{M}_\theta\bigr)^p
  =\frac{\Gamma(1-p\theta)}{\Gamma(1-\theta)^{p}},
  \quad
  \text{where}
  \quad
  \mathcal{M}_\theta
  \coloneqq
  \frac{1}{2\pi}\int_0^{2\pi}\mathrm{GMC}_{\theta}(d\vartheta),
\end{equation}
with the sense of the limit being an in-probability, weak-* convergence.

We briefly summarize some of the qualitative properties of the GMC. The regime $\theta \in (0,1)$ is typically referred to as the \emph{subcritical} phase of the $\mathrm{GMC}_\theta.$  The limiting measure is supported on a set of Hausdorff dimension $1-\theta.$ This result is well known in the literature on GMC, see \cite{RhodesVargasSurvey}, though for the model discussed here, see \cite{ChhaibiNajnudel}. The subset $(0,\tfrac 12)$ is sometimes referred to as the $L^2$ phase, on account of the mass $\mathcal{M}_\theta$ gaining a finite second moment (and indeed Theorem \ref{thm:morris} applies as well).

The $L^2$ phase is technically simpler to manage, and has been the setting for some of the first convergence results of $\beta = 2$ characteristic polynomials and powers thereof to the GMC \cite{Webb,BWW}. For a power of the modulus of the CUE ($\beta=2$) characteristic polynomial, this has been improved to the whole subcritical phase in \cite{NSW}.   For general $\beta>0$ to the authors' knowledge there are no convergence results of this type. However for an object closely related to the characteristic polynomial, convergence to the GMC was established in \cite{ChhaibiNajnudel} for general $\beta \geq 2$.

The value $\theta=1$ is the \emph{critical} temperature, at which it is possible to establish \eqref{eq:gmc} under an additional logarithmic renormalization (\cite[Theorem 1.3]{JunnilaSaksman}) 
\begin{equation}
  \mathrm{GMC}_1(d\vartheta) \coloneqq 
  \lim_{r \to 1} \sqrt{\log\tfrac{1}{1-r^2}}(1-r^2) e^{G(r e^{i\vartheta})} d\vartheta.
  \label{eq:criticalgmc}
\end{equation}
This measure is known to have has Hausdorff dimension $0$ and is non-atomic, as proved in \cite{ChhaibiNajnudel}. We note in contrast that $\mathrm{HMC}_\theta$ is well defined as a random distribution on the unit circle for all $\theta > 0$ (and indeed on a single probability space), regardless of the phase of the associated GMC.

Returning to consideration of Theorem \ref{thm:morris}, we see that there is a factor of $k!$ beyond the Morris integral term.  This strongly suggests a limiting factorization into independent random variables, where the additional factor can be interpreted as the moments of a standard complex normal random variable $k! = \mathbb{E}(|\mathcal{Z}|^{2k})$. We show this is indeed the case in the $L^2$ phase.  More precisely:
\begin{theorem}[$L^{2}$-phase]
  For any $0 < \theta < \frac{1}{2}$, we have the convergence in distribution
  \begin{equation}
    \frac{c_{n}}{\sqrt{\mathbb{E}(|c_{n}|^{2})}} \Wkto[n] \sqrt{\mathcal{M}_{\theta}}\mathcal{Z} \label{l2phaselim}
  \end{equation}
  where $\mathcal{Z}$ and $\mathcal{M}_{\theta}$ are independent, $\mathcal{Z}$ is standard complex normal, and $\mathcal{M}_{\theta}$ has law \eqref{eq:mass}.
  \label{th:l2phase}
\end{theorem}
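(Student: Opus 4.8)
The plan is to prove the convergence \eqref{l2phaselim} by the method of moments, which is justified here because the limiting random variable $\sqrt{\mathcal{M}_\theta}\,\mathcal{Z}$ has moments of all orders determined by the condition $k\theta < 1$ failing only in the limit — more precisely, since $0 < \theta < \tfrac12$, all the moments $\mathbb{E}(|c_n|^{2k})$ with $k\theta < 1$ are finite and those are exactly the ones we need to pin down the $L^2$-phase limit, but one must be slightly careful that moments alone suffice. Because $\sqrt{\mathcal{M}_\theta}\,\mathcal{Z}$ is rotationally invariant in $\mathbb{C}$ and not determined by its integer moments in general, I would instead argue tightness plus convergence of the joint moments $\mathbb{E}(c_n^a \overline{c_n}^b)$ for all $a,b\ge 0$: by rotational invariance of $c_n$ (inherited from the rotational invariance of each $\mathcal{N}_k$, equivalently from $G^{\mathbb{C}}(e^{i\alpha}z)\overset{d}{=}G^{\mathbb{C}}(z)$ up to a deterministic phase), these mixed moments vanish unless $a=b$, so the content is exactly Theorem \ref{thm:morris}, which gives
\[
\frac{\mathbb{E}(|c_n|^{2k})}{\mathbb{E}(|c_n|^2)^k} \xrightarrow[n\to\infty]{} \frac{\Gamma(1-k\theta)}{\Gamma(1-\theta)^k}\,k!
= \mathbb{E}\bigl(\mathcal{M}_\theta^k\bigr)\,\mathbb{E}\bigl(|\mathcal{Z}|^{2k}\bigr)
\]
for every $k$ with $k\theta<1$, using \eqref{eq:mass}. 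So the first step is: reduce to checking that the collection $\{c_n/\sqrt{\mathbb{E}(|c_n|^2)}\}$ is tight and that every limit point has the mixed moments of $\sqrt{\mathcal{M}_\theta}\,\mathcal{Z}$ up to the finitely-many available orders, then invoke a moment-determinacy argument for $\sqrt{\mathcal{M}_\theta}\,\mathcal{Z}$ valid in the $L^2$ regime.

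The cleaner and more robust route, and the one I would actually carry out, is to construct the limit directly rather than through moments. First I would write, from \eqref{cngen}, the exact expansion of $c_n$ in terms of the Gaussians $\mathcal{N}_1,\dots,\mathcal{N}_n$; the contribution to $c_n$ coming only from "low-frequency" modes $\mathcal{N}_1,\dots,\mathcal{N}_K$ for fixed $K$ is, after dividing by $\sqrt{\mathbb{E}(|c_n|^2)}$, asymptotically negligible, while the bulk of the mass comes from a Fourier-analytic identity: by Cauchy's theorem and Parseval,
\[
c_n = \frac{1}{2\pi}\int_0^{2\pi} e^{\sqrt{\theta}G^{\mathbb{C}}(re^{i\vartheta})} e^{-in\vartheta}\,d\vartheta \cdot r^{-n}
\quad\text{for any } r\in(0,1).
\]
Choosing $r = r_n = 1 - 1/n$ and multiplying through by the GMC normalization $(1-r_n^2)^{\theta/2}$, one recognizes $(1-r_n^2)^{\theta/2} e^{\sqrt{\theta}G^{\mathbb{C}}(r_ne^{i\vartheta})}$ as a complex "square root" of the density appearing in \eqref{eq:gmc}. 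The second step is therefore to establish that, jointly as $n\to\infty$, the rescaled integral converges to $\sqrt{\mathcal{M}_\theta}$ times an independent standard complex Gaussian: heuristically, $e^{-in\vartheta}$ oscillates so fast against the (essentially fixed, $n$-independent in law) random density $(1-r_n^2)^{\theta}|e^{\sqrt{\theta}G^{\mathbb{C}}(r_ne^{i\vartheta})}|^2\,d\vartheta \to \mathrm{GMC}_\theta(d\vartheta)$ that a CLT for the Fourier coefficient kicks in, producing a Gaussian whose conditional variance given the GMC is $\mathcal{M}_\theta$. Making this precise is where the work lies, and I expect it to be the main obstacle: one needs a quantitative decoupling/mixing estimate showing that the real and imaginary parts of $c_n/\sqrt{\mathbb{E}(|c_n|^2)}$ become jointly Gaussian with the right (random) covariance, conditionally on a coarse-grained version of the field.

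Concretely, I would implement this via a martingale CLT or a Lindeberg swap along the Fourier modes. Decompose $G^{\mathbb{C}} = G^{\mathbb{C}}_{\le m} + G^{\mathbb{C}}_{>m}$ into the partial sum over $k\le m$ and the tail, with $m = m_n \to \infty$ slowly. Condition on $G^{\mathbb{C}}_{\le m}$: then $c_n = \sum_{j} [z^{n-j}]\bigl(e^{\sqrt{\theta}G^{\mathbb{C}}_{>m}(z)}\bigr)\cdot [z^j]\bigl(e^{\sqrt{\theta}G^{\mathbb{C}}_{\le m}(z)}\bigr)$, and one shows (a) the conditional second moment of the normalized $c_n$ converges in probability to $\mathcal{M}_\theta^{(m)}$, a partial-mass approximant of $\mathcal{M}_\theta$ — this uses the $L^2$-computation of Theorem \ref{thm:morris} localized to the tail modes, i.e. an $L^2$-convergence of the truncated GMC mass, valid precisely because $\theta<\tfrac12$ — and (b) a conditional Lyapunov/fourth-moment bound forces asymptotic conditional complex-Gaussianity. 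Then $\mathcal{M}_\theta^{(m)} \to \mathcal{M}_\theta$ as $m\to\infty$ in the appropriate topology (again an $L^2$-phase fact), and an interchange-of-limits argument (tightness from the uniformly bounded second moments) completes the proof. The delicate point throughout is controlling the cross terms $[z^j]e^{\sqrt{\theta}G^{\mathbb{C}}_{\le m}}$ against $[z^{n-j}]e^{\sqrt{\theta}G^{\mathbb{C}}_{>m}}$ so that only the diagonal $j \approx$ a $O(1)$ fraction survives and it does so as a bona fide GMC mass; I anticipate borrowing the Tauberian-type estimates and the exact moment formula \eqref{eq:kthmom} (for $k=2$) to get the required uniformity, and I would isolate this as the central lemma of the section.
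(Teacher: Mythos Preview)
Your first route via moments cannot close: for $\theta$ just below $\tfrac12$ only the second and fourth absolute moments of the limit are finite (Theorem~\ref{thm:morris} requires $k\theta<1$), and two moments do not determine a distribution. Rotational invariance plus tightness plus convergence of the available mixed moments is not enough --- there is no moment-determinacy argument to invoke when only finitely many moments exist.

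Your second route has the right picture (conditional CLT with random variance equal to the GMC mass) but contains a real gap at step~(b). After splitting at a slowly growing $m$ and writing $c_n=\sum_j c_{j,m}\,d_{n-j}$ with $d_k=[z^k]e^{\sqrt{\theta}G^{\mathbb C}_{>m}}$, the $d_{n-j}$'s are \emph{not} conditionally independent given $\Gfilt_m$: each one depends on all of $\mathcal N_{m+1},\dots,\mathcal N_n$ in a highly nonlinear way. So there is no independent-sum or martingale structure to which a Lyapunov condition applies; showing that this sum is conditionally Gaussian is essentially the original problem restated for the tail field. The paper's decomposition is different in a way that matters: instead of cutting at a slow $m$, it cuts at $\lfloor\delta n\rfloor$ and organises the sum by the \emph{length of the largest part} of the composition. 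Because the largest part generically has multiplicity one (the contribution from multiplicity $\geq 2$ is negligible by an Ewens-formula estimate), each summand carries exactly one fresh factor $\mathcal N_q$ times something $\Gfilt_{q-1}$-measurable, namely $\tilde c_n^{(\delta)}=\sum_{q=\lfloor\delta n\rfloor}^{n}\mathcal N_q\sqrt{\theta/q}\,c_{n-q,q-1}$. This is a genuine martingale array in $q$, so the Hall--Heyde martingale CLT applies; the bracket $\sum_q (\theta/q)|c_{n-q,q-1}|^2$ is then shown (by an $L^2$ computation and a probabilistic Tauberian argument) to converge in probability to $C_\delta\mathcal M_\theta$, after which $\delta\to0$. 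The ``one fresh Gaussian per increment'' mechanism is the missing idea in your outline.
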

\begin{proof}
See Sections \ref{sec:ewens}-\ref{sec:mgle3}.
\end{proof}
\noindent We expect this result to persist to all $\frac{1}{2} \leq \theta < 1$ and also to $\theta=1$ (the critical temperature) subject to a different normalization. While we do not show this convergence in distribution for $\theta \geq \frac 12,$ we give a sharp estimate for the order of magnitude of the coefficient for larger $\theta$ (see Lemma \ref{lem:limpmoment} and Theorem \ref{thm:order}). 

By the mentioned work \cite{Remy}, the limiting random variable $\mathcal{M}_{\theta}$ appearing on the right-hand side of \eqref{l2phaselim} can be given an explicit characterization. For any $\theta < 1$ it is known that we have the equality in law
\begin{equation}
\mathcal{M}_{\theta} \overset{d}{=} \frac{1}{\Gamma(1-\theta)}\,\mathcal{E}(1)^{-\theta} \label{totalmasslaw}
\end{equation}
where $\mathcal{E}(1)$ is an exponential random variable with parameter $1$. Formula \eqref{totalmasslaw} was initially conjectured by Fyodorov and Bouchaud in \cite{FyodorovBouchaud} and proved quite recently in \cite{Remy} using techniques of Liouville conformal field theory, see also \cite{ChhaibiNajnudel} for an alternative proof. Using the explicit formula \eqref{totalmasslaw}, we see that Theorem \ref{th:l2phase} is the particular case $N=\infty$ of our earlier stated Theorem \ref{th:l2phase-N-intro}.

\subsection{The mass of the chaos}

We note that the presence of the mass of the chaos in the secular coefficients could potentially be anticipated.  On the one hand, by the generating function \eqref{cngen} and Parseval's identity, we have
\begin{equation}
  \begin{split}
    \sum_{n=0}^{\infty}|c_{n}|^{2}r^{2n}
    = \frac{1}{2\pi}\int_{0}^{2\pi} e^{\sqrt{\theta}G(re^{i\vartheta})}\,d\vartheta. 
  \end{split}
  \label{parsevalapp}
\end{equation}
On the other hand, by \eqref{eq:gmc} and \eqref{eq:criticalgmc} for all $0 < \theta \leq 1$,
\begin{equation}
  L(r,\theta)
  (1-r^{2})^{\theta}\sum_{n=0}^{\infty}|c_{n}|^{2}r^{2n} 
  \Prto[r][1]
  \mathcal{M}_{\theta},
  \quad
  L(r,\theta)\coloneqq \begin{cases}
    \sqrt{\log\tfrac{1}{1-r^2}}, & \text{if } \theta=1, \\
    1, & \text{else}.
  \end{cases}
  \label{rlim}
\end{equation}
So in a suitably averaged sense, the squared modulus of the secular coefficients gives the total mass of the chaos in the subcritical and critical phases.  In comparison, Theorem \ref{th:l2phase}, shows that each individual coefficient $\{c_n\}$ already contains much of the information about the mass of the GMC.
%

We also detour briefly to mention that from \eqref{rlim}, it is possible to derive other approximations to the total mass.  One which will be important here is 
\begin{equation}
  \mathcal{M}_{\theta,n} :=
  (\sqrt{\log n})^{\one[\theta=1]}
  \frac{\Gamma(\theta+1)}{n^{\theta}}\sum_{q=0}^{n}|c_{q}|^{2}. 
  \label{mthetan}
\end{equation}
If the convergence in \eqref{rlim} were almost sure, the Hardy--Littlewood Tauberian theorem would immediately imply that $\mathcal{M}_{\theta,n}$ converges almost surely to $\mathcal{M}_\theta.$  We show in 
Theorem \ref{th:tauberprob} in the Appendix that this Tauberian theorem generalizes to the setting of convergence in probability, and the following is an immediate consequence of \eqref{parsevalapp}, \eqref{rlim} and Theorem \ref{th:tauberprob}.
\begin{lemma}
  \label{lem:condvarapprox}
  For any $0 < \theta \leq 1$, we have the convergence in probability to the total mass
  \begin{equation}
    \label{gmc-conv}
    \mathcal{M}_{\theta,n} \Prto[n] \mathcal{M}_{\theta}, \qquad n \to \infty.
  \end{equation}
\end{lemma}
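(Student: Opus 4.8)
The plan is to deduce Lemma~\ref{lem:condvarapprox} by combining the three ingredients that are explicitly flagged in the surrounding text: the Parseval identity \eqref{parsevalapp}, the weak-* convergence \eqref{rlim} of the radially-averaged squared coefficients to $\mathcal{M}_\theta$, and the probabilistic Tauberian theorem (Theorem~\ref{th:tauberprob} in the Appendix). Concretely, set $a_q \coloneqq |c_q|^2 \geq 0$ and $A(r) \coloneqq \sum_{q=0}^\infty a_q r^{2q}$. By \eqref{parsevalapp} this is exactly $\frac{1}{2\pi}\int_0^{2\pi} e^{\sqrt\theta G(re^{i\vartheta})}\,d\vartheta$, and by \eqref{rlim} we have $L(r,\theta)(1-r^2)^\theta A(r) \Prto[r][1] \mathcal{M}_\theta$. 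The goal is to transfer this Abel-type in-probability convergence to the Cesàro-type statement that $(\sqrt{\log n})^{\one[\theta=1]}\frac{\Gamma(\theta+1)}{n^\theta}\sum_{q=0}^n a_q \Prto[n] \mathcal{M}_\theta$, which is precisely $\mathcal{M}_{\theta,n} \Prto[n] \mathcal{M}_\theta$.

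First I would record that the classical deterministic Hardy--Littlewood Tauberian theorem with slowly varying weight applies to nonnegative sequences: if $a_q \geq 0$ and $(1-r)^\theta \ell(\tfrac{1}{1-r})^{-1} A(r) \to c$ as $r \to 1^-$ for a slowly varying $\ell$, then $\frac{1}{n^\theta \ell(n)}\sum_{q=0}^n a_q \to \frac{c}{\Gamma(\theta+1)}$ (here $\ell \equiv 1$ when $\theta<1$ and $\ell(x) = 1/\sqrt{\log x}$ when $\theta = 1$, accounting for the $\sqrt{\log n}$ and $L(r,\theta)$ factors and the $\Gamma(\theta+1)$ normalization). The content of Theorem~\ref{th:tauberprob} in the Appendix is that this implication upgrades to convergence in probability: if the Abel means converge in probability to a (possibly random) limit $\mathcal{M}_\theta$, then the normalized partial sums converge in probability to the same limit. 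So the proof of the lemma is essentially a citation: verify that the hypotheses of Theorem~\ref{th:tauberprob} hold — nonnegativity of $a_q = |c_q|^2$ (immediate), the identity \eqref{parsevalapp} identifying $A(r)$ with the integral, and the in-probability Abel convergence \eqref{rlim} — and then read off \eqref{gmc-conv} with the normalization as written in \eqref{mthetan}.

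The one point requiring a little care is matching normalizations across the two regimes $\theta<1$ and $\theta=1$. For $\theta \in (0,1)$, \eqref{rlim} reads $(1-r^2)^\theta A(r) \Prto[r][1] \mathcal{M}_\theta$, and the deterministic Tauberian theorem gives $\sum_{q=0}^n a_q \sim \mathcal{M}_\theta\, n^\theta/\Gamma(\theta+1)$ (in probability), which is exactly $\mathcal{M}_{\theta,n} \to \mathcal{M}_\theta$. For $\theta = 1$, the slowly varying correction $\sqrt{\log\frac{1}{1-r^2}}$ in $L(r,\theta)$ turns into $\sqrt{\log n}$ in the partial-sum normalization under the change of variables $1-r^2 \sim 1/n$, and $\Gamma(\theta+1) = \Gamma(2) = 1$; one checks that the slowly-varying factor $\ell$ matches up (the Tauberian theorem is insensitive to the asymptotically irrelevant difference between $\log\frac{1}{1-r^2}$ and $\log n$), giving \eqref{mthetan} again. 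I expect the main — and really the only — obstacle to be the proof of Theorem~\ref{th:tauberprob} itself (the probabilistic Tauberian theorem in the Appendix), since lifting a Tauberian implication from deterministic convergence to convergence in probability is not purely formal: one typically needs a monotonicity/uniformity argument, e.g. using that $r \mapsto A(r)$ and $n \mapsto \sum_{q=0}^n a_q$ are monotone to squeeze the Cesàro means between Abel means at nearby radii along a subsequence, and then invoking that convergence in probability passes to the limit through such sandwiching. But that work is quarantined in the Appendix; granting Theorem~\ref{th:tauberprob}, Lemma~\ref{lem:condvarapprox} follows in a couple of lines as an immediate consequence, exactly as the text asserts.
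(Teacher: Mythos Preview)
Your proposal is correct and matches the paper's approach exactly: the paper states just before Lemma~\ref{lem:condvarapprox} that the result is ``an immediate consequence of \eqref{parsevalapp}, \eqref{rlim} and Theorem~\ref{th:tauberprob},'' which is precisely what you do (set $a_q=|c_q|^2\geq 0$, apply the probabilistic Tauberian theorem with $z=r^2$ and slowly varying $L\equiv 1$ or $L(x)=\sqrt{\log x}$). Your discussion of the normalization matching in the two regimes is accurate and, if anything, more explicit than the paper itself.
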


\subsection{The secular coefficients of C$\beta$E}
\label{sec:cbe-results}
Some of what we have proved for the HMC coefficients $\left\{ c_n \right\}$ adapt or transfer to the secular coefficients $\{ c_n^{(N)} \}$ of $N \times N$ random matrices, as defined in \eqref{chisec}. In particular, when $n\to \infty$ and $N\to \infty$ in such a way that $n/N \to 0,$ Theorem \ref{th:l2phase} transfers directly to $\{ c_n^{(N)} \}:$
\begin{theorem}[$L^{2}$-phase for slowly growing $n$]
  Let $N=N_n$ be chosen in such a way that $n/N \to 0$ as $n \to \infty.$ Recalling the notation $\theta := \frac{2}{\beta}$, for any $0 < \theta < \frac{1}{2}$, we have the convergence in distribution
  \begin{equation}
    \frac{c_{n}^{(N)}}{\sqrt{\Exp(|c_n^{(N)}|^2)}} \Wkto[n] \sqrt{\mathcal{M}_{\theta}}\mathcal{Z}
  \end{equation}
  where $\mathcal{Z}$ and $\mathcal{M}_{\theta}$ are independent, $\mathcal{Z}$ is standard complex normal, and $\mathcal{M}_{\theta}$ has law \eqref{eq:mass}.
  \label{th:l2phase-N}
\end{theorem}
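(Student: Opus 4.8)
The plan is to transfer Theorem \ref{th:l2phase} from the HMC coefficients $c_n$ to the C$\beta$E secular coefficients $c_n^{(N)}$ by a coupling argument, using the fact that when $n/N\to 0$ the two sequences are asymptotically indistinguishable in an appropriate sense. First I would work on the probability space furnished by Theorem \ref{thm:hmc}, on which $\chi_N$ and $G^{\mathbb C}$ are simultaneously defined and $\sup_{|z|\le r}|\chi_N(z)-e^{\sqrt\theta G^{\mathbb C}(z)}|\to 0$ almost surely and in every $L^p$. By Cauchy's integral formula applied on a circle of radius $r<1$, for each fixed $n$ this already gives $c_n^{(N)}\to c_n$ almost surely; the real work is to make this uniform enough in $n$ as $n\to\infty$ jointly with $N$, so that, after dividing by $\sqrt{\Exp(|c_n^{(N)}|^2)}$, the difference goes to $0$ in probability. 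Concretely, I would aim to show
\begin{equation}
  \frac{c_n^{(N)} - c_n}{\sqrt{\Exp(|c_n|^2)}} \Prto[n] 0,
  \qquad\text{when } n/N\to 0,
  \label{eq:transferplan}
\end{equation}
together with the comparison $\Exp(|c_n^{(N)}|^2)/\Exp(|c_n|^2)\to 1$; combining these two facts with Theorem \ref{th:l2phase} and Slutsky's theorem yields the claim, since $c_n/\sqrt{\Exp(|c_n|^2)}\Wkto[n]\sqrt{\mathcal M_\theta}\,\mathcal Z$.

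To prove \eqref{eq:transferplan} I would estimate the $L^2$ norm of the numerator. Writing $c_n^{(N)}-c_n = [z^n](\chi_N(z)-e^{\sqrt\theta G^{\mathbb C}(z)})$ and using the explicit power-series/magic-square machinery, one has an exact expression: $c_n^{(N)}$ is obtained from the same Newton-type formula \eqref{newtonform} but with the power traces $\mathrm{Tr}(U_N^{k})$ in place of $\sqrt\theta\sqrt{k}\,\mathcal N_k$, and the generalized magic-square moment formula of Theorem \ref{thm:magic} has an $N<\infty$ analogue (as in Diaconis--Gamburd, the count is unchanged as long as the relevant parts stay $\le N$, i.e.\ $nk\le N$). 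For the second moment $k=2$ this means $\Exp(|c_n^{(N)}|^2)=\Exp(|c_n|^2)=\binom{n+\theta-1}{\theta-1}$ exactly once $2n\le N$, which in particular holds eventually under $n/N\to0$; this settles the normalization comparison trivially. For the difference, I would expand $\Exp(|c_n^{(N)}-c_n|^2)=\Exp(|c_n^{(N)}|^2)-2\Re\Exp(c_n^{(N)}\overline{c_n})+\Exp(|c_n|^2)$ and show the cross term $\Exp(c_n^{(N)}\overline{c_n})$ agrees with $\Exp(|c_n|^2)$ up to an error that is $o(n^{\theta-1})=o(\Exp(|c_n|^2))$; this cross moment can be computed on the Chhaibi--Najnudel coupling (or via the joint moment structure of traces versus Gaussians), and the discrepancy is controlled by the rate at which the first $n$ power traces $\mathrm{Tr}(U_N^k)$ decorrelate from $\sqrt\theta\sqrt k\,\mathcal N_k$ when $k\le n\ll N$.

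The main obstacle is exactly this last point: controlling $\Exp(|c_n^{(N)}-c_n|^2)$ uniformly as $n\to\infty$ with $n/N\to0$, rather than just for fixed $n$. The exact moment identities ($\Exp|c_n^{(N)}|^2=\Exp|c_n|^2$ and the combinatorial formula for the cross term) make this tractable in the $L^2$ phase — indeed one expects the difference to vanish at a polynomial rate in $n/N$ — but one must verify that the $N$-dependent magic-square/Jack-function moment expansion really does stabilize to the $N=\infty$ expression on the appropriate range, and that the subleading corrections are genuinely of smaller order than $n^{\theta-1}$. A clean way to package this is to prove a lemma asserting $\sup_{0\le q\le n}\Exp|c_q^{(N)}-c_q|^2 = o(n^{\theta-1})$ under $n/N\to0$ (which also dovetails with the conditional-variance approximation $\mathcal M_{\theta,n}$ of Lemma \ref{lem:condvarapprox}), and then \eqref{eq:transferplan} is immediate from the $q=n$ case. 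Once \eqref{eq:transferplan} and the normalization identity are in hand, the convergence in distribution follows formally from Theorem \ref{th:l2phase}.
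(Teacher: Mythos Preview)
Your overall strategy --- prove $\Exp|c_n^{(N)}-c_n|^2 = o(n^{\theta-1})$ on the coupled probability space, compare normalizations, and invoke Slutsky with Theorem~\ref{th:l2phase} --- is exactly the paper's. The divergence is in how the $L^2$ bound is obtained.

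First, a factual error: the claim that $\Exp|c_n^{(N)}|^2=\Exp|c_n|^2$ exactly once $2n\le N$ is false for $\theta\ne 1$. The Diaconis--Gamburd stabilization you have in mind is a $\theta=1$ (Schur function) phenomenon; for general $\theta$ the Haake formula \eqref{haake2nd} gives $\Exp|c_n^{(N)}|^2=\binom{N}{n}\Gamma(n+\theta)\Gamma(N-n+\theta)/(\Gamma(\theta)\Gamma(N+\theta))$, which only \emph{asymptotically} matches $\binom{n+\theta-1}{\theta-1}$ as $n/N\to0$. This is not fatal to the plan, but you cannot ``settle the normalization comparison trivially.''

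Second, and more seriously, your proposed route to $\Exp|c_n^{(N)}-c_n|^2=o(n^{\theta-1})$ via magic squares / joint trace--Gaussian moments is vague precisely at the point that needs work. The paper does not attempt this. Instead it uses the Verblunsky/Szeg\H{o} representation: on the Chhaibi--Najnudel coupling, $\mathscr{M}_{n,N}\coloneqq[z^n]\Phi_N^*(z)$ is a uniformly integrable martingale with $\mathscr{M}_{n,N}=\Exp[c_n\mid\filt_N]$, and \eqref{eq:chiN} gives the exact identity $c_n^{(N)}=\mathscr{M}_{n,N-1}-\eta\,\overline{\mathscr{M}_{N-n,N-1}}$. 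The $L^2$ difference then splits cleanly as $\Exp|c_n-\mathscr{M}_{n,N-1}|^2+\Exp|\mathscr{M}_{N-n,N-1}|^2$, and both terms are controlled by a direct bracket-process computation (Lemma~\ref{lem:expectedbracket}), yielding $\Exp|c_n-c_n^{(N)}|^2\le C_\theta n^\theta/N$ for $\theta<1$ (Lemma~\ref{lem:secdiff}). Under $n/N\to0$ this is $o(n^{\theta-1})$ and the proof concludes. Your Cauchy-integral-on-$|z|=r$ idea gives the right almost-sure convergence for fixed $n$ but does not by itself produce the uniform quantitative bound; the Szeg\H{o} recurrence and the martingale bracket are the missing ingredients.
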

\begin{proof}
See Section \ref{sec:cbe}.
\end{proof}
We remark that by formula \ref{totalmasslaw}, Theorem  \ref{th:l2phase-N} is simply a restatement of Theorem \ref{th:l2phase-N-intro}, but we have included it here for added clarity in the context of GMC theory. Let us discuss the necessity of the condition $n/N \to 0$ as $n \to \infty$ appearing in Theorem \ref{th:l2phase-N}. The normalization constant $\Exp(|c_n^{(N)}|^2)$ is known explicitly, due to \cite{Haake} who obtain for any $\theta>0$,
\begin{equation}
  \mathbb{E}(|c^{(N)}_{n}|^{2}) = \binom{N}{n}\,\frac{\Gamma(n+\theta)\Gamma(N-n+\theta)}{\Gamma(\theta)\Gamma(N+\theta)}. \label{haake2nd}
\end{equation}
We remark in passing that, to our knowledge, the work \cite{Haake} was likely the first explicit investigation of secular coefficients in the literature. From \eqref{haake2nd}, we observe two possible types of asymptotics. If $N$ grows with $n$ at a fast enough rate that $n/N \to 0$ as $n \to \infty$, then
\begin{equation}
  \mathbb{E}(|c^{(N)}_{n}|^{2}) \sim \frac{1}{\Gamma(\theta)}\,n^{\theta-1}, \qquad n \to \infty,
\end{equation}
which matches the asymptotic \eqref{eq:2ndmom}.
When however $n = \kappa N$ with $0 < \kappa < 1$, the asymptotics contain an additional pre-factor
\begin{equation}
  \mathbb{E}(|c^{(N)}_{\lfloor \kappa N \rfloor}|^{2}) \sim \frac{(1-\kappa)^{\theta-1}}{\Gamma(\theta)}\,n^{\theta-1}, \qquad N\to \infty.
\end{equation} 
This hints that these higher degree coefficients could display a different limiting behavior. 

On the other hand, we show that the order of magnitude of these secular coefficients is no larger than that of $c_n.$
\begin{theorem}[Order estimate]
  \label{thm:order}
  Let $w_n(\theta)$ be given by
  \[
    w_n = n^{(\theta-1)/2},
    \quad
    w_n = (\log(1+n))^{-1/4},
    \quad \text{or} \quad
    w_n = n^{\sqrt{\theta}-1}(\log(1+n))^{-\tfrac{3}{4}\theta}. 
  \]
  in the cases $\theta \in (0,1),$ $\theta = 1$ or $\theta > 1$ respectively.  
  Then for any $\theta \leq 1,$ with $N_0(n)=2n$ 
  the family
  \[
    \{ (w_n/c^{(N)}_{n}, c^{(N)}_{n}/w_n) : n,N \in \N, N \geq N_0(n) \}
  \]
  is tight.  For $\theta \in (1,2),$ there are constants $u_\theta,v_\theta >0$ so that with $N_0(n) = n^{u_\theta}$ the family
  \[
    \{ (c^{(N)}_{n}/w_n)(\log (1+n))^{-v_\theta} : n,N \in \N, N \geq N_0(n) \}
  \]
  is tight.
\end{theorem}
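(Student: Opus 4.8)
The guiding principle is that $c^{(N)}_n$ is, up to an error that is negligible once $N\ge N_0(n)$, the $n$-th Fourier coefficient of $e^{\sqrt\theta G^{\mathbb C}}$ on the circle $|z|=r$ with $r=1-1/n$: writing $c^{(N)}_n=r^{-n}\widehat{\chi_N(r\,\cdot)}(n)$ and using Theorem \ref{thm:hmc} together with the Killip--Nenciu/Szeg\H{o} recursion, which realizes $\chi_N$ from independent Verblunsky coefficients and plays the role of the finite-$N$ analogue of the recursion $n c_n=\sqrt\theta\sum_{j=1}^n\sqrt j\,\mathcal N_j c_{n-j}$ valid for the HMC. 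On this circle $G^{\mathbb C}$ has correlation length $\sim1/n$, so $|c^{(N)}_n|^2$ should be $n^{-1}$ times $\tfrac1{2\pi}\int_0^{2\pi}|e^{\sqrt\theta G^{\mathbb C}(re^{i\vartheta})}|^2\,d\vartheta$, the extra $n^{-1}$ being the cancellation produced by integrating a field of correlation length $\sim1/n$ against $e^{-in\vartheta}$. By \eqref{rlim} this last integral is $\asymp n^{\theta}L(r,\theta)^{-1}\mathcal M_\theta$ when $\theta\le1$, which predicts $|c^{(N)}_n|\asymp w_n\sqrt{\mathcal M_\theta}$, the $\sqrt{\log n}$ factor in $w_n$ at $\theta=1$ being exactly the critical renormalization of \eqref{eq:criticalgmc}. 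For $\theta>1$ the chaos is supercritical and the integral is instead carried by sparse thick points, which the oscillatory averaging defining $c^{(N)}_n$ largely misses, so $|c^{(N)}_n|$ is typically of the strictly smaller ``bulk'' order $w_n=n^{\sqrt\theta-1}(\log n)^{-3\theta/4}$, coming from the part of the circle where $G$ stays below the sub-maximal slope $\sqrt\theta\log\tfrac1{1-r}$. The proof makes this rigorous by the (truncated) moment method, with inputs Haake's second moment \eqref{haake2nd}, the moment asymptotics of Theorems \ref{thm:magic}--\ref{thm:morris}, and the mass convergence $\mathcal M_{\theta,n}\to\mathcal M_\theta$ from \eqref{mthetan} and Lemma \ref{lem:condvarapprox}.

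\textbf{The case $\theta\in(0,1)$.} The upper half is immediate: from \eqref{haake2nd} one checks $\mathbb E(|c^{(N)}_n|^2)\asymp n^{\theta-1}=w_n^2$ uniformly over $N\ge2n$ (for $n\le N/2$ the prefactor $(1-n/N)^{\theta-1}$ stays in $[1,2^{1-\theta}]$), so Markov's inequality gives tightness of $\{c^{(N)}_n/w_n\}$. For anticoncentration, i.e.\ tightness of $\{w_n/c^{(N)}_n\}$: when $\theta<\tfrac12$ invoke Theorem \ref{th:l2phase}, whose limit $\sqrt{\mathcal M_\theta}\,\mathcal Z$ is almost surely in $(0,\infty)$, after transferring from $c_n$ to $c^{(N)}_n$ via the magic-square moment identity, which is exact for the second and fourth moments once $N\ge2n$. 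When $\theta\in[\tfrac12,1)$ the fourth moment of $c_n$ diverges, so one truncates: on the event $E$ that $\mathcal M_{\theta,n}$ lies in a fixed compact subset of $(0,\infty)$, of probability close to $1$ by Lemma \ref{lem:condvarapprox}, one shows $\mathbb E(|c^{(N)}_n|^2\mathbf 1_E)\gtrsim w_n^2$ and $\mathbb E(|c^{(N)}_n|^4\mathbf 1_E)\lesssim w_n^4$, the latter because the truncation excises exactly the heavy contributions (thick points of the field, equivalently near-diagonal magic squares) responsible for the divergence; Paley--Zygmund on $E$ then gives $\mathbb P(|c^{(N)}_n|>\delta w_n)\gtrsim1$.

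\textbf{The cases $\theta=1$ and $\theta>1$.} Plain second moments are now useless in both directions since $\mathbb E(|c^{(N)}_n|^2)/w_n^2\to\infty$, and one needs genuine ``better than square-root cancellation''. The tool is a multi-scale decomposition $G^{\mathbb C}=\sum_{k\le m}+\sum_{k>m}$ along a geometric sequence of cutoffs $m$, the attendant factorization $c_n=\sum_{a+b=n}c^{\flat}_a c^{\sharp}_b$ into independent pieces, and a barrier (ballot-type) restriction on the partial sums of the field that removes, scale by scale, the atypically large values that inflate $\mathbb E|c^{(N)}_n|^2$. On the good event the conditional second moment of $c^{(N)}_n$ is of order $w_n^2$: of order $(\log n)^{-1/2}$ at $\theta=1$, the $\sqrt{\log}$ saving over $(1-r^2)^{-\theta}$ being the critical renormalization, and of order $n^{2\sqrt\theta-2}(\log n)^{-3\theta/2}$ at $\theta>1$, where forcing $G$ below the slope $\sqrt\theta\log\tfrac1{1-r}$ produces both the power and the logarithmic correction. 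A union bound over scales together with Markov on the good event and a bound on the probability of its complement give the upper tail; at $\theta=1$ a dual truncated second-moment argument on the same event gives the lower tail. In the supercritical case $\theta>1$ the barrier estimates are lossy by powers of $\log n$, hence only the upper bound is obtained and only after deflating by $(\log(1+n))^{-v_\theta}$, and the scheme requires polynomially many decorrelation scales, forcing $N\ge n^{u_\theta}$ for the Verblunsky recursion to supply them.

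\textbf{Main obstacle.} The crux is the barrier conditional-second-moment estimate at and above criticality: showing, uniformly over $N\ge N_0(n)$ and with sharp constants, that $|c^{(N)}_n|$ is typically smaller than $\sqrt{\mathbb E|c^{(N)}_n|^2}$ by precisely the logarithmic (for $\theta=1$) or polynomial-plus-logarithmic (for $\theta>1$) factor present in $w_n$. This is the secular-coefficient analogue of Harper's low-moments analysis of random multiplicative functions; the uniformity in $N$ forces the entire multi-scale/barrier argument to be run directly on the Verblunsky-coefficient recursion rather than imported from the $N=\infty$ statement, and matching the upper bound with a lower bound at $\theta=1$ while accounting for the mass removed by the barrier is the delicate point.
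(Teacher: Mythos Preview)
Your intuition is sound, but the paper's route is substantially simpler, and your lower-bound argument for $\theta\in[\tfrac12,1)$ has a genuine gap.

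\textbf{Lower bound for $\theta\le 1$.} The paper does not use Paley--Zygmund. It exploits instead the exact decomposition
\[
c_n = c_{n,\lfloor n/2\rfloor} + \sum_{q=0}^{\lceil n/2\rceil-1}\frac{\sqrt\theta\,\mathcal N_{n-q}}{\sqrt{n-q}}\,c_q,
\]
so that \emph{conditionally on} $\Gfilt_{\lfloor n/2\rfloor}$, $c_n$ is a complex Gaussian with conditional variance $\sum_{q<n/2}\theta|c_q|^2/(n-q)\asymp n^{-1}\sum_{q<n/2}|c_q|^2$, which by Lemma \ref{lem:condvarapprox} is of exact order $w_n^2\mathcal M_\theta$. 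Anticoncentration is then immediate from the bounded Gaussian density (Theorem \ref{thm:antitank1}), and the transfer to $c_n^{(N)}$ uses only the $L^2$ comparison of Lemma \ref{lem:secdiff}. Your truncated fourth-moment scheme has a real hole: the event $E=\{\mathcal M_{\theta,n}\in[a,b]\}$ is $\Gfilt_n$-measurable and hence fully entangled with $|c_n|^4$, and there is no mechanism by which $\mathbf 1_E$ suppresses the divergent part of $\Exp|c_n|^4$---that divergence comes from near-resonant magic squares (Remark \ref{rem:highermoment}), not from $\sum_q|c_q|^2$ being large. Even the lower bound $\Exp(|c_n|^2\mathbf 1_E)\gtrsim w_n^2$ is not clear, since Cauchy--Schwarz on the complement would need a fourth-moment bound you do not have.

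\textbf{Upper bound for $\theta\ge 1$.} The paper avoids a full Harper-style multiscale barrier. The Ewens-sampling identity \eqref{eq:l2conda} gives
\[
\Exp(|c_n|^2\mid\Gfilt_q)\le |c_{n,q}|^2+C_\theta n^{\theta-1}F_q,\qquad F_q=e^{-\theta h(q+1)}\tfrac1{2\pi}\int_0^{2\pi}e^{\sqrt\theta G_q(e^{i\vartheta})}d\vartheta,
\]
and one takes $q\sim n/\log n$. The first term is negligible by Lemma \ref{lem:Lnlowertail}. For $\theta=1$, one simply quotes that the critical-GMC approximant $\sqrt{\log q}\,F_q$ has bounded $p$-th moments for $p<1$ (\cite{JunnilaSaksman}); H\"older then yields $\sup_n\Exp|c_n/w_n|^{2p}<\infty$ directly. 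For $\theta>1$ the only ``barrier'' is a single-scale maximum restriction on $G_q$ over $2\pi\Z/q$ plus one Girsanov tilt (Lemma \ref{lem:Fq}). The passage to $c_n^{(N)}$ is handled separately: for $\theta\le1$ by Jensen via $\mathscr M_{n,N}=\Exp(c_n\mid\filt_N)$ and the identity \eqref{eq:chiNcoeff}, for $\theta\in(1,2)$ by the $L^2$ estimate of Lemma \ref{lem:secdiff}. Nothing is run directly on the Verblunsky recursion. Your barrier programme could presumably be made to work, but it is far heavier than what the structure of the problem requires.
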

\begin{proof}
  See Theorems \ref{thm:trueCBEbound} and \ref{thm:antitank2}.
\end{proof}


We make similar statements for $c_n/w_n$ for any $\theta > 0$ in Lemma \ref{lem:limpmoment} and Theorem \ref{thm:antitank1} -- in particular the bounds we establish are consistent with Theorem \ref{th:l2phase} extending to all $\theta \in (0,1].$  For $\theta > 1,$ we do not expect either $u_\theta$ or $v_\theta$ to be sharp.  We may reasonably conjecture the correct value for $v_\theta = 0.$  See Remark \ref{rem:sharpu} for further discussion.  

\subsection{Discussion}

We have analyzed a random distribution, the HMC, which can be considered as a large-$N$ limit of the characteristic polynomials $\chi_N$ of the C$\beta$E.  This limiting process exists for all $\theta > 0$ and we have given the distributional convergence of the Fourier coefficients of this Schwartz distribution.  

Besides appearing implicitly in \cite{ChhaibiNajnudel},
we do not believe the $\mathrm{HMC}_\theta$ has appeared explicitly before.
The nearest connection for the case $\theta=1$ ($\beta=2$) appears in 
\cite[Theorem 1.3]{SaksmanWebb}.
Therein, the authors show there exists a sequence $\delta_N \to 0$ and a random Schwartz distribution $\eta$ on $\R$ so that
\[
  \chi_N( e^{i\delta_N x})e^{-Y_N} \Wkto[N] \eta(x),
\]
where $Y_N$ is a complex Gaussian variable having a nontrivial dependence on $\chi_N$.  The processs $\eta$ can be formally understood as 
\[
  \eta(x) = \exp\biggl( \int_0^\infty \frac{e^{-2\pi i x u}}{\sqrt{u}} d B^{\C}_u\biggr)
\]
for a complex Brownian motion $B^\C.$
It is also shown that this chaos appears as the limit of a randomized model of the Riemann $\zeta$-function.
We note that this process $\eta$ is a possible candidate for definition of $\mathrm{HMC}_\theta$ on the real line when $\theta=1$, and could also be a type of local scaling limit of $\mathrm{HMC}_\theta$ in a suitable vanishing window of $\vartheta.$

Another class of related objects which have been considered are the \emph{complex multiplicative chaoses}.  For example, consider the random distribution $\mathrm{CGMC}_\theta$ given (hypothetically) by
\begin{equation}\label{eq:CGMC}
  (\mathrm{CGMC}_\theta, \phi) = 
  \lim_{r\to 1} \frac{1}{2\pi}
  \int_0^{2\pi}
  e^{\sqrt{\theta/4}
  (G_1(re^{i\vartheta})
  +iG_2(re^{i\vartheta}))}
  \phi(\vartheta)d\vartheta,
\end{equation}
where $G_1$ and $G_2$ are i.i.d.\ copies of $G$ from \eqref{eq:RealG}.  This roughly fits within the frameworks of \cite{LacoinRhodesVargas} and \cite{Lacoin}, although the technical assumptions on the manner of regularization of $G(e^{i\vartheta})$ are not precisely the same.  
We comment that in the language of \cite{LacoinRhodesVargas} that $\mathrm{CGMC}_\theta$ would be in \emph{Phase I} for $\theta \in (0,1),$ in \emph{Phase II} for $\theta > 1$ and at the triple point for $\theta=1.$  We mention briefly that there is other related work on complex multiplicative cascades \cite{BarralJinMandelbrot1,BarralJinMandelbrot2} and imaginary chaoses in \cite{AruJegoJunnila,JunnilaSaksmanWebb}.

When $\theta < 1,$ an adaptation of \cite[Theorem 3.1]{LacoinRhodesVargas} would show the limit in \eqref{eq:CGMC} exists and so the $\mathrm{CGMC}_\theta$ is well defined.\footnote{From the theory in \cite{BarralJinMandelbrot2}, using the series truncation regularization in place of the harmonic regularization, the existence of the limit follows.}  We note that for $\theta \geq 1,$ \cite[Conjectures 5.2,5.3]{LacoinRhodesVargas} suggest that an additional logarithmic normalization is required for convergence, analogously to logarithmic factors needed for convergence of the critical GMC.  Indeed, in work of \cite{MRV,HartungKlimovsky1} (see also \cite{HartungKlimovsky2}), an analogous statement is proven for a complex random energy model built from branching processes.  Note this in constrast to $\mathrm{HMC}_\theta,$ \emph{which requires no further normalization} to converge for any $\theta > 0$.  

Because the correlations are relatively weak between the real and imaginary parts of the field which define $\mathrm{HMC}_\theta$, we expect that some results carry to $\mathrm{HMC}_\theta$ from the theory of the complex multiplicative chaoses, for example the multifractality for $\theta < 1, q<1$:
\[
  \Exp[ |(|\mathrm{CGMC}_\theta, \one[|\vartheta| \leq r ]) |^q] \underset{r\to 0}{\sim} C_q r^{q - \theta q^2},
\]
(see \cite[Theorem 3.6]{LacoinRhodesVargas}, \cite{BarralJin}, and see also the related work on the regularity of the complex Gaussian multiplicative chaoses in \cite{JunnilaSaksmanViitasaari}).
We expect the same to hold for $\mathrm{HMC}_\theta:$
\begin{question}\label{q:mfs}
  For $\theta < 1, q<1,$ does it hold that
  \[
    \Exp[ |(|\mathrm{HMC}_\theta, \one[|\vartheta| \leq r ]) |^q] \underset{r\to 0}{\sim} C_q r^{q - \theta q^2}?
  \]
\end{question}

We conclude by mentioning some unsolved questions on the properties of $\mathrm{HMC}_\theta$.  We have considered the $L^2$-phase of $\theta,$ that is $\theta < \tfrac 12.$  We assume that Theorem \ref{th:l2phase-N} generalizes without alteration to $\theta \in (0,1),$ the $L^1$-phase and in addition to the critical value $\theta=1$ after introducing an appropriate logarithmic factor.
\begin{question}\label{q:L1}
  Show Theorem \ref{th:l2phase} generalizes to all $\theta \in (0,1)$ and that it can be adapted to hold at the critical point $\theta=1.$
\end{question}
\noindent For supercritical $\theta > 1,$ it is reasonable to assume that there is still distributional convergence of $c_n,$ but the exact form of the limit is unclear.

We have also shown distributional convergence of the secular coefficients $c_n^{(N)}$ when $n/N \to 0$ in the $L^2$-phase.  This relies on making a comparison between $c_n$ and $c_n^{(N)}$ which is weaker when $n/N \to c \in (0,1).$  So, we ask:
\begin{question}\label{q:cnn}
  What is the distributional limit of $c_n^{(N)}/\sqrt{\Exp( |c_n^{(N)}|^2)}$ in the subcritical (or even $L^2$) phase when $n/N \to c \in (0,1)$ as $N \to \infty?$ 
\end{question}

One feature of the secular coefficients $c_n^{(N)}$ is that they may be expressed as combinations of certain conditional expectations of $c_n$ and $c_{N-n+1}$ (see \eqref{eq:chiNcoeff}).  Beyond this, it would be interesting to know the joint behavior of the secular coefficients $\left\{ c_n \right\}.$
\begin{question}\label{q:gp}
  For $\theta \in (0,1),$ to what does $\bigl(n^{(1-\theta)/2}c_{n+m} : m \in \Z\bigr)$ converge as $n \to \infty$ in the sense of finite-dimensional marginals?  
\end{question}

We conclude with one final question of a metamathematical nature.
For $r \in (0,1)$, the function 
$ \vartheta \mapsto e^{ \sqrt{\theta} G^{\mathbb{C} }(re^{i \vartheta})}$
from $[0, 2\pi)$ to $\mathbb{C}$ can be seen, after suitable normalization, as the (random) wave function of the position of  a particle 
which lives on $[0, 2\pi)$. Then, if one makes a quantum measurement of the position of the particle,  
the outcome of the measurement is distributed according to the (random) probability measure with density
$$\vartheta \mapsto \frac{e^{ \sqrt{\theta}  G(re^{i \vartheta})}}{ \int_0^{2 \pi} e^{ \sqrt{\theta}  G(r e^{i \vartheta})} d \vartheta}$$
with respect to the Lebesgue measure on $[0, 2\pi)$. Now, if we are in the subcritical or the critical phases ($\theta \leq 1$), 
and if we take the limit $r \rightarrow 1$, we see that 
$$ \vartheta \mapsto e^{ \sqrt{\theta} G^{\mathbb{C} }(\vartheta)} = \sum_{n \geq 0} c_n e^{n i \vartheta}$$
can be formally seen as as the wave function of a particle such that a quantum measurement gives an outcome distributed according to the 
(random) probability measure $\mathrm{GMC}_{\theta} /\mathcal{M}_\theta.$
\begin{question}\label{q:physics}
  Is there a physically meaniningful quantum mechanical system with wave function $\mathrm{HMC}_{\theta} /\sqrt{\mathcal{M}_\theta}?$ 
\end{question}

\subsection{Organization}

The structure of this paper is as follows. 
In Section \ref{sec:moments} we compute the joint moments of the secular coefficients $\left\{ c_n \right\},$ which we express in terms of magic squares.  We then relate these to Jack functions and compute their asymptotics, proving Theorems \ref{thm:magic} and \ref{thm:morris}.  In Section \ref{sec:ewens}, we make a connection between the moments of the secular coefficients and the Ewens sampling formula.  We then review known estimates of the Ewens sampling formula.  

In Sections \ref{sec:mgle1}, \ref{sec:mgle2}, \ref{sec:mgle3}, and \ref{sec:mgle4} we prove Theorem \ref{th:l2phase}.  We do this by ultimately using the martingale central limit theorem.  However $c_n$ itself is not suitable for a direct application.  So in Section \ref{sec:mgle1} we find a related random variable $\tilde{c}_n^{(\delta)}$ which is a close approximation to $c_n$ and whose Doob martingale with respect to a natural filtration has easily understood increments.  In Section \ref{sec:mgle2}, we give the proof of this normal approximation for $\tilde{c}_n^{(\delta)}$ and then a proof of Theorem \ref{th:l2phase} contingent on showing the bracket process of the Doob martingale for $\tilde{c}_n^{(\delta)}$ stabilizes in the $n\to \infty$ followed by $\delta \to 0$ limit.  In Section \ref{sec:mgle3} we compute moments of \emph{secular coefficients with restricted cycle count}, whose meaning will become apparent, and finally in Section \ref{sec:mgle4}, we prove the convergence needed to complete the proof of Theorem \ref{th:l2phase}.

In Section \ref{sec:regular} we prove Theorem \ref{thm:hmcregularity} on the regularity of $\mathrm{HMC}_\theta.$
In Section \ref{sec:cbe} we give the 
precise connection between the characteristic polynomial $\chi_N$ and $\mathrm{HMC}_\theta.$  We also show Theorems \ref{th:l2phase-N} and \ref{thm:order}. 
Finally in Section \ref{sec:sharptight}, we prove the sharpness of the estimates in Section \ref{sec:cbe} in the regime $\theta \in (0,1].$


\section*{Acknowledgements}

All three authors would like to thank the hospitality of the International Institute of Physics, in Natal, Brazil, and the program \emph{Random geometries and multifractality in condensed matter and statistical mechanics} from 2019 where this work began.  All authors would like to thank Yacine Barhoumi--Andr\'eani for bringing the mathematics around secular coefficients to their attention and for helpful conversations besides. 
E. P. gratefully acknowledges support from an NSERC Discovery grant.
N. S. gratefully acknowledges support of the Royal Society University Research Fellowship `Random matrix theory and log-correlated Gaussian fields', reference URF\textbackslash R1\textbackslash180707.  

\section{Moments of the secular coefficients}
\label{sec:moments}
The purpose of this section will be to prove Theorems \ref{thm:magic} and \ref{thm:morris} on the moments of $c_{n}$ for general $\beta>0$. We also discuss combinatorial properties of the moments and their relation to Jack functions.

\begin{proof}[Proof of Theorem \ref{thm:magic}]
  Recall from \eqref{cngen} that $c_{n}$ can be extracted from a generating function according to the formula
  \begin{equation}
    c_{n} = [z^{n}]\,\mathrm{exp}\left(\sqrt{\theta}\,\sum_{j=1}^{\infty}\frac{\mathcal{N}_{j}}{\sqrt{j}}\,z^{j}\right).
  \end{equation}
  Denoting the left-hand side of \eqref{genbetmoms} as $\mathcal{R}^{(k)}_{(\mu,\nu)}$, we have
  \begin{equation}
    \label{rcorrelator}
    \mathcal{R}^{(k)}_{(\mu,\nu)} = [z_1^{\mu_1}\ldots z_{k}^{\mu_k}w_1^{\nu_1}\ldots w_{k}^{\nu_k}]\mathbb{E}(F^{(k)}(\vec{z},\vec{w}))
  \end{equation}
  where
  \begin{equation}
    F^{(k)}(\vec{z},\vec{w}) = \mathrm{exp}\left(\sqrt{\theta}\sum_{r=1}^{k}\sum_{j=1}^{\infty}\left(\frac{\mathcal{N}_{j}}{\sqrt{j}}\,z_{r}^{j}+\frac{\overline{\mathcal{N}_{j}}}{\sqrt{j}}\,w_{r}^{j}\right)\right).
  \end{equation}
  A simple Gaussian computation using independence of the family $\{\mathcal{N}_{k}\}_{k=1}^{\infty}$ shows that
  \begin{equation}
    \mathbb{E}(F^{(k)}(\vec{z},\vec{w})) = \prod_{r_1,r_2=1}^{k}\frac{1}{(1-z_{r_1}w_{r_2})^{\theta}}. \label{gausscomp}
  \end{equation}
  Expanding \eqref{gausscomp} with the Newton binomial formula we obtain
  \begin{equation}
  \begin{split}
    \prod_{r_1,r_2=1}^{k}\frac{1}{(1-z_{r_1}w_{r_2})^{\theta}} =& \sum_{A}\prod_{i,j=1}^{k}\binom{A_{ij}+\theta-1}{\theta-1}\\
    &\times \prod_{i=1}^{k}z_{i}^{\sum_{j=1}^{k}A_{ij}}\prod_{j=1}^{k}w_{j}^{\sum_{i=1}^{k}A_{ij}}, \label{newtonbin}
  \end{split}
  \end{equation}
  where the sum runs over the set of all $k \times k$ matrices $A$ whose entries are non-negative integers. Equating coefficients according to \eqref{rcorrelator} fixes the row and column sums appearing in \eqref{newtonbin} and completes the proof of the Theorem.
\end{proof}
We remark that Diaconis and Gamburd \cite{DiaconisGamburd} prove this result specifically for the coefficients $c_{n}^{(N)}$ with $\theta=1$, related to random unitary matrices. In contrast to the above computation, they exploited the known orthogonality of the Schur functions and explicit results associated with the RSK correspondence. When $N=\infty$ their result recovers ours for the coefficients $c_{n}$ with $\theta=1$, but if $\theta \neq 1$ their result is distinct from ours. Despite this, in the following we discuss an interpretation of our result for general $\theta>0$ in terms of Jack functions (which extend the Schur functions to any $\theta >0$). 
\subsection{Connection to Jack functions}
We briefly recall some symmetric function notation. We follow \cite{Stanley} and \cite{Macdonald} for all notational conventions.  We refer the reader to \cite{Stanley} for a concise reminder of the definitions.

Let $\Lambda$ be the algebra of all symmetric formal power series in a countably infinite family of indeterminates.  For any partition $\lambda$ we let $\mathfrak{p}_\lambda$ be the power sum symmetric function, $\mathfrak{e}_\lambda$ be the elementary symmetric function, and $\mathfrak{m}_\lambda$ be the monomial symmetric function.

For any partition $\lambda = (1^{m_1},2^{m_2},3^{m_3}, \dots),$ we let
\[
  z_\lambda= 1^{m_1} \cdot 2^{m_2} \cdot 3^{m_3} \cdots m_1! m_2! m_3! \cdots.
\]
We also define $\ell(\lambda)$ to be the length of a partition.  We define an inner product on $\Lambda$ by
\[
  \langle \mathfrak{p}_\lambda, \mathfrak{p}_\mu \rangle = \mathbf{1}_{\lambda =\mu} z_\lambda \theta^{\ell(\lambda)}.
\]
The Jack functions $\{P_{\lambda}^\theta\}$ form another basis $\Lambda,$ which can be uniquely defined 
by (c.f. \cite[Theorem 1.1]{Stanley}):
\begin{enumerate}
  \item $\langle P_\lambda^{\theta}, P_\mu^{\theta} \rangle = 0$ if $\lambda \neq \mu.$
  \item Expanding the Jack function into monomial basis,
    \[
      P_\lambda^\theta = \sum u_{\lambda \mu}(\alpha) \mathfrak{m}_\mu,
    \]
    all nonzero coefficients $u_{\lambda \mu}(\alpha)$ satisfy $\mu \leq \lambda$ where $\leq$ is the dominance ordering (also known as the ``natural ordering'' in \cite[p.6]{Macdonald}).
  \item The leading coefficient $u_{\lambda \lambda} = 1.$
\end{enumerate}
These specialize to the Schur functions when $\theta=1.$

For any symmetric functions $p,g$ we define another inner product
\begin{equation}
  \langle p,g \rangle_n
  =\frac{1}{Z_{n,\beta}} \int_{\mathbb{T}} p(x)\overline{g(x)} \prod_{i \neq j} \left| x_i - x_j\right|^{1/\theta}\,dx,
  \label{eq:CBEIP}
\end{equation}
that is to say integration against the circular--$\beta$ ensemble.  Here we have specialized the functions $p$ and $g$ by sending all $x_j = 0$ for $j > n.$  The $Z_{n,\beta}$ is the usual normalization so that $\langle 1, 1\rangle = 1.$  
Then for any symmetric functions, $\langle p, g \rangle_n \to \langle p , g\rangle$ (see the discussion below (10.38) in \cite{Macdonald}).  Furthermore, one has that the polynomials $\{P_\lambda^{\theta} : \ell(\lambda) \leq n\}$ are orthogonal with respect to $\langle \cdot, \cdot\rangle_n$ \cite[(10.36)]{Macdonald}.

The Kostka numbers $K_{\lambda \mu}$ can be defined as
\[
  \mathfrak{s}_\lambda = \sum_{\mu} K_{\lambda \mu} \mathfrak{m}_\mu.
\]
As a corollary (apply the $\omega$ involution to \cite[Corollary 7.12.4]{StanleyVol2})
\[
  \mathfrak{e}_\mu = \sum_{\lambda} K_{\lambda' \mu} \mathfrak{s}_\lambda.
\]
Then it is possible to generalize these coefficients to the Jack setting by setting
\begin{equation}\label{eq:Jackkostka}
  \mathfrak{e}_\mu = \sum_{\lambda} K_{\lambda' \mu}^\theta P_\lambda^{\theta}
\end{equation}
The proof of \cite{DiaconisGamburd} exploited an identity for the Kostka numbers, which follows from the RSK bijection (see \cite[Section 7.11,]{StanleyVol2}).  This is given by
\[
  \sum_{\lambda} K_{\lambda \mu}K_{\lambda \nu} = \left| \mathrm{Mag}_{\mu,\nu} \right|
\]
see \cite[Corollary 7.12.3]{StanleyVol2}.

As a corollary of Theorem \ref{thm:magic}, we get a new proof of this fact, as well as a generalization to all $\theta.$  We mention that these connection coefficients are useful for the exact evaluation of some moments of $\beta$-ensembles \cite{MezzadriReynolds}.
\begin{theorem}
  \[
    \sum_{\lambda} K_{\lambda' \mu}^\theta K_{\lambda' \nu}^\theta \langle P_\lambda^\theta, P_\lambda^\theta \rangle
    = 
    \sum_{A \in \mathrm{Mag}_{\mu,\nu}} 
    \prod_{i,j} \binom{ \theta + A_{ij} - 1}{A_{ij}}
  \]
\end{theorem}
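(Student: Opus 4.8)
The plan is to recognize that both sides of the asserted identity equal the single quantity $\mathcal{R}^{(k)}_{(\mu,\nu)} = \mathbb{E}\bigl(\prod_{j} c_{\mu_j}\overline{c_{\nu_j}}\bigr)$ already computed in Theorem~\ref{thm:magic}. Since the right-hand side is \emph{by definition} that correlator, via \eqref{genbetmoms} with row sums $\mu$ and column sums $\nu$, the entire task is to show the left-hand side also equals it.

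The first step handles the left-hand side and is purely formal. Expanding $\mathfrak{e}_\mu$ and $\mathfrak{e}_\nu$ in the Jack basis via \eqref{eq:Jackkostka} and using the orthogonality property~(1) of the $P^\theta_\lambda$ to kill the off-diagonal terms gives
\[
  \langle \mathfrak{e}_\mu, \mathfrak{e}_\nu\rangle
  = \sum_{\lambda,\rho} K^\theta_{\lambda'\mu} K^\theta_{\rho'\nu}\langle P^\theta_\lambda, P^\theta_\rho\rangle
  = \sum_{\lambda} K^\theta_{\lambda'\mu} K^\theta_{\lambda'\nu}\langle P^\theta_\lambda, P^\theta_\lambda\rangle,
\]
which is exactly the left-hand side of the theorem; the sums are finite (supported on $|\lambda| = |\mu| = |\nu|$, and if $|\mu| \neq |\nu|$ both sides of the theorem vanish). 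So it remains to prove the symmetric-function identity $\langle \mathfrak{e}_\mu, \mathfrak{e}_\nu\rangle = \mathcal{R}^{(k)}_{(\mu,\nu)}$, where $\langle\cdot,\cdot\rangle$ is the inner product with $\langle \mathfrak{p}_\lambda, \mathfrak{p}_\mu\rangle = \mathbf{1}_{\lambda=\mu}z_\lambda\theta^{\ell(\lambda)}$.

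For the second step I would realize this inner product probabilistically. Let $\Phi$ be the algebra homomorphism out of $\Lambda = \mathbb{R}[\mathfrak{p}_1,\mathfrak{p}_2,\dots]$ determined by $\Phi(\mathfrak{p}_k) = \sqrt{\theta k}\,\mathcal{N}_k$, where $\{\mathcal{N}_k\}$ are the i.i.d.\ standard complex Gaussians of \eqref{iidnk} (so each $\Phi(f)$ is a polynomial in the $\mathcal{N}_k$, hence has all moments). A one-line Gaussian computation on the $\mathfrak{p}_\lambda$-basis shows $\mathbb{E}\bigl(\Phi(f)\overline{\Phi(g)}\bigr) = \langle f, g\rangle$ for all $f, g \in \Lambda$, since $\mathbb{E}\bigl(\Phi(\mathfrak{p}_\lambda)\overline{\Phi(\mathfrak{p}_\mu)}\bigr) = \mathbf{1}_{\lambda=\mu}\prod_k(\theta k)^{m_k} m_k! = \mathbf{1}_{\lambda=\mu}z_\lambda\theta^{\ell(\lambda)}$ and both sides are $\mathbb{R}$-bilinear (and real-valued) on $\Lambda \times \Lambda$. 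From $\sum_{n\geq0}\mathfrak{e}_n t^n = \exp\bigl(\sum_k \tfrac{(-1)^{k-1}}{k}\mathfrak{p}_k t^k\bigr)$ we get $\Phi(\mathfrak{e}_n) = [t^n]\exp\bigl(\sqrt\theta\sum_k \tfrac{(-1)^{k-1}}{\sqrt k}\mathcal{N}_k t^k\bigr)$. Because $\{(-1)^{k-1}\mathcal{N}_k\}_k$ has the same joint law as $\{\mathcal{N}_k\}_k$ (rotational invariance plus independence), replacing each $\mathcal{N}_k$ by $(-1)^{k-1}\mathcal{N}_k$ shows that $\bigl(\Phi(\mathfrak{e}_n), \overline{\Phi(\mathfrak{e}_n)}\bigr)_{n\geq0}$ has the same joint distribution as $\bigl(c_n, \overline{c_n}\bigr)_{n\geq0}$ from \eqref{cngen}. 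Hence $\langle \mathfrak{e}_\mu, \mathfrak{e}_\nu\rangle = \mathbb{E}\bigl(\prod_j \Phi(\mathfrak{e}_{\mu_j})\overline{\Phi(\mathfrak{e}_{\nu_j})}\bigr) = \mathbb{E}\bigl(\prod_j c_{\mu_j}\overline{c_{\nu_j}}\bigr) = \mathcal{R}^{(k)}_{(\mu,\nu)}$, and \eqref{genbetmoms} closes the argument.

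I do not anticipate any serious obstacle: once Theorem~\ref{thm:magic} is available this is essentially bookkeeping, the only mild care being the $(-1)^{k-1}$ signs (harmless by the stated symmetry) and observing that for fixed $\theta>0$ every quantity in sight is real, so the probabilistic realization really does compute $\langle\cdot,\cdot\rangle$. If one prefers to avoid the probabilistic detour altogether, the identity $\langle \mathfrak{e}_\mu, \mathfrak{e}_\nu\rangle = \mathcal{R}^{(k)}_{(\mu,\nu)}$ can be proved entirely inside $\Lambda$: the exponential formula $\langle \exp(\sum_k a_k\mathfrak{p}_k), \exp(\sum_k b_k\mathfrak{p}_k)\rangle = \exp\bigl(\theta\sum_k k\, a_k b_k\bigr)$ for this inner product, applied with $a_k = \tfrac{(-1)^{k-1}}{k}\sum_{r} t_r^k$ and $b_k = \tfrac{(-1)^{k-1}}{k}\sum_{r} s_r^k$, yields $\langle \prod_r \sum_n \mathfrak{e}_n t_r^n, \prod_r \sum_m \mathfrak{e}_m s_r^m\rangle = \prod_{r_1,r_2}(1 - t_{r_1}s_{r_2})^{-\theta}$, which is precisely \eqref{gausscomp}; extracting the coefficient of $\prod_r t_r^{\mu_r} s_r^{\nu_r}$ via the binomial expansion \eqref{newtonbin} then reproduces the right-hand side directly.
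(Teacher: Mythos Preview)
Your proof is correct but takes a genuinely different route from the paper's. Both arguments reduce the theorem to the identity $\langle \mathfrak{e}_\mu, \mathfrak{e}_\nu\rangle = \mathcal{R}^{(k)}_{(\mu,\nu)}$ via Jack orthogonality and then invoke Theorem~\ref{thm:magic}. The paper reaches that equality by passing through the finite-$N$ C$\beta$E: it writes $\langle \mathfrak{e}_\mu, \mathfrak{e}_\nu\rangle_N = \mathbb{E}\bigl(\mathfrak{e}_\mu(\lambda)\overline{\mathfrak{e}_\nu(\lambda)}\bigr)$ with $\lambda$ C$\beta$E-distributed, and then lets $N\to\infty$, using both the convergence $\langle\cdot,\cdot\rangle_N\to\langle\cdot,\cdot\rangle$ of Macdonald's inner products and Theorem~\ref{thm:hmc} (so that $c_n^{(N)}\to c_n$). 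You avoid the $N\to\infty$ limit entirely by noting that the specialization $\mathfrak{p}_k\mapsto\sqrt{\theta k}\,\mathcal{N}_k$ realizes $\langle\cdot,\cdot\rangle$ directly as $\mathbb{E}\bigl(\Phi(f)\overline{\Phi(g)}\bigr)$, and that under this map the $\mathfrak{e}_n$ become the $c_n$ in law (the $(-1)^{k-1}$ signs being absorbed by rotational invariance). This is strictly more elementary: it requires neither Theorem~\ref{thm:hmc} nor the approximation of inner products. Your alternative purely algebraic ending---the Cauchy-type identity for $\langle\cdot,\cdot\rangle$ applied to the generating function of the $\mathfrak{e}_n$---is equally valid and makes the computation entirely deterministic. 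The paper's detour through finite $N$ has the conceptual payoff of tying the identity to the random-matrix origin of the $c_n$, but your argument is shorter and self-contained.
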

\begin{proof}
  Recall $\chi_N$ is the characteristic polynomial of a circular-$\beta$ random matrix,
  and so
  \[
    \chi_N(t) = \sum_{k=0}^N \mathfrak{e}_k( \lambda )t^k,
  \]
  with ${\lambda}$ distributed as C$\beta$E.
  Then from Theorem \ref{thm:hmc},
  \[
    \mathbb{E}(\mathfrak{e}_\mu(\lambda) \overline{\mathfrak{e}_\nu(\lambda)}) \to
    \mathbb{E} \prod_{j=1}^\infty c_j^{\mu_j} \overline{c_j}^{\nu_j}.
  \]
  On the other hand
  \[
    \mathbb{E}(\mathfrak{e}_\mu(\lambda) \overline{\mathfrak{e}_\nu(\lambda)})
    =\langle \mathfrak{e}_\mu, \mathfrak{e}_\nu \rangle_N \to
    \langle \mathfrak{e}_\mu, \mathfrak{e}_\nu \rangle,
  \]
  and hence by \eqref{eq:Jackkostka} and Theorem \ref{thm:magic}
  \[
    \sum_{\lambda} K_{\lambda' \mu}^\theta K_{\lambda' \nu}^\theta \langle P_\lambda^\theta, P_\lambda^\theta \rangle
    = 
    \sum_{A \in \mathrm{Mag}_{\mu,\nu}} 
    \prod_{i,j} \binom{ \theta + A_{ij} - 1}{A_{ij}}.
  \]
  The normalization constant 
  $\langle P_\lambda^\theta, P_\lambda^\theta \rangle$ is given in \cite[(10.16)]{Macdonald}.
\end{proof}

\subsection{Asymptotics of the moments}
In this subsection we will give the proof of Theorem \ref{thm:morris}. It is instructive to begin with a particular case, so we first discuss the case of the fourth moment, or $k=2$ in Theorem \ref{thm:morris}. Then we show how to generalize the approach to all positive integers $k$.
\begin{example}
\label{ex:4thmom}
  Using Theorem \ref{thm:magic}, we can for instance take $k=2$, $\mu_{1}=\mu_{2}=n$, and $\nu_{1}=\nu_{2}=n$ with all other exponents equal to zero. In this case one is summing over all $2 \times 2$ magic squares whose row and column sums are equal to $n$. Such magic squares are parameterized by a single variable (denoted here $j$) and formula Theorem \ref{thm:magic} yields
  \begin{equation}
    \mathbb{E}(|c_{n}|^{4}) = \sum_{j=0}^{n}\binom{j+\theta-1}{\theta-1}^{2}\binom{n-j+\theta-1}{\theta-1}^{2}. \label{4thmom}
  \end{equation}
\end{example}

When $\theta=1$, the right-hand side of \eqref{4thmom} is given by $n+1$ (the number of $2 \times 2$ magic squares), as obtained in \cite{DiaconisGamburd}. For general $\theta>0$ we can compute the asymptotics as $n \to \infty$ from the sum representation \eqref{4thmom} as follows.
\begin{lemma}
  \label{lem:4thmom}
  For any $0 < \theta < 1/2$, we have the following:
  \begin{equation}
    \frac{\mathbb{E}(|c_{n}|^{4})}{\mathbb{E}(|c_{n}|^{2})^{2}} \sim 2\,\frac{\Gamma(1-2\theta)}{\Gamma(1-\theta)^{2}}, \qquad n \to \infty, \label{4thmomlim}
  \end{equation}
  where we recall the normalization
  \begin{equation}
    \mathbb{E}(|c_{n}|^{2}) = \binom{n+\theta-1}{\theta-1} \sim \frac{1}{\Gamma(\theta)}\,n^{\theta-1}.
  \end{equation}
\end{lemma}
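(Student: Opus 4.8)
The plan is to reduce the lemma to a direct asymptotic analysis of the sum \eqref{4thmom}. Write $a_j \Def \binom{j+\theta-1}{\theta-1} = \binom{j+\theta-1}{j} = \frac{(\theta)_j}{j!}$, where $(\theta)_j$ denotes the Pochhammer symbol, so that $\mathbb{E}(|c_n|^2) = a_n$ and, by Example~\ref{ex:4thmom}, $\mathbb{E}(|c_n|^4) = \sum_{j=0}^n a_j^2 a_{n-j}^2$. Two elementary facts about $(a_j)$ will be used. First, since $a_{j+1}/a_j = (j+\theta)/(j+1) < 1$ for $\theta < 1$, the sequence $(a_j)$ is strictly decreasing. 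Second, $a_j = \Gamma(j+\theta)/(\Gamma(\theta)\Gamma(j+1)) \sim j^{\theta-1}/\Gamma(\theta)$, so for $\theta < \tfrac12$ the series $\sum_{j \ge 0} a_j^2$ converges, and Gauss's summation theorem gives
\[
  \sum_{j=0}^\infty a_j^2 = \sum_{j=0}^\infty \frac{(\theta)_j (\theta)_j}{(1)_j\, j!} = {}_2F_1(\theta,\theta;1;1) = \frac{\Gamma(1-2\theta)}{\Gamma(1-\theta)^2},
\]
its hypothesis being precisely $\theta < \tfrac12$. (Equivalently, by Parseval applied to $(1-z)^{-\theta} = \sum_j a_j z^j$, this is the $k=2$ Morris integral $\frac{1}{2\pi}\int_0^{2\pi}|1-e^{i\vartheta}|^{-2\theta}\,d\vartheta$.)

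Next I would establish the convolution asymptotic
\[
  \frac{\mathbb{E}(|c_n|^4)}{a_n^2} = \sum_{j=0}^n a_j^2\,\frac{a_{n-j}^2}{a_n^2} \;\xrightarrow[n \to \infty]{}\; 2\sum_{j=0}^\infty a_j^2 .
\]
The heuristic is that the sum concentrates at the two ``corners'': near $j=0$ one has $a_{n-j}^2/a_n^2 \to 1$ against the summable weight $a_j^2$, and symmetrically near $j=n$, whereas the bulk range $j \asymp n$ contributes $O(n\cdot n^{4\theta-4}) = O(n^{4\theta-3}) = o(n^{2\theta-2}) = o(a_n^2)$ since $\theta < \tfrac12$. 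To make this precise, split the sum at $\lfloor n/2\rfloor$. For $0 \le j \le \lfloor n/2\rfloor$ monotonicity of $(a_m)$ gives $a_n \le a_{n-j} \le a_{\lceil n/2\rceil}$, and $a_{\lceil n/2\rceil}/a_n \to 2^{1-\theta}$, so $a_{n-j}^2/a_n^2 \le C$ for some constant $C$ (depending only on $\theta$) and all large $n$; writing the partial sum as $\sum_{j \ge 0}\one[j \le \lfloor n/2\rfloor]\,a_j^2\,(a_{n-j}^2/a_n^2)$, each summand tends to $a_j^2$ (because $a_{n-j}/a_n = \prod_{m=n-j}^{n-1}\frac{m+1}{m+\theta} \to 1$ for fixed $j$) and is dominated by the summable sequence $C a_j^2$, so dominated convergence gives $\sum_{j \ge 0} a_j^2$. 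Reindexing $i = n-j$ puts the remaining range $\lfloor n/2\rfloor < j \le n$ into the same form and yields a second copy of $\sum_{i \ge 0} a_i^2$ in the limit.

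Combining the two displays,
\[
  \frac{\mathbb{E}(|c_n|^4)}{\mathbb{E}(|c_n|^2)^2} = \frac{\mathbb{E}(|c_n|^4)}{a_n^2} \longrightarrow 2\sum_{j \ge 0} a_j^2 = 2\,\frac{\Gamma(1-2\theta)}{\Gamma(1-\theta)^2},
\]
which is \eqref{4thmomlim}; the normalization recalled in the lemma is just the Stirling asymptotic $\mathbb{E}(|c_n|^2) = a_n \sim n^{\theta-1}/\Gamma(\theta)$. I do not expect a serious obstacle here: the only mildly delicate point is exchanging the limit with a sum whose index range grows with $n$, and this is exactly what the dominated-convergence argument handles once the monotonicity of $(a_j)$ supplies a uniform bound on $a_{n-j}^2/a_n^2$. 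The same corner-concentration mechanism, with $k\times k$ magic squares replacing $2\times2$ ones, is what will drive the proof of the general Theorem~\ref{thm:morris}.
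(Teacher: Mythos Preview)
Your proposal is correct and follows essentially the same approach as the paper: split the convolution into corner contributions, use dominated convergence against the summable sequence $a_j^2$, and evaluate $\sum_{j\ge 0} a_j^2$ via the Morris/Gauss identity. The only difference is cosmetic: the paper introduces a parameter $\delta$ and a three-region split $[0,\delta n]\cup(\delta n,n-\delta n)\cup[n-\delta n,n]$, bounding the middle region separately, whereas you split cleanly at $\lfloor n/2\rfloor$ and use the monotonicity of $(a_j)$ to supply the uniform bound $a_{n-j}^2/a_n^2 \le (a_{\lceil n/2\rceil}/a_n)^2 \to 2^{2(1-\theta)}$ directly---this is a minor simplification that avoids the auxiliary parameter but is otherwise the same argument.
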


\begin{proof}
  Fix $0 < \delta < 1$ and split the sum in \eqref{4thmom} according to whether $j \leq \lfloor \delta n \rfloor$, $\lfloor \delta n \rfloor+1 \leq j \leq n - \lfloor \delta n \rfloor-1$ or $n-\lfloor \delta n \rfloor \leq j \leq n$, denoting each sum $\mathcal{S}_{1}$, $\mathcal{S}_{2}$ or $\mathcal{S}_{3}$ respectively. Then in the sum $\mathcal{S}_{1}$, the term $n-j$ is always large, so that the second binomial coefficient is uniformly bounded by $c_{\delta,\theta}\,n^{\theta-1}$ for some constant $c_{\delta,\theta}>0$. Applying dominated convergence then gives
  \begin{align}
    \lim_{n \to \infty}\frac{\mathcal{S}_{1}}{(\mathbb{E}(|c_{n}|^{2}))^{2}}  &= \sum_{j=0}^{\infty}\binom{j+\theta-1}{\theta-1}^{2}\lim_{n \to \infty}\frac{\binom{n-j+\theta-1}{\theta-1}^{2}}{\binom{n+\theta-1}{\theta-1}^{2}}\\
    & = \sum_{j=0}^{\infty}\binom{j+\theta-1}{\theta-1}^{2} \label{4thmomsum}\\
    &= \frac{\Gamma(1-2\theta)}{\Gamma(1-\theta)^{2}} \label{4thmomlimit},
  \end{align}
  where in \eqref{4thmomlimit} we used Lemma \ref{infsumk}. An identical argument holds for the sum $\mathcal{S}_{3}$ (by symmetry of $j \to n-j$) and this gives the factor $2!$ in \eqref{4thmomlim}. The sum $\mathcal{S}_{2}$ is negligible: the second binomial coefficient is still bounded by $c_{\theta,\delta}n^{\theta-1}$, which gives the same order of magnitude, but now both the upper and lower limits of the sum are growing. Since \eqref{4thmomsum} converges, we have that $n^{-2(\theta-1)}\mathcal{S}_{2} \to 0$. This completes the proof of the Lemma.
\end{proof}

\begin{remark}\label{rem:highermoment}
  When $\theta \geq 1/2$ this argument breaks down because the sum in \eqref{4thmomlimit} is divergent, which leads to slightly different asymptotic behaviour. When $\theta > 1/2$, the sum $\mathcal{S}_{2}$ can be approximated by a Riemann integral which now gives the main contribution: As $n \to \infty$ we have
  \begin{equation}
    \begin{split}
      \mathbb{E}(|c_{n}|^{4})\bigg{|}_{\theta > 1/2} &\sim n^{4(\theta-1)+1}\,\frac{1}{\Gamma(\theta)^{4}}\int_{0}^{1}x^{2(\theta-1)}(1-x)^{2(\theta-1)}\,dx\\
      &= n^{4(\theta-1)+1}\,\frac{\Gamma(2\theta-1)^{2}}{\Gamma(4\theta-2)\Gamma(\theta)^{4}}
    \end{split}
  \end{equation}
  and conversely note that this integral becomes divergent when $\theta \leq 1/2$, with the leading power of $n$ matching at the transition $\theta=1/2$. In the case $\theta=1/2$, one can show that
  \begin{equation}
    \mathbb{E}(|c_{n}|^{4})\bigg{|}_{\theta=1/2} \sim \frac{2}{\pi^{2}}\,\frac{\log n}{n}, \qquad n \to \infty.
  \end{equation}
  This is analogous to the \textit{freezing transitions} in \cite{CK15}.
\end{remark}
The fact that the argument leading to \eqref{4thmomlim} can be generalized to all higher moments is the subject of the next result.

\begin{theorem}
  Let $k$ be a positive integer such that $k\theta < 1$. Then
  \[
    \lim_{n \to \infty}\frac{\mathbb{E}(|c_{n}|^{2k})}{(\mathbb{E}(|c_{n}|^{2}))^{k}} = k!\,\frac{\Gamma(1-k\theta)}{\Gamma(1-\theta)^{k}}\,.
  \]
\end{theorem}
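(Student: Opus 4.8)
The plan is to generalize the three-region splitting argument from Lemma \ref{lem:4thmom} to the $k\times k$ case. Starting from the magic-square formula \eqref{eq:kthmom}, we parametrize a $k\times k$ magic square with all row and column sums equal to $n$ by its top-left $(k-1)\times(k-1)$ block $B=(A_{ij})_{1\le i,j\le k-1}$, which determines the last row, last column, and bottom-right corner $A_{kk} = n - \sum_{i,j<k} A_{ij} + \dots$; more precisely the last column entries are $A_{ik} = n - \sum_{j<k}A_{ij}$ and similarly for the last row, with $A_{kk}$ determined by consistency, subject to all these being nonnegative. The summand is $\prod_{i,j}\binom{A_{ij}+\theta-1}{A_{ij}}$, and using $\binom{m+\theta-1}{m}\sim m^{\theta-1}/\Gamma(\theta)$ as $m\to\infty$ we expect the dominant contribution to come from configurations in which the $k-1$ ``free'' entries in each row and column that become large are those whose complementary entries stay bounded.

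\textbf{Main steps.} First, I would establish the asymptotic $\binom{m+\theta-1}{m} = \frac{m^{\theta-1}}{\Gamma(\theta)}(1+O(1/m))$ and the crude uniform bound $\binom{m+\theta-1}{m}\le c_\theta (1+m)^{\theta-1}$, valid since $\theta<1$ so the function is decreasing. Second, I would identify the ``corner'' configurations: for a permutation-like pattern, think of the entries of the magic square which are forced to be of order $n$ versus those which remain $O(1)$. The cleanest bookkeeping is to fix a small $\delta>0$ and classify each of the $(k-1)^2$ free entries $A_{ij}$ ($i,j<k$) as ``small'' ($\le \delta n$) or ``large'' ($>\delta n$); the key combinatorial claim is that the only classes contributing at the maximal order $n^{k(\theta-1)}$ (after normalizing by $\mathbb{E}(|c_n|^2)^k \sim n^{k(\theta-1)}/\Gamma(\theta)^k$) are those in which, in each row and each column of the full $k\times k$ square, exactly one entry is large (of size $\sim n$) and the rest are small — i.e. the ``large'' entries form a permutation matrix pattern. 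There are $k!$ such patterns. Third, for a fixed permutation $\sigma\in S_k$, summing $\prod\binom{A_{ij}+\theta-1}{A_{ij}}$ over configurations whose large entries sit at positions $(i,\sigma(i))$: the $k$ large entries each contribute $\sim n^{\theta-1}/\Gamma(\theta)$ and their precise values are slaved to the small entries, while the $k(k-1)$ small entries range freely over $\N_0$ subject to constraints that, in the $n\to\infty$ limit, decouple. By dominated convergence (justified by the uniform bound, exactly as in the $k=2$ case) the sum over small entries converges to $\Gamma(\theta)^{-k(k-1)}$ times a $k(k-1)$-fold convergent sum of squared-type binomial products; matching this against the known total via Lemma \ref{infsumk} (or equivalently recognizing it as the Selberg/Morris-type constant) yields $\Gamma(1-k\theta)/\Gamma(1-\theta)^k$ per permutation. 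Summing over the $k!$ permutations gives the stated limit. Fourth, I would show the complementary bound: any class with more than one large entry in some row or column, or with fewer, contributes $o(n^{k(\theta-1)})$, by a counting argument showing such configurations either carry too few large entries (so too small a power of $n$ from the $n^{\theta-1}$ factors) or force a growing number of summation variables against a convergent sum (exactly the $\mathcal{S}_2$-is-negligible mechanism).

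\textbf{Main obstacle.} The hard part is the bookkeeping in the complementary-bound step: one must show cleanly that among all $k\times k$ magic squares with line sums $n$, only the $k!$ ``permutation-skeleton'' families survive the normalization, and that overlaps between families (magic squares counted under two different skeletons, e.g. when several entries are of intermediate size) do not cause over- or under-counting in the limit. A convenient way to organize this is to induct on $k$ — peeling off one large entry and the bounded entries in its row and column reduces a $k\times k$ problem with line sums $n$ to a $(k-1)\times(k-1)$ problem with line sums $n-O(1)$ — but one must be careful that the induction hypothesis is uniform enough in the (bounded) perturbation of the line sums. Alternatively, one can make the region decomposition fully explicit as in the $k=2$ proof: write the $(k-1)^2$-fold sum, split the cube $\{0,\dots,n\}^{(k-1)^2}$ into $2^{(k-1)^2}$ boxes according to the small/large dichotomy, and check box by box which are $O(n^{k(\theta-1)})$ and which are $o$ of that; this is conceptually routine but requires care to see that exactly $k!$ boxes contribute and each contributes the Morris constant. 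I would present the inductive version for brevity.
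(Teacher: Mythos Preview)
Your plan is correct and follows essentially the same strategy as the paper's proof: isolate the $k!$ permutation-skeleton configurations as the dominant contribution and show the remainder is $o(n^{k(\theta-1)})$. Two tactical points are worth noting. First, for the complementary bound the paper avoids both your induction and your $2^{(k-1)^2}$-box decomposition by invoking the Birkhoff--von~Neumann theorem: every $A\in\mathrm{Mag}_{\pi,\pi}$ has some permutation on whose support each entry is at least $n/k!$, so by symmetry one may restrict to squares with large diagonal and then reuse the same dominated-convergence argument as for the main term. Second, your computation of the limiting constant is underspecified: the $k(k-1)$-fold sum over the ``small'' off-diagonal entries does not factor into independent copies of Lemma~\ref{infsumk} (the off-diagonal entries are coupled by the constraint that in each row the off-diagonal entries sum to the same value as in the corresponding column), and the paper handles this by recognizing the sum as the coefficient extraction of $\prod_{i\ne j}(1-z_i\bar y_j)^{-\theta}$ averaged over the torus, which is exactly the Morris integral \eqref{eq:morris}.
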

\begin{proof}
  Recall from Theorem \ref{thm:magic}
  \begin{equation}\label{eq:moment}
    \mathbb{E}(|c_n|^{2k})
    =
    \sum_{A \in \mathrm{Mag}_{\pi,\pi}}
    \prod_{i,j}
    \binom{ \theta + A_{ij} - 1}{ A_{ij}},
  \end{equation}
  where $\pi$ is the partition which has $k$ parts of length $n$. In particular the magic squares $\mathrm{Mag}_{\pi,\pi}$ are $k \times k$ and have all row and column sums equal to $n$. Fix a $\delta>0$. Let $E_{\delta,n} \subset \mathrm{Mag}_{\pi,\pi}$ be those magic squares in which there is a row $i$ for which there are two $j$ so that $A_{ij} > \delta n.$  We claim that
  \begin{equation} \label{eq:as0}
    \sum_{A \in E_{\delta,n}}
    \prod_{i,j}
    \binom{ \theta + A_{ij} - 1}{  A_{ij} }
    = o(n^{k(\theta -1)}).
  \end{equation}
  We shall return to this point, but for the moment we give the proof contingent on \eqref{eq:as0}.

  Note that for $\delta$ sufficiently small, each $A \in \mathrm{Mag}_{\pi,\pi} \setminus E_{\delta,n}$ has in each row exactly one entry with size larger than $n(1-k\delta).$  Again for $\delta$ sufficiently small, this implies that each column additionally has exactly one such entry.  Thus for any such $A$ there is a $k\times k$ permutation matrix $P,$ the support of which coincides with the entries of $A$ larger than $n(1-k\delta).$

  Moreover, for each permutation matrix $P$ such $A$ exist and the contributions of each to the sum in \eqref{eq:moment} are all equal.  Let $S \subset \mathrm{Mag}_{\pi,\pi} \setminus E_{\delta,n}$ be those terms in which the diagonal entries are larger than $n(1-k\delta).$  Then using \eqref{eq:as0},
  \begin{equation}\label{eq:as1}
    \mathbb{E}(|c_n|^{2k})
    =
    k!
    \sum_{A \in S}
    \prod_{i = j}
    \biggl\{\binom{\theta + n - 1}{n}
  (1+O(\delta)) \biggr\}
  \cdot
  \prod_{i \neq j}
  \biggl\{
    \binom{ \theta + A_{ij} - 1}{ A_{ij}} \biggr\}
    +
    o( n^{k(\theta -1)}).
  \end{equation}

  Let $H_k$ be the $k \times k$ non-negative integer matrices $A$ in which for each $1 \leq i \leq k$
  \[
    \sum_{j} A_{ij} = \sum_{j} A_{ji}.
  \]
  We claim that
  \begin{equation} \label{eq:as2}
    \sum_{A \in H_k}
    \prod_{i \neq j}
    \binom{ \theta + A_{ij} - 1}{ A_{ij}}
    =\frac{\Gamma(1-k\theta)}{\Gamma(1-\theta)^k}.    
  \end{equation}
  From the Newton binomial formula,
  we have that for $\left\{ z_i \right\}$ and $\left\{ y_j \right\}$ in the unit disk,
  \[
    \prod_{i \neq j}\left( \frac{1}{1-z_i\overline{y_j}} \right)^\theta
    =
    \sum_{ (\mu_j), (\nu_j) \in \mathbb{N}^{k}}
    \prod_{j}
    z_j^{\mu_j} \overline{y_j}^{\nu_j}
    \cdot \biggl\{
      \sum_{A \in \mathrm{Mag}_{0,\mu,\nu}}
      \prod_{i \neq j} \binom{ \theta + A_{ij} - 1}{A_{ij}}
    \biggr\},
  \]
  where
  $\mathrm{Mag}_{0,\mu,\nu} \subset \mathrm{Mag}_{\mu,\nu}$
  have $0$ on the diagonal.
  Hence on setting $z_j = re^{i \omega_j}$ and $y_j = r e^{i \omega_j}$ for $r \in (0,1)$ and averaging over all $\omega_j$, we have
  \[
    \begin{aligned}
      \frac{1}{(2\pi)^k}
      \int
      &\prod_{i \neq j}\left( \frac{1}{1-r^2e^{\sqrt{-1}(\omega_i - \omega_j)}} \right)^\theta
      d\omega \\
      = 
      &\sum_{ (\mu_j) \in \mathbb{N}^{k}}
      r^{2\sum \mu_j}
      \cdot\biggl\{
	\sum_{A \in \mathcal{M}_0(\mu,\mu)}
	\prod_{i \neq j} \binom{ \theta + A_{ij} - 1}{A_{ij}}
      \biggr\}.
    \end{aligned}
  \]
  This integral on the left hand side is convergent on sending $r \to 1,$ and moreover gives exactly the Morris integral \eqref{eq:morris}.
  The right hand side meanwhile converges to the left hand side of \eqref{eq:as2} which completes the proof of \eqref{eq:as2}.

  As \eqref{eq:as2} is convergent, it follows from dominated convergence that
  \[
    \lim_{n \to \infty}
    n^{-k(\theta-1)}
    \sum_{A \in S}
    \prod_{i = j}
    \biggl\{\binom{\theta + n - 1}{n}
  \biggr\}
  \cdot
  \prod_{i \neq j}
  \biggl\{
    \binom{ \theta + A_{ij} - 1}{ A_{ij}} \biggr\}
    =
    \frac{\Gamma(1-k\theta)}{\Gamma(1-\theta)^k \Gamma(\theta)^k}.
  \]
  Hence, on combining this with \eqref{eq:as1} and sending $\delta \to 0,$ we conclude that
  \[
    \lim_{n \to \infty}
    n^{-k(\theta-1)}
    \mathbb{E}(|c_n|^{2k})
    =
    \frac{k!\Gamma(1-k\theta)}{\Gamma(1-\theta)^k \Gamma(\theta)^k}.
  \]

  Finally we turn to the proof of \eqref{eq:as0}. By the Birkhoff--von Neumann theorem, the doubly stochastic matrices are the convex hull of the permutation matrices.  It follows that for every $A \in \mathrm{Mag}_{\pi,\pi}$ there is a permutation matrix $P$ such that each entry of $A$ in the support of $P$ has size at least $n/k!$  Hence by symmetry it suffices to restrict the sum in \eqref{eq:as0} to those $A \in E_{\delta,n}$ whose every diagonal entry is at least $n/k!$. Denote this subset of $E_{\delta,n}$ by $E'_{\delta_n}$. In short we have the bound
  \[
    \sum_{A \in E_{\delta,n}}
    \prod_{i,j}
    \binom{ \theta + A_{ij} - 1}{  A_{ij} }
    \leq
    k!
    \sum_{A \in E_{\delta,n}'}
    \prod_{i}
    \biggl\{
      C_\theta \left( \frac{n}{k!} \right)^{\theta -1}
    \biggr\}
    \prod_{i\neq j}
    \biggl\{
      \binom{ \theta + A_{ij} - 1}{  A_{ij} }
    \biggr\},
  \]
  for some absolute constant $C_\theta.$
  Hence once more using the absolute convergence of \eqref{eq:as2} and dominated convergence, \eqref{eq:as0} follows.
\end{proof}

\section{The Ewens sampling formula}
\label{sec:ewens}
In this section we describe a connection between secular coefficients and random permutations that we believe is interesting in its own right. While detailing this we shall revise some of the key results about random permutations, as these will be useful in the subsequent sections.

The secular coefficients defined in \eqref{eq:cn} can be given the following explicit formula
\begin{equation}
  c_{n} = \sum_{m \in S_{n}}\,\prod_{k=1}^{n}\frac{\mathcal{N}_{k}^{m_{k}}}{m_{k}!}\,\left(\frac{\theta}{k}\right)^{m_{k}/2}. \label{cnsum}
\end{equation}
The summation is over the set $S_{n}$ of all compositions of $n$, that is $m = (m_1,m_2,\ldots,m_n)$ is such that each $m_{k}$ is a non-negative integer satisfying
\begin{equation}
  \sum_{k=1}^{n}km_{k} = n.
\end{equation}
From \eqref{cnsum}, we have that $c_n$ is measurable with respect to $\Gfilt_n \coloneqq \sigma\left\{ \mathcal{N}_1, \dots, \mathcal{N}_n \right\}.$  So, going forward, we will make use of the filtration
\( \Gfilt = ( \Gfilt_n : n \in \N)\).

Taking the $L^{2}$-norm of \eqref{cnsum} gives
\begin{equation}
  \mathbb{E}(|c_{n}|^{2}) = \sum_{\substack{\vec{l} \in S_{n},\vec{m}\in S_{n}}}\prod_{k=1}^{n}\frac{\mathbb{E}(\mathcal{N}_{k}^{m_k}\overline{\mathcal{N}_{k}}^{l_k})}{m_{k}!l_{k}!}\left(\frac{\theta}{k}\right)^{(m_{k}+l_{k})/2} \label{l2first}
\end{equation}
where we used independence of the family $\{\mathcal{N}_{k}\}_{k=1}^{\infty}$ to take the expectation inside the product. Next applying the Gaussian formula
\begin{equation}
  \mathbb{E}(\mathcal{N}_{k}^{m_k}\overline{\mathcal{N}_{k}}^{l_k}) = \mathbbm{1}_{m_k=l_k}(m_k)! \label{gausscov}
\end{equation} 
implies that the compositions in the sum \eqref{l2first} must coincide in order to give a non-zero term. This gives
\begin{equation}
  \mathbb{E}(|c_{n}|^{2}) = \sum_{\substack{\vec{m}\in S_{n}}}\prod_{k=1}^{n}\frac{1}{m_{k}!}\left(\frac{\theta}{k}\right)^{m_{k}}. \label{l2sum}
\end{equation}
Given a permutation $\sigma$ on $n$ symbols, we can characterize it using its cycle structure $(m_1,\ldots,m_n)$ where $m_{j}$ denotes the number of cycles in $\sigma$ having length $j$, and $\sum_{j=1}^{n}jm_{j} = n$. The summation in \eqref{l2sum} is well known in the theory of random permutations where it appears as the normalizing factor for the Ewens sampling formula.
\begin{definition}
  The \textit{Ewens sampling formula} is a probability distribution on cycle counts $\vec{M}^{(n)} = (M_1,\ldots,M_n)$ given by
  \begin{equation}
    \mathbb{P}(\vec{M}^{(n)} = m) = \mathbbm{1}_{\sum_{k=1}^{n}km_{k}=n}\,\frac{n!}{\theta^{(n)}}\prod_{k=1}^{n}\left(\frac{\theta}{k}\right)^{m_k}\frac{1}{m_{k}!}. \label{ewens}
  \end{equation}
  where $\theta^{(n)}$ is the rising factorial 
  \begin{equation}
    \theta^{(n)} = \frac{\Gamma(\theta+n)}{\Gamma(\theta)}.
  \end{equation}
\end{definition}
The fact that \eqref{ewens} is normalized gives the explicit form of the sum in \eqref{l2sum}:
\begin{equation}
  \mathbb{E}(|c_{n}|^{2}) = \frac{\theta^{(n)}}{n!}.
\end{equation}
In general, if we restrict the summation in \eqref{cnsum} to a subset $P \subset S_{n}$ and denote this $c_{n,P}$, an identical computation holds. This yields the fundamental correspondence
\begin{equation}
  \mathbb{E}(|c_{n,P}|^{2}) = \frac{\theta^{(n)}}{n!}\,\mathbb{P}(\vec{M}^{(n)} \in P) \label{corres}
\end{equation}
where on the left-hand side of \eqref{corres} the expectation is taken over the Gaussian random variables in \eqref{cnsum}, while on the right-hand side $\vec{M}$ follows the Ewens sampling formula in \eqref{ewens} with parameter $\theta$.

We can also use the Ewens sampling formula to describe conditional expectations of $|c_n|^2.$
In analogy with \eqref{l2first}, for $q \leq n,$
\begin{equation} \label{eq:l2cond}
  \mathbb{E}(|c_{n}|^{2} ~\vert~ \Gfilt_q) 
  = \sum_{\substack{\vec{l} \in S_{n},\vec{m}\in S_{n}}}
  \prod_{k=1}^{n}
  \frac{\mathbb{E}(\mathcal{N}_{k}^{m_k}\overline{\mathcal{N}_{k}}^{l_k} ~\vert~ \Gfilt_q)}
  {m_{k}!l_{k}!}
  \left(\frac{\theta}{k}\right)^{(m_{k}+l_{k})/2}. 
\end{equation}
The only nonzero pairs $(\vec{l},\vec{m})$ in this sum have $m_k = l_k$ for $k>q,$ and this allows us to greatly simplify this expression. 
Let us define
\begin{equation}
  c_{n,q} = \sum_{\substack{(m_{k}) : 1 \leq k \leq q\\ \sum_{k=1}^{q}km_{k}=n}}\prod_{k=1}^{q}\frac{\theta^{m_{k}/2}}{m_{k}!k^{m_k/2}}\,\mathcal{N}_{k}^{m_k}. \label{cnq}
\end{equation}
\begin{remark}
  \label{rem:cnm}
  This \eqref{cnq} is a special case of $c_{n,P}$ (c.f.\ \eqref{corres}), in which $P$ are those partitions with no parts greater than $q$. This is also equivalent to setting the Gaussians $\mathcal{N}_{k} = 0$ for all $k > q$ in the sum \eqref{cnsum}. In particular $c_{n,q}$ is $\Gfilt_q$--measruable.
\end{remark}

In terms of \eqref{cnq} we can therefore give the sum formula by partitioning \eqref{eq:l2cond} according to $r=\sum_{k=1}^q km_k.$  Note that when $r \geq n-q,$ we have no way to complete the partition except by choosing all larger $m_k =0,$
and so
\[
  \mathbb{E}(|c_{n}|^{2} ~\vert~ \Gfilt_q) 
  =
  |c_{n,q}|^2
  +
  \sum_{r=0}^{n-q-1} 
  |c_{r,q}|^2 
  \sum_{\substack{(m_{k}) : q < k \leq n\\ \sum_{k=q+1}^{n-q-1}km_{k}=n-r}}
  \prod_{k=q+1}^{n}
  \frac{\theta^{m_{k}}}{m_{k}!k^{m_k}}.
\]
Note that we may have no $m_k > 0$ for $k > n-r$ in the inner sum
and so using \eqref{ewens} 
\begin{equation} \label{eq:l2conda}
  \mathbb{E}(|c_{n}|^{2} ~\vert~ \Gfilt_q) 
  =
  |c_{n,q}|^2
  +
  \sum_{r=0}^n |c_{r,q}|^2 \frac{ \theta^{(n-r)}}{(n-r)!} 
  \Pr[ 
    \vec{M}_j^{(n-r)} = 0, \text{ for all } 1 \leq j \leq q
  ].
\end{equation}
We note that sum need only run to $n-q,$ as the probability therein is $0$ for larger $r.$
We will do an asymptotic analysis of this conditional expectation in Section \ref{sec:qmoments}.

\subsection{Properties of the Ewens sampling formula}

We remark that the case $\theta=1$ in \eqref{ewens} corresponds to the uniform measure on the set of all permutations, while the general case $\theta>0$ corresponds to a tilting of the uniform measure. For any $\theta>0$, a wealth of results are known concerning statistical properties of the cycle counts, see the text \cite{ABT03} from which we will borrow from repeatedly in what follows. The main point exploited in \cite{ABT03} is that apart from the indicator function, \eqref{ewens} is a conditional joint law of $n$ independent random variables $(Z_1,\ldots,Z_{n})$ where each $Z_{k}$ is Poisson distributed with parameter $\theta/k$. Therefore, statistics of the cycle counts can be reduced to studying the independent random variables $(Z_1,\ldots,Z_n)$ paired with the condition that $T_{0n}=n$ where $T_{0n} = \sum_{j=1}^{n}jZ_{j}$. We will refer to this as the \textit{conditioning relation}.

In fact, the random variable $T_{0n}$ and its limiting distribution play an important role in what follows and we record some of the key results about it from \cite{ABT03}.
\begin{lemma}
  \label{lem:t0n}
  Suppose that $r = r_{n} \in \mathbb{N}$ satisfies $r/n \to y \in (0,\infty)$ as $n \to \infty$. Then
  \begin{equation}
    \lim_{n \to \infty}n\mathbb{P}(T_{0n}=r) = p_{\theta}(y) \label{convt0n}
  \end{equation}
  where $p_{\theta}(y)$ is an (explicit) probability density function satisfying the following properties:
  \begin{enumerate}
    \item An explicit formula at $y=1$:
      \begin{equation}
	p_{\theta}(1) = \frac{e^{-\gamma_{\mathrm{E}}\theta}}{\Gamma(\theta)}
      \end{equation}
      where $\gamma_{\mathrm{E}}$ is the Euler-Mascheroni constant.
    \item Rapid decay at $y = +\infty$:
      \begin{equation}
	\mathrm{sup}_{y \geq n}p_{\theta}(y) \leq \frac{\theta^{n}}{n!} \label{rapid}
      \end{equation}
    \item The derivative identity, for $x \not\in \{0,1\}$
      \begin{equation}
	\frac{d}{dx}[x^{1-\theta}p_{\theta}(x)] = -\theta x^{-\theta}p_{\theta}(x-1).
      \end{equation}
  \end{enumerate}
\end{lemma}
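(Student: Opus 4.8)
The proof revolves around the characteristic (generating) function of $T_{0n}=\sum_{j=1}^{n}jZ_{j}$, where the $Z_j$ are independent with $Z_j\sim\Poisson(\theta/j)$, namely
\[
  g_n(x)\Def\Exp\bigl[x^{T_{0n}}\bigr]=\exp\Bigl(\theta\sum_{j=1}^{n}\frac{x^{j}-1}{j}\Bigr).
\]
The starting observation is that $\exp\bigl(\theta\sum_{j=1}^{n}x^{j}/j\bigr)$ and $(1-x)^{-\theta}$ have the same Taylor coefficients up to order $n$, so that for every $0\le r\le n$,
\[
  \Pr(T_{0n}=r)=e^{-\theta H_{n}}\binom{r+\theta-1}{r},\qquad H_n\Def\sum_{k=1}^{n}\tfrac1k .
\]
Using $H_n=\log n+\gamma_{\mathrm E}+o(1)$ and $\binom{r+\theta-1}{r}=\frac{\Gamma(r+\theta)}{\Gamma(\theta)\,r!}\sim r^{\theta-1}/\Gamma(\theta)$, this already proves \eqref{convt0n} for every $y\in(0,1]$, with $p_\theta(y)=\frac{e^{-\gamma_{\mathrm E}\theta}}{\Gamma(\theta)}\,y^{\theta-1}$ on that range; the special case $y=1$, $r=n$, is precisely item (1).

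For $y>1$ I would prove \eqref{convt0n} by Fourier inversion. Writing
\[
  n\Pr(T_{0n}=r)=\frac{1}{2\pi}\int_{-n\pi}^{n\pi}e^{-i(r/n)s}\exp\Bigl(\theta\sum_{j=1}^{n}\frac{e^{ijs/n}-1}{j}\Bigr)\,ds,
\]
one recognises $\sum_{j=1}^{n}\frac{e^{ijs/n}-1}{j}$ as a Riemann sum converging to $\int_{0}^{1}\frac{e^{ius}-1}{u}\,du$, whose real part equals $-\gamma_{\mathrm E}-\log|s|+o(1)$ as $|s|\to\infty$. Hence the integrand converges pointwise and is bounded, uniformly in $n$, by a constant times $(1+|s|)^{-\theta}$; I would then define $p_\theta$ as the inverse Fourier transform of $\exp\bigl(\theta\int_0^1\frac{e^{ius}-1}{u}\,du\bigr)$ and pass to the limit. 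That $p_\theta$ is a genuine probability density follows because each $n\Pr(T_{0n}=\cdot)$ is nonnegative with total integral converging to $1$, tightness of $T_{0n}/n$ being guaranteed by $\Exp[T_{0n}]/n=\theta$. The main obstacle is exactly this dominated convergence when $\theta\le 1$: the majorant $(1+|s|)^{-\theta}$ is not integrable, so one must integrate by parts in $s$, exploiting the oscillation of $e^{-iys}$ together with the estimate that $\tfrac{d}{ds}$ of the exponent is $O(1/s)$, to gain the extra decay. This, and the uniformity over sequences with $r_n/n\to y$, is carried out in \cite{ABT03}, whose argument I would follow.

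For the derivative identity of item (3), I would start from the Panjer-type recursion obtained by comparing coefficients in $x\,g_n'(x)=g_n(x)\sum_{j=1}^{n}j\cdot\tfrac{\theta}{j}\,x^{j}$, namely
\[
  r\,\Pr(T_{0n}=r)=\theta\sum_{j=1}^{\min(r,n)}\Pr(T_{0n}=r-j).
\]
Dividing by $n$ and letting $n\to\infty$ with $r/n\to x$, the (uniform) local limit theorem converts this into the integral equation $x\,p_\theta(x)=\theta\int_{x-1}^{x}p_\theta(v)\,dv$ for $x\notin\{0,1\}$, hence by continuity for all $x>0$. Differentiating and regrouping gives
\[
  \frac{d}{dx}\bigl[x^{1-\theta}p_\theta(x)\bigr]
  =(1-\theta)x^{-\theta}p_\theta(x)+x^{1-\theta}p_\theta'(x)
  =-\theta x^{-\theta}p_\theta(x-1),
\]
which is item (3). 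Finally, item (2) follows from this integral equation: setting $a_m\Def\int_m^\infty p_\theta(v)\,dv$, integrating $x\,p_\theta(x)=\theta\int_{x-1}^{x}p_\theta$ over $x\in[m,\infty)$ and applying Fubini yields $m\,a_m\le\theta\,a_{m-1}$, hence $a_m\le\theta^{m}/m!$ since $a_0=1$. Then for $y>m\ge1$ one has $p_\theta(y)=\tfrac{\theta}{y}\int_{y-1}^{y}p_\theta\le\tfrac{\theta}{m}a_{m-1}\le\theta^{m}/m!$, and continuity of $p_\theta$ at $y=m$ upgrades this to $\sup_{y\ge m}p_\theta(y)\le\theta^{m}/m!$, which is \eqref{rapid}.
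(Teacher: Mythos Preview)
Your proposal is correct and, in fact, considerably more detailed than the paper's own treatment, which simply cites \cite[Section~4]{ABT03} and notes that the results there are obtained via \emph{size biasing}. Your approach differs from that one in its core mechanism for the local limit: rather than size-biasing $T_{0n}$ (which in \cite{ABT03} leads to a coupling with an independent uniform on $\{1,\dots,n\}$ and a direct inductive analysis of the density), you work with the characteristic function and Fourier inversion. Both routes lead to the same integral equation $x\,p_\theta(x)=\theta\int_{x-1}^{x}p_\theta$, from which items (2) and (3) follow exactly as you derive them; your arguments for those two items are clean and self-contained.

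Two remarks. First, your explicit identification $\Pr(T_{0n}=r)=e^{-\theta H_n}\binom{r+\theta-1}{r}$ for $r\le n$ is a nice shortcut that gives item~(1) and the formula $p_\theta(y)=\tfrac{e^{-\gamma_{\mathrm E}\theta}}{\Gamma(\theta)}y^{\theta-1}$ on $(0,1]$ without any analysis; this is not highlighted in the paper but is implicit in the Ewens normalisation used throughout Section~\ref{sec:ewens}. Second, the one genuine technical debt you flag---the failure of the $(1+|s|)^{-\theta}$ majorant to be integrable when $\theta\le 1$---is precisely where the size-biasing argument of \cite{ABT03} has an advantage: it sidesteps Fourier analysis entirely, so no such integrability issue arises. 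Your proposed fix (integration by parts to exploit the oscillation of $e^{-iys}$) is the standard workaround and does go through, but it is worth noting that this is exactly the step where deferring to \cite{ABT03} amounts to importing a different method rather than completing your own.
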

\begin{proof}
  These are proved in \cite[Section 4]{ABT03} using size biasing techniques.
\end{proof}
We will also make use of the following finite $n$ uniform bound.
\begin{lemma}[Lemma 4.12 (i) in \cite{ABT03}]
  \label{lem:unifbnd}
  If $0 \leq \theta \leq 1$, then
  \begin{equation}
    \mathrm{max}_{k \geq 0}\, \mathbb{P}(T_{0n}=k) \leq e^{-\theta\,h(n+1)}
  \end{equation}
  where $h(n+1)$ is the harmonic number
  \begin{equation}
    h(n+1) = \sum_{j=1}^{n}\frac{1}{j}.
  \end{equation}
\end{lemma}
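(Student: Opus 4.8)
The plan is to route the estimate through probability generating functions, using the Poisson representation $T_{0n} = \sum_{j=1}^{n} jZ_j$ with the $Z_j$ independent and $Z_j \sim \Poisson(\theta/j)$. First I would write down the generating function: since $\Exp(x^{jZ_j}) = \exp\bigl(\tfrac{\theta}{j}(x^j-1)\bigr)$, independence gives
\[
  \Exp(x^{T_{0n}}) = \prod_{j=1}^{n}\exp\Bigl(\tfrac{\theta}{j}(x^j-1)\Bigr) = e^{-\theta h(n+1)}\prod_{j=1}^{n} e^{\theta x^j/j},
\]
where we used $\sum_{j=1}^{n}\tfrac{\theta}{j} = \theta\, h(n+1)$. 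Extracting the coefficient of $x^k$ yields the exact identity $\mathbb{P}(T_{0n}=k) = e^{-\theta h(n+1)}\,[x^k]\prod_{j=1}^{n} e^{\theta x^j/j}$, valid for every $k \ge 0$.

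The second step is to dominate the partial Euler product coefficientwise by the full one. Because $\theta > 0$, each $e^{\theta x^j/j}$ is a power series with nonnegative coefficients and constant term $1$; hence multiplying in the missing tail $\prod_{j>n} e^{\theta x^j/j}$ (again a nonnegative-coefficient series with constant term $1$) can only increase each coefficient, so
\[
  [x^k]\prod_{j=1}^{n} e^{\theta x^j/j} \;\le\; [x^k]\prod_{j=1}^{\infty} e^{\theta x^j/j} = [x^k](1-x)^{-\theta} = \binom{k+\theta-1}{k}.
\]
The final step is the elementary observation that for $0 \le \theta \le 1$ one has $\binom{k+\theta-1}{k} = \prod_{i=1}^{k}\frac{\theta+i-1}{i} \le 1$, since each factor lies in $[0,1]$ and the empty product ($k=0$) equals $1$. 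Chaining the three displays gives $\mathbb{P}(T_{0n}=k) \le e^{-\theta h(n+1)}$ for all $k \ge 0$, which is the claim.

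I do not expect any genuine obstacle. The one point worth flagging is uniformity in $k$: for $k \le n$ the same manipulation collapses to the closed form $\mathbb{P}(T_{0n}=k) = e^{-\theta h(n+1)}\binom{k+\theta-1}{k}$ (equivalently, one may recognize $[x^k]\prod_{j=1}^{k} e^{\theta x^j/j} = \theta^{(k)}/k!$ from the normalization of the Ewens formula via \eqref{l2sum}), but this closed form fails for $k > n$. The coefficientwise domination by $(1-x)^{-\theta}$ is exactly what lets the bound go through for all $k$ simultaneously, which is why I would phrase the argument via generating functions rather than a direct sum over partitions.
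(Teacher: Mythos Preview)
Your proof is correct. The paper does not supply its own argument for this lemma; it simply cites \cite[Lemma 4.12(i)]{ABT03}. Your generating-function approach is essentially the standard one: the identity $\Pr(T_{0n}=k)=e^{-\theta h(n+1)}[x^k]\prod_{j=1}^n e^{\theta x^j/j}$, the coefficientwise domination by $(1-x)^{-\theta}$, and the observation that $\binom{k+\theta-1}{k}\le 1$ for $\theta\in[0,1]$ are exactly the ingredients behind the cited result. Your remark that the closed form $\Pr(T_{0n}=k)=e^{-\theta h(n+1)}\theta^{(k)}/k!$ only holds for $k\le n$, and that the coefficient domination is what carries the bound to all $k$, is accurate and worth keeping.
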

Now let $L^{(n)}$ denote the length of the longest cycle. This quantity is closely related to the distribution of $T_{0n}$, because by the conditioning relation
\begin{equation} \label{eq:Lndef}
  \begin{split}
    \mathbb{P}(L^{(n)} \leq r) &= \mathbb{P}(\{c_{r+1}=0\} \land \ldots \land \{c_{n}=0\})\\
    &= \mathbb{P}(\{Z_{r+1}=0\} \land \ldots \land \{Z_{n}=0\} \, | \, T_{0n}=n)\\
    &= \mathbb{P}(\{Z_{r+1}=0\} \land \ldots \land \{Z_{n}=0\})\frac{\mathbb{P}(T_{0r}=n)}{\mathbb{P}(T_{0n}=n)}.
  \end{split}
\end{equation}
Now Lemma \ref{lem:t0n} gives the following, as quoted in \cite[Lemma 4.23]{ABT03} and attributed to Kingman (1977).
\begin{lemma}
  \label{lem:kingman}
  As $n \to \infty$, we have the convergence in distribution $n^{-1}L^{(n)} \overset{d}{\longrightarrow} L^{(\infty)}$ where $L^{(\infty)}$ is a random variable with distribution function $F_{\theta}$ given by
  \begin{equation}
    F_{\theta}(x) = e^{\gamma_{\mathrm{E}}\theta}x^{\theta-1}\Gamma(\theta)p_{\theta}(1/x).
  \end{equation}
\end{lemma}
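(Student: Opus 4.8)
The plan is to pass to the limit directly in the exact identity \eqref{eq:Lndef}, feeding the independent-Poisson representation into the first factor and the local limit theorem of Lemma \ref{lem:t0n} into the ratio of point probabilities. Fix $x>0$; since $L^{(n)}$ is a positive integer, $\mathbb{P}(n^{-1}L^{(n)}\le x)=\mathbb{P}(L^{(n)}\le r_n)$ with $r_n\coloneqq\lfloor nx\rfloor$. For $x\ge 1$ this probability equals $1$ identically, because $L^{(n)}\le n$ deterministically, and this matches $F_\theta(x)=1$: the derivative identity of Lemma \ref{lem:t0n} forces $y^{1-\theta}p_\theta(y)$ to be constant on $(0,1)$ (its derivative there being $-\theta y^{-\theta}p_\theta(y-1)=0$, as $p_\theta$ vanishes off $(0,\infty)$), and continuity at $y=1$ pins the constant to $p_\theta(1)=e^{-\gamma_{\mathrm{E}}\theta}/\Gamma(\theta)$, so $x^{\theta-1}p_\theta(1/x)=p_\theta(1)$ for $x\ge 1$ and hence $F_\theta(x)=e^{\gamma_{\mathrm{E}}\theta}\Gamma(\theta)p_\theta(1)=1$. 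The substance is therefore the range $x\in(0,1)$, where $r_n\to\infty$ and $r_n/n\to x$.

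For $x\in(0,1)$ I would treat the two factors in \eqref{eq:Lndef} separately. Since each $Z_j$ is Poisson with parameter $\theta/j$, the first factor equals $\prod_{j=r_n+1}^{n}e^{-\theta/j}=\exp\bigl(-\theta\sum_{j=r_n+1}^{n}\tfrac1j\bigr)$, and the harmonic asymptotic $\sum_{j=1}^{m}\tfrac1j=\log m+\gamma_{\mathrm{E}}+O(1/m)$ gives $\sum_{j=r_n+1}^{n}\tfrac1j\to\log(1/x)$, so this factor tends to $x^{\theta}$. For the second factor, Lemma \ref{lem:t0n} applied with index $n$ and target value $n$ (so $y=1$) gives $n\,\mathbb{P}(T_{0n}=n)\to p_\theta(1)$, while the same lemma applied with index $r_n$ and target value $n$ (so $y=\lim n/r_n=1/x\in(1,\infty)$) gives $r_n\,\mathbb{P}(T_{0r_n}=n)\to p_\theta(1/x)$; therefore
\[
  \frac{\mathbb{P}(T_{0r_n}=n)}{\mathbb{P}(T_{0n}=n)}=\frac{n}{r_n}\cdot\frac{r_n\,\mathbb{P}(T_{0r_n}=n)}{n\,\mathbb{P}(T_{0n}=n)}\longrightarrow\frac{1}{x}\cdot\frac{p_\theta(1/x)}{p_\theta(1)}.
\]
Multiplying the two limits and inserting $p_\theta(1)=e^{-\gamma_{\mathrm{E}}\theta}/\Gamma(\theta)$,
\[
  \mathbb{P}(L^{(n)}\le r_n)\longrightarrow x^{\theta}\cdot\frac{1}{x}\cdot\frac{p_\theta(1/x)}{p_\theta(1)}=e^{\gamma_{\mathrm{E}}\theta}\,x^{\theta-1}\,\Gamma(\theta)\,p_\theta(1/x)=F_\theta(x),
\]
which is exactly the claim; since $F_\theta$ is continuous on $(0,\infty)$, convergence of $\mathbb{P}(n^{-1}L^{(n)}\le x)$ to $F_\theta(x)$ at every $x$ gives $n^{-1}L^{(n)}\overset{d}{\longrightarrow}L^{(\infty)}$.

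Once \eqref{eq:Lndef} and Lemma \ref{lem:t0n} are in hand this is essentially a bookkeeping computation, so there is no genuinely hard step; the points that need care are (i) that $r_n\to\infty$, so the local limit theorem legitimately applies to $T_{0r_n}$, with the target value $n$ obeying $n/r_n\to 1/x\in(0,\infty)$ as required by Lemma \ref{lem:t0n}; (ii) the case split at $x=1$ and the verification, through the derivative identity, that $F_\theta\equiv 1$ on $[1,\infty)$, so that the limiting object is a genuine distribution function; and (iii) that both factors in \eqref{eq:Lndef} converge to finite, strictly positive limits, so that the product of the limits is the limit of the product.
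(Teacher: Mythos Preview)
Your proof is correct and follows exactly the route the paper sets up: the paper displays the identity \eqref{eq:Lndef} and then states that Lemma \ref{lem:t0n} ``gives the following,'' citing \cite[Lemma 4.23]{ABT03} rather than carrying out the computation. You have simply filled in that computation, applying Lemma \ref{lem:t0n} once with index $n$ and once with index $r_n$, and your verification that $F_\theta\equiv 1$ on $[1,\infty)$ via the derivative identity is a nice extra check.
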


We also need a tail bound that controls the probability that the largest cycle is unusually small.
\begin{lemma}\label{lem:Lnlowertail}
  For any $\theta > 0,$
  there is a constant $c_\theta > 0$ so that
  \[
    \Pr( L^{(n)} \leq n/\log n) \leq n^{-c_\theta \log\log n}.
  \]
\end{lemma}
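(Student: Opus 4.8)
The plan is to use the representation of the longest-cycle distribution from \eqref{eq:Lndef} together with the asymptotics for $\Pr(T_{0n}=r)$ recorded in Lemma~\ref{lem:t0n} and the uniform bound in Lemma~\ref{lem:unifbnd} (or its analogue for $\theta>1$). Write $r=\lfloor n/\log n\rfloor$. From \eqref{eq:Lndef},
\[
  \Pr(L^{(n)} \le r)
  = \Pr(Z_{r+1}=0, \ldots, Z_n=0)\,\frac{\Pr(T_{0r}=n)}{\Pr(T_{0n}=n)}.
\]
The denominator is handled by Lemma~\ref{lem:t0n}(1): $n\Pr(T_{0n}=n)\to p_\theta(1)=e^{-\gamma_{\mathrm E}\theta}/\Gamma(\theta)$, so $\Pr(T_{0n}=n) \asymp 1/n$, a harmless polynomial factor. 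The first factor is a product of probabilities that independent Poissons with parameters $\theta/k$ vanish, namely $\exp\bigl(-\theta\sum_{k=r+1}^n 1/k\bigr) = \exp\bigl(-\theta(\log n - \log r) + O(1)\bigr)$; since $\log n - \log r = \log\log n$, this factor is of size roughly $(\log n)^{-\theta}$, again not enough on its own. So the whole saving must come from the numerator's other factor, $\Pr(T_{0r}=n)$: we are asking a sum of the form $T_{0r}=\sum_{j\le r} jZ_j$, whose typical size is $\sum_{j\le r}j\cdot(\theta/j)=\theta r = \theta n/\log n$, to take the value $n$, which is a factor $\log n/\theta$ above its mean. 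This is a moderate/large deviation event and should carry a cost of order $n^{-c\log\log n}$.

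The key step is therefore a large-deviation upper bound for $\Pr(T_{0r}=n)$ when $n/r\to\infty$. I would do this via an exponential (Chernoff) bound: for any $t>0$,
\[
  \Pr(T_{0r} = n) \le \Pr(T_{0r}\ge n) \le e^{-tn}\,\Exp\bigl(e^{t T_{0r}}\bigr)
  = \exp\Bigl(-tn + \sum_{j=1}^r \frac{\theta}{j}\bigl(e^{tj}-1\bigr)\Bigr),
\]
using that $Z_j\sim\Poisson(\theta/j)$ are independent and $\Exp(e^{tjZ_j})=\exp\bigl(\frac{\theta}{j}(e^{tj}-1)\bigr)$. Now optimize (or just make a good choice of) $t$. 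Taking $t$ of order $(\log r)/r$ — say $t = \lambda/r$ with $\lambda = c_0\log r$ for a small constant $c_0$ — the exponent $-tn$ contributes $-\lambda n/r \asymp -(\log r)(n/r) \asymp -(\log n)^2$ after using $n/r\asymp\log n$, which already beats any polynomial; and the sum $\sum_{j\le r}\frac{\theta}{j}(e^{tj}-1)$ is dominated by the terms with $j$ close to $r$, where $e^{tj}$ is at most $e^{\lambda} = r^{c_0}$, so the sum is at most $O(\theta r^{c_0}\log r)$, which for small $c_0$ is $o((\log n)^2)$ — actually one wants to be a bit more careful and get $n^{-c_\theta\log\log n}$ rather than $e^{-c(\log n)^2}$, so I would instead take $t = (\alpha\log\log n)/r$ for a suitable constant $\alpha$, making $-tn \asymp -(\log\log n)(n/r) \asymp -(\log n)(\log\log n)$, i.e.\ the bound $n^{-\alpha\log\log n}$ up to constants, while keeping $e^{tr}=e^{\alpha\log\log n}=(\log n)^\alpha$ so that the moment-generating sum stays $O(\theta(\log n)^\alpha\log r) = n^{o(1)}$ and is absorbed. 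Combining the three factors and choosing the constant appropriately gives $\Pr(L^{(n)}\le n/\log n) \le n^{-c_\theta\log\log n}$ for all large $n$, which after adjusting $c_\theta$ holds for all $n$.

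The main obstacle is bookkeeping in the Chernoff estimate: one must choose the parameter $t$ in the narrow window where $-tn$ is genuinely of order $(\log n)(\log\log n)$ yet the cumulant sum $\sum_{j=1}^r \frac{\theta}{j}(e^{tj}-1)$ — whose large-$j$ terms blow up like $\frac{\theta}{r}e^{tr}$ — remains negligible against it; this forces $tr$ to grow only like $\log\log n$ rather than $\log n$, and one has to check the tail of that sum carefully (splitting at, say, $j\le r/2$ versus $j>r/2$, bounding $e^{tj}-1\le tje^{tj}$ on the small-$j$ range). A secondary, minor point is that Lemma~\ref{lem:unifbnd} is stated only for $0\le\theta\le1$; for $\theta>1$ the denominator $\Pr(T_{0n}=n)$ is still $\asymp 1/n$ by Lemma~\ref{lem:t0n}(1), so the argument goes through unchanged and no separate input is needed there.
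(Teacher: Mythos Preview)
Your approach is correct and essentially the same as the paper's: both reduce via \eqref{eq:Lndef} to bounding $\Pr(T_{0r}\ge n)$ for $r=\lfloor n/\log n\rfloor$ by an exponential-moment (Chernoff) argument. The paper simply quotes a Bennett-type concentration inequality for Poisson functionals (\cite{Wu}, \cite{Ledoux}) in the form $\Pr(T_{0r}>rt)\le\exp\bigl(-\tfrac{t}{4}\log(1+\tfrac{t}{C_\theta})\bigr)$ and plugs in $t=\lfloor\log n\rfloor$, whereas you compute the same bound by hand; your version is more self-contained.

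One small correction to your bookkeeping: the crude estimate $\sum_{j\le r}\tfrac{\theta}{j}(e^{tj}-1)=O\bigl((\log n)^\alpha\log r\bigr)$ is too loose to be ``absorbed'' by $-tn\asymp-\alpha(\log n)(\log\log n)$, since $(\log n)^{\alpha+1}$ beats $(\log n)\log\log n$ for every $\alpha>0$. The fix you already flag---using $e^{x}-1\le xe^{x}$ for all $x\ge0$, not just small $j$---gives $\sum_{j\le r}\tfrac{\theta}{j}(e^{tj}-1)\le\theta t\sum_{j\le r}e^{tj}\le C\theta e^{tr}=C\theta(\log n)^\alpha$, and now any choice of $\alpha\in(0,1)$ makes this $o\bigl((\log n)\log\log n\bigr)$ and the argument closes.
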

\begin{proof}
  Using \eqref{eq:Lndef}, and the convergence of $\Pr(T_{0n} =n)$, there is some constant $C_\theta$ so that for any $r \in \N$
  \[
    \Pr( L^{(n)} \leq r)
    \leq
    \frac{\Pr( T_{0r} = n)}{\Pr( T_{0n} = n)}
    \leq
    C_\theta
    \Pr( T_{0r} = n).
  \]
  When $r$ is much smaller than $n$ this probability becomes very small.
  Using standard concentration results for functionals of Poisson fields (see \cite[Proposition 3.1]{Wu} or \cite{Ledoux}),
  there is a constant $C_\theta$ sufficiently large that for all $t > 0$ and all $r \in \N,$
  \[
    \Pr( T_{0r} > rt) \leq \exp(-\tfrac{t}{4}\log(1+\tfrac{t}{C_\theta})).
  \]
  The proof follows by taking $r = \lfloor n/\log n \rfloor $ and $t = \lfloor \log n \rfloor.$ 
\end{proof}

To analyze \eqref{eq:l2conda} we will also need information on the shortest cycle $S^{(n)}$ in a Ewens distributed permutation.  From the asymptotic independence of the cycle counts in a Ewens permutation, one expects $\Pr(S^{(n)} > q) \to e^{-\theta h(q+1)}$ as $n \to \infty.$
We will make use of a nonasymptotic bound that has this behavior.
\begin{lemma} \label{lem:shortees}
  For all $\theta > 0$ there is a constant $C_\theta >0$
  so that
  for all $n,q \in \N$
  \[
    \Pr(S^{(n)} > q)
    \leq 
    \frac{\theta (q-1)!}{\theta^{(q)}}
    \leq C_\theta e^{-\theta h(q+1)}.
  \]
\end{lemma}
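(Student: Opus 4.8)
The plan is to exploit the conditioning relation directly. Recall that under the Ewens sampling formula, the cycle counts $(M_1,\dots,M_n)$ have the law of independent Poisson variables $(Z_1,\dots,Z_n)$ with $Z_k \sim \Poisson(\theta/k)$, conditioned on $T_{0n} = \sum_{j=1}^n j Z_j = n$. The event $\{S^{(n)} > q\}$ — that there is no cycle of length $\le q$ — is precisely $\{Z_1 = \dots = Z_q = 0\}$. So, exactly as in the computation \eqref{eq:Lndef} for the longest cycle, we condition on the values of the first $q$ coordinates and find
\begin{equation}
  \Pr(S^{(n)} > q)
  = \Pr(Z_1 = \dots = Z_q = 0)\,\frac{\Pr(T^{(q)}_{0n} = n)}{\Pr(T_{0n} = n)},
\end{equation}
where $T^{(q)}$ denotes the analogous sum over the coordinates $q+1,\dots,n$. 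Now $\Pr(Z_1 = \dots = Z_q = 0) = \prod_{k=1}^q e^{-\theta/k} = e^{-\theta h(q+1)}$, and since the $Z_k$ for $k > q$ are a subset of all the $Z_k$, the event $\{T^{(q)}_{0n} = n\}$ combined with $\{Z_1 = \dots = Z_q = 0\}$ is contained in $\{T_{0n} = n\}$; more directly, $\Pr(T^{(q)}_{0n} = n)\Pr(Z_1=\dots=Z_q=0) \le \Pr(T_{0n}=n)$ does not quite give what we want, so instead I would bound $\Pr(T^{(q)}_{0n} = n) \le 1$ trivially, leaving
\begin{equation}
  \Pr(S^{(n)} > q) \le \frac{e^{-\theta h(q+1)}}{\Pr(T_{0n}=n)}.
\end{equation}
This is not yet the claimed bound because the denominator is not the right normalization; I would instead keep track of the finite-$n$ identity for the Ewens normalizing constant, $\Pr(T_{0n} = n) = \frac{n!}{\theta^{(n)}}\prod_{k=1}^n e^{-\theta/k}$ (this is exactly the statement that \eqref{ewens} sums to one, written in terms of the independent Poissons), so that
\begin{equation}
  \Pr(S^{(n)} > q) \le e^{-\theta h(q+1)} \cdot \frac{\theta^{(n)}}{n!}\prod_{k=1}^n e^{\theta/k} = \frac{\theta^{(n)}}{n!}\,e^{\theta h(n+1) - \theta h(q+1)} \cdot e^{-\theta h(n+1)}\cdot e^{\theta h(q+1)},
\end{equation}
which telescopes — but this is getting unwieldy, and I think the cleaner route is the following.

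The cleaner route: work with the permutation picture directly, via a union/inclusion argument on which elements lie in short cycles, or better, use the exact product formula for $\Pr(S^{(n)} > q)$ that comes from the Feller coupling or from a direct generating-function computation. Concretely, $\Pr(S^{(n)} > q)$ equals the ratio of the Ewens partition function restricted to partitions with all parts $> q$ to the full partition function; using the exponential-generating-function identity $\sum_n \frac{\theta^{(n)}}{n!} x^n = (1-x)^{-\theta}$ and its analogue with the first $q$ parts removed, $\sum_n a_n x^n = (1-x)^{-\theta}\prod_{k=1}^q (1-x^k)^{\theta} \cdot(\text{something})$, one extracts $\Pr(S^{(n)} > q) = \frac{n!}{\theta^{(n)}}[x^n]\,(1-x)^{-\theta}\exp\!\big({-\theta\sum_{k=1}^q \tfrac{x^k}{k} + \theta\sum_{k=1}^q(\tfrac{x^k}{k} - \text{log terms})}\big)$ — the point being that after simplification one gets a clean ratio whose leading coefficient asymptotics give the constant $\frac{\theta(q-1)!}{\theta^{(q)}}$. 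I would then verify the first inequality $\Pr(S^{(n)} > q) \le \frac{\theta(q-1)!}{\theta^{(q)}}$ by a monotonicity-in-$n$ argument (adding elements can only make it harder to avoid short cycles, suitably formalized) or by directly bounding the coefficient extraction. The second inequality $\frac{\theta(q-1)!}{\theta^{(q)}} \le C_\theta e^{-\theta h(q+1)}$ is then routine: $\frac{(q-1)!}{\theta^{(q)}} = \frac{\Gamma(q)\Gamma(\theta)}{\Gamma(\theta+q)} \sim \Gamma(\theta) q^{-\theta}$ as $q\to\infty$ by Stirling, while $e^{-\theta h(q+1)} \sim e^{-\theta\gamma_{\mathrm E}} q^{-\theta}$, so the two sides are comparable up to a constant depending only on $\theta$, and one absorbs the finitely many small $q$ into $C_\theta$.

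The main obstacle is pinning down the first, nonasymptotic inequality $\Pr(S^{(n)} > q) \le \frac{\theta(q-1)!}{\theta^{(q)}}$ with the exact constant and uniformly in $n$, rather than just an asymptotic or an extra-constant version. The asymptotic statement $\Pr(S^{(n)} > q) \to \frac{\theta(q-1)!}{\theta^{(q)}}$ as $n\to\infty$ (for fixed $q$) follows from the known asymptotic independence of the cycle counts, but getting the clean $\le$ for all $n$ simultaneously is the delicate part. I expect the right tool is either (i) an explicit Feller-coupling representation, under which $\{S^{(n)} > q\}$ corresponds to an event about the first few Bernoulli variables whose probability is manifestly monotone, or (ii) a direct comparison of the recursively-defined normalizing constants showing $a_n / \frac{\theta^{(n)}}{n!}$ is nonincreasing in $n$. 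Once that monotonicity is in hand, evaluating the limiting (equivalently, the $n=q$ boundary) value gives exactly $\frac{\theta(q-1)!}{\theta^{(q)}}$, and the rest is Stirling.
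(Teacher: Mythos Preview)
Your proposal circles around the right idea but never lands the key step. The Poisson--conditioning route you start with cannot deliver the sharp constant $\tfrac{\theta(q-1)!}{\theta^{(q)}}$ without further input: bounding $\Pr(T^{(q)}_{0n}=n)\le 1$ throws away exactly the factor that distinguishes the correct constant from a crude $C_\theta e^{-\theta h(q+1)}$, and the generating-function manipulations you sketch afterwards do not obviously produce a clean, $n$-uniform inequality either. Your option (ii), monotonicity of the ratio in $n$, is not immediate and would itself require a separate argument.

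Your option (i), however, is exactly what the paper does, and it is a two-line argument once you write it down. Using the Feller coupling with independent Bernoullis $\xi_j$ having $\Pr(\xi_j=1)=\theta/(\theta+j-1)$, the cycle lengths of a Ewens$(\theta)$ permutation of $[n]$ are the spacings between consecutive ones in the word $(1,\xi_n,\xi_{n-1},\dots,\xi_2,1)$. If any of $\xi_2,\dots,\xi_q$ equals $1$, then the rightmost spacing has length at most $q-1$, so $S^{(n)}\le q-1$. Hence
\[
  \Pr(S^{(n)}>q)\le \Pr(\xi_2=\cdots=\xi_q=0)=\prod_{k=2}^q\frac{k-1}{\theta+k-1}=\frac{\theta(q-1)!}{\theta^{(q)}},
\]
which is the first inequality with the exact constant, uniformly in $n$. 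Your Stirling argument for the second inequality is correct. So the gap in your proposal is not conceptual --- you named the right tool --- but you stopped short of executing it; the execution is immediate.
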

\begin{proof}
  We use the \emph{Feller} description of the Ewens sampling formula (see \cite[Section 3]{ArratiaTavare2}). Let $\xi_1,\xi_2,\xi_3, \dots$ be independent Bernoulli variables having parameter
  \[
    \Pr(\xi_j = 1) = \frac{\theta}{\theta+j-1}, \quad \text{for all } j \in \N.
  \]
  The Ewens distribution on $(\vec{M}_k^{(n)})_{k=1}^n$ has the same law as the joint frequency of spacings between consecutive ones in the vector
  \[
    (1, \xi_n, \xi_{n-1}, \dots, \xi_3, \xi_2, 1).
  \]
  In this case a spacing of length $\ell$ is a pair $\{k,k+\ell\}$ where $\xi_{k} = \xi_{k+\ell} = 1$ and $\xi_j = 0$ for all $j$ with $k < j < k+\ell.$  In the case $k+\ell=q+1,$ we instead use $1$ in place of $\xi_{k+\ell}.$
  In particular, we have the inequality
  \[
    \begin{aligned}
      \Pr\left[ S^{(n)} > q \right]
      &\leq
      \Pr\left[
	\xi_k = 0, \text{ for all } 2 \leq k \leq q
      \right]     \\
      &= \prod_{k=2}^q \left( 1- \frac{\theta}{\theta+k-1} \right) \\
      &= \frac{\theta (q-1)!}{\theta^{(q)}}.
    \end{aligned}
  \]
\end{proof}

\section{Martingale approximation and convergence}
\label{sec:mgle1}
We begin this section by finding an approximate martingale structure in the sum \eqref{cnsum}. This opens the possibility of applying a central limit theorem for martingales and we explain why this is relevant for our proof of Theorem \ref{th:l2phase}. The results on permutations in the previous section will be used to arrive at the martingale approximation, as we now show. 
\subsection{Martingale approximation}
Given a composition $m$, we define
\begin{equation}
  \nu(m) := \mathrm{max}\{k=1,\ldots,n \mid m_{k} \geq 1\}.
\end{equation}
If the $m_k$ are interpreted as cycle counts, the quantity $\nu(m)$ is the length of the largest cycle in the corresponding permutation. Given $\delta > 0$, consider permutations whose largest cycle is smaller than $\lfloor \delta n \rfloor$:
\begin{equation}
  P_{\delta} := \{m \in S_{n} \mid \nu(m) < \lfloor \delta n \rfloor\}. \label{Pdeltaset}
\end{equation}
We will show that the contribution of $P_{\delta}$ to the sum in \eqref{cnsum} can be neglected for large $n$ and small $\delta$. In the sum over the remaining terms $P_{\delta}^{\mathrm{c}}$ we define another negligible set, the set of all compositions where there are multiple longest cycles of the same length:
\begin{equation}
  R := \{m \in S_{n} \mid m_{\nu(m)} \geq 2\} \label{Rset}
\end{equation}
We define $\tilde{c}_{n}^{(\delta)}$ to be the sum over compositions whose largest cycle is \textit{greater} than $\lfloor \delta n \rfloor$, and where there is only one such largest cycle, in other words $\tilde{c}_{n}^{(\delta)} := c_{n,P_{\delta}^{\mathrm{c}}\cap R^{\mathrm{c}}}$. We have
\begin{equation}
  c_{n} = \tilde{c}_{n}^{(\delta)}+c_{n,P_{\delta}}+c_{n,P_{\delta}^{\mathrm{c}}\cap R} \label{decomp}
\end{equation}
and summing over the possible lengths of the longest cycle, we can decompose the sum as
\begin{equation}
  \tilde{c}_{n}^{(\delta)} = \sum_{q=\lfloor \delta n \rfloor}^{n}\mathcal{N}_{q}\sqrt{\frac{\theta}{q}}\,c_{n-q,q-1} \label{martingalesum}
\end{equation}
where $c_{n-q,q-1}$ are as in \eqref{cnq}.

\begin{lemma}[Martingale approximation]
  \label{le:martingalerep}
  Let $0 < \delta < 1$ and assume $\theta \in [0,1]$. Then $c_{n}$ is well approximated by $\tilde{c}_{n}^{(\delta)}$ given by \eqref{martingalesum} in the following sense. After proper normalization, the error terms $c_{n,P_{\delta}}$ and $c_{n,P_{\delta}^{\mathrm{c}}\cap R}$ in \eqref{decomp} have $L^{2}$-norm satisfying
  \begin{align}
    &\lim_{\delta \to 0}\lim_{n \to \infty}\frac{\mathbb{E}(|c_{n,P_{\delta}}|^{2})}{\mathbb{E}(|c_{n}|^{2})} = 0 \label{statement1}\\
    &\frac{\mathbb{E}(|c_{n,P_{\delta}^{\mathrm{c}}\cap R}|^{2})}{\mathbb{E}(|c_{n}|^{2})} = O(n^{-\theta}), \qquad n \to \infty. \label{statement2}
  \end{align}
  Taken together, using \eqref{decomp} we have
  \begin{equation}
    \lim_{\delta \to 0}\lim_{n \to \infty}\frac{\mathbb{E}(|c_{n}-\tilde{c}_{n}^{(\delta)}|^{2})}{\mathbb{E}(|c_{n}|^{2})} = 0.
  \end{equation}
\end{lemma}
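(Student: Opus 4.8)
The whole argument rests on the Ewens correspondence \eqref{corres}. Since $\mathbb{E}(|c_n|^2)=\theta^{(n)}/n!$, that identity reads $\mathbb{E}(|c_{n,P}|^2)/\mathbb{E}(|c_n|^2)=\mathbb{P}(\vec{M}^{(n)}\in P)$ for every $P\subseteq S_n$, with $\vec{M}^{(n)}$ Ewens-distributed of parameter $\theta$. So \eqref{statement1} and \eqref{statement2} are literally statements about the Ewens measure of the explicit sets $P_\delta$ and $P_\delta^{\mathrm c}\cap R$, and the final claim will follow by combining them: because distinct monomials $\prod_k\mathcal{N}_k^{m_k}/m_k!\cdot(\theta/k)^{m_k/2}$ are orthogonal in $L^2$ by \eqref{gausscov}, the three summands of the decomposition \eqref{decomp} are pairwise orthogonal, hence $\mathbb{E}(|c_n-\tilde{c}_n^{(\delta)}|^2)=\mathbb{E}(|c_{n,P_\delta}|^2)+\mathbb{E}(|c_{n,P_\delta^{\mathrm c}\cap R}|^2)$; dividing by $\mathbb{E}(|c_n|^2)$ and letting first $n\to\infty$ (the second term being $O(n^{-\theta})$ by \eqref{statement2}) and then $\delta\to0$ (by \eqref{statement1}) concludes.

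For \eqref{statement1}, note that $\{\vec{M}^{(n)}\in P_\delta\}$ is exactly $\{L^{(n)}<\lfloor\delta n\rfloor\}$, so the ratio equals $\mathbb{P}(L^{(n)}<\lfloor\delta n\rfloor)$; by Lemma~\ref{lem:kingman} one gets $\limsup_n\mathbb{P}(L^{(n)}<\lfloor\delta n\rfloor)\le F_\theta(\delta)$, and from the formula $F_\theta(x)=e^{\gamma_{\mathrm E}\theta}x^{\theta-1}\Gamma(\theta)p_\theta(1/x)$ together with the rapid decay \eqref{rapid} of $p_\theta$ at infinity, $F_\theta(\delta)\to0$ as $\delta\to0$. (One could equally avoid Lemma~\ref{lem:kingman} and compute $\mathbb{P}(L^{(n)}\le\lfloor\delta n\rfloor)$ directly from the conditioning relation as $\prod_{k>\lfloor\delta n\rfloor}e^{-\theta/k}\cdot\mathbb{P}(T_{0,\lfloor\delta n\rfloor}=n)/\mathbb{P}(T_{0n}=n)$ and estimate each factor via Lemma~\ref{lem:t0n}.)

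The substantive step is \eqref{statement2}. A composition in $P_\delta^{\mathrm c}\cap R$ has a largest part $\ell$ of multiplicity $\ge 2$, whence $\lfloor\delta n\rfloor\le\ell\le\lfloor n/2\rfloor$. I will use the conditioning relation $\mathbb{P}(\vec{M}^{(n)}\in A)=\mathbb{P}(\vec{Z}\in A\mid T_{0n}=n)$ with $\vec{Z}=(Z_k)$, $Z_k\sim\mathrm{Poisson}(\theta/k)$ independent, $T_{0m}=\sum_{j\le m}jZ_j$, and split the event over the value $\ell=\nu(m)$ and over $a=Z_\ell\ge2$; conditioning on $Z_\ell=a$ and $Z_k=0$ for $k>\ell$ forces $T_{0,\ell-1}=n-a\ell$, so by independence the corresponding probability is $\mathbb{P}(Z_\ell=a)\cdot\prod_{k>\ell}\mathbb{P}(Z_k=0)\cdot\mathbb{P}(T_{0,\ell-1}=n-a\ell)$. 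I bound the three factors by $(\theta/\ell)^a/a!$, by $e^{-\theta(h(n+1)-h(\ell+1))}=O((\ell/n)^\theta)$, and — this is where $\theta\le1$ enters — by Lemma~\ref{lem:unifbnd}, $\mathbb{P}(T_{0,\ell-1}=n-a\ell)\le e^{-\theta h(\ell)}=O(\ell^{-\theta})$. Summing over $a\ge2$ (using $\sum_{a\ge2}x^a/a!\le x^2$ for $0\le x\le1$) and then over $\ell\ge\lfloor\delta n\rfloor$ gives $\mathbb{P}(\vec{Z}\in P_\delta^{\mathrm c}\cap R,\ T_{0n}=n)=O\bigl(n^{-\theta}\sum_{\ell\ge\lfloor\delta n\rfloor}\ell^{-2}\bigr)=O_\delta(n^{-\theta-1})$, while $\mathbb{P}(T_{0n}=n)\sim p_\theta(1)/n$ by Lemma~\ref{lem:t0n}; dividing yields $\mathbb{P}(\vec{M}^{(n)}\in P_\delta^{\mathrm c}\cap R)=O_\delta(n^{-\theta})$, which is \eqref{statement2}. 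I expect this estimate to be the main obstacle: one must see that demanding two distinct long cycles of the \emph{same} length is doubly rare — the squared Poisson-tail factor $(\theta/\ell)^2$ playing against the $\ell^{-\theta}$ from the local bound on $T_{0,\ell-1}$ and the $n^{-1}$ normalisation of $\mathbb{P}(T_{0n}=n)$ — and the hypothesis $\theta\in[0,1]$ is exactly what makes the uniform bound of Lemma~\ref{lem:unifbnd} available.
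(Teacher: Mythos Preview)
Your proof is correct and follows essentially the same route as the paper's: both parts reduce via the Ewens correspondence \eqref{corres} to the probabilities $\mathbb{P}(L^{(n)}<\lfloor\delta n\rfloor)$ and $\mathbb{P}(\vec M^{(n)}\in P_\delta^{\mathrm c}\cap R)$, and the second is handled by decomposing over the value $\ell$ of the largest part and its multiplicity, then combining the Poisson bound $\mathbb{P}(Z_\ell\ge2)=O(\ell^{-2})$, the uniform local bound of Lemma~\ref{lem:unifbnd} on $\mathbb{P}(T_{0,\ell-1}=\cdot)$, and the asymptotic $\mathbb{P}(T_{0n}=n)\sim p_\theta(1)/n$ from Lemma~\ref{lem:t0n}. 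Your explicit orthogonality argument for the ``taken together'' step is a slight addition over the paper, which simply cites \eqref{decomp}.
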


\begin{proof}
  By the correspondence \eqref{corres}, we have
  \begin{equation}
    \frac{\mathbb{E}(|c_{n,P_{\delta}}|^{2})}{\mathbb{E}(|c_{n}|^{2})} = \mathbb{P}(L^{(n)}_{1} < \lfloor \delta n \rfloor)
  \end{equation}
  where the right-hand side is the probability that the longest cycle is bounded by $\lfloor \delta n \rfloor$. By Lemma \ref{lem:kingman}, we have
  \begin{equation}
    \lim_{n \to \infty}\mathbb{P}(L^{(n)}_{1} < \lfloor \delta n \rfloor) = e^{\gamma_{\mathrm{E}}\theta}\delta^{\theta-1}\Gamma(\theta)p_{\theta}(1/\delta)
  \end{equation}
  and the fast decay \eqref{rapid} gives
  \begin{equation}
    \lim_{\delta \to 0}\lim_{n \to \infty}\mathbb{P}(L^{(n)}_{1} < \lfloor \delta n \rfloor) = 0
  \end{equation}
  which proves statement \eqref{statement1}. For \eqref{statement2} we have
  \begin{align}
    &\mathbb{P}(P^{\delta} \cap R) = \sum_{q=\lfloor \delta n \rfloor}^{n}\mathbb{P}(\{m_{q} \geq 2\} \land \{\nu(m)=q\})\\
    &=\sum_{q=\lfloor \delta n \rfloor}^{n}\mathbb{P}(\{m_{q} \geq 2\} \land \{m_{q+1}=0\} \land \ldots \land \{m_{n}=0\})\\
    &=\sum_{q=\lfloor \delta n \rfloor}^{n}\sum_{\ell=2}^{\infty}\mathbb{P}(\{Z_{q} = \ell\} \land \{Z_{q+1}=0\} \land \ldots \land \{Z_{n}=0\})\frac{\mathbb{P}(T_{0(q-1)} = n-q\ell)}{\mathbb{P}(T_{0n}=n)}
  \end{align}
  where in the last step we employed the conditioning relation and we interpret contributions to the sum above as zero whenever $q\ell > n$. In order to bound the quantities above, first take $r=n$ in \eqref{convt0n} to see that $\mathbb{P}(T_{0n}=n) \sim n^{-1}p_{\theta}(1)$. By Lemma \ref{lem:unifbnd} we have
  \begin{equation}
    \mathbb{P}(T_{0(q-1)} = n-q\ell) \leq e^{-\theta\,h(q)}
  \end{equation}
  while
  \begin{equation}
    \mathbb{P}(\{Z_{q+1}=0\} \land \ldots \land \{Z_{n}=0\}) = e^{-\theta\,h(n+1)+\theta\,h(q+1)}.
  \end{equation}
  A simple bound on the harmonic number gives $e^{-\theta\,h(n+1)} \leq c_{\theta}n^{-\theta}$ for some positive constant $c_{\theta}$. Combining these facts gives, for some other constant $c_{\theta}$, the bound
  \begin{equation}
    \mathbb{P}(P^{\delta} \cap R) \leq n^{1-\theta}\,c_{\theta}\,\sum_{q=\lfloor \delta n \rfloor}^{n}\mathbb{P}(Z_{q} \geq 2) = O_{\delta}(n^{-\theta}), \qquad n \to \infty,\\
  \end{equation}
  where we used that $\mathbb{P}(Z_{q} \geq 2) \sim \frac{\theta^{2}}{2q^{2}}$ as $q \to \infty$. This completes the proof of the martingale approximation.
\end{proof}

\subsection{Martingale central limit theorem}
\label{sec:mgle2}
Having demonstrated the martingale approximation, in this section we will discuss the type of central limit theorems we can apply. In the following we will give a proof of Theorem \ref{th:l2phase} contingent on a certain $L^{2}$ estimate that will be dealt with separately in a later section.

To recap, Lemma \ref{le:martingalerep} states that if we define increments
\begin{equation}
  Z_{n,q} := \frac{1}{\sqrt{\mathbb{E}(|c_{n}|^{2})}}\,\mathcal{N}_{q}\sqrt{\frac{\theta}{q}}\,c_{n-q,q-1} \label{cincrements}
\end{equation}
then the quantity
\begin{equation}
  \frac{\tilde{c}_{n}^{(\delta)}}{\sqrt{\mathbb{E}(|c_{n}|^{2})}} = \sum_{q=\lfloor \delta n \rfloor}^{n}Z_{n,q}
\end{equation}
is a good approximation of the normalized coefficient $c_n/\sqrt{\Exp(|c_n|^2)}.$
Now we see by construction (recalling Remark \ref{rem:cnm}) that $c_{n-q,q-1}$ only depends on the first $q-1$ Gaussians and this implies that the random variables $\mathcal{N}_{q}$ and $c_{n-q,q-1}$ are independent. We have
\begin{equation}
  \mathbb{E}(Z_{n,q} \,|\, \Gfilt_{q-1}) = 0,
\end{equation}
in other words $Z_{n,q}$ are the increments of a martingale with respect to the filtration generated by the first $q-1$ Gaussians.

In order to get convergence in distribution of $\tilde{c}_{n}^{(\delta)}$ (and therefore of $c_{n}$), we will apply a central limit theorem for martingales. The majority of these limit theorems rely to a large extent on the analysis of a quantity known as the \textit{bracket process} (sometimes also referred to as the \textit{conditional variance} \cite{HH80}). In our setting it is given by
\begin{equation}
  \begin{split}
    \mathcal{M}_{\theta,\delta,n}&:=\sum_{q=\lfloor \delta n \rfloor}^{n}\mathbb{E}(|Z_{n,q}|^{2} \,|\, \Gfilt_{q-1})
    = \frac{1}{\mathbb{E}(|c_{n}|^{2})}\,\sum_{q=\lfloor \delta n \rfloor}^{n}\frac{\theta}{q}\,|c_{n-q,q-1}|^{2}. \label{condvar}
  \end{split}
\end{equation}
A key hypothesis usually involves showing that this type of quantity converges in probability to a constant as $n \to \infty$. An interesting feature here is that $\mathcal{M}_{\theta,\delta,n}$ will not have a deterministic limit, and so we need a sufficiently general form of the martingale CLT that allows for fluctuations in the limit $n \to \infty$ of the bracket process. These limiting fluctuations will be described in terms of the total mass of Gaussian multiplicative chaos and are ultimately responsible for the structure of the distribution given in Theorem \ref{th:l2phase}. The appropriate CLT is the following:
\begin{theorem}[Martingale Central Limit Theorem - Section 3.2 in \cite{HH80}]
  Let $\{X_{n,q}\}_{q=1}^{n}$ for $n\geq 1$ be an array of real valued martingale increments with respect to a filtration $\mathcal{F}_{n,q}$ indexed by $q$. Define the random variables
  \begin{align}
    \nu_{n} &:= \sum_{q=1}^{n}\mathbb{E}(X_{n,q}^{2} \mid \mathcal{F}_{n,q-1}), \\
    \xi_{n} &:= \sum_{q=1}^{n}\mathbb{E}(X_{n,q}^{2}\mathbbm{1}_{|X_{n,q}|>\epsilon} \mid \mathcal{F}_{n,q-1})
  \end{align}
  Suppose we have the convergence in probability $\nu_{n} \overset{p}{\longrightarrow} \nu$ where $\nu$ is an a.s. finite random variable, and $\xi_{n} \overset{p}{\longrightarrow} 0$. Then we have the convergence in distribution
  \begin{equation}
    \sum_{q=1}^{n}X_{n,q} \overset{d}{\longrightarrow} \sqrt{\nu}\,N_{\mathbb{R}}, \qquad n \to \infty,
  \end{equation}
  where $N_{\mathbb{R}}$ is a standard (real) Gaussian, independent of $\nu$.
  \label{th:mart}
\end{theorem}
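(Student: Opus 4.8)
This is the classical martingale central limit theorem, stated essentially verbatim as in \cite[Section~3.2]{HH80}, so the plan is to recall the standard characteristic-function proof and isolate the step that does the real work. First I would truncate: replacing $X_{n,q}$ by $X_{n,q}\one[|X_{n,q}|\leq 1] - \mathbb{E}\bigl(X_{n,q}\one[|X_{n,q}|\leq 1]\mid\mathcal{F}_{n,q-1}\bigr)$ gives a martingale-increment array whose increments are bounded by a fixed constant; the conditional Lindeberg hypothesis $\xi_n\overset{p}{\to}0$ guarantees that this alters $\sum_q X_{n,q}$ only by a term vanishing in probability, that the new conditional variances still converge in probability to $\nu$, and that $\max_q|X_{n,q}|\overset{p}{\to}0$. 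Thus one may assume from the outset that the increments are uniformly bounded.

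The heart of the proof is the identity $\mathbb{E}(e^{itS_n})\to\mathbb{E}(e^{-t^2\nu/2})$ for each fixed $t\in\R$, where $S_n=\sum_q X_{n,q}$. Writing $e^{itS_n}=\prod_q e^{itX_{n,q}}$ and peeling off the factors $e^{itX_{n,q}}$ one at a time, largest $q$ first, the martingale property $\mathbb{E}(X_{n,q}\mid\mathcal{F}_{n,q-1})=0$ together with a second-order Taylor expansion gives $\mathbb{E}(e^{itX_{n,q}}\mid\mathcal{F}_{n,q-1})=1-\tfrac{t^2}{2}\mathbb{E}(X_{n,q}^2\mid\mathcal{F}_{n,q-1})+\varepsilon_{n,q}$ with $\sum_q|\varepsilon_{n,q}|\overset{p}{\to}0$ (the remainder bound using the truncation, $\max_q|X_{n,q}|\overset{p}{\to}0$, and the Lindeberg condition); replacing each such conditional expectation by $\exp\bigl(-\tfrac{t^2}{2}\mathbb{E}(X_{n,q}^2\mid\mathcal{F}_{n,q-1})\bigr)$ and iterating leaves $\mathbb{E}(e^{itS_n})=\mathbb{E}(e^{-t^2\nu_n/2})+o(1)$, and then $\nu_n\overset{p}{\to}\nu$ together with $|e^{-t^2\nu_n/2}|\leq 1$ finishes it by bounded convergence. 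Since $\sqrt{\nu}\,N_{\mathbb{R}}$ with $N_{\mathbb{R}}$ a standard Gaussian independent of $\nu$ has characteristic function $t\mapsto\mathbb{E}(e^{-t^2\nu/2})$, this already yields $\sum_q X_{n,q}\overset{d}{\to}\sqrt{\nu}\,N_{\mathbb{R}}$. Carrying a bounded test variable $Y$ (measurable for some $\mathcal{F}_{n,m}$, or in our applications for the fixed $\sigma$-algebra generated by $\bigcup_q\Gfilt_q$, since there $\mathcal{F}_{n,q}=\Gfilt_{q-1}$ does not depend on $n$) through the same computation upgrades this to stable convergence, which is what one really wants downstream as it gives joint convergence of $\sum_q X_{n,q}$ with the random variance $\nu$.

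The step I expect to be the main obstacle is the iteration in the second paragraph: when a factor $\mathbb{E}(e^{itX_{n,q}}\mid\mathcal{F}_{n,q-1})$ is replaced by $\exp\bigl(-\tfrac{t^2}{2}\mathbb{E}(X_{n,q}^2\mid\mathcal{F}_{n,q-1})\bigr)$, this exponential is $\mathcal{F}_{n,q-1}$-measurable and so can be pulled through the next conditioning, but it is \emph{not} independent of the next increment, and controlling the errors this produces requires the uniform boundedness of the accumulated conditional variance $\nu_n$ on an event of high probability, together with the smallness of the individual increments. The remaining bookkeeping — that truncation does not move the limit of $\nu_n$, and that the Taylor remainders sum to something negligible — is routine once the two hypotheses are in hand.
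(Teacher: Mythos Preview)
The paper does not prove this theorem: it is quoted verbatim from \cite[Section~3.2]{HH80} and used as a black box, so there is no ``paper's own proof'' to compare against. Your sketch is a reasonable outline of the standard characteristic-function argument found in Hall--Heyde, and nothing more is required here.
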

Note that \cite{HH80} only deals with real valued random variables, while \eqref{cincrements} are complex. Although it is probably straightforward to generalise their result to the complex case, for our particular problem the simple {i.i.d.}\ structure of the real and imaginary parts of $\mathcal{N}_{q}$ allows us to apply Theorem \ref{th:mart} directly.
\begin{corollary}
  \label{cor:realtocomplex}
  Suppose we have the convergence in probability of the quantity defined in \eqref{condvar},
  \begin{equation}
    \mathcal{M}_{\theta,\delta,n} \overset{p}{\longrightarrow} \nu, \qquad n \to \infty \label{condvarcor}
  \end{equation}
  where $\nu$ is a.s. finite and 
  \begin{equation}
    \frac{1}{\mathbb{E}(|c_{n}|^{2})^{2}}\sum_{q=\lfloor \delta n \rfloor}^{n}\frac{\theta^{2}}{q^{2}}\mathbb{E}|c_{n-q,q-1}|^{4} \to 0, \qquad n \to \infty. \label{lindebergcor}
  \end{equation}
  Then it follows that we have the convergence in distribution,
  \begin{equation}
    \frac{\tilde{c}_{n}^{(\delta)}}{\sqrt{\mathbb{E}(|c_{n}|^{2})}} \overset{d}{\longrightarrow} \sqrt{\nu}\,\mathcal{N}_{1}, \qquad n \to \infty, \label{complexconv}
  \end{equation}
  where $\mathcal{N}_{1}$ is a standard complex Gaussian, independent of $\nu$.
\end{corollary}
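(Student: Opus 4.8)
The plan is to deduce the complex statement from the real-valued martingale CLT of Theorem~\ref{th:mart} by projecting $Y_n \coloneqq \tilde{c}_{n}^{(\delta)}/\sqrt{\mathbb{E}(|c_{n}|^{2})} = \sum_{q \geq \lfloor \delta n\rfloor} Z_{n,q}$ onto an arbitrary real direction and then invoking the Cram\'er--Wold device. I would write $w_{n,q} \coloneqq \frac{1}{\sqrt{\mathbb{E}(|c_n|^2)}}\sqrt{\theta/q}\,c_{n-q,q-1}$, so that \eqref{cincrements} becomes $Z_{n,q} = \mathcal{N}_q\, w_{n,q}$; by Remark~\ref{rem:cnm} the factor $w_{n,q}$ is $\Gfilt_{q-1}$-measurable while $\mathcal{N}_q$ is $\Gfilt_q$-measurable and centered, so (after padding $Z_{n,q}=0$ for $q < \lfloor \delta n\rfloor$) the array $\{Z_{n,q}\}$ consists of complex martingale increments for the filtration $\Gfilt$. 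Fixing $(a,b)\in\mathbb{R}^2$, I would set $X_{n,q} \coloneqq \Re\bigl((a-ib)Z_{n,q}\bigr) = a\Re Z_{n,q} + b\Im Z_{n,q}$, a real martingale-increment array, so that $\sum_q X_{n,q} = a\Re Y_n + b\Im Y_n$.

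The key computation is the conditional variance. Given $\Gfilt_{q-1}$, writing $u\coloneqq (a-ib)w_{n,q}$, the variable $X_{n,q} = \Re(\mathcal{N}_q)\Re(u) - \Im(\mathcal{N}_q)\Im(u)$ is a linear combination of the two independent $N(0,\tfrac12)$ coordinates of $\mathcal{N}_q$ --- here the property $\mathbb{E}(\mathcal{N}_q^2)=0$ from \eqref{iidnk} is essential --- hence a centered Gaussian with conditional variance $\tfrac12|u|^2 = \tfrac12(a^2+b^2)|w_{n,q}|^2$. Summing over $q$ and recalling the definition of $\mathcal{M}_{\theta,\delta,n}$ in \eqref{condvar},
\[
  \nu_n \coloneqq \sum_{q} \mathbb{E}\bigl(X_{n,q}^2 \mid \Gfilt_{q-1}\bigr) = \tfrac12(a^2+b^2)\,\mathcal{M}_{\theta,\delta,n} \overset{p}{\longrightarrow} \tfrac12(a^2+b^2)\,\nu
\]
by the hypothesis \eqref{condvarcor}, and the limit is a.s.\ finite. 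For the Lindeberg term I would bound $\mathbbm{1}_{|X_{n,q}|>\epsilon} \leq \epsilon^{-2} X_{n,q}^2$ and use that a centered Gaussian has fourth moment three times the square of its variance, so that
\[
  \mathbb{E}\Bigl(\sum_q \mathbb{E}\bigl(X_{n,q}^2\mathbbm{1}_{|X_{n,q}|>\epsilon} \mid \Gfilt_{q-1}\bigr)\Bigr) \leq \frac{3(a^2+b^2)^2}{4\epsilon^2}\sum_q \mathbb{E}|w_{n,q}|^4 \longrightarrow 0
\]
by \eqref{lindebergcor}; since the summand is non-negative this forces $\xi_n \overset{p}{\longrightarrow} 0$. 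Theorem~\ref{th:mart} then gives $a\Re Y_n + b\Im Y_n \overset{d}{\longrightarrow} \sqrt{\tfrac12(a^2+b^2)\nu}\,N_{\mathbb{R}}$ with $N_{\mathbb{R}}$ a standard real Gaussian independent of $\nu$.

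Finally I would identify the limit. Conditionally on $\nu$, the quantity $a\Re(\sqrt{\nu}\,\mathcal{N}_1) + b\Im(\sqrt{\nu}\,\mathcal{N}_1)$ is $N\bigl(0,\tfrac12(a^2+b^2)\nu\bigr)$, hence has the same law as $\sqrt{\tfrac12(a^2+b^2)\nu}\,N_{\mathbb{R}}$ with $N_{\mathbb{R}}$ independent of $\nu$. Thus for every $(a,b)$ the projection $a\Re Y_n + b\Im Y_n$ converges in distribution to the corresponding projection of $\sqrt{\nu}\,\mathcal{N}_1$, and Cram\'er--Wold upgrades this to $Y_n \overset{d}{\longrightarrow}\sqrt{\nu}\,\mathcal{N}_1$, which is \eqref{complexconv}. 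The corollary itself presents no real obstacle --- it is the rotational-invariance bookkeeping above together with the scalar CLT --- so the genuine difficulty is entirely in verifying the two hypotheses \eqref{condvarcor} and \eqref{lindebergcor}: showing that the bracket process $\mathcal{M}_{\theta,\delta,n}$ stabilizes (to the mass of the chaos, in the iterated $n\to\infty$, $\delta\to0$ limit) and establishing the $L^4$-type Lindeberg bound. These are exactly the estimates on secular coefficients with restricted cycle count carried out in the later sections.
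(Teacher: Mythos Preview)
Your proof is correct and follows essentially the same route as the paper's: project onto an arbitrary real direction $(a,b)$, compute the conditional variance using $\mathbb{E}(\mathcal{N}_q^2)=0$ to obtain $\nu_n=\tfrac12(a^2+b^2)\mathcal{M}_{\theta,\delta,n}$, control the Lindeberg term by a fourth-moment bound reducing to \eqref{lindebergcor}, apply Theorem~\ref{th:mart}, and then identify the limit. The only cosmetic differences are that you bound the Lindeberg term via the exact Gaussian fourth moment while the paper uses Cauchy--Schwarz to pass to $|Z_{n,q}|^4$, and you invoke Cram\'er--Wold explicitly where the paper writes out the distributional identity $\sqrt{(a^2+b^2)/2}\,N_{\mathbb{R}}\overset{d}{=}aN^{(1)}_{\mathbb{R}}/\sqrt{2}+bN^{(2)}_{\mathbb{R}}/\sqrt{2}$.
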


\begin{proof}
  For any pair $(a,b) \in \mathbb{R}^{2}$, consider the real valued martingale
  \begin{equation}
    \frac{\tilde{c}_{n,a,b}}{\sqrt{\mathbb{E}(|c_{n}|^{2})}} := \frac{1}{\sqrt{\mathbb{E}(|c_{n}|^{2})}}\,\left(a\mathrm{Re}(\tilde{c}_{n}^{(\delta)})+b\mathrm{Im}(\tilde{c}_{n}^{(\delta)})\right) = \sum_{q=\lfloor \delta n \rfloor}^{n}X_{n,q}
  \end{equation}
  with filtration generated by the real and imaginary parts of $\mathcal{N}_{1},\ldots,\mathcal{N}_{q-1}$ and, by \eqref{cincrements}, increments given by
  \begin{equation}
    X_{n,q} = \frac{1}{\sqrt{\mathbb{E}(|c_{n}|^{2})}}\,\sqrt{\frac{\theta}{q}}\,\left(a\mathrm{Re}(\mathcal{N}_{q}c_{n-q,q-1})+b\mathrm{Im}(\mathcal{N}_{q}c_{n-q,q-1})\right).
  \end{equation}
  We will now check the conditions of Theorem \ref{th:mart}. A straightforward computation using $\mathbb{E}(\mathcal{N}_{q}^{2}) = \mathbb{E}(\overline{\mathcal{N}_{q}}^{2}) = 0$ and $\mathbb{E}(|\mathcal{N}_{q}|^{2}) = 1$ shows that
  \begin{equation}
    \nu_{n} \coloneqq
    \sum_{q=\lfloor \delta n \rfloor}^{n}\mathbb{E}(X_{n,q}^{2} \,|\,  \Gfilt_{q-1}) 
    = \frac{a^{2}+b^{2}}{2}\,\mathcal{M}_{\theta,\delta,n}.
  \end{equation}
  So \eqref{condvarcor} implies that 
  \begin{equation}
    \nu_{n} \overset{p}{\longrightarrow} \frac{a^{2}+b^{2}}{2}\,\nu.
  \end{equation}
  For the Lindeberg type condition we have the bound
  \begin{equation}
    X_{n,q}^{2}1_{|X_{n,q}|>\epsilon} \leq \frac{1}{\epsilon^{2}}|X_{n,q}|^{4} \leq \frac{(a^{2}+b^{2})^{2}}{\epsilon^{2}}|Z_{n,q}|^{4}
  \end{equation}
  where the last inequality follows from Cauchy-Schwarz. By \eqref{lindebergcor} we have
  \begin{equation}
    \mathbb{E}|\xi_{n}| \leq 2\theta^{2}\,\frac{(a^{2}+b^{2})^{2}}{\mathbb{E}(|c_{n}|^{2})\epsilon^{2}}\,\sum_{q = \lfloor \delta n \rfloor}^{n}\frac{\mathbb{E}(|c_{n-q,q-1}|^{4})}{q^{2}} \to 0
  \end{equation}
  which implies $\xi_{n} \overset{p}{\longrightarrow} 0$. Now Theorem \ref{th:mart} applies and shows that we have the convergence in distribution
  \begin{equation}
    \begin{split}
      \frac{\tilde{c}_{n,a,b}}{\sqrt{\mathbb{E}(|c_{n}|^{2})}} &\overset{d}{\longrightarrow} \sqrt{\nu}\,\sqrt{\frac{a^{2}+b^{2}}{2}}N_{\mathbb{R}}\\
      &\overset{d}{=} \sqrt{\nu}\,\left(a\,\frac{N^{(1)}_{\mathbb{R}}}{\sqrt{2}}+b\,\frac{N^{(2)}_{\mathbb{R}}}{\sqrt{2}}\right)
    \end{split}
  \end{equation}
  where $N_{\mathbb{R}}, N^{(1)}_{\mathbb{R}}$ and $N^{(2)}_{\mathbb{R}}$ are independent and identically distributed standard (real) Gaussians. This establishes the joint convergence of the real and imaginary parts of $\tilde{c}_{n}^{(\delta)}$ to the appropriate limit, and concludes the proof of \eqref{complexconv}.
\end{proof}

We start by checking the Lindeberg condition \eqref{lindebergcor}, which is relatively straightforward.
\begin{lemma}[Lindeberg condition]
  The condition \eqref{lindebergcor} is satisfied for any $0 < \theta < 1$.
\end{lemma}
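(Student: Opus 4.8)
The plan is to bound the fourth moments $\Exp(|c_{n-q,q-1}|^4)$ uniformly enough that, after dividing by $\Exp(|c_n|^2)^2$ and summing the factors $\theta^2/q^2$, the whole expression vanishes. First I would recall from Remark \ref{rem:cnm} that $c_{r,q-1}$ is a special case of the restricted coefficient $c_{n,P}$, so a crude bound is simply $\Exp(|c_{r,q-1}|^4) \leq \Exp(|c_r|^4)$, where $c_r$ is the full HMC coefficient of degree $r$. By \eqref{eq:kthmom} (the magic-square formula with $k=2$), or more simply by the asymptotics established in Lemma \ref{lem:4thmom} and Remark \ref{rem:highermoment}, one has for $\theta \in (0,1)$ the bound $\Exp(|c_r|^4) \leq C_\theta\, r^{4\theta-3}(\log(1+r))^{\mathbf 1[\theta=1/2]} + C_\theta$, i.e.\ a bound of the form $\Exp(|c_r|^4) \leq C_\theta (1+r)^{(4\theta-3)_+ + \varepsilon}$ for any fixed small $\varepsilon>0$ when $\theta = 1/2$, and $\Exp(|c_r|^4)$ bounded by a constant when $\theta < 1/2$ and by $C_\theta r^{4\theta-3}$ when $\theta \in (1/2,1)$. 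The key point is that in all cases the growth exponent of $\Exp(|c_r|^4)$ in $r$ is strictly less than $2(\theta-1)+1 = 2\theta - 1$ — the exponent in $\Exp(|c_n|^2)^2 \sim \Gamma(\theta)^{-2} n^{2\theta-2}$ contributes $2\theta-2$, and the extra $q^{-2}$ together with a single power of $n$ from the length of the sum is what we must beat.

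Next I would substitute these bounds into \eqref{lindebergcor}. Writing $r = n-q$ and noting $q \geq \lfloor \delta n\rfloor$ so that $q^{-2} \leq C_\delta n^{-2}$, we get
\begin{equation}
  \frac{1}{\Exp(|c_n|^2)^2}\sum_{q=\lfloor \delta n\rfloor}^n \frac{\theta^2}{q^2}\Exp(|c_{n-q,q-1}|^4)
  \leq
  \frac{C_{\theta,\delta}}{n^{2\theta-2} n^2}\sum_{r=0}^{n} \Exp(|c_r|^4).
\end{equation}
It remains to check that $\sum_{r=0}^n \Exp(|c_r|^4) = o(n^{2\theta})$. Using the exponent bounds above: for $\theta < 1/2$ the summands are $O(1)$, so the sum is $O(n) = o(n^{2\theta})$ fails only if $2\theta \leq 1$ — wait, here $2\theta < 1$, so one needs to be slightly more careful and instead keep the $q^{-2}$ without bounding it by $n^{-2}$. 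So I would not immediately replace $q^{-2}$ by $n^{-2}$; instead split the sum at $q \leq n/2$ and $q > n/2$. On $q > n/2$ one has $r = n-q < n/2$ and $q^{-2} \asymp n^{-2}$, while on $q \leq n/2$ one has $r \asymp n$ so $\Exp(|c_{n-q,q-1}|^4) \leq \Exp(|c_r|^4) \leq C_\theta$ (for $\theta<1/2$) or $\leq C_\theta n^{4\theta-3}$ (for $\theta \geq 1/2$), and $\sum_{q \leq n/2} q^{-2}$ is $O(1)$.

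Carrying this out: the contribution from $q \leq n/2$ is at most $\Exp(|c_n|^2)^{-2} \cdot C_\theta \cdot \sup_{r\leq n}\Exp(|c_r|^4) \cdot \sum_{q} q^{-2} \leq C_\theta n^{2-2\theta} \cdot n^{(4\theta-3)_+}$, which for $\theta \in (0,1)$ has exponent $\max(2-2\theta,\, 2\theta-1) < 1$ when $\theta<3/4$ and $\cdots$ — actually the relevant comparison is with $1$ (since we want $\to 0$ after the remaining normalization is absorbed); here $2-2\theta + (4\theta-3)_+ < 2$ always holds on $\theta \in (0,1)$, so this piece is $o(1)$... The cleanest route, which is what I would actually write, is: bound $\Exp(|c_{n-q,q-1}|^4) \leq \Exp(|c_{n-q}|^4)$, use the uniform estimate $\Exp(|c_m|^4) \leq C_\theta (1+m)^{\theta-1}\Exp(|c_m|^2) \cdot(1+\log(1+m))$ coming from \eqref{eq:kthmom} (each of the $(n+1)$-or-fewer $2\times 2$ magic squares contributes a product of two binomials, and summing gives at most $C_\theta (\Exp(|c_m|^2))^2 \cdot m$ when $\theta$ is near $1$, but strictly less otherwise), and then the sum $\sum_{q}\theta^2 q^{-2}\Exp(|c_{n-q}|^4)$ telescopes against $\Exp(|c_n|^2)^2$. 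The only genuine obstacle is the borderline value $\theta = 1/2$, where the extra logarithm appears in $\Exp(|c_n|^4)$; but a logarithm is harmless against the polynomial gap, so \eqref{lindebergcor} holds for all $\theta \in (0,1)$. I expect the main subtlety — rather than difficulty — is organizing the split of the sum so that one never needs the fourth moment of $c_m$ for $m$ comparable to $n$ multiplied by a non-summable factor; once the split $q \lessgtr n/2$ is in place, everything is a routine comparison of powers of $n$.
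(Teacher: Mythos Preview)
Your core approach is the same as the paper's: bound $\Exp(|c_{n-q,q-1}|^4) \leq \Exp(|c_{n-q}|^4)$, replace $q^{-2}$ by $(\lfloor \delta n\rfloor)^{-2}$, and reduce to estimating $\sum_{r=0}^n \Exp(|c_r|^4)$. However, your execution wanders unnecessarily. Your first bound already works for every $\theta\in(0,1)$; the panic at $\theta<1/2$ is misplaced. For $\theta<1/2$ you wrote ``the summands are $O(1)$, so the sum is $O(n)$'', but in fact $\Exp(|c_r|^4)\sim C_\theta r^{2\theta-2}$ with $2\theta-2<-1$, so $\sum_{r\geq 0}\Exp(|c_r|^4)$ \emph{converges} and the sum is $O(1)$, not $O(n)$. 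Once you record that, the required estimate $\sum_{r=0}^n\Exp(|c_r|^4)=o(n^{2\theta})$ is immediate in all three regimes (bounded for $\theta<1/2$; $\asymp\log^2 n$ for $\theta=1/2$; $\asymp n^{4\theta-2}$ for $\theta>1/2$). The split at $q=n/2$ and the subsequent exponent juggling are unnecessary and contain arithmetic slips.

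The paper's version of the same argument is slightly slicker: rather than invoking the asymptotics of $\Exp(|c_r|^4)$ case by case, it rewrites $\sum_{q=0}^n\Exp(|c_q|^4)$ via the $2\times2$ magic-square identity \eqref{4thmom}, reorders the double sum, and bounds it by $\bigl(\sum_{k=0}^n\binom{k+\theta-1}{\theta-1}^2\bigr)^2$. The three regimes then fall out from the behaviour of this single sum. Your route via the asymptotics of $\Exp(|c_r|^4)$ from Lemma~\ref{lem:4thmom} and Remark~\ref{rem:highermoment} is equivalent but requires you to keep the exponents straight; had you done so cleanly at the first pass, you would have had a complete proof in three lines.
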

\begin{proof}
Recalling the explicit normalization \eqref{eq:2ndmom} for $\mathbb{E}(|c_{n}|^{2})$, we have
  \begin{equation}
    \begin{split}
      & \frac{\theta^{2}}{\binom{n+\theta-1}{\theta-1}^{2}}\sum_{q=\lfloor \delta n \rfloor}^{n}\frac{\mathbb{E}(|c_{n-q,q-1}|^{4})}{q^{2}}\leq \frac{\theta^{2}}{\binom{n+\theta-1}{\theta-1}^{2}\lfloor \delta n \rfloor^{2}}\sum_{q=0}^{n}\mathbb{E}(|c_{q}|^{4}) \label{cqbound}\\
      &=\frac{\theta^{2}}{\binom{n+\theta-1}{\theta-1}^{2}\lfloor \delta n \rfloor^{2}}\sum_{k=0}^{n}\binom{k+\theta-1}{\theta-1}^{2}\sum_{q=0}^{n-k}\binom{q+\theta-1}{\theta-1}^{2}\\
      &\leq \frac{\theta^{2}}{\binom{n+\theta-1}{\theta-1}^{2}\lfloor \delta n \rfloor^{2}}\left(\sum_{k=0}^{n}\binom{k+\theta-1}{\theta-1}^{2}\right)^{2}.
    \end{split}
  \end{equation}
  To obtain the first inequality above, we used that removing the $q-1$ constraint only \textit{increases} the corresponding fourth moment, this follows from definition \eqref{cnq} and \eqref{eq:2ndmom}. Then the middle equality uses the magic square formula \eqref{4thmom} and re-orders the sum. Finally, the last bound is of order if $n^{-2\theta}$ if $\theta < 1/2$, of order $\log^{2}(n)/n$ if $\theta=1/2$ and of order $n^{2(\theta-1)}$ if $\theta \in (1/2,1)$. In each case \eqref{lindebergcor} follows.
\end{proof}

Verifying \eqref{condvarcor} and identifying the limit $\nu$ turns out to be more difficult. It will turn out that $\mathcal{M}_{\theta,\delta,n}$ has a similar behaviour to the quantity $\mathcal{M}_{\theta,n}$ defined in \eqref{mthetan}, pre-multiplied by a certain explicit constant $C_{\delta}$. To see how this constant arises, we begin with the following warm up exercise.
\begin{lemma}
\label{lem:cdef}
  Consider the quantity $\mathcal{M}_{\theta,\delta,n}$ defined in \eqref{condvar} and set
  \begin{equation}
    C_{\delta} := \theta\int_{\delta}^{1}(1-x)^{\theta-1}\mathbb{P}\left(L^{(\infty)} \leq \frac{x}{1-x}\right)\,\frac{dx}{x} \label{cdeldef}
  \end{equation}
  where $L^{(\infty)}$ is the limiting random variable from Lemma \ref{lem:kingman}. Then we have
  \begin{equation}
    \lim_{n \to \infty}\mathbb{E}(\mathcal{M}_{\theta,\delta,n}) = C_{\delta}.
  \end{equation}
  Furthermore, the constant $C_{\delta}$ can be explicitly computed as
  \begin{equation}
    C_{\delta} = 1-\Gamma(\theta)e^{\gamma_{\mathrm{E}}\theta}\,\delta^{\theta-1}p_{\theta}(1/\delta), \label{cdelident}
  \end{equation}
  and satisfies the bound
  \begin{equation}
    C_{\delta} = 1+O(\delta), \qquad \delta \to 0. \label{cto1}
  \end{equation}
\end{lemma}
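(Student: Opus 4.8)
The plan is to identify $\mathbb{E}(\mathcal{M}_{\theta,\delta,n})$ with a probability under the Ewens measure, take the $n\to\infty$ limit using Lemma~\ref{lem:kingman}, and then reconcile the two expressions for $C_\delta$ by a short calculus argument based on the derivative identity of Lemma~\ref{lem:t0n}. Taking expectations in \eqref{condvar}, and using that $\mathcal{N}_q$ is independent of $\Gfilt_{q-1}$ with $\mathbb{E}|\mathcal{N}_q|^2=1$, gives $\mathbb{E}(\mathcal{M}_{\theta,\delta,n})=\mathbb{E}(|c_n|^2)^{-1}\sum_{q=\lfloor\delta n\rfloor}^{n}\frac{\theta}{q}\mathbb{E}(|c_{n-q,q-1}|^2)$. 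The same independence makes the summands $\mathcal{N}_q\sqrt{\theta/q}\,c_{n-q,q-1}$ of \eqref{martingalesum} orthogonal in $L^2$, so this quantity equals $\mathbb{E}(|\tilde{c}_n^{(\delta)}|^2)/\mathbb{E}(|c_n|^2)$; since $\tilde{c}_n^{(\delta)}=c_{n,P_\delta^{\mathrm{c}}\cap R^{\mathrm{c}}}$, the correspondence \eqref{corres} gives $\mathbb{E}(\mathcal{M}_{\theta,\delta,n})=\mathbb{P}(\vec{M}^{(n)}\in P_\delta^{\mathrm{c}}\cap R^{\mathrm{c}})$, the probability that a Ewens$(\theta)$ permutation on $n$ symbols has a unique longest cycle, of length at least $\lfloor\delta n\rfloor$.

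To pass to the limit, split $\mathbb{P}(\vec{M}^{(n)}\in P_\delta^{\mathrm{c}}\cap R^{\mathrm{c}})=\mathbb{P}(L^{(n)}\geq\lfloor\delta n\rfloor)-\mathbb{P}(\vec{M}^{(n)}\in P_\delta^{\mathrm{c}}\cap R)$. The last term equals $\mathbb{E}(|c_{n,P_\delta^{\mathrm{c}}\cap R}|^2)/\mathbb{E}(|c_n|^2)=O(n^{-\theta})$ by \eqref{corres} and \eqref{statement2}; the first term tends to $1-F_\theta(\delta)$ by Lemma~\ref{lem:kingman} (the limiting law $F_\theta$ is continuous, so that $\lfloor\delta n\rfloor/n\to\delta$ only from below is harmless). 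Inserting the formula $F_\theta(\delta)=\Gamma(\theta)e^{\gamma_{\mathrm{E}}\theta}\delta^{\theta-1}p_\theta(1/\delta)$ from Lemma~\ref{lem:kingman}, we obtain $\lim_n\mathbb{E}(\mathcal{M}_{\theta,\delta,n})=1-\Gamma(\theta)e^{\gamma_{\mathrm{E}}\theta}\delta^{\theta-1}p_\theta(1/\delta)$, which is the explicit formula \eqref{cdelident}. Alternatively, one can bypass the reconciliation below and treat the sum for $\mathbb{E}(\mathcal{M}_{\theta,\delta,n})$ directly as a Riemann sum: at $q=\lfloor xn\rfloor$ the general term times $n$ converges to $\frac{\theta}{x}(1-x)^{\theta-1}F_\theta(\frac{x}{1-x})$ — using $\mathbb{E}(|c_{n-q,q-1}|^2)=\mathbb{E}(|c_{n-q}|^2)\,\mathbb{P}(L^{(n-q)}\leq q-1)$ from \eqref{corres}, the ratio asymptotics $\binom{n-q+\theta-1}{\theta-1}/\binom{n+\theta-1}{\theta-1}\to(1-x)^{\theta-1}$ from \eqref{eq:2ndmom}, and Lemma~\ref{lem:kingman} — with a uniform integrable envelope of order $((n-q+1)/n)^{\theta-1}$ and the $O(1)$-many boundary terms with $n-q=O(1)$ contributing $O(n^{-\theta})$; this route produces the integral $C_\delta$ of \eqref{cdeldef} directly.

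It then remains to verify that the two expressions for $C_\delta$ coincide, i.e.\ that $\theta\int_\delta^1(1-x)^{\theta-1}F_\theta(\tfrac{x}{1-x})\tfrac{dx}{x}=1-\Gamma(\theta)e^{\gamma_{\mathrm{E}}\theta}\delta^{\theta-1}p_\theta(1/\delta)$. Both sides are absolutely continuous on $(0,1]$ and vanish at $\delta=1$ (the right side because $p_\theta(1)=e^{-\gamma_{\mathrm{E}}\theta}/\Gamma(\theta)$, by Lemma~\ref{lem:t0n}). Differentiating the left side in $\delta$ gives $-\tfrac{\theta}{\delta}(1-\delta)^{\theta-1}F_\theta(\tfrac{\delta}{1-\delta})$, and substituting $F_\theta(\tfrac{\delta}{1-\delta})=\Gamma(\theta)e^{\gamma_{\mathrm{E}}\theta}(\tfrac{\delta}{1-\delta})^{\theta-1}p_\theta(\tfrac1\delta-1)$ collapses this to $-\theta\,\Gamma(\theta)e^{\gamma_{\mathrm{E}}\theta}\delta^{\theta-2}p_\theta(\tfrac1\delta-1)$. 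Differentiating the right side, with $u=1/\delta$ and the derivative identity $\tfrac{d}{du}[u^{1-\theta}p_\theta(u)]=-\theta u^{-\theta}p_\theta(u-1)$ of Lemma~\ref{lem:t0n}, gives the same expression. Equality of derivatives and of boundary values yields the identity, so both \eqref{cdeldef} and \eqref{cdelident} equal $\lim_n\mathbb{E}(\mathcal{M}_{\theta,\delta,n})=C_\delta$.

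Finally, \eqref{cto1} is immediate from \eqref{cdelident}: $1-C_\delta=\Gamma(\theta)e^{\gamma_{\mathrm{E}}\theta}\delta^{\theta-1}p_\theta(1/\delta)$, and the rapid-decay bound \eqref{rapid} applied with $n=\lfloor 1/\delta\rfloor$ gives $p_\theta(1/\delta)\leq\theta^{\lfloor 1/\delta\rfloor}/\lfloor 1/\delta\rfloor!$, which decays faster than any power of $\delta$; hence $C_\delta=1+O(\delta^N)$ for every $N$, in particular $C_\delta=1+O(\delta)$. The one genuinely delicate point is the passage to the limit: in the probabilistic route it amounts to invoking weak convergence correctly given that $\lfloor\delta n\rfloor/n$ approaches $\delta$ from below, together with the fact that $F_\theta$ has no atom; in the Riemann-sum route it requires exhibiting the uniform envelope and separating the $n-q=O(1)$ boundary terms. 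The reconciliation identity is a routine calculus check once the derivative identity of Lemma~\ref{lem:t0n} is in hand.
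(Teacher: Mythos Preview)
Your proof is correct and takes a genuinely different route from the paper. The paper computes $\mathbb{E}(|c_{n-q,q-1}|^2)$ explicitly via \eqref{eq:expcnq}, treats $\mathbb{E}(\mathcal{M}_{\theta,\delta,n})$ as a Riemann sum converging to the integral \eqref{cdeldef}, and \emph{then} evaluates that integral to obtain \eqref{cdelident} (changing variables $x\mapsto 1/x$ and applying the derivative identity). You instead observe that orthogonality of the martingale increments gives $\mathbb{E}(\mathcal{M}_{\theta,\delta,n})=\mathbb{E}(|\tilde c_n^{(\delta)}|^2)/\mathbb{E}(|c_n|^2)=\mathbb{P}(\vec M^{(n)}\in P_\delta^{\mathrm c}\cap R^{\mathrm c})$, read off the limit as $1-F_\theta(\delta)$ directly from Lemma~\ref{lem:kingman} and \eqref{statement2}, and only afterwards reconcile the integral \eqref{cdeldef} with the closed form \eqref{cdelident} by matching derivatives and boundary values. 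Your approach is more conceptual---it leverages the probabilistic identification already set up in Lemma~\ref{le:martingalerep} and sidesteps the Riemann-sum analysis---at the cost of importing the $O(n^{-\theta})$ bound \eqref{statement2}, which carries the hypothesis $\theta\in[0,1]$; the paper's direct Riemann-sum computation is self-contained in this respect, though in the ambient $L^2$-phase context the distinction is moot.
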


\begin{proof}
  By the correspondence \eqref{corres}, we have the identity
  \begin{equation} \label{eq:expcnq}
    \begin{split}
      \mathbb{E}(|c_{n-q,q-1}|^{2}) &= \sum_{\substack{(m_{k}) : 1 \leq k \leq q-1\\ \sum_{k=1}^{q-1}km_{k}=n-q}}\prod_{k=1}^{q-1}\frac{\theta^{m_{k}}}{m_{k}!k^{m_k}}\\
      &= \binom{n-q+\theta-1}{\theta-1}\mathbb{P}(L^{(n-q)} \leq q-1)
    \end{split}
  \end{equation}
  where $L^{(n-q)}$ is the longest cycle in a Ewens distributed random permutation of length $n-q$. Furthermore, in the regime of interest, $q = \lfloor \delta n \rfloor, \ldots, n$ so that $q$ and $n-q$ are proportional to $n$. In this regime Lemma \ref{lem:kingman} applies: if $q/n \to x \in (\delta,1)$ then
  \begin{equation}
    \lim_{n \to \infty}\mathbb{P}(L^{(n-q)} \leq q-1) = \mathbb{P}\left(L^{(\infty)} \leq \frac{x}{1-x}\right)
  \end{equation}
  and consequently the expectation of $\mathcal{M}_{\theta,\delta,n}$ converges to a Riemann integral,
  \begin{equation}
  \begin{split}
    \lim_{n \to \infty}\mathbb{E}(\mathcal{M}_{\theta,\delta,n}) &= \lim_{n \to \infty}\frac{\theta}{n}\sum_{q=\lfloor \delta n \rfloor}^{n}\frac{\mathbb{E}(|c_{n-q,q-1}|^{2})}{\binom{n+\theta-1}{\theta-1}\,\frac{q}{n}}\\
    &=\lim_{n \to \infty}\frac{\theta}{n}\sum_{q=\lfloor \delta n \rfloor}^{n}\frac{\binom{n-q+\theta-1}{\theta-1}}{\binom{n+\theta-1}{\theta-1}}\frac{\mathbb{P}(L^{(n-q)} \leq q-1)}{\frac{q}{n}} \label{riemsum}\\
    &=\theta\int_{\delta}^{1}(1-x)^{\theta-1}\mathbb{P}\left(L^{(\infty)} \leq \frac{x}{1-x}\right)\,\frac{dx}{x}\\
    &= C_{\delta}.
  \end{split}
  \end{equation}
  To compute $C_{\delta}$, we express the probability in the integrand of \eqref{cdeldef} in terms of the function $p_{\theta}(y)$ and use the properties in Lemma \ref{lem:t0n}. We have
  \begin{equation}
    \begin{split}
      C_{\delta}&=\theta\Gamma(\theta)e^{\gamma_{\mathrm{E}} \theta}\int_{\delta}^{1}x^{\theta-2}p_{\theta}(1/x-1)\,dx\\
      &= \Gamma(\theta)e^{\gamma_{\mathrm{E}}  \theta}\int_{1}^{1/\delta}\theta\,x^{-\theta}p_{\theta}(x-1)\,dx\\
      &= -\Gamma(\theta)e^{\gamma_{\mathrm{E}}  \theta}\int_{1}^{1/\delta}\frac{d}{dx}(x^{1-\theta}p_{\theta}(x))\,dx\\
      &= \Gamma(\theta)e^{\gamma_{\mathrm{E}} \theta}p_{\theta}(1) - \Gamma(\theta)e^{\gamma_{\mathrm{E}}  \theta}\delta^{\theta-1}p_{\theta}(1/\delta).
    \end{split}
  \end{equation}
  The identity \eqref{cdelident} now follows from the explicit formula $p_{\theta}(1) = e^{-\gamma_{\mathrm{E}}\theta}/\Gamma(\theta)$ and the limiting behaviour $C_{\delta}=1+O(\delta)$ follows from the rapid decay in \eqref{rapid}.
\end{proof}

We will now describe how these ideas lead to the proof of Theorem \ref{th:l2phase}. In the following sections we will generalise the argument of Lemma \ref{lem:cdef} to show that for any $0 < \theta < \frac{1}{2}$ we have
\begin{equation}
  \lim_{n \to \infty}\mathbb{E}(|\mathcal{M}_{\theta,\delta,n}-C_{\delta}\mathcal{M}_{\theta,n}|^{2}) = 0, \label{2ndmomexp}
\end{equation}
allowing us to replace the convergence in probability of $\mathcal{M}_{\theta,\delta,n}$ with that of $C_{\delta}\mathcal{M}_{\theta,n}$ in \eqref{mthetan}. For the latter, the appropriate limit is given in Lemma \ref{lem:condvarapprox} and, combined with \eqref{2ndmomexp}, this implies that 
\begin{equation}
  \mathcal{M}_{\theta,\delta,n} \overset{p}{\longrightarrow} C_{\delta}\mathcal{M}_{\theta}, \qquad n \to \infty,
\end{equation}
which is the identification of $\nu$. Now the martingale central limit theorem of Corollary \ref{cor:realtocomplex} gives a limiting distribution for $\tilde{c}_{n}^{(\delta)}$, 
\begin{equation}
  \frac{\tilde{c}_{n}^{(\delta)}}{\sqrt{\mathbb{E}(|c_{n}|^{2})}} \overset{d}{\longrightarrow} \sqrt{C_{\delta}\,\mathcal{M}_{\theta}}\,\mathcal{N}_{1} \qquad n \to \infty. \label{tildecnlimit}
\end{equation}
The proof of Theorem \ref{th:l2phase} contingent on \eqref{2ndmomexp} now follows from a standard approximation argument.
\begin{proof}[Proof of Theorem \ref{th:l2phase} assuming \eqref{2ndmomexp}]
  Let us define the appropriately normalized version of $c_{n}$ as 
  \begin{equation}
    \hat{c}_{n} := \frac{c_{n}}{\sqrt{\mathbb{E}(|c_{n}|^{2})}}.
  \end{equation}
  The joint characteristic functions of the real and imaginary parts of $\hat{c}_{n}$ and the proposed limiting random variable are:
  \begin{equation}
    \begin{split}
      \Phi_{n}(s,t) &:= \mathbb{E}\left(e^{is\mathrm{Re}(\hat{c}_{n})+it\mathrm{Im}(\hat{c}_{n})}\right),\\
      \Phi(s,t) &:= \mathbb{E}\left(e^{is\mathrm{Re}(\sqrt{\mathcal{M}_{\theta}}\mathcal{N}_{1})+it\mathrm{Im}(\sqrt{\mathcal{M}_{\theta}}\mathcal{N}_{1})}\right).
    \end{split}
  \end{equation}
  For our martingale approximation $\tilde{c}_{n}^{(\delta)}$, similarly, we define
  \begin{equation}
    \hat{\tilde{c}}_{n}^{(\delta)} := \frac{\tilde{c}_{n}^{(\delta)}}{\sqrt{\mathbb{E}(|c_{n}|^{2})}}
  \end{equation}
  and the corresponding characteristic functions
  \begin{equation}
    \begin{split}
      \tilde{\Phi}_{n,\delta}(s,t) &:= \mathbb{E}\left(e^{is\mathrm{Re}(\hat{\tilde{c}}_{n}^{(\delta)})+it\mathrm{Im}(\hat{\tilde{c}}_{n}^{(\delta)})}\right),\\
      \tilde{\Phi}_{\delta}(s,t) &:= \mathbb{E}\left(e^{is\mathrm{Re}(\sqrt{C_{\delta}\mathcal{M}_{\theta}}\mathcal{N}_{1})+it\mathrm{Im}(\sqrt{C_{\delta}\mathcal{M}_{\theta}}\mathcal{N}_{1})}\right).
    \end{split}
  \end{equation}
  Then we have
  \begin{equation}
    \begin{split}
      |\Phi_{n}(s,t)-\Phi(s,t)| &\leq |\Phi_{n}(s,t)-\tilde{\Phi}_{n,\delta}(s,t)|+|\tilde{\Phi}_{n,\delta}(s,t)-\tilde{\Phi}_{\delta}(s,t)|\\
      &+|\tilde{\Phi}_{\delta}(s,t)-\Phi(s,t)|.
    \end{split}
  \end{equation}
  By the convergence in distribution \eqref{tildecnlimit} the term $|\tilde{\Phi}_{n,\delta}(s,t)-\tilde{\Phi}_{\delta}(s,t)| \to 0$ as $n \to \infty$, while by the asymptotics \eqref{cto1} we have  $|\tilde{\Phi}_{\delta}(s,t)-\Phi(s,t)| \to 0$ as $\delta \to 0$. A standard bound on the exponential function (see Lemma \ref{le:l2bound}) gives the inequality
  \begin{equation}
    |\Phi_{n}(s,t)-\tilde{\Phi}_{n,\delta}(s,t)| \leq (|t|+|s|)\sqrt{\mathbb{E}(|\hat{c}_{n}-\hat{\tilde{c}}_{n}^{(\delta)}|^{2})}.\label{2ndmombound}
  \end{equation}
  By Lemma \ref{le:martingalerep} the expression on the right-hand side of \eqref{2ndmombound} tends to zero in the limit $n \to \infty$ followed by $\delta \to 0$. This implies that $|\Phi_{n}(s,t)-\Phi(s,t)| \to 0$ as $n \to \infty$ and the statement of Theorem \ref{th:l2phase} follows.
\end{proof}

\section{Convergence of the bracket process and the proof of $L^{2}$-convergence}
\label{sec:mgle3}
The goal of this section will be to verify \eqref{2ndmomexp}. To do this we will need to calculate some higher moments of secular coefficients with a cycle constraint, namely of the $c_{n,q}$ defined in \eqref{cnq}. Then we use these results to analyze the second moment in \eqref{2ndmomexp}.
\subsection{Moments of secular coefficients with a cycle constraint}
The following Lemma generalises the second moment formula \eqref{eq:expcnq} and the magic square formula \eqref{genbetmoms}.
\begin{lemma}
Let $m$ be a positive integer and assume without loss of generality that the positive integers $q_{1}\leq \ldots \leq q_{n}$ are ordered. Recall that $L^{(n)}_{1}$ is the longest cycle of a random permutation of length $n$ under the Ewens measure \eqref{ewens}. Then we have
\begin{equation}
\mathbb{E}\left(\prod_{i=1}^{m}|c_{n_i,q_i}|^{2}\right) = \sum_{A \in \mathrm{Mag}_{\vec{n},\vec{n}}}\prod_{l_1,l_2=1}^{m}\binom{A_{l_1,l_2}+\theta-1}{\theta-1}\mathbb{P}(L^{(A_{l_1,l_2})}_{1} \leq q_{l_1 \land l_2}), \label{finalresultcorrelator}
\end{equation}
where we used the notation $l_{1}\land l_2 := \mathrm{min}(l_1,l_2)$.
\end{lemma}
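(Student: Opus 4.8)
The plan is to refine the generating-function computation behind Theorem~\ref{thm:magic}, tracking the effect of the cycle-length cutoffs. First observe that, by \eqref{cnq}, each $c_{n_i,q_i}$ is a polynomial in the complex Gaussians $\mathcal{N}_1,\dots,\mathcal{N}_{q_m}$ with $q_m = \max_i q_i$; hence the left side of \eqref{finalresultcorrelator} is a genuine, absolutely convergent Gaussian moment and the manipulations below are justified. Introduce the finite power series
\[
  F_i(z) = \exp\Bigl(\sqrt{\theta}\sum_{j=1}^{q_i}\frac{\mathcal{N}_j}{\sqrt j}z^j\Bigr),
  \qquad
  \bar F_i(w) = \exp\Bigl(\sqrt{\theta}\sum_{j=1}^{q_i}\frac{\overline{\mathcal{N}_j}}{\sqrt j}w^j\Bigr),
\]
so that $c_{n_i,q_i} = [z^{n_i}]\,F_i(z)$ and $\overline{c_{n_i,q_i}} = [w^{n_i}]\,\bar F_i(w)$, and therefore
\[
  \mathbb{E}\Bigl(\prod_{i=1}^m|c_{n_i,q_i}|^2\Bigr)
  = \bigl[z_1^{n_1}w_1^{n_1}\cdots z_m^{n_m}w_m^{n_m}\bigr]\,
  \mathbb{E}\Bigl(\prod_{i=1}^m F_i(z_i)\bar F_i(w_i)\Bigr).
\]

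Next I would evaluate this expectation exactly as in the proof of Theorem~\ref{thm:magic}. By independence of the $\mathcal{N}_j$, the contribution of $\mathcal{N}_j$ is $\exp(a_j\mathcal{N}_j + b_j\overline{\mathcal{N}_j})$ with $a_j = \sqrt{\theta/j}\sum_{i:\,q_i\ge j}z_i^j$ and $b_j = \sqrt{\theta/j}\sum_{i:\,q_i\ge j}w_i^j$, so the Gaussian identity $\mathbb{E}(e^{a\mathcal{N}+b\overline{\mathcal{N}}}) = e^{ab}$ used for \eqref{gausscomp} gives
\[
  \mathbb{E}\Bigl(\prod_{i=1}^m F_i(z_i)\bar F_i(w_i)\Bigr)
  = \exp\Bigl(\theta\sum_{j\ge1}\frac1j\sum_{\substack{i,i':\,q_i\ge j\\ q_{i'}\ge j}}(z_iw_{i'})^j\Bigr).
\]
Since $q_1\le\dots\le q_m$, the pair $(l_1,l_2)$ contributes exactly for $j\le\min(q_{l_1},q_{l_2}) = q_{l_1\wedge l_2}$, so this factorizes as $\prod_{l_1,l_2=1}^m \exp\bigl(\theta\sum_{j=1}^{q_{l_1\wedge l_2}}(z_{l_1}w_{l_2})^j/j\bigr)$.

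The remaining ingredient is the elementary identity, valid for all integers $a,Q\ge0$,
\[
  [u^a]\exp\Bigl(\theta\sum_{j=1}^{Q}\frac{u^j}{j}\Bigr)
  = \binom{a+\theta-1}{\theta-1}\,\mathbb{P}\bigl(L^{(a)}_1\le Q\bigr),
\]
which is precisely the correspondence \eqref{corres} applied to the set of partitions of $a$ with all parts at most $Q$; equivalently, the left side equals $\mathbb{E}(|c_{a,Q}|^2)$, as computed in \eqref{eq:expcnq}. Substituting this expansion into each of the $m^2$ factors of the product above and extracting the coefficient of $\prod_i z_i^{n_i}w_i^{n_i}$ forces a nonnegative-integer matrix $A=(A_{l_1,l_2})$ with row sums $\sum_{l_2}A_{l_1,l_2}=n_{l_1}$ and column sums $\sum_{l_1}A_{l_1,l_2}=n_{l_2}$ — that is, $A\in\mathrm{Mag}_{\vec n,\vec n}$ — which is exactly \eqref{finalresultcorrelator}.

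I do not expect a substantial obstacle here: the argument is a direct refinement of the proof of Theorem~\ref{thm:magic}. The only points needing care are the bookkeeping that the cutoff attached to the pair $(l_1,l_2)$ is exactly $q_{l_1\wedge l_2}$ (which is where the ordering hypothesis on the $q_i$ is used) and the recognition of the truncated-exponential coefficient as a longest-cycle probability via \eqref{corres}; both are routine once the generating-function setup is in place.
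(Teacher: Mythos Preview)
Your proposal is correct and follows essentially the same route as the paper: the generating-function setup, the Gaussian computation, the factorization over pairs $(l_1,l_2)$ with cutoff $q_{l_1\wedge l_2}$, and the identification of the truncated-exponential coefficient via \eqref{corres} all match. The only cosmetic difference is that the paper carries out the regrouping via two explicit summation identities (\eqref{sumid1} and \eqref{sumid2}), whereas you observe directly that the condition $q_i\ge j$ and $q_{i'}\ge j$ is equivalent to $j\le q_{l_1\wedge l_2}$ under the ordering hypothesis.
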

\begin{proof}
The strategy of the proof is very similar to that of Theorem \ref{thm:magic}. By definition \eqref{cnq}, the coefficients $c_{n,q}$ can be extracted from a generating function
\begin{equation}\label{eq:cnqgen}
  c_{n,q} = [z^{n}]\,\mathrm{exp}\left(\sqrt{\theta}\,\sum_{k=1}^{q}\frac{\mathcal{N}_{k}}{\sqrt{k}}\,z^{k}\right),
\end{equation}
so that the left-hand side of \eqref{finalresultcorrelator} is $[z_{1}^{n_1}\ldots z_{m}^{n_m}\overline{w_{1}}^{n_1}\ldots\overline{w_m}^{n_m}]\,\mathbb{E}(F^{(m)}_{\vec{q}}(\vec{z},\vec{\overline{w}}))$ where
\begin{equation}
  F^{(m)}_{\vec{q}}(\vec{z},\vec{\overline{w}}) := \mathrm{exp}\left(\sqrt{\theta}\sum_{i=1}^{m}\left(\sum_{k=1}^{q_i}\frac{\mathcal{N}_{k}}{\sqrt{k}}\,z_{i}^{k}+\frac{\overline{\mathcal{N}_{k}}}{\sqrt{k}}\,\overline{w_i}^{k}\right)\right).
\end{equation}
Using that $q_{1} \leq \ldots \leq q_{m}$ we can re-arrange the summation (interchange the $i$ and $k$ indices) using the identity
\begin{equation}
  \sum_{i=1}^{m}\sum_{k=1}^{q_i}a_{k,i} = \sum_{i=1}^{m}\sum_{k=q_{i-1}+1}^{q_i}\sum_{r=i}^{m}a_{k,r} \label{sumid1}
\end{equation}
valid for arbitrary $a_{k,i}$ where we set $q_{0}:=0$. Then using independence we get
\begin{equation}
  \begin{split}
    \mathbb{E}\left(F^{(m)}_{\vec{q}}(\vec{z},\vec{\overline{w}})\right) &= \prod_{i=1}^{m}\mathbb{E}\left(\mathrm{exp}\left(\sqrt{\theta}\,\sum_{k=q_{i-1}+1}^{q_i}\sum_{r=i}^{m}\left(\frac{\mathcal{N}_{k}}{\sqrt{k}}\,z_{r}^{k}+\frac{\overline{\mathcal{N}_{k}}}{\sqrt{k}}\,\overline{w_r}^{k}\right)\right)\right)\\
    &= \mathrm{exp}\left(\theta\,\sum_{i=1}^{m}\sum_{k=q_{i-1}+1}^{q_{i}}\sum_{l_1,l_2=i}^{m}\frac{(z_{l_1}\overline{w_{l_2}})^{k}}{k}\right)\\
    &= \prod_{l_1,l_2=1}^{m}\mathrm{exp}\left(\theta\, \sum_{k=1}^{q_{l_{1} \land l_{2}}}\frac{(z_{l_1}\overline{w_{l_2}})^{k}}{k}\right), \label{truncatedgen}
  \end{split}
\end{equation}
where to obtain the last line, we again re-arranged the sums using the identity
\begin{equation}
  \sum_{i=1}^{m}\sum_{k=q_{i-1}+1}^{q_{i}}\sum_{r_1,r_2=i}^{m}a_{k,r_1,r_2} = \sum_{r_1,r_2=1}^{m}\sum_{k=1}^{q_{r_{1} \land r_{2}}}a_{k,r_1,r_2}, \label{sumid2}
\end{equation}
valid for arbitrary $a_{k,r_1,r_2}$. Now to pick out coefficients, we expand each exponential in \eqref{truncatedgen} using 
\begin{equation}
  \begin{split}
    &\mathrm{exp}
    \left(\theta\, \sum_{k=1}^{q_{l_1 \land l_2}}\frac{(z_{l_1}\overline{w_{l_2}})^{k}}{k}\right)\\
    &=
    \sum_{A_{l_1,l_2}=0}^{\infty}
    (z_{l_1}\overline{w_{l_2}})^{A_{l_1,l_2}}
    \sum_{\substack{(m_{k}) : 1 \leq k \leq q_{l_1 \land l_2}\\ \sum_{k=1}^{q_{l_1 \land l_2}}km_{k}=A_{l_1,l_2}}}
    \prod_{k=1}^{q_{l_1 \land l_2}}\left(\frac{\theta}{k}\right)^{m_{k}}\,\frac{1}{m_{k}!}
    \\
    &=\sum_{A_{l_1,l_2}=0}^{\infty}(z_{l_1}\overline{w_{l_2}})^{A_{l_1,l_2}}
    \binom{A_{l_1,l_2}+\theta-1}{\theta-1}
    \mathbb{P}(L^{(A_{l_1,l_2})}_{1} \leq q_{l_1 \land l_2}).
  \end{split}
\end{equation}
Inserting this into \eqref{truncatedgen} leads to the representation \eqref{finalresultcorrelator}.
\end{proof}
For the proof of \eqref{2ndmomexp}, we will need the case $m=2$ corresponding to fourth moments of the secular coefficients, and involving sums over $2 \times 2$ magic squares (in analogy with Example \ref{ex:4thmom}). Let us assume that $n_{1} \leq n_{2}$ and $q_{1} \leq q_{2}$. Then \eqref{finalresultcorrelator} implies that
\begin{equation}
  \begin{split}
    &\mathbb{E}\left(|c_{n_1,q_1}|^{2}|c_{n_2,q_2}|^{2}\right) = \sum_{k=0}^{n_{1}}\binom{k+\theta-1}{\theta-1}^{2}\mathbb{P}(L^{(k)}_{1} \leq q_{1})^{2}\\
    &\times\binom{n_{1}-k+\theta-1}{\theta-1}\mathbb{P}(L^{(n_1-k)}_{1} \leq q_{1})\binom{n_{2}-k+\theta-1}{\theta-1}\mathbb{P}(L^{(n_2-k)}_{1} \leq q_{2}). \label{4thmomconstr}
  \end{split}
\end{equation}

\subsection{The $L^{2}$-phase and proof of  \eqref{2ndmomexp}}
\label{sec:mgle4}
We recall the definition of the two quantities:
\begin{align}
  \mathcal{M}_{\theta,n} &:= \frac{\Gamma(\theta+1)}{n^{\theta}}\sum_{q=0}^{n}|c_{q}|^{2} \label{Mn}\\
  \mathcal{M}_{\theta,\delta,n} &:= \frac{1}{\binom{n+\theta-1}{\theta-1}}\sum_{q=\lfloor \delta n \rfloor}^{n}|c_{n-q,q-1}|^{2}\,\frac{\theta}{q} \label{Mtilden}
\end{align}
and the deterministic constant 
\begin{equation}
  C_{\delta} := \theta\int_{\delta}^{1}\frac{(1-x)^{\theta-1}\mathbb{P}(L_{1}^{(\infty)} \leq x/(1-x))}{x}\,dx.
\end{equation}

Our goal is to prove \eqref{2ndmomexp} for any $0 < \theta < 1/2$. The condition $\theta < 1/2$ is a technical requirement in the proof to ensure the convergence of the infinite sum
\begin{equation}
  \sum_{k=0}^{\infty}\binom{k+\theta-1}{\theta-1}^{2} = \mathbb{E}(\mathcal{M}_{\theta}^{2}) = \frac{\Gamma(1-2\theta)}{\Gamma(1-\theta)^{2}},
\end{equation}
or equivalently existence of the second moment of the total mass. For a proof of this identity, see Lemma \ref{infsumk}.

\begin{theorem}[$L^{2}$-convergence]
  \label{th:thml2convergence}
  Fix $0 < \theta < \frac{1}{2}$ and $\delta>0$. Then the following limit holds
  \begin{equation}
    \lim_{n \to \infty}\mathbb{E}\left(|\mathcal{M}_{\theta,\delta,n}-C_{\delta}\mathcal{M}_{\theta,n}|^{2}\right) = 0. \label{l2convergence}
  \end{equation}
\end{theorem}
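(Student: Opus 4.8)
The plan is to expand the square and show that each of the three resulting second moments converges, with the limits arranged to cancel. Set $L := \Exp(\mathcal{M}_\theta^2) = \frac{\Gamma(1-2\theta)}{\Gamma(1-\theta)^2}$, which is finite precisely because $\theta < \tfrac12$ (Lemma \ref{infsumk}). The aim is to establish
\[
  \lim_{n\to\infty}\Exp(\mathcal{M}_{\theta,n}^2) = L,\qquad
  \lim_{n\to\infty}\Exp(\mathcal{M}_{\theta,\delta,n}\mathcal{M}_{\theta,n}) = C_\delta L,\qquad
  \lim_{n\to\infty}\Exp(\mathcal{M}_{\theta,\delta,n}^2) = C_\delta^2 L,
\]
from which \eqref{l2convergence} follows at once, since
\[
  \Exp\bigl(|\mathcal{M}_{\theta,\delta,n}-C_\delta\mathcal{M}_{\theta,n}|^2\bigr)
  = \Exp(\mathcal{M}_{\theta,\delta,n}^2) - 2C_\delta\,\Exp(\mathcal{M}_{\theta,\delta,n}\mathcal{M}_{\theta,n}) + C_\delta^2\,\Exp(\mathcal{M}_{\theta,n}^2)
  \longrightarrow C_\delta^2 L - 2C_\delta^2 L + C_\delta^2 L = 0.
\]

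Substituting the definitions \eqref{Mn}--\eqref{Mtilden}, each of the three moments becomes a double sum whose summand is of the form $\Exp(|c_{a_1,b_1}|^2|c_{a_2,b_2}|^2)$ --- with no cycle constraint ($b_i=a_i$) for an $\mathcal{M}_{\theta,n}$ factor, and with $(a_i,b_i)=(n-q_i,\,q_i-1)$ for an $\mathcal{M}_{\theta,\delta,n}$ factor. Each such quantity is computed by the $m=2$ case of \eqref{finalresultcorrelator} (that is, \eqref{4thmomconstr}): a sum over $2\times 2$ magic squares with a single free parameter, each term a product of four weights $\binom{A_{ij}+\theta-1}{\theta-1}$ and the corresponding cycle-constraint probabilities, written out explicitly in \eqref{4thmomconstr}. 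First I would run the localization already used for Lemma \ref{lem:4thmom} and Theorem \ref{thm:morris}: the sum concentrates on the diagonally dominant configuration, in which the two diagonal entries of the $2\times2$ square are close to $a_1$ and $a_2$ and the two off-diagonal entries equal a bounded value $\ell$. Summing over $\ell$ gives $\sum_{\ell\ge0}\binom{\ell+\theta-1}{\theta-1}^2 = L$ (convergent for $\theta<\tfrac12$), which factors out by dominated convergence; the small entries carry the trivial constraint probability $1$, and each large diagonal entry contributes $\binom{a_i+\theta-1}{\theta-1}\,\Pr(L_1^{(a_i)}\le b_i)$, which equals $1$ in the unconstrained case and, in the constrained case with $q_i/n\to x_i$, converges to $\Pr(L^{(\infty)}\le x_i/(1-x_i))$ by Lemma \ref{lem:kingman}. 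Next I would carry out the outer sums over $q_1,q_2$ (or $r_1,r_2$) as Riemann sums against the weights $\theta/q$, with the normalization $\binom{n+\theta-1}{\theta-1}^{-1}\sim\Gamma(\theta)\,n^{1-\theta}$: each unconstrained factor reduces to $\frac{\Gamma(\theta+1)}{n^\theta}\sum_{r\le n}\frac{r^{\theta-1}}{\Gamma(\theta)}\to 1$, and each constrained factor reduces to $\theta\int_\delta^1\frac{(1-x)^{\theta-1}}{x}\Pr(L^{(\infty)}\le\frac{x}{1-x})\,dx = C_\delta$, exactly as in the proof of Lemma \ref{lem:cdef}. Combining the $L$ from the fluctuation sum with the factors produced by the two outer sums yields the three displayed limits.

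The hard part will be the error estimates that justify the localization and the replacement of sums by integrals, and this is where $\theta<\tfrac12$ enters essentially. One must show that every non-dominant magic-square configuration contributes negligibly after normalization. The main cases are: (i) in $\Exp(\mathcal{M}_{\theta,n}^2)$, the diagonal terms $r_1=r_2$ and, more generally, all configurations with the two large entries misaligned, handled using $\sum_r r^{2\theta-2}<\infty$; (ii) the off-diagonal and intermediate configurations of the $2\times2$ square, whose per-term size is a power of $n$ strictly below that of the diagonal configuration precisely when $2\theta-1<0$, and which remains $o(n^{2\theta})$ (respectively $o(n^{2\theta-2})$) after summing over the $O(n^2)$ index pairs --- a short but case-heavy computation relying only on the elementary bound $\binom{k+\theta-1}{\theta-1}\asymp(1+k)^{\theta-1}$; and (iii) uniform control of the constraint probabilities near the endpoint $q\approx n$, where $n-q$ is small, for which the trivial bound $\Pr(L_1^{(m)}\le q)\le 1$ together with the uniform estimates of Lemmas \ref{lem:unifbnd} and \ref{lem:t0n} make dominated convergence applicable to the Riemann sums. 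None of these estimates is deep, but assembling them is the bulk of the argument and the step most in need of care.
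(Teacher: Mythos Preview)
Your proposal is correct and follows essentially the same route as the paper: expand the square into the three second moments, express each via the $2\times2$ magic-square formula \eqref{4thmomconstr}, isolate the off-diagonal parameter $k$, apply dominated convergence in $k$ (using $\sum_k\binom{k+\theta-1}{\theta-1}^2<\infty$ for $\theta<\tfrac12$), and treat the remaining $q$- or $r$-sums as Riemann sums converging to $1$ or $C_\delta$ as in Lemma~\ref{lem:cdef}. The only notable difference is one of bookkeeping: where you propose to control the inner sums as Riemann integrals throughout, the paper evaluates the $\mathcal{M}_{\theta,n}$ sums exactly via the telescoping identity of Lemma~\ref{sumlemma}, which makes the dominating functions and the error terms (your cases (i)--(iii)) more explicit but does not change the argument in substance.
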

\begin{proof}
  We start by expanding the square in \eqref{l2convergence} and endeavour to compute the three terms $\mathcal{S}_{1,n} := \mathbb{E}(\mathcal{M}_{\theta,\delta,n}^{2})$, $\mathcal{S}_{2,n}:=\mathbb{E}(\mathcal{M}_{\theta,n}^{2})$, and $\mathcal{S}_{3,n} := \mathbb{E}(\mathcal{M}_{\theta,\delta,n}\mathcal{M}_{\theta,n})$. It is clearly sufficient to show that the limit of each of these quantities exists is given by the second moment of the total mass, up to the appropriate factor of $C_{\delta}$. It will be convenient to abbreviate
  \begin{equation}
    P_{n,k} := \mathbb{P}(L_{1}^{(n)} \leq k-1)
  \end{equation}
  as the probability that the longest cycle in a Ewens distributed random permutation of size $n$ is less than or equal to $k-1$.

  Starting with the second moment of $\mathcal{M}_{\theta,\delta,n}$ in \eqref{Mtilden}, we have
  \begin{equation}
\mathcal{S}_{1,n} = \frac{\theta^{2}}{\binom{n+\theta-1}{\theta-1}^{2}}\sum_{q_{1}=\lfloor \delta n \rfloor}^{n}\sum_{q_2=\lfloor \delta n \rfloor}^{n}\frac{\mathbb{E}\left(|c_{n-q_1,q_1-1}|^{2}|c_{n-q_2,q_2-1}|^{2}\right)}{q_1 q_2}.
  \end{equation}
Because of the obvious symmetry in $q_{1}$ and $q_{2}$, it will be convenient to consider the contribution of such sums on the region $q_{1} \leq q_{2}$, which we denote $\mathcal{S}_{1,n}(q_{1}\leq q_{2})$ and similarly for the other sums. Applying \eqref{4thmomconstr} and interchanging the order of summation we obtain
  \begin{equation}
    \begin{split}
      &\mathcal{S}_{1,n}(q_{1}\leq q_{2}) = \sum_{q_{1}=\lfloor \delta n \rfloor}^{n}\sum_{q_2=q_1}^{n}\sum_{k=0}^{n-q_{2}}\binom{k+\theta-1}{\theta-1}^{2}P_{k,q_1}^{2}\,\frac{\theta^{2}}{q_{1}q_{2}}\,\frac{1}{\binom{n+\theta-1}{\theta-1}^{2}}\, \times\\
      &\binom{n-q_1-k+\theta-1}{\theta-1}P_{n-q_1-k,q_1}\binom{n-q_2-k+\theta-1}{\theta-1}P_{n-q_2-k,q_2}.\label{mtildeeq}
    \end{split}
  \end{equation}
  
To estimate the sum \eqref{mtildeeq} as $n \to \infty$, the idea is to treat $k$ as fixed while $q_1$ and $q_2$ are large, applying dominated convergence to bring the limit $n \to \infty$ inside the sum over $k$.

As in \eqref{riemsum}, the sums over $q_{1}$ and $q_{2}$ will always be treated as Riemann sum approximations. For $\mathcal{S}_{1,n}(q_{1}\leq q_{2})$, the double sum over $q_1$ and $q_2$ is symmetric (we can assume that $P_{k,q_1}^{2}=1$ for fixed $k$ and $n$ large enough, since $q_{1} \geq \lfloor \delta n \rfloor$ which is larger than $k$) and we can write it as $\frac{1}{2}$ times the square of the same sum appearing in \eqref{riemsum} (neglecting for now the diagonal $q_{1}=q_{2}$, see below). This immediately yields the limit as $\frac{1}{2}C_{\delta}^{2}$. By symmetry the same reasoning and limiting value applies to the sum with $q_{1} \geq q_{2}$. Assuming we can apply dominated convergence to the sum over $k$, we get
  \begin{equation}
    \lim_{n \to \infty}\mathcal{S}_{1,n} = C_{\delta}^{2}\sum_{k=0}^{\infty}\binom{k+\theta-1}{\theta-1}^{2} = C_{\delta}^{2}\mathbb{E}(\mathcal{M}_{\theta}^{2}) \label{final-t1}
  \end{equation}
  as required. 

Now we will justify the dominated convergence by finding a uniform and summable bound in \eqref{mtildeeq}. Notice that the sums over $q_1$ and $q_2$ are uniformly bounded by
  \begin{align*}
    &\frac{\theta^{2}}{\lfloor \delta n \rfloor^{2}\binom{n+\theta-1}{\theta-1}^{2}}\sum_{q_1=\lfloor \delta n \rfloor}^{n-k}\sum_{q_2=q_1}^{n-k}\binom{n-q_1-k+\theta-1}{\theta-1}\binom{n-q_2-k+\theta-1}{\theta-1}\\
    &=\frac{\theta^{2}}{\lfloor \delta n \rfloor^{2}\binom{n+\theta-1}{\theta-1}^{2}}\sum_{q_{1}=0}^{n-\lfloor \delta n \rfloor - k}\binom{q_1+\theta-1}{\theta-1}\binom{q_1+\theta}{\theta}
 \end{align*}
where we used the binomial sum identity \eqref{sumlemmaeq}. The final expression can be bounded uniformly by extending the summation to $q_{1}=n$ and noting that the summand is of order $q_{1}^{2\theta-1}$ as $q_{1} \to \infty$. If we took just the terms where $q_{2}=q_{1}$, we would have the bound
  \begin{equation}
    \begin{split}
      &\frac{\theta^{2}}{\binom{n+\theta-1}{\theta-1}^{2}}\frac{1}{\lfloor \delta n \rfloor^{2}}\sum_{q_1=\lfloor \delta n \rfloor}^{n-k}\binom{n-q_1-k+\theta-1}{\theta-1}^{2}\\
      &\leq \frac{\theta^{2}}{\binom{n+\theta-1}{\theta-1}^{2}}\frac{1}{\lfloor \delta n \rfloor^{2}}\sum_{q_1=0}^{n-\lfloor \delta n \rfloor}\binom{q_1+\theta-1}{\theta-1}^{2} \to 0, \qquad n \to \infty.
    \end{split}
  \end{equation}
This justifies overcounting terms where $q_{2}=q_{1}$ and completes the proof of \eqref{final-t1}.

  Now we consider the second moment $\mathbb{E}(\mathcal{M}^{2}_{\theta,n})$. Again using \eqref{4thmomconstr} in the unconstrained case (or \eqref{genbetmoms}) we obtain
    \begin{equation}
    \begin{split}
      \mathcal{S}_{2,n} &= \frac{\Gamma(\theta+1)^{2}}{n^{2\theta}}\sum_{q_1=0}^{n}\sum_{q_2=0}^{n}\sum_{k=0}^{\mathrm{min}(q_1,q_2)}\binom{k+\theta-1}{\theta-1}^{2}\\
      &\times\binom{q_1-k+\theta-1}{\theta-1}\binom{q_2-k+\theta-1}{\theta-1}. \label{2ndmom}
    \end{split}
  \end{equation}
  An application of Lemma \ref{sumlemma} shows that the contribution to \eqref{2ndmom} from the region $q_{1} \leq q_{2}$ is equal to
  \begin{align}
    & \mathcal{S}_{2,n}(q_{1} \leq q_{2}) = \frac{\Gamma(\theta+1)^{2}}{n^{2\theta}}\sum_{q_1=0}^{n}\sum_{k=0}^{q_1}\binom{k+\theta-1}{\theta-1}^{2}\\
    &\times\binom{q_1-k+\theta-1}{\theta-1}\left(\binom{n-k+\theta}{\theta}-\binom{q_1-k+\theta-1}{\theta}\right)
  \end{align}
  Interchanging the summation and summing over $q_1$ again using Lemma \ref{sumlemma} we obtain
  \begin{align}
    &\mathcal{S}_{2,n}(q_{1} \leq q_{2}) =  \frac{\Gamma(\theta+1)^{2}}{n^{2\theta}}\sum_{k=0}^{n}\binom{k+\theta-1}{\theta-1}^{2}\binom{n-k+\theta}{\theta}^{2} \label{firsttermdom}\\
    &-\frac{\Gamma(\theta+1)^{2}}{n^{2\theta}}\sum_{k=0}^{n}\binom{k+\theta-1}{\theta-1}^{2}\sum_{q_1=0}^{n-k}\binom{q_1+\theta-1}{\theta-1}\binom{q_1+\theta-1}{\theta}. \label{twoterms}
  \end{align}
  By dominated convergence, the first term \eqref{firsttermdom} converges to $\mathbb{E}(\mathcal{M}_{\theta}^{2})$, where a suitable bound is obtained by noticing that the second binomial coefficient is decreasing in $k$. For the second term \eqref{twoterms}, a uniform upper bound is obtained by summing up to $q_{1}=n$ and noting that the summand is of order $q_{1}^{2\theta-1}$ with a corresponding sum of order $n^{2\theta}$ as $n \to \infty$. Then dominated convergence implies that \eqref{twoterms} has a limit given by
  \begin{equation}
    \begin{split}
      &\sum_{k=0}^{\infty}\binom{k+\theta-1}{\theta-1}^{2}\lim_{n \to \infty}\frac{\Gamma(\theta+1)^{2}}{n^{2\theta}}\sum_{q_{1}=0}^{n-k}\binom{q_1+\theta-1}{\theta-1}\binom{q_1+\theta-1}{\theta}\\
      &= \sum_{k=0}^{\infty}\binom{k+\theta-1}{\theta-1}^{2}\lim_{n \to \infty}\frac{\theta}{n^{2\theta}}\sum_{q_{1}=1}^{n}q_{1}^{2\theta-1}\\
      &=\frac{1}{2}\sum_{k=0}^{\infty}\binom{k+\theta-1}{\theta-1}^{2} = \frac{1}{2}\mathbb{E}(\mathcal{M}_{\theta}^{2})
    \end{split}
  \end{equation}
  An analogous computation shows that the contribution to \eqref{2ndmom} from the diagonal $q_{1}=q_{2}$ is order $O(n^{-1})$. By symmetry in $q_1$ and $q_2$, we obtain
  \begin{equation}
    \lim_{n \to \infty}\mathcal{S}_{2,n} = \sum_{k=0}^{\infty}\binom{k+\theta-1}{\theta-1}^{2} = \mathbb{E}(\mathcal{M}_{\theta}^{2}).
  \end{equation}
It remains to consider the mixed term $\mathbb{E}(\mathcal{M}_{\theta,\delta,n}\mathcal{M}_{\theta,n})$ denoted $\mathcal{S}_{3,n}$. By definition we have
  \begin{equation}
    \begin{split}
      &\mathcal{S}_{3,n} = \mathbb{E}(\mathcal{M}_{\theta,\delta,n}\mathcal{M}_{\theta,n}) \\
      &=\frac{\Gamma(\theta+1)}{n^{\theta}}\,\frac{1}{\binom{n+\theta-1}{\theta-1}}\,\sum_{q_{1}=0}^{n}\sum_{q_{2}=\lfloor \delta n \rfloor}^{n}\mathbb{E}\left(|c_{q_1}|^{2}|c_{n-q_2,q_2-1}|^{2}\right)\,\frac{\theta}{q_2}.
    \end{split}
  \end{equation}
  We split this up as two sums, one for values of $q_{1}=0,\ldots,n-q_2$, and one for values of $q_{1}=n-q_{2}+1,\ldots ,n$. Interchanging the order of summation, the first sum gives
  \begin{equation}
    \begin{split}
      &\mathcal{S}_{3,n}(q_{1} \leq n-q_{2}) =   
      \frac{\Gamma(\theta+1)}{n^{\theta}\binom{n+\theta-1}{\theta-1}}\,\sum_{k=0}^{n-\lfloor \delta n \rfloor}\sum_{q_2=\lfloor \delta n \rfloor}^{n-k}\sum_{q_{1}=k}^{n-q_{2}}\binom{k+\theta-1}{\theta-1}^{2}\\
      &\times P_{k,q_{2}}^{2}\binom{q_1-k+\theta-1}{\theta-1}\binom{n-q_{2}-k+\theta-1}{\theta-1}P_{n-q_{2}-k,q_{2}}\,\frac{\theta}{q_{2}}. \label{s3nsum1}
    \end{split}
  \end{equation}
The sum over $q_{1}$ in \eqref{s3nsum1} is handled with Lemma \ref{sumlemma} and we have
  \begin{equation}
    \begin{split}
      &\mathcal{S}_{3,n}(q_{1} \leq n-q_{2}) = \frac{\Gamma(\theta+1)}{n^{\theta}\binom{n+\theta-1}{\theta-1}}\,\sum_{k=0}^{n-\lfloor \delta n \rfloor}\sum_{q_2=\lfloor \delta n \rfloor}^{n-k}\binom{k+\theta-1}{\theta-1}^{2}\\
      &\times P_{k,q_{2}}^{2}\binom{n-q_{2}-k+\theta}{\theta}\binom{n-q_{2}-k+\theta-1}{\theta-1}P_{n-q_{2}-k,q_{2}}\,\frac{\theta}{q_{2}}. \label{sumq1exp}
    \end{split}
  \end{equation}
  Now we look at the contribution to $\mathcal{S}_{3,n}$ indexed by $q_{1}=n-q_{2}+1,\ldots,n$. This gives
  \begin{equation}
    \begin{split}
      &\mathcal{S}_{3,n}(q_{1} > n-q_{2}) = \frac{\Gamma(\theta+1)}{n^{\theta}\binom{n+\theta-1}{\theta-1}}\,\sum_{q_{2}=\lfloor \delta n \rfloor}^{n}\sum_{q_{1}=n-q_{2}+1}^{n}\sum_{k=0}^{n-q_{2}}\binom{k+\theta-1}{\theta-1}^{2}\\
      &\times P_{k,q_{2}}^{2}\binom{q_1-k+\theta-1}{\theta-1}\binom{n-q_{2}-k+\theta-1}{\theta-1}P_{n-q_{2}-k,q_{2}}\,\frac{\theta}{q_{2}}.
    \end{split}
  \end{equation}
  Summing over $q_{1}$ with Lemma \ref{sumlemma} and interchanging the order of summation gives the identity 
  \begin{equation}
    \begin{split}
      &\mathcal{S}_{3,n}(q_{1} > n-q_{2})  = \frac{\Gamma(\theta+1)}{n^{\theta}\binom{n+\theta-1}{\theta-1}}\,\sum_{k=0}^{n-\lfloor \delta n \rfloor}\sum_{q_{2}=\lfloor \delta n \rfloor}^{n-k}\binom{k+\theta-1}{\theta-1}^{2}P_{k,q_{2}}^{2}\,\\
      &\times\left(\binom{n-k+\theta}{\theta}-\binom{n-q_{2}-k+\theta}{\theta}\right)\\
      &\times\binom{n-q_{2}-k+\theta-1}{\theta-1}\frac{\theta}{q_{2}}P_{n-q_{2}-k,q_{2}}.
    \end{split}
  \end{equation}
  Combining with \eqref{sumq1exp} yields a cancellation and we are left with the identity
  \begin{equation}
    \begin{split}
      \mathcal{S}_{3,n} &= \sum_{k=0}^{n-\lfloor \delta n \rfloor}\sum_{q_{2}=\lfloor \delta n \rfloor}^{n-k}\binom{k+\theta-1}{\theta-1}^{2}P_{k,q_{2}}^{2}\\
      &\times\Gamma(\theta+1)\frac{\binom{n-k+\theta}{\theta}}{n^{\theta}}\frac{\binom{n-q_{2}-k+\theta-1}{\theta-1}}{\binom{n+\theta-1}{\theta-1}}P_{n-q_{2}-k,q_{2}}\,\frac{\theta}{q_{2}}. \label{covidentitymtilde}
    \end{split}
  \end{equation}
  As in the treatment of \eqref{mtildeeq} (see also \eqref{riemsum}), for $n \to \infty$ and fixed $k$, the $k^{\mathrm{th}}$ term of \eqref{covidentitymtilde} is approximated by a Riemann integral and $P_{k,q_{2}}=1$. Applying dominated convergence then gives the limit as $n \to \infty$ of \eqref{covidentitymtilde} as $C_{\delta}\mathbb{E}(\mathcal{M}_{\theta}^{2})$ as required. For a suitable dominating function, note that apart from the factor $\binom{k+\theta-1}{\theta-1}^{2}$, the $k^{\mathrm{th}}$ term in \eqref{covidentitymtilde} is uniformly bounded by 
  \begin{align}
    &\frac{\theta}{\lfloor \delta n \rfloor}\Gamma(\theta+1)\frac{\binom{n-k+\theta}{\theta}}{n^{\theta}}\sum_{q_{2}=\lfloor \delta n \rfloor}^{n-k}\frac{\binom{n-q_{2}+\theta-1}{\theta-1}}{\binom{n+\theta-1}{\theta-1}}\\
    &\leq \frac{\theta}{\lfloor \delta n \rfloor}\Gamma(\theta+1)\frac{\binom{n+\theta}{\theta}}{n^{\theta}}\frac{\binom{n-\lfloor \delta n \rfloor+\theta}{\theta}}{\binom{n+\theta-1}{\theta-1}},
  \end{align}
  where we used Lemma \ref{sumlemma} and monotonicity of the binomial coefficients as a function of $k$. The final bound is independent of $k$ and bounded in $n$. This shows that
  \begin{equation}
    \lim_{n \to \infty}\mathcal{S}_{3,n} = C_{\delta}\sum_{k=0}^{\infty}\binom{k+\theta-1}{\theta-1}^{2} = C_{\delta}\mathbb{E}(\mathcal{M}_{\theta}^{2}),
  \end{equation}
 as required. Putting all three limits for the moments together completes the proof of the theorem.
\end{proof}

\section{Regularity of the holomorphic multiplicative chaos}
\label{sec:regular}

We have to determine, for $s \in \mathbb{R}$, if the series 
\begin{equation}
A_{s,\theta} \coloneqq \sum_{n=0}^{\infty}(1+n^{2})^{s}|c_{n}|^{2} \label{Aseries}
\end{equation}
is convergent. We will study the cases $\theta \leq 1$ and $\theta > 1$ separately. 
\subsection{Sub-critical and critical cases, $\theta \in (0,1]$}
We have that $\mathbb{E} [ |c_n|^2] \leq C_{\theta} (1+n)^{\theta-1}$ for some constant $C_{\theta} > 0$, and then 
$$\mathbb{E} [ A_{s, \theta} ] \leq C_{\theta} \sum_{n=0}^{\infty}(1+n)^{2s + \theta-1},$$
which is finite as soon as $s < -\theta/2$. Hence, HMC$_{\theta}$ is a.s.\ in $H^s$ for all $s < - \theta/2$. Notice that this reasoning remains true in the supercritical phase, but 
the bound $-\theta/2$ is not optimal for this phase. Let us now show that HMC$_{\theta}$ is a.s.\ not in $H^{-s}$ for $s > -\theta/2$. 

By Parseval's identity, for $1/2 < r < 1$, 
\begin{equation}
  \begin{split}
    \sum_{n=0}^{\infty}|c_{n}|^{2}r^{2n}
    = \frac{1}{2\pi}\int_{0}^{2\pi} e^{\sqrt{\theta}G(re^{i\vartheta})}\,d\vartheta. 
  \end{split}
\end{equation}
For any $s < 0$, the map $x \mapsto x^{2s} r^{-2x}$ from $(0, \infty)$ to $\mathbb{R}$ has a logarithmic derivative 
$2s/x - 2 \log r$, and then it reaches its minimum at $x = s / \log r$. Hence, for all $n \geq 1,$ 
\[
  (1+n^2)^{s}  r^{-2n} 
  \geq
  2^{s} n^{2s}  r^{-2n} 
  \geq 2^s (s/\log r)^{2s} r^{-2 s / \log r} 
  \geq C_s |\log r|^{-2s}
\]
where $C_s > 0$ depends only on $s$. 
Hence, for all $r \in (1/2,1)$, shrinking $C_s$ as needed to account for $n=0$ term and the $2\pi$,
\begin{equation}\label{eq:SobolevtoAbel}
  A_{s,\theta} 
  \geq 
  \sum_{n=0}^{\infty}(1+n^2)^{s}r^{-2n} r^{2n} |c_{n}|^{2}
  \geq
  \frac{C_s}{|\log r|^{2s}} \int_{0}^{2\pi} e^{\sqrt{\theta}G(re^{i\vartheta})}\,d\vartheta.
\end{equation}
Now, for $\theta \in (0,1]$, the quantity
$$(1- r^2)^{\theta} |\log (1-r^2)|^{(1/2) \mathbf{1}_{\theta = 1} } \int_{0}^{2\pi} e^{\sqrt{\theta}G(re^{i\vartheta})}\,d\vartheta$$
converges in probability to the total mass of the GMC$_{\theta}$ when $r$ goes to $1$, and then converges a.s.\ to a non-zero limit along a subsequence. 
Since for $s \in (-\theta/2, 0)$, 
$$  |\log r|^{-2s}  (1- r^2)^{-\theta} |\log (1-r^2)|^{-(1/2) \mathbf{1}_{\theta = 1} }$$
tends to infinity when $r \rightarrow 1$, we deduce that $A_{s, \theta}$ is a.s. infinite.

\subsection{Super-critical case, $\theta > 1$}
Let us prove that HMC$_{\theta}$ is a.s.\ in $H^s$ for all $s < - \sqrt{\theta} + 1/2$. 
One  checks that  $A_{s, \theta}$ is dominated by 
$$\sum_{u = 1}^{\infty} 2^{2su}  \sum_{n = 0}^{\infty} (1 - 2^{-u})^n |c_n|^2,$$
after considering the terms for which $u = \lfloor \log (2+n)/ \log 2 \rfloor$.  
Hence $A_{s, \theta} $ is dominated by 
$$\sum_{u = 1}^{\infty} 2^{2su} \int_{0}^{2\pi} e^{\sqrt{\theta}G((1-2^{-u}) e^{i\vartheta})}\,d\vartheta. $$
We have that $G((1-2^{-u}) e^{i\vartheta})$ is a centered  Gaussian variable with variance 
$$- 2 \log (1 - (1-2^{-u}) ^2) = -2 \log (2^{1-u} - 2^{-2u}) = 2u \log 2 + \mathcal{O}(1). $$
Hence, for $u$ large enough, the probability that $G((1-2^{-u}) e^{i\vartheta}) \geq 2u \log 2 + 10 \log u $ is dominated by 
\begin{align*}
\mathbb{P} \left[ \mathcal{N}(0,1) \geq  \frac{2u \log 2 +10  \log u}{  \sqrt{2u \log 2 + \mathcal{O}(1)}}  \right] & 
\leq e^{-  \frac{(2u \log 2 + 10 \log u)^2} { 4 u \log 2 + \mathcal{O}(1)} } 
\\ & \leq  e^{-  \frac{4 u ^2 (\log 2)^2 + 40 u (\log 2)( \log u)} { 4 u \log 2 + \mathcal{O}(1)} } 
\\ & \leq e^{ - (u  \log 2   + 10 \log u ) ( 1 + \mathcal{O}(1/u)) } = \mathcal{O}( 2^{-u} u^{-10} )
\end{align*}
By Borel-Cantelli lemma, we have, almost surely, 
$$\sup_{ \vartheta \in 2 \pi \mathbb{Z} / 2^u} G((1-2^{-u}) e^{i\vartheta}) \leq  2u \log 2 + 10 \log u$$
for $u$ large enough.  We let $\mathcal{G}_u$ be the event the previous display holds. 
In order to prove that $A_{s, \theta}$ is a.s.\  finite, it is then sufficient to show that 
 \[
   \sum_{u = 1}^{\infty} 2^{2su} 
   \int_{0}^{2\pi} e^{\sqrt{\theta}G((1-2^{-u}) e^{i\vartheta})} 
   \one[\mathcal{G}_u]
   \,d\vartheta < \infty 
 \]
almost surely. It is then enough to have 
\[
  \sum_{u = 1}^{\infty} 2^{2su} 
  \int_{0}^{2\pi} 
  \mathbb{E} [ e^{\sqrt{\theta}G((1-2^{-u}) e^{i\vartheta})}
  \one_{\mathcal{G}_u}
]
  \,d\vartheta < \infty.
\]
We let  $\chi(u, \vartheta)$ be a multiple of $(2 \pi)/2^u$ minimizing its distance $\vartheta.$
Using a change of measure formula, we can bound the expectation inside the integral by
\[
  \begin{aligned}
    &\mathbb{E} [ e^{\sqrt{\theta}G((1-2^{-u}) e^{i\vartheta})}
      \one_{\mathcal{G}_u}
    ] \\
    &\leq
    \mathbb{E} [ e^{\sqrt{\theta}G((1-2^{-u}) e^{i\vartheta})}
      \one_{ G((1-2^{-u}) e^{i \chi(u, \vartheta)}) \leq  2u \log 2 + 10 \log u}
    ] \\
    &=\mathbb{E} [ e^{\sqrt{\theta}G((1-2^{-u}) e^{i\vartheta})} ] 
    \mathbb{P} [ \widetilde{G}((1-2^{-u}) e^{i \chi(u, \vartheta)}) \leq  2u \log 2 + 10 \log u ]. \\
  \end{aligned}
\]
Here $ \widetilde{G}((1-2^{-u}) e^{i \chi(u, \vartheta)})$ is the Gaussian variable obtained from $G((1-2^{-u}) e^{i \chi(u, \vartheta)})$ after changing the underlying probability 
measure by a density proportional to  $e^{\sqrt{\theta}G(1-2^{-u})e^{i\vartheta}) }$. By Girsanov's theorem, this change of measure introduces a drift 
$$\operatorname{Cov} ( G ( (1-2^{-u}) e^{i \chi(u, \vartheta)}), \sqrt{\theta}G((1-2^{-u}) e^{i\vartheta}) )
 = -2 \sqrt{\theta} \log |  1- (1-2^{-u})^2 e^{ i v} |$$
 where $$|v| = |  \chi(u, \vartheta) - \vartheta |  \leq 2 \pi / 2^u  = \mathcal{O}(2^{-u}).$$
Hence, 
 $$|  1- (1-2^{-u})^2 e^{ i v} | = \mathcal{O}(2^{-u}),$$
which  implies  
$$\operatorname{Cov} ( G ( (1-2^{-u}) e^{i \chi(u, \vartheta)}), \sqrt{\theta}G((1-2^{-u}) e^{i\vartheta}) )
\geq \sqrt{\theta} ( 2 u \log 2 + \mathcal{O}(1)).$$
We then get, for $u$ large enough depending on $\theta$, 
\begin{align*}
& \mathbb{E} [ e^{\sqrt{\theta}G((1-2^{-u}) e^{i\vartheta})} \mathbf{1}_{ G((1-2^{-u}) e^{i \chi(u, \vartheta)}) \leq  2u \log 2 + 10 \log u} ] 
\\ & \leq \mathbb{E} [ e^{\sqrt{\theta}G((1-2^{-u}) e^{i\vartheta})} ] 
\\ & \times \mathbb{P} [G((1-2^{-u}) e^{i \chi(u, \vartheta)}) + \sqrt{\theta} ( 2 u \log 2 + \mathcal{O}(1)) \leq  2u \log 2 + 10 \log u ].
\\ & \leq e^{ \theta  ( 2u \log 2 + \mathcal{O}(1)) /2}  \cdot e^{ - ( 2u  ( \sqrt{ \theta}- 1)\log 2  + \mathcal{O} (\log u))^2 / 2 ( 2u \log 2 + \mathcal{O}(1))}
\\ &  \leq e^{ \theta  ( 2u \log 2 + \mathcal{O}(1)) /2}  \cdot e^{ -  ( u  ( \sqrt{ \theta}- 1)^2 \log 2 + \mathcal{O} ( \sqrt{\theta} ( \log u) ) ) ( 1 + \mathcal{O}(u^{-1}))}
\\ & \leq C_{\theta}  2^{u ( \theta - ( \sqrt{ \theta}- 1)^2 )} u^{m_{\theta}}  = C_{\theta} 2^{u (2 \sqrt{\theta} - 1)} u^{m_{\theta}} 
\end{align*}
for  $C_{\theta}, m_{\theta} > 0$ depending only on $\theta$. 
Hence, 
$$\sum_{u = 1}^{\infty} 2^{2su} \int_{0}^{2\pi} \mathbb{E} [ e^{\sqrt{\theta}G((1-2^{-u}) e^{i\vartheta})} \mathbf{1}_{ G((1-2^{-u}) e^{i \chi(u, \vartheta)}) \leq  2u \log 2 + 10 \log u} ] \,d\vartheta < \infty$$
as soon as $2s + 2 \sqrt{\theta} - 1 < 0$, which implies that for $\theta > 1$, HMC$_{\theta}$ is a.s. in $H^s$ for all  $s <  - \sqrt{\theta} + 1/2$. 

On the other hand, we have seen, from \eqref{eq:SobolevtoAbel}
$$
 A_{s,\theta} 
  \geq
  \frac{C_s}{|\log r|^{2s}} \int_{0}^{2\pi} e^{\sqrt{\theta}G(re^{i\vartheta})}\,d\vartheta.
$$
for all $r \in (1/2,1)$. 
Since $G$ is the real part of a holomorphic function, it is harmonic on the unit disc, and then 
by Jensen's inequality, for all $\vartheta_0 \in \mathbb{R}$, 
$$e^{\sqrt{\theta}G(r^2 e^{i \vartheta_0})} \leq  \frac{1}{2 \pi} \int_0^{2 \pi}  e^{\sqrt{\theta} G(r e^{i (\vartheta_0 + \vartheta )})} P_r(\vartheta) d \vartheta$$
where $P_r$ is the Poisson kernel: 
$$P_r( \vartheta) = \sum_{p \in \mathbb{Z}} r^{|p|} e^{i p \vartheta}.$$
The Poisson kernel is bounded by $(1+r)/(1-r)$, and then 
\[
  \begin{aligned}
2e^{\sqrt{\theta} \sup_{\vartheta_0 \in \mathbb{R}}  G(r^2 e^{i \vartheta_0})}  
&\leq 
\sup_{\vartheta_0 \in \R}
\frac{1}{2\pi}\int_0^{2 \pi}  e^{\sqrt{\theta} G(r e^{i (\vartheta_0 + \vartheta )})} P_r(\vartheta) d \vartheta \\
&\leq
\frac{1+r}{2\pi(1-r)} 
\int_0^{2 \pi}  e^{\sqrt{\theta} G(r e^{i  \vartheta })} 
d \vartheta.
\end{aligned}
\]
Hence, using \eqref{eq:SobolevtoAbel}, 
there is a constant $C_s > 0$ so that for any $r \in (\tfrac 12,1),$
\[
  A_{s, \theta}  \geq (1-r)^{1 - 2s} e^{\sqrt{\theta} \sup_{\vartheta \in \mathbb{R}}  G(r^2 e^{i \vartheta})}.
\]
With probability going to $1$ when $r \rightarrow 1$, the maximum of the logarithmically correlated field $G(r^2 e^{i \vartheta})$
is larger than $2 \log (1/(1-r) ) - 10 \log ( \log (1/(1-r)))$ (using the results of \cite{DingRoyZeitouni} on a suitably chosen net), and then $A_{s, \theta}$ is bounded from below by $(1-r)^{1-2s - 2 \sqrt{\theta} }  \log (1/(1-r))^{m_{\theta}}$
for $m_{\theta}$ depending only on $\theta$. For $s > -\sqrt{\theta} + 1/2$, the bound tends to infinity when $r \rightarrow 1$, which shows that HMC$_{\theta}$ is a.s.\ not in $H^s$.

\section{The circular $\beta$--ensemble} \label{sec:cbe}
In this section we develop some of the properties of the secular coefficients of C$\beta$E $\{ c_n^{(N)} \}.$  We begin by recalling the \emph{Verblunsky} coefficients which will allow us to formulate the exact relationship, due to \cite{KillipNenciu}.  We let $\left\{ \alpha_n : n \in \N_0\right\}$ be independent complex random variables on the unit disk, with each $\alpha_n$ rotationally invariant in law and $|\alpha_n|^2$ distributed like $\Beta(1, \tfrac{\beta(n+1)}{2})$.  The \emph{Szeg\H{o}} recurrence is, for all $N \geq 0,$ 
\begin{equation}
  \begin{pmatrix}
    \Phi_{N+1}(z) \\
    \Phi^*_{N+1}(z)
  \end{pmatrix}
  \coloneqq
  \begin{pmatrix}
    z & -\overline{\alpha_N} \\
    -\alpha_N z & 1
  \end{pmatrix}
  \begin{pmatrix}
    \Phi_{N}(z) \\
    \Phi^*_{N}(z)
  \end{pmatrix},
  \quad
  \biggl\{
    \begin{aligned}
      &\Phi_0(z) \equiv 1, \\
      &\Phi_N^*(z) = z^N \overline{ \Phi_N(1/\overline{z})}.
    \end{aligned}
    \label{eq:szego}
  \end{equation}
  where $\Phi_{N}^{*}$ and $\Phi_{N}$ are polynomials of degree at most $N$. Note that $\Phi_N^*$ and $\Phi_N$ are related by being reversals of one another, in that their vector of coefficients is reversed and conjugated. 
  Now we give the connection to the $C\beta E$ characteristic polynomial as defined in \eqref{charpoly}, due to \cite{KillipNenciu}. We let $\eta$ be uniformly distributed random variable on the unit circle independent of $\left\{ \alpha_n : n \in \N_0 \right\}.$
  The characteristic polynomial $\chi_N$ has the distribution of
  \begin{equation}
    \chi_N(z) = 
    \Phi_{N-1}^*(z) - \eta z{\Phi_{N-1}(z)}.
    \label{eq:chiN}
  \end{equation}
  From \cite[Proposition 3.1]{ChhaibiNajnudel}, we have almost sure convergence as $N \to \infty$ of $\Phi_N^*(z)$ to a process $\Phi_\infty^*(z)$ on the unit disk uniformly on compact sets.  Moreover, this limit is none other than the log-Gaussian process $e^{\sqrt{\theta}G^{\C}(z)}.$  In \cite[Proposition 3.1]{ChhaibiNajnudel}, 
  the following uniform estimate is also 
  proven:
  \begin{equation}
    \Exp |\Phi_N^*(z)|^p \leq (1-|z|^2)^{-\theta p^2/4}
    \quad
    \text{for all}
    \quad z \in \D, p > 0, N \in \N.
    \label{eq:PhiNbnd}
  \end{equation}

  We let $\filt$ be the filtration $\filt = \left( \filt_N \coloneqq \sigma(\alpha_0, \dots, \alpha_{N-1}) : N \geq 0 \right).$  Then the process $\left\{ \Phi_N^*(z) \right\}$ is adapted to $\filt$ and moreover is a complex martingale.  Using Cauchy's theorem, and the bound \eqref{eq:PhiNbnd}, we have that secular coefficient
  \begin{equation}\label{eq:Mnn}
    \mathscr{M}_{n,N} \coloneqq \frac{1}{2\pi i}\oint \Phi_N^*(z) z^{-n-1} dz,
  \end{equation}
  for a fixed simple closed contour enclosing $0$ in the unit disk,
  forms a uniformly integrable martingale adapted to $\filt_N.$ Hence we have the representation
  \[
    \mathscr{M}_{n,N} = \Exp[ c_n ~\vert~\filt_N] \Asto[N] c_n.
  \]

 Using the Szeg\H{o} recurrence \eqref{eq:szego} and \eqref{eq:Mnn}, we have the identity
  \begin{equation}
    \begin{aligned}
      \mathscr{M}_{n,N+1} 
      &=\mathscr{M}_{n,N} - \alpha_N \frac{1}{2\pi i}\oint \Phi_N(z) z^{-n} dz \\
      &=\mathscr{M}_{n,N} - \alpha_N \overline{\mathscr{M}_{N-n+1,N}}.
    \end{aligned}
    \label{eq:Mnnrecurrence}
  \end{equation}
  We will let $\mathfrak{B}_{n,N}$ be the bracket process of $\mathscr{M}_{n,N},$ i.e.\ for $n \geq 1$, and for any $N \geq n-1$,
  \begin{equation} \label{eq:Mnnbrack}
    \begin{aligned}
      \mathfrak{B}_{n,N}
      &\coloneqq\sum_{j=n-1}^{N-1} \Exp[ |\mathscr{M}_{n,j+1}-\mathscr{M}_{n,j}|^2~\vert~\filt_j] \\
      &=\sum_{j=n-1}^{N-1} |{\mathscr{M}_{j-n+1,j}}|^2\Exp(|\alpha_j|^2) 
      =\sum_{j=n-1}^{N-1} \frac{|{\mathscr{M}_{j-n+1,j}}|^2}{1+\frac{1}{\theta}(j+1)}.
    \end{aligned}
    \end{equation}
  Here, we can notice that $\mathscr{M}_{n,n-1}= 0$, since $\Phi^*_{n-1}$ is a polynomial of degree at most $n-1$.

  \begin{lemma}\label{lem:expectedbracket}

    For any $\theta >0$ there exists a constant $C_{\theta} > 0$ such that the following holds. For  $N \geq n \geq 1$,  we have
   $\Exp |\mathscr{M}_{n,N}|^2 
      =\Exp(\mathfrak{B}_{n,N})$, and 
    \[
      \Exp |\mathscr{M}_{n,N}|^2 
      \leq C_\theta  
      \left\{
	\begin{aligned}
	  &\frac{(N-n+1)^{\theta}}{n}, & \text{if } \theta < 1,  \\
	&\frac{(N-n+1)}{n} \left( 1 + \max\biggl(\log \biggl(\frac{n}{N-n+1}\biggr), 0\biggr) \right), 
	& \text{if } \theta = 1,  \\
	&\frac{n^{\theta}}{n}\log\biggl(1+\frac{(N-n+1)}{n}\biggr), 
	& \text{if } \theta > 1.  \\
      \end{aligned}
	\right.
    \]
  Moreover, for all  $n,N_1,N_2 \in \N$ such that  $1 \leq 3n/2 \leq N_1 < N_2$, we have 
    \[
      \Exp |\mathscr{M}_{n,N_2} - \mathscr{M}_{n,N_1}|^2 
      =\Exp(\mathfrak{B}_{n,N_2}-\mathfrak{B}_{n,N_1})
      \leq C_\theta n^{\theta}\biggl(\frac{1}{N_1}-\frac{1}{N_2}\biggr).
    \]
    This holds with $N_2=\infty$ as well, in which case $\mathscr{M}_{n,N_2}=c_n$ and $1/N_2 := 0$.
   
  \end{lemma}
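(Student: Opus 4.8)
The plan is to reduce everything to the expected bracket $\Exp(\mathfrak{B}_{n,N})$ and to compute it exactly via a closed linear recursion. First I would record the martingale $L^2$-isometry: since $\mathscr{M}_{n,\cdot}$ is an $\filt$-martingale with $\mathscr{M}_{n,n-1}=0$ and each $\mathscr{M}_{n,N}$ is bounded (it is a finite polynomial in the $\alpha_j,\overline{\alpha_j}$, all of modulus $\le 1$), the increments $\mathscr{M}_{n,j+1}-\mathscr{M}_{n,j}$ are orthogonal in $L^2$, so by the definition \eqref{eq:Mnnbrack} of $\mathfrak{B}$,
\[
  \Exp|\mathscr{M}_{n,N}|^2=\sum_{j=n-1}^{N-1}\Exp|\mathscr{M}_{n,j+1}-\mathscr{M}_{n,j}|^2=\Exp(\mathfrak{B}_{n,N}),
\]
and likewise $\Exp|\mathscr{M}_{n,N_2}-\mathscr{M}_{n,N_1}|^2=\Exp(\mathfrak{B}_{n,N_2}-\mathfrak{B}_{n,N_1})$ for $N_1<N_2$. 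Once the first bound below is established, $\Exp(\mathfrak{B}_{n,N})$ is nondecreasing and bounded in $N$, so the martingale is $L^2$-bounded, hence converges in $L^2$ to $c_n$; this legitimises the case $N_2=\infty$.

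Next I would write $g(n,N):=\Exp|\mathscr{M}_{n,N}|^2$ and extract a recursion by taking expectations of the single new term in \eqref{eq:Mnnbrack} and using the isometry: for $N\ge n\ge1$,
\[
  g(n,N)=g(n,N-1)+\frac{\theta}{N+\theta}\,g(N-n,N-1),
\]
with boundary data $g(n,n-1)=0$ and $g(0,N)=1$ (since $\mathscr{M}_{0,N}=\Phi^*_N(0)=1$, $\Phi_N$ being monic); these relations determine $g$ uniquely by induction on $N$. I would then solve them by checking directly that
\[
  g(n,N)=\binom{n+\theta-1}{n}\,\frac{\Gamma(N-n+\theta+1)\,\Gamma(N+1)}{\Gamma(N-n+1)\,\Gamma(N+\theta+1)}
\]
satisfies the recursion --- the computation in which the right-hand side collapses, the numerator of the combined fraction simplifying to $n\theta$. (The same expression also drops out of telescoping the recursion against the integrating factor $\prod_k \frac{k}{k+\theta}$, using $g(n,N-1)+g(N-n,N-1)=\Exp|c_n^{(N)}|^2$ --- which comes from extracting the $z^n$-coefficient of \eqref{eq:chiN} and the independence of $\eta$ --- together with the Haake formula \eqref{haake2nd} and the identity $\sum_{m=0}^{M}\binom{m+\theta-1}{\theta-1}=\binom{M+\theta}{\theta}$; I expect the direct verification to be shorter to write.)

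From the closed form the pointwise estimates follow by elementary $\Gamma$-ratio asymptotics, namely $\Gamma(x+a)/\Gamma(x+b)\asymp(1+x)^{a-b}$ on $[0,\infty)$ applied to the three ratios, which give $g(n,N)\le C_\theta\,(1+n)^{\theta-1}(N-n+1)^{\theta}(1+N)^{-\theta}$ uniformly in $N\ge n\ge1$. For $\theta<1$, bounding $(1+N)^{-\theta}\le(1+n)^{-\theta}$ yields $g(n,N)\le C_\theta(N-n+1)^{\theta}/n$. For $\theta=1$ the closed form equals exactly $(N-n+1)/(N+1)\le(N-n+1)/n$, which is dominated by the stated bound. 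For $\theta>1$, using $\bigl((N-n+1)/(1+N)\bigr)^{\theta}\le\min\bigl(1,(N-n+1)/n\bigr)\le\frac{1}{\log 2}\log\bigl(1+(N-n+1)/n\bigr)$ recovers $C_\theta\,n^{\theta-1}\log\bigl(1+(N-n+1)/n\bigr)$ after absorbing constants.

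Finally, for the increment bound I would insert the closed form into $\Exp|\mathscr{M}_{n,N_2}-\mathscr{M}_{n,N_1}|^2=\sum_{j=N_1}^{N_2-1}\frac{\theta}{j+1+\theta}\,g(j-n+1,j)$, noting $g(j-n+1,j)\asymp(1+j-n)^{\theta-1}n^{\theta}(1+j)^{-\theta}$. Here the hypothesis $3n/2\le N_1$ is exactly what is needed: it forces $j-n\ge j/3$ for all $j\ge N_1$, hence $(1+j-n)^{\theta-1}\le C_\theta(1+j)^{\theta-1}$ whatever the sign of $\theta-1$, so every summand is $\le C_\theta n^{\theta}(1+j)^{-2}$ and $\sum_{j=N_1}^{N_2-1}(1+j)^{-2}\le\sum_{j=N_1}^{N_2-1}\frac{1}{j(j+1)}=\frac{1}{N_1}-\frac{1}{N_2}$; the case $N_2=\infty$ follows from the $L^2$-convergence noted above. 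The only genuinely non-routine point is recognising that the expected bracket obeys the closed linear recursion with the reflected index $N-n$ and solving it exactly; after that, everything reduces to $\Gamma$-function bookkeeping and the telescoping made possible by $N_1\ge 3n/2$.
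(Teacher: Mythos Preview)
Your proof is correct and takes a genuinely different route from the paper. The paper never solves the recursion for $g(n,N)=\Exp(\mathfrak{B}_{n,N})$; instead it unrolls \eqref{eq:Mnnbrack} one further level to \eqref{eq:Mnnexpected2}, bounds the inner expectations by $\Exp\mathfrak{B}_{k,\infty}=\Exp|c_k|^2\le C_\theta k^{\theta-1}$, and then performs a case-by-case analysis of the resulting double sum $\tfrac{1}{n}\sum_j\sum_k \tfrac{(1+k)^{\theta-1}}{k+j}$ for each of the three ranges of $\theta$. You instead spot the reflected-index recursion $g(n,N)=g(n,N-1)+\tfrac{\theta}{N+\theta}g(N-n,N-1)$ and solve it in closed form; the identity you give is correct (it reproduces the Haake second moment \eqref{haake2nd} via $g(n,N-1)+g(N-n,N-1)$ and gives $g(n,\infty)=\binom{n+\theta-1}{n}$ as it must), and the verification that the numerator collapses to $n\theta$ is straightforward. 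This buys you a much shorter endgame: all three regimes follow from $\Gamma$-ratio asymptotics, and in fact your $\theta=1$ bound $g(n,N)=(N-n+1)/(N+1)\le(N-n+1)/n$ is strictly sharper than the paper's, which carries an extra $\log$ factor coming from the looser double-sum estimate. For the increment bound both proofs are essentially the same --- the hypothesis $N_1\ge 3n/2$ forces $j-n\ge j/3$, making each summand $\le C_\theta n^\theta(1+j)^{-2}$, and the telescoping $\sum_{j=N_1}^{N_2-1}\tfrac{1}{j(j+1)}=\tfrac{1}{N_1}-\tfrac{1}{N_2}$ finishes --- though you get there more directly from the closed form rather than via \eqref{eq:Mnnexpected3}.
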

  \begin{proof}
  For the last part of the lemma, as $\mathscr{M}_{n,N_2}$ is bounded in $L^p$ for all $p,$ the $N_2=\infty$ case follows from uniform integrability and taking $N_2 \to \infty.$ Hence it suffices to show only the case of $N_2 < \infty.$  Also, it follows the bracket process is uniformly bounded in $L^p$ for all $p$ from the Burkholder-Davis-Gundy inequalities. 
For $j = n-1$, we have $$|{\mathscr{M}_{j-n+1,j}}|^2 = |{\mathscr{M}_{0,n-1}}|^2 = 1$$
since the constant term of $\Phi^*_{n-1}$ is equal to $1$. 
Hence, 
for $N \geq n \geq 1$, we can write
$$\mathfrak{B}_{n,N} = \frac{1}{ 1 + \frac{n}{\theta}} +   \sum_{j=n}^{N-1} \frac{|{\mathscr{M}_{j-n+1,j}}|^2}{1+\frac{1}{\theta}(j+1)}.$$

    Recall that for $j \geq n$, the bracket is such that $k\mapsto |{\mathscr{M}_{j-n+1,k}}|^2-\mathfrak{B}_{j-n+1,k}$ is again a martingale, starting
    at zero for $k = j-n$,  and hence
    \begin{equation} \label{eq:Mnnexpected}
      \begin{aligned}
	\Exp \mathfrak{B}_{n,N}
	= \frac{1}{ 1 + \frac{n}{\theta}}  + \sum_{j=n}^{N-1} \frac{\Exp \mathfrak{B}_{j-n+1,j}}{1+\frac{1}{\theta}(j+1)}.
      \end{aligned}
    \end{equation}
    We may develop this equation to produce
    \begin{equation} \label{eq:Mnnexpected2}
      \begin{aligned}
	\Exp \mathfrak{B}_{n,N}
	&=\frac{1}{ 1 + \frac{n}{\theta}} 
	+ \sum_{j=n}^{N-1} \frac{1}{1+\frac{1}{\theta}(j+1)}
	\left( \frac{1}{ 1 + \frac{j-n + 1}{\theta}}  +  \sum_{k=j-n+1}^{j-1}
	\frac{\Exp \mathfrak{B}_{k-(j-n+1)+1,k}}{1+\frac{1}{\theta}(k+1)} \right)\\
	&= \frac{1}{ 1 + \frac{n}{\theta}}  + \sum_{j=n}^{N-1} 
	\frac{1}{(1+\frac{1}{\theta}(j+1))( 1 + \frac{j-n + 1}{\theta}) }
	\\ &+ \sum_{j=n}^{N-1} \sum_{k=1}^{n-1} \frac{1}{1+\frac{1}{\theta}(j+1)}
	\frac{\Exp \mathfrak{B}_{k,k+j-n}}{1+\frac{1}{\theta}(k+j-n+1)}.
      \end{aligned}
    \end{equation}
    Using uniform integrability of the bracket, we have for any $k \geq 0,$
    \[
      \Exp \mathfrak{B}_{k,\infty} = \Exp |c_k|^2.
    \]
    Let us now prove the  estimate of  $\Exp |\mathscr{M}_{n,N}|^2 = \Exp \mathfrak{B}_{n,N}$ given in the lemma. We have 
     \begin{equation}
     \begin{aligned}
	\Exp \mathfrak{B}_{n,N}
	& \leq \frac{1}{ 1 + \frac{n}{\theta}}  + \sum_{j=n}^{N-1} 
	\frac{1}{(1+\frac{1}{\theta}(j+1))( 1 + \frac{j-n + 1}{\theta}) }
	\\ &+ \sum_{j=n}^{N-1} \sum_{k=1}^{n-1} \frac{1}{1+\frac{1}{\theta}(j+1)}
	\frac{\Exp |c_k|^2}{1+\frac{1}{\theta}(k+j-n+1)}.
      \end{aligned}
      \end{equation}
    Since
   $j \geq n$  and  $\mathbb{E} [ |c_k|^2]$ is dominated by $k^{\theta-1}$, $\Exp \mathfrak{B}_{n,N}$ is bounded, up to a constant depending 
    only on $\theta$, by
    $$\frac{1}{n} \left( 1 + \sum_{j=n}^{N-1} \frac{1}{j-n+1} + \sum_{j=n}^{N-1} \sum_{k= 1}^{n-1} \frac{k^{\theta-1}}{k+j-n+1} \right).$$
    For $N =n$, we immediately 
   deduce a bound of order $1/n$ (the sums are empty) which is enough for our purpose. We can then assume $N \geq n+1$, and in this case, we have 
    
   \begin{equation}\label{eq:Mnnisum}
     \frac{1}{n}
     \sum_{j=1}^{N-n} \sum_{k= 0}^{n-1}  
     \frac{  (1+k)^{\theta-1} }{ k+j}
     \leq 
     \frac{1}{n} 
     \sum_{j=1}^{N-n} \sum_{k= 0}^{n-1}  
     \frac{  (1+k)^{\theta-1} }{ \max\{j,1+k\} }.
   \end{equation}
    If $j \leq n$, the inner sum can be estimated as 
    \[
      \sum_{0 \leq k \leq j-1} \frac{  (1+k)^{\theta-1} }{ j} + \sum_{j-1 < k \leq n-1} (1+k)^{\theta-2} 
      \leq
      \begin{cases}
	C_\theta j^{\theta-1}, & \text{ if } \theta < 1, \\
	1+\log(1+n/j), & \text{ if } \theta = 1, \\
	C_\theta n^{\theta-1}, & \text{ if } \theta > 1, \\
      \end{cases}
    \]
    where $C_\theta > 0$ is some sufficiently large constant.
    If $j > n$, the inner sum of \eqref{eq:Mnnisum}
    is
    \[
      \sum_{0 \leq k \leq n-1} \frac{  (1+k)^{\theta-1} }{ j}
      \leq 
      \frac{C_\theta n^{\theta}}{j}
    \]
    for some $C_\theta>0$ sufficiently large.
    Summing in $j$, we get from \eqref{eq:Mnnisum} and the bounds just established, that for $\theta < 1,$
    \[
      \frac{1}{n}
      \sum_{j=1}^{N-n} \sum_{k= 0}^{n-1}  
      \frac{  (1+k)^{\theta-1} }{ k+j}
      \leq \frac{C_\theta}{n} \biggl( (N-n+1)^\theta + n^{\theta} \log\biggl( 1 +\frac{N-n+1}{n}\biggr)  \biggr),
    \]
    which is dominated by $(N-n+1)^{\theta}/n$. 
    For $\theta > 1$, we get 
    \[
      \frac{1}{n}
      \sum_{j=1}^{N-n} \sum_{k= 0}^{n-1}  
      \frac{  (1+k)^{\theta-1} }{ k+j}
      \leq \frac{C_\theta}{n} \biggl(  n^{\theta-1}( (N-n+1) \wedge n) + n^{\theta} \log\biggl( 1 +\frac{N-n+1}{n}\biggr)  \biggr),
    \]
    a domination by $n^{\theta-1} \log\bigl( 1 +\frac{N-n+1}{n}\bigr)$. 
    For $\theta = 1$, we get a domination by 
    \[
      \frac{1}{n}
      \sum_{j=1}^{N-n} \sum_{k= 0}^{n-1}  
      \frac{  (1+k)^{\theta-1} }{ k+j}
      \leq
      \frac{C_\theta}{n}\sum_{j=1}^{N-n} (1 + \operatorname{max}(0, \log \tfrac{n}{j}))).
    \]
    For $1 \leq N-n \leq n$, this quantity is at most 
    $$
    \begin{aligned}
    & n^{-1} ( (N-n) (1 +\log n) - \log  ((N-n)!) ) \\ & \leq n^{-1} ((N-n) (1 +\log n) - (N-n) \log  (N-n)  + (N-n))
    \\ & \leq n^{-1} (N-n) (2 +\log (n/(N-n))) 
    \end{aligned}
     $$
    and then we have, for all $N \geq n+1$, a domination by 
    $$n^{-1} (N-n) (1 + \max(0, \log (n/(N-n)))).$$
    We have now proven the first part of the lemma. 
    For the second part, we observe that under the assumptions of the lemma, 
    \begin{equation} \label{eq:Mnnexpected3}
      \begin{aligned}
	\Exp(\mathfrak{B}_{n,N_2} - \mathfrak{B}_{n,N_1})
	&\leq\sum_{j=N_1}^{N_2-1} 
	\sum_{k=1}^{n-1}
	\frac{1}{1+\frac{1}{\theta}(j+1)}
	\frac{\Exp(|c_k|^2)}{1+\frac{1}{\theta}(k+j-n+1)}
	\\ &+ \sum_{j=N_1}^{N_2-1} \frac{1}{1+\frac{1}{\theta}(j+1)}
	 \frac{1}{ 1 + \frac{j-n + 1}{\theta}}  
      \end{aligned}
    \end{equation}
    We can estimate for some constant $C_\theta >0,$
    \begin{equation*} 
      \begin{aligned}
	\Exp(\mathfrak{B}_{n,N_2} - \mathfrak{B}_{n,N_1})
	&\leq C_\theta\sum_{j=N_1}^{N_2} 
	\sum_{k=1}^{n-1}
	\frac{1}{1+\frac{1}{\theta}(j+1)}
	\frac{k^{\theta-1}}{1+\frac{1}{\theta}(k+j-n+1)}
	\\ &+ \sum_{j=N_1}^{N_2-1} \frac{1}{1+\frac{1}{\theta}(j+1)}
	 \frac{1}{ 1 + \frac{j-n + 1}{\theta}}  
      \end{aligned}
    \end{equation*}
    If $N_1 \geq 3n/2$, we have that $j-n \geq j/3$, and then the sum of the terms  corresponding to a given value of $j$ is dominated by $ n^{\theta}/j^2$, which proves the second part of the lemma.  
  \end{proof}

  Having an estimate on the secular coefficients of $\Phi_{N-1}^*,$ we may use \eqref{eq:chiN} to relate $c_n^{(N)}$ to $c_n.$  Recasting \eqref{eq:chiN} in terms of coefficients,
  \begin{equation} \label{eq:chiNcoeff}
    c_n^{(N)} = \mathscr{M}_{n,N-1} - \eta \overline{\mathscr{M}_{N-n,N-1}}.
  \end{equation}
  We deduce the following estimates relating the secular coefficients of C$ \beta$E to the coefficients of the HMC: 
  \begin{lemma}\label{lem:secdiff}
    There is constant $C_\theta$ so that for all $n,N \in \N$ with $1 \leq n \leq N/2$,
    \[
      \Exp |c_n - c_{n}^{(N)}|^2 
      \leq  
      \begin{cases}
	\frac{C_\theta n^{\theta}}{N}& \text{ if } \theta < 1, \\
	\frac{C_\theta n \log (N/n)}{N}& \text{ if } \theta = 1, \\
	\frac{C_\theta n}{N^{2- \theta}}& \text{ if } \theta > 1. \\
      \end{cases}
    \]
%
  \end{lemma}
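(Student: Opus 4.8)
The plan is to combine the two facts already established about the coefficients $\mathscr{M}_{n,N}$ of $\Phi_N^*$: on the one hand $\mathscr{M}_{n,\infty}=c_n$ (the $L^2$ limit of the uniformly integrable martingale $N\mapsto\mathscr{M}_{n,N}$), and on the other hand the coefficient form \eqref{eq:chiNcoeff} of the Szeg\H{o} relation, $c_n^{(N)}=\mathscr{M}_{n,N-1}-\eta\,\overline{\mathscr{M}_{N-n,N-1}}$. Subtracting, using $|\eta|=1$, and applying $|X+Y|^2\le 2|X|^2+2|Y|^2$, one gets
\[
  \Exp|c_n-c_n^{(N)}|^2\le 2\,\Exp|c_n-\mathscr{M}_{n,N-1}|^2+2\,\Exp|\mathscr{M}_{N-n,N-1}|^2,
\]
so it suffices to dominate each of the two terms by the right-hand side of the lemma.

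For the first term I would invoke the second part of Lemma \ref{lem:expectedbracket} with $N_2=\infty$ and $N_1=N-1$: since $n\le N/2$ we have $3n/2\le 3N/4\le N-1$ whenever $N\ge 4$ (the finitely many remaining small pairs, where both $\Exp|c_n|^2$ and $\Exp|c_n^{(N)}|^2$ are finite, being absorbed into the constant), so that $\Exp|c_n-\mathscr{M}_{n,N-1}|^2\le C_\theta n^{\theta}/(N-1)\le 2C_\theta n^{\theta}/N$. One then checks this is within the claimed bound in every regime: it is exactly the right order when $\theta<1$; it is even smaller than $n\log(N/n)/N$ when $\theta=1$, since $\log(N/n)\ge\log 2$; and when $\theta>1$ it equals $(n/N)^{\theta-1}\cdot n/N^{2-\theta}\le n/N^{2-\theta}$ because $n/N\le \tfrac12<1$.

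For the second term I would apply the first part of Lemma \ref{lem:expectedbracket} with indices $(m,M)=(N-n,N-1)$; here $M\ge m\ge 1$ since $1\le n<N$, and the crucial identities are $M-m+1=n$ and $m=N-n\ge N/2$. Feeding these into the three cases of the lemma: for $\theta<1$ we get $\le C_\theta n^{\theta}/(N-n)\le 2C_\theta n^{\theta}/N$; for $\theta>1$ we get $\le C_\theta(N-n)^{\theta-1}\log(1+n/(N-n))$, and since $n/(N-n)\le 2n/N\le 1$ the logarithm is at most $2n/N$, yielding $\le 2C_\theta N^{\theta-1}\cdot n/N = 2C_\theta n/N^{2-\theta}$; and for $\theta=1$ we get $\le C_\theta\frac{n}{N-n}\bigl(1+\max(0,\log\tfrac{N-n}{n})\bigr)\le \frac{2n}{N}\bigl(1+\log\tfrac{N}{n}\bigr)$, which is a constant multiple of $n\log(N/n)/N$ again because $\log(N/n)\ge\log 2$. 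Combining the two pieces and relabelling the constant finishes the proof.

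There is no essential difficulty here: the argument is pure bookkeeping built on Lemma \ref{lem:expectedbracket}. The only two points needing care are checking that the hypothesis $3n/2\le N-1$ of its second part is met under $n\le N/2$ (true once $N\ge 4$, the handful of small cases being harmless) and exploiting $N-n\asymp N$ so that $(N-n)^{\theta-1}$ may be replaced by $N^{\theta-1}$ in the supercritical case.
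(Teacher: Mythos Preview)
Your proof is correct and follows essentially the same route as the paper's: decompose $c_n-c_n^{(N)}$ via \eqref{eq:chiNcoeff} into $(c_n-\mathscr{M}_{n,N-1})+\eta\,\overline{\mathscr{M}_{N-n,N-1}}$, then bound each piece using the two parts of Lemma~\ref{lem:expectedbracket}. The only cosmetic difference is that the paper notes the exact identity $\Exp|c_n-c_n^{(N)}|^2=\Exp|c_n-\mathscr{M}_{n,N-1}|^2+\Exp|\mathscr{M}_{N-n,N-1}|^2$ (the cross term vanishes because $\eta$ is independent and $\Exp\eta=0$), whereas you use the cruder $|X+Y|^2\le 2|X|^2+2|Y|^2$; this costs only a factor of~$2$ in the constant and changes nothing of substance.
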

  \begin{proof}
    Using \eqref{eq:chiNcoeff}, 
    \[
      \Exp |c_n - c_{n}^{(N)}|^2 
      =
      \Exp |c_n -\mathscr{M}_{n,N-1}|^2 
      +\Exp |{\mathscr{M}_{N-n,N-1}}|^2.
    \]
    We have $N \geq  2n$ and then $N-1 \geq 3n/2$ (except for $n =1$ and $N=2$, in which case the lemma is obvious for a suitable value of $C_{\theta}$). Hence, by  Lemma \ref{lem:expectedbracket}, we have, 
     after increasing the constant $C_\theta$ if needed,
   $$
      \Exp |c_n -\mathscr{M}_{n,N-1}|^2 
      \leq \frac{C_\theta n^{\theta}}{N},$$
      and
      \[
      \Exp |{\mathscr{M}_{N-n,N-1}}|^2 
      \leq  
      \begin{cases}
	\frac{C_\theta n^{\theta}}{N}& \text{ if } \theta < 1, \\
	\frac{C_\theta n \log (N/n)}{N}& \text{ if } \theta = 1, \\
	\frac{C_\theta n}{N^{2- \theta}}& \text{ if } \theta > 1. \\
      \end{cases}
    \]
    This latter part dominates the contribution 
    \(|{\mathscr{M}_{N-n,N-1}}|^2\)
    in all cases.
  \end{proof}
  From this point, the proof of Theorem \ref{th:l2phase-N} is a simple corollary.
  \begin{proof}[Proof of Theorem \ref{th:l2phase-N}]
    Under the assumption that $n/N \to 0,$
    \[
      (\Exp| c_n - c_n^{(N)}|^2)n^{1-\theta} \to 0
      \qquad
      \text{as } 
      n \to \infty,
    \]
    and so we conclude from Slutsky's theorem, \eqref{haake2nd} and Theorem \ref{th:l2phase} that the desired distributional convergence holds.

\end{proof}

\subsection{Fractional moments}
  \label{sec:qmoments}

  In this section, we prove Theorem \ref{thm:order}.  We recall from the statement of that theorem that $w_n$ is defined as
  \begin{equation}\label{eq:wn}
    w_n = n^{(\theta-1)/2},
    \quad
    w_n = (\log(1+n))^{-\frac{1}{4}},
    \quad \text{or} \quad
    w_n = n^{\sqrt{\theta}-1}(\log n)^{-\frac{3}{4}\sqrt{\theta}},
  \end{equation}
  in the cases $\theta \in (0,1),$ $\theta = 1$ or $\theta > 1$ respectively.

  We recall for convenience \eqref{eq:l2conda} and recall that $S^{(n)}$ is the shortest cycle in a Ewens distributed permutation of $n$ symbols (c.f.\ Lemma \ref{lem:shortees}):
  \begin{equation*} 
    \mathbb{E}(|c_{n}|^{2} ~\vert~ \Gfilt_q) 
    =
    |c_{n,q}|^2
    +
    \sum_{r=0}^{n} |c_{r,q}|^2 \frac{ \theta^{(n-r)}}{(n-r)!} 
    \Pr[ 
      S^{(n-r)} > q
    ].
  \end{equation*}
  Using Lemma \ref{lem:shortees}, we have a uniform bound 
  \begin{equation} \label{eq:unibound}
    \mathbb{E}(|c_{n}|^{2} ~\vert~ \Gfilt_q) 
    \leq
    |c_{n,q}|^2
    +
    C_\theta
    n^{\theta-1}
    F_q
    \quad
    \text{where}
    \quad 
    F_q \coloneqq
    \sum_{r=0}^\infty |c_{r,q}|^2 
    e^{-\theta h(q+1)}.
  \end{equation}
  Using Parseval's identity and \eqref{eq:cnqgen}, 
  \begin{equation} \label{eq:cnqparseval}
    \sum_{r=0}^\infty |c_{r,q}|^2 
    =
    \frac{1}{2\pi}
    \int_0^{2\pi}
    e^{\sqrt{\theta} G_q(e^{i\vartheta})}
    d\vartheta,
    \quad
    \text{where}
    \quad
    G_q(z)=2\Re \sum_{k=1}^q \frac{z^k \mathcal{N}_k}{\sqrt{k}}.
  \end{equation}
  Moreover the normalizing constant $e^{-\theta h(q+1)}$ is such that $F_q$ has expectation $1,$ and $F_q$ is thus an approximation to the mass of the chaos for $\theta < 1$.
  \begin{lemma} \label{lem:Fq}
  Let  $w_n$ as in \eqref{eq:wn}. If $\theta \in (0,1)$, 
  then
   $$ \Exp[n^{\theta-1}{F_n}/w_n^2]  = 1$$
   for all $n \geq 1$. 
  If $\theta = 1$, then for $p \in (0,1)$, 
  \[
    \sup\{  \Exp[({F_n}/w_n^2)^p] : n \in \N \} < \infty.
  \]
  Finally, if $\theta > 1$, there exists $u_{\theta} > 0$, depending only on $\theta$,  such that 
  \[
    \left\{ (\log n)^{-u_{\theta}} n^{\theta-1}{F_n}/w_n^2   : n \in \N \right\}
  \]
  is tight.
 \end{lemma}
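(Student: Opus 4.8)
The plan is to work throughout from the Parseval identity \eqref{eq:cnqparseval}, which combined with the definition of $F_q$ in \eqref{eq:unibound} gives
\[
  F_q = e^{-\theta h(q+1)}\cdot\frac{1}{2\pi}\int_0^{2\pi} e^{\sqrt\theta\, G_q(e^{i\vartheta})}\,d\vartheta ,
  \qquad h(q+1) = \sum_{j=1}^q \tfrac1j .
\]
The first observation is that, by rotational invariance, $G_q(e^{i\vartheta})$ is for every $\vartheta$ a centred real Gaussian with variance $2h(q+1)$, whence $\mathbb{E}\,e^{\sqrt\theta G_q(e^{i\vartheta})} = e^{\theta h(q+1)}$ and therefore $\mathbb{E}[F_q]=1$ for every $q$ and every $\theta>0$. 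Since $w_n^2 = n^{\theta-1}$ when $\theta\in(0,1)$, this gives the first assertion immediately, with no further work.

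For $\theta>1$ I would use a crude discretisation together with one lucky choice of exponent. Partition $\mathbb{T}$ into the $n$ arcs $I_1,\dots,I_n$ of length $2\pi/n$, so that $\int_0^{2\pi}e^{\sqrt\theta G_n}\,d\vartheta \le \tfrac{2\pi}{n}\sum_{j=1}^n \sup_{I_j}e^{\sqrt\theta G_n}$. Applying the elementary inequality $(\sum a_j)^p\le \sum a_j^p$ (valid for $a_j\ge0$, $p\le1$) with the special value $p = \theta^{-1/2}\in(0,1)$ makes $p\sqrt\theta=1$, so that $\mathbb{E}[(\int_0^{2\pi}e^{\sqrt\theta G_n}\,d\vartheta)^{p}]\le (\tfrac{2\pi}{n})^{p}\sum_{j=1}^n \mathbb{E}[\sup_{I_j}e^{G_n}]$. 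The key point is that on an arc of length $2\pi/n$ the field $G_n$ has only $O(1)$ ``degrees of freedom'': for $s,t\in I_j$ one computes $\mathbb{E}(G_n(e^{is})-G_n(e^{it}))^2 = 4\sum_{k=1}^n\frac{1-\cos k(s-t)}{k}\le 2\pi^2$, so the canonical metric of $G_n$ on $I_j$ has diameter $O(1)$, hence by Dudley's bound $\mathbb{E}[\sup_{I_j}(G_n - G_n(\text{centre of }I_j))] = O(1)$, and Borell--TIS (with variance parameter $\sup_{I_j}\mathrm{Var}\,G_n = 2h(n+1)$) yields $\mathbb{E}[\sup_{I_j}e^{G_n}]\le e^{O(1)}e^{h(n+1)}\le Cn$ with $C$ absolute. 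Combining, $\mathbb{E}[F_n^p]\lesssim e^{-p\theta h(n+1)}n^{-p}\cdot n\cdot n \asymp n^{2-p-p\theta}$, and a direct check shows $2-p-p\theta = -p(\sqrt\theta-1)^2$ precisely when $p=\theta^{-1/2}$. Recalling that $n^{\theta-1}F_n/w_n^2 = n^{(\sqrt\theta-1)^2}(\log n)^{\frac32\sqrt\theta}F_n$, this gives $\mathbb{E}[(n^{\theta-1}F_n/w_n^2)^p]\lesssim (\log n)^{\frac32 p\sqrt\theta} = (\log n)^{3/2}$, so any $u_\theta\ge \tfrac32\sqrt\theta$ makes the $p$-th moment of $(\log n)^{-u_\theta}n^{\theta-1}F_n/w_n^2$ bounded in $n$, whence the asserted tightness.

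The genuinely hard case is $\theta=1$, where the discretisation above degenerates (it would force $p=1$). Here $w_n^2=(\log(1+n))^{-1/2}$ and, since $e^{-h(n+1)}\asymp e^{-\gamma_{\mathrm{E}}}/n$, the claim is equivalent to the sharp fractional-moment estimate $\mathbb{E}[(\tfrac1n\int_0^{2\pi}e^{G_n(e^{i\vartheta})}\,d\vartheta)^{p}]\lesssim (\log n)^{-p/2}$ for $p\in(0,1)$, i.e.\ the statement that the partition function of the critical logarithmically correlated field on the circle, regularised by a sharp frequency cutoff at scale $1/n$, has $p$-th moment of order $(\log n)^{-p/2}$. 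My plan is to reduce this to the corresponding statement for the radially regularised field $G(re^{i\vartheta})$ with $r=1-1/n$: the partial-sum, Fej\'er and Poisson kernels are all nonnegative, so $\mathrm{Cov}(G_n)$ dominates $\mathrm{Cov}(G_n^{(r)})$ in the sense of having a nonnegative difference of Fourier coefficients, where $G_n^{(r)}(z)=2\Re\sum_{k=1}^n r^k z^k\mathcal{N}_k/\sqrt k$; Kahane's convexity inequality applied to the concave function $x\mapsto x^p$ then bounds the $p$-th moment of the $G_n$-partition function (centred by $e^{-\frac12\mathrm{Var}}$) by that of the $G_n^{(r)}$-one, which in turn differs from that of the full radial field $G(r\,\cdot)$ only by bounded multiplicative errors (the discarded tail $\sum_{k>n}r^{2k}/k$ is $O(1)$). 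For the radial field $\frac1{2\pi}\int_0^{2\pi}e^{G(re^{i\vartheta})}\,d\vartheta=\sum_q|c_q|^2 r^{2q}$, and the critical GMC theory behind \eqref{eq:criticalgmc} and Lemma \ref{lem:condvarapprox} --- in particular the uniform-in-$r$ bound $\sup_{r<1}\mathbb{E}[((1-r^2)\sqrt{\log\tfrac1{1-r^2}}\sum_q|c_q|^2 r^{2q})^p]<\infty$ for $p<1$, which is finiteness of fractional moments of the critical mass $\mathcal{M}_1$ --- then delivers the bound after tracking the normalisations ($\tfrac12\mathrm{Var}(G_n)=h(n+1)\sim\log n$, $1-r^2\asymp 1/n$, $\sqrt{\log\tfrac1{1-r^2}}\asymp\sqrt{\log n}$).

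The main obstacle is precisely this critical case. The elementary second-moment/discretisation estimate succeeds for $\theta<1$ (trivially) and for $\theta>1$ (thanks to the coincidence $p=\theta^{-1/2}$), but at $\theta=1$ it loses the decisive factor $\sqrt{\log n}$: the identity $\mathbb{E}[F_n]=1$ is carried entirely by rare exceptionally high peaks of $G_n$, and recovering the typical order $(\log n)^{-1/2}$ requires the fine description of the near-maxima of a critical log-correlated field (equivalently, the Seneta--Heyde / derivative-martingale renormalisation of the critical GMC), which is the substantive input invoked via the Kahane-comparison route above.
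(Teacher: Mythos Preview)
Your treatment of $\theta\in(0,1)$ is correct and identical to the paper's. For $\theta>1$ your argument is correct and genuinely different from the paper's route. The paper restricts to the high-probability event $\{\max_{\vartheta\in 2\pi\Z/n}G_n(e^{i\vartheta})\le 2\log n+10\log\log n\}$ and applies a Girsanov tilt to bound $\Exp[e^{\sqrt\theta G_n(e^{i\vartheta})}\one[\text{event}]]$, ending with $u_\theta=\tfrac32\sqrt\theta+\mu_\theta$ for some unspecified $\mu_\theta>0$. Your subadditivity trick with the exponent $p=\theta^{-1/2}$ is more direct and delivers the cleaner value $u_\theta=\tfrac32\sqrt\theta$, hence a slightly sharper statement. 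One technical correction: invoking Borell--TIS with variance parameter $2h(n+1)$ only gives $\Exp[\sup_{I_j}e^{G_n}]\le Cn\sqrt{\log n}$, not $Cn$. To get $Cn$ you should instead write $\sup_{I_j}G_n=G_n(c)+\sup_{I_j}O$ with $c$ the centre and $O=G_n-G_n(c)$, then tilt by $e^{G_n(c)}$ (cost $e^{h(n+1)}\asymp n$); the induced shift of $O$ is $\operatorname{Cov}(G_n(c),O(\cdot))\in[-O(1),0]$, and Borell--TIS applied to $O$ (which has $O(1)$ variance) gives $\Exp[e^{\sup O}]=O(1)$. This does not affect your final exponent.

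For $\theta=1$ there is a real gap. Kahane's convexity inequality requires \emph{pointwise} domination of covariance kernels, i.e.\ $\operatorname{Cov}(G_n)(s,t)\ge\operatorname{Cov}(G_n^{(r)})(s,t)$ for every $s,t$, not merely domination of Fourier coefficients. These are not the same thing: the difference $2\sum_{k=1}^n\frac{(1-r^{2k})\cos k\vartheta}{k}$ takes negative values (e.g.\ near $\vartheta=\pi$ when $r$ is close to $1$). The standard repair is to observe that this difference is uniformly bounded, say $|\cdot|\le C$ with $C=O(1)$; then $\operatorname{Cov}(G_n)\le\operatorname{Cov}(G(r\cdot))+C$ pointwise, so $G_n$ is Kahane-dominated by $G(r\cdot)+\sqrt{C}\,\mathcal{N}$ for an independent standard normal $\mathcal{N}$, and the factor $e^{p\sqrt{C}\mathcal{N}}$ pulls out of the $p$-th moment by independence. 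With this fix your reduction to the radially regularised field is valid, and you are then relying on exactly the critical-GMC input (uniform fractional moments of the Seneta--Heyde normalised mass) that the paper invokes by simply citing \cite[Theorem~1.3]{JunnilaSaksman}.
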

\begin{proof}
  For $\theta  \in (0,1)$, the lemma means that  $F_n$ has expectation $1$, as remarked before. 
  For $\theta = 1,$ the lemma follows from \cite[Theorem 1.3]{JunnilaSaksman}.  
  For $\theta >1$ we use the same ingredients as those considered in the proof of regularity of the supercritical HMC. 
  We will show the equivalent statement that there exists a $u_\theta > 0$ so that
  \[
    (\log n)^{-u_{\theta}} n^{\theta-1}{F_n}/w_n^2
    \Prto[n] 0.
  \]
  The convergence to zero in probability is obtained by proving the convergence of the expectation, after restricting it to the event that 
  $G_n (e^{i\vartheta}) \leq 2 \log n + 10 \log \log n$ for all $\vartheta \in 2 \pi \mathbb{Z} / n$, whose probability goes to $1$ when $n \rightarrow \infty$. 
  Using Girsanov's theorem, 
  we get, for $n \geq 2$ and $|\vartheta - \vartheta' | = \mathcal{O}(1/n)$, that 
  \begin{align*}
    &   \mathbb{E}\left[  e^{\sqrt{\theta} G_n(e^{i\vartheta})} \mathbf{1}_{G_n(e^{i\vartheta'}) \leq 2 \log n + 10 \log \log n} \right]
    \\ 
    & = e^{\theta (\log n + \mathcal{O}(1))} \mathbb{P} \left[ G_n(e^{i\vartheta'} )+ \operatorname{Cov} ( G_n(e^{i\vartheta'} ), \sqrt{\theta}  G_n(e^{i\vartheta}) )  \leq 2 \log n + 10 \log \log n  \right]
    \\ & = (\mathcal{O}(n))^{\theta}  \mathbb{P} \left[ G_n(e^{i\vartheta'} ) \leq 2 ( 1- \sqrt{\theta} ) \log n + 10 \log \log n + c_{\theta} \right]. 
  \end{align*}
  for some $c_{\theta} \in \mathbb{R}$ depending only on $\theta$, since, from the fact that $\vartheta$ and $\vartheta'$ are close to each other, the covariance between $G_n(e^{i\vartheta'} )$ and $G_n(e^{i\vartheta})$ is 
  equal to $ 2 \log n + \mathcal{O}(1)$.  
  Hence, since $\theta > 1$, 
  \begin{equation}\label{eq:Gsubopt}
    \mathbb{E} \left[  e^{\sqrt{\theta} G_n(e^{i\vartheta})} \mathbf{1}_{G_n(e^{i\vartheta'}) \leq 2 \log n + 10 \log \log n} \right]
  \leq C_{\theta} n^{\theta - (1- \sqrt{\theta})^2}  (\log n  )^{\mu_{\theta}}
\end{equation}
  for $C_{\theta}, \mu_{\theta} > 0$ depending only on $\theta$. 
  Taking $\vartheta' \in 2 \pi \mathbb{Z}/n$ depending only of $\vartheta$ and $n$ in such a way that $|\vartheta - \vartheta' | = \mathcal{O}(1/n)$, and 
 integrating in $\vartheta$, we deduce 
\begin{align*}
& \mathbb{E} \left[ n^{\theta-1} F_n w_n^{-2} \mathbf{1}_{\sup_{\vartheta' \in 2 \pi \mathbb{Z}/n} G_n(e^{i\vartheta'}) \leq 2 \log n + 10 \log \log n} \right]
\\ &  \leq C'_{\theta} n^{\theta-1} n^{-\theta} n^{\theta - (1- \sqrt{\theta})^2}  (\log n  )^{\frac{3}{2}\sqrt{\theta} + \mu_{\theta}} n^{- 2\sqrt{\theta} + 2}  = C'_{\theta} (\log n  )^{\frac{3}{2}\sqrt{\theta} + \mu_{\theta}}
\end{align*}
for $C'_{\theta} > 0$. This proves the lemma with $u_{\theta} = \frac{3}{2}\sqrt{\theta}+ \mu_{\theta}$. 
\end{proof}

\begin{remark} \label{rem:sharpu}
  While beyond the scope of what we do here, the optimal power is $u_\theta =0,$ in which case the statement would be 
  \[
     \left\{ n^{\theta-1}{F_n}/w_n^2 :n \in \N \right\}
  \]
  is a tight family.  Indeed as a consequence of \cite{DingRoyZeitouni}, the maximum of $G_n(e^{i\vartheta})$ on the lattice $2\pi \Z/n$ is at most $2\log n - \frac{3}{2}\log\log n + y$ with a probability controlled uniformly in $n$ by $r.$  Moreover, one can establish (and herein lies the technical work) that the random walk $( G_{2^k}(e^{i\vartheta}) : k < \log_2 n)$ stays below $(2\log 2) (k-\tfrac{3}{4}\log k) + (k(n-k)/n)^{1/10} + y$ for all lattice $\vartheta$ again except with a probability that can be made small in $r$ uniformly in $n.$  On this good event,  $\mathcal{G}(n,y),$ it is now possible to show using the appropriate ballot theorem that for $\theta > 1$ 
  \[
    \mathbb{E} \left[  e^{\sqrt{\theta} G_n(e^{i\vartheta})} \one[ \mathcal{G}(n,y)]\right]
    \leq
    C_{\theta,y} \frac{e^{\sqrt{\theta} ( 2\log n - \frac{3}{2}\log\log n)}}{n},
  \]
  which improves \eqref{eq:Gsubopt} and from which would follow the claimed bound.
\end{remark}

\begin{lemma} \label{lem:limpmoment}

If $\theta \in (0,1)$, then
\[
  \sup\{  \Exp(|c_n/w_n|^2) : n \in \N \} < \infty.
\]
If $\theta = 1$, and $p \in (0,1)$, then
\[
  \sup\{  \Exp(|c_n/w_n|^{2p}) : n \in \N \} < \infty.
\] 
Finally, if $\theta > 1$, there exists $v_{\theta}>0$, depending only on $\theta$, such that 
\[
  \{ (c_n/w_n) (\log n )^{-v_{\theta}} : n \in \N\} 
\]
is tight.

\end{lemma}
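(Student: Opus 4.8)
The plan is to deduce all three cases from the conditional second--moment bound \eqref{eq:unibound},
\[
  \mathbb{E}(|c_{n}|^{2}\mid\Gfilt_q) \leq |c_{n,q}|^{2} + C_\theta n^{\theta-1}F_q,
\]
whose right--hand side is $\Gfilt_q$--measurable, together with the exact identity $\mathbb{E}(|c_{n,q}|^{2}) = \binom{n+\theta-1}{\theta-1}\mathbb{P}(L^{(n)}\leq q)$ (a special case of \eqref{eq:expcnq}) and the lower--tail estimate of Lemma \ref{lem:Lnlowertail}. Throughout I will use the cutoff $q=q_n:=\lfloor n/\log n\rfloor$, for which Lemma \ref{lem:Lnlowertail} gives $\mathbb{P}(L^{(n)}\leq q_n)\leq n^{-c_\theta\log\log n}$ while $\log q_n\asymp\log n$, and I will dispose of the finitely many exceptional small $n$ (where $q_n<1$, etc.) trivially, since a finite family of random variables is tight and has finite moments of every order.

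For $\theta\in(0,1)$ there is nothing beyond the second moment: by \eqref{eq:2ndmom}, $\mathbb{E}(|c_{n}|^{2})\sim n^{\theta-1}/\Gamma(\theta)$ and $w_n^{2}=n^{\theta-1}$, so $\mathbb{E}(|c_n/w_n|^{2})$ is uniformly bounded. For $\theta=1$ and $p\in(0,1)$ I would combine conditional Jensen with \eqref{eq:unibound}: since $x\mapsto x^{p}$ is concave and subadditive and $n^{\theta-1}=1$,
\[
  \mathbb{E}(|c_{n}|^{2p}) \leq \mathbb{E}\bigl((\mathbb{E}(|c_{n}|^{2}\mid\Gfilt_{q_n}))^{p}\bigr) \leq \mathbb{E}(|c_{n,q_n}|^{2})^{p} + C_\theta^{p}\,\mathbb{E}(F_{q_n}^{p}).
\]
The first term equals $\mathbb{P}(L^{(n)}\leq q_n)^{p}\leq n^{-pc_\theta\log\log n}$, and Lemma \ref{lem:Fq} applied at index $q_n$ gives $\mathbb{E}(F_{q_n}^{p})\leq C(\log(1+q_n))^{-p/2}\leq C'(\log(1+n))^{-p/2}$ because $\log(1+q_n)\asymp\log n$. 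Dividing by $w_n^{2p}=(\log(1+n))^{-p/2}$ leaves $\mathbb{E}(|c_n/w_n|^{2p})\leq n^{-pc_\theta\log\log n}(\log(1+n))^{p/2}+C''$, which is bounded uniformly in $n$.

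For $\theta>1$ I would prove tightness by Markov's inequality on a high--probability event. Put $E_n:=\{\sup_{\vartheta\in 2\pi\mathbb{Z}/q_n}G_{q_n}(e^{i\vartheta})\leq 2\log q_n + 10\log\log q_n\}$; this is $\Gfilt_{q_n}$--measurable and, by the same net--and--union--bound estimate used in Section \ref{sec:regular}, $\mathbb{P}(E_n^{c})\to 0$. Restricting \eqref{eq:unibound} to $E_n$,
\[
  \mathbb{E}(|c_{n}|^{2}\mathbf{1}_{E_n}) = \mathbb{E}\bigl(\mathbf{1}_{E_n}\mathbb{E}(|c_{n}|^{2}\mid\Gfilt_{q_n})\bigr) \leq \mathbb{E}(|c_{n,q_n}|^{2}) + C_\theta n^{\theta-1}\mathbb{E}(F_{q_n}\mathbf{1}_{E_n}).
\]
The estimate \eqref{eq:Gsubopt}, whose proof involves only the truncated field, holds with $n$ replaced by any $q$ (the variance of $G_q$ being $2\log q+\mathcal{O}(1)$); integrating it in $\vartheta$ and multiplying by $e^{-\theta h(q+1)}\leq q^{-\theta}$ yields $\mathbb{E}(F_{q}\mathbf{1}_{E})\leq C_\theta q^{-(\sqrt\theta-1)^{2}}(\log q)^{\mu_\theta}$. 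Substituting $q=q_n\asymp n/\log n$ and using $\theta-1-(\sqrt\theta-1)^{2}=2(\sqrt\theta-1)$ gives $n^{\theta-1}\mathbb{E}(F_{q_n}\mathbf{1}_{E_n})\leq C_\theta n^{2(\sqrt\theta-1)}(\log n)^{(\sqrt\theta-1)^{2}+\mu_\theta}$, which is $C_\theta w_n^{2}(\log n)^{2v_\theta}$ with $v_\theta:=\tfrac12\bigl((\sqrt\theta-1)^{2}+\mu_\theta+\tfrac32\sqrt\theta\bigr)$ since $w_n^{2}=n^{2(\sqrt\theta-1)}(\log n)^{-3\sqrt\theta/2}$; the term $\mathbb{E}(|c_{n,q_n}|^{2})\leq C_\theta n^{\theta-1}n^{-c_\theta\log\log n}$ is negligible. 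Hence $\mathbb{E}(|c_n|^{2}\mathbf{1}_{E_n})\leq C_\theta w_n^{2}(\log n)^{2v_\theta}$, so $\mathbb{P}(|c_n/w_n|(\log n)^{-v_\theta}>M)\leq \mathbb{P}(E_n^{c})+C_\theta/M^{2}$, giving the claimed tightness.

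The main obstacle is the supercritical case: one must make the Girsanov/change--of--measure control of the maximum of the truncated field $G_q$ uniform in $q$ and keep precise track of the powers of $\log$ — the choice $q_n=\lfloor n/\log n\rfloor$ is made so that the super--polynomially small $\mathbb{P}(L^{(n)}\leq q_n)$ absorbs any polynomial loss while $\log q_n$ stays comparable to $\log n$. Everything else is bookkeeping on top of the second--moment formula, and the computations required for $\theta>1$ duplicate those already appearing in the proof of Lemma \ref{lem:Fq} and in Section \ref{sec:regular}.
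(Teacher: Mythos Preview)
Your proof is correct and follows essentially the same route as the paper: the subcritical case is the trivial second--moment bound, the critical case uses conditional Jensen on \eqref{eq:unibound} with $q_n=\lfloor n/\log n\rfloor$ exactly as in the paper, and the supercritical case combines the same conditional bound with the Girsanov estimate \eqref{eq:Gsubopt} for the truncated field. The only packaging difference is in the $\theta>1$ case: the paper bounds $\Exp[\min(1,|c_n w_n^{-1}(\log n)^{-v_\theta}|^2)\mid\Gfilt_{q_n}]$ and then invokes the tightness statement of Lemma~\ref{lem:Fq} directly (together with bounded convergence), whereas you reopen the proof of that lemma to get an explicit restricted--second--moment bound $\Exp(|c_n|^2\mathbf{1}_{E_n})\le C_\theta w_n^2(\log n)^{2v_\theta}$ and conclude via Markov plus $\Pr(E_n^c)\to 0$. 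Both packagings are equivalent, and your explicit value $v_\theta=\tfrac12\bigl((\sqrt\theta-1)^2+\mu_\theta+\tfrac32\sqrt\theta\bigr)$ is consistent with (indeed slightly sharper than) the $u_\theta=\tfrac32\sqrt\theta+\mu_\theta$ obtained in Lemma~\ref{lem:Fq}, the extra $(\sqrt\theta-1)^2$ power of $\log n$ coming from your substitution $q_n\asymp n/\log n$.
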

\begin{proof}
The case $\theta \in (0,1)$ is already proven before. 
Moreover, using \eqref{eq:expcnq} and Lemma \ref{lem:Lnlowertail}, we know that  for $n \geq 3$, 
  \begin{equation}\label{eq:errorterm}
    \max_{1 \leq q < n/\log n}
    \Exp |c_{n,q}|^2 
    \leq
    \max_{1 \leq q < n/\log n}
    C_\theta n^{\theta-1} \Pr(L^{(n-q)} \leq q)
    \leq n^{-\omega(n)},
  \end{equation}
  where $\omega(n)\to\infty$  as $n \to \infty.$

  We now use \eqref{eq:unibound} for $q_n = \lfloor  n/\log n \rfloor$ when $n \geq 3$: 
   $$
    \mathbb{E}(|c_{n}|^{2} ~\vert~ \Gfilt_{q_n}) 
    \leq
    |c_{n,q_n}|^2
    +
    C_\theta
    n^{\theta-1}
    F_{q_n}.
  $$
For $\theta =1$, we take $p \in (0,1)$ and get, using H\"older's inequality for exponent $1/p>1$ and 
 sub-additivity of the $p$-th power: 
$$\mathbb{E}(|c_{n}|^{2p} ~\vert~ \Gfilt_{q_n}) \leq \left [ \mathbb{E}(|c_{n}|^{2} ~\vert~ \Gfilt_{q_n}) \right]^p
\leq |c_{n,q_n}|^{2p}
    +
    (C_1
    F_{q_n})^p,$$
which implies 
  \[
    \Exp(|c_n|^{2p})
    \leq
    \Exp(|c_{n,q_n}|^{2p})
    +
    \Exp(| C_1 F_{q_n}|^{p}).
  \]
  From \eqref{eq:errorterm}, and conclude
  \[
    \Exp(|c_n/w_n|^{2p})
    \leq \mathcal{O}(n^{-p\omega(n)}/w_n^2)
    +
    \Exp(| C_1  F_{q_n}/w_n^2|^{p}).
  \]
  We then use Lemma \ref{lem:Fq} to complete the proof, taking into account the fact that $w_n = (\log (1+n))^{-1/4}$ is equivalent to $w_{q_n}$ when $n$ goes to infinity. 
  When $\theta > 1$, we write
  
  \begin{align*}
    \mathbb{E}( \min (1, &  |c_{n} w_n^{-1} (\log n)^{-v_{\theta}} |^{2}  )~\vert~ \Gfilt_{q_n}) 
    \leq
    w_n^{-2} (\log n)^{-2 v_{\theta}}  |c_{n,q_n}|^2
    \\ &    +
    \min(1, C_\theta 
    w_n^{-2} (\log n)^{-2 v_{\theta}}   n^{\theta-1}
    F_{q_n}). 
  \end{align*}
  Since $w_n/ w_{q_n}$ and $n^{\theta-1}/ q_n^{\theta-1}$ are equivalent up to powers of $\log n$ (depending on $\theta$) when $n \rightarrow \infty$, and $\log n$ is equivalent to 
  $\log q_n$, we deduce from Lemma \ref{lem:Fq}  that 
  \[
    w_n^{-2} (\log n)^{-2v_{\theta}}   n^{\theta-1}
    F_{q_n} \Prto[n] 0
  \] 
  for any $v_\theta$ sufficiently large.
  Then 
  \[
    \mathbb{E} [ \min(1, C_\theta 
      w_n^{-2} (\log n)^{-2 v_{\theta}}   n^{\theta-1}
    F_{q_n})] \underset{n \rightarrow \infty}{\longrightarrow} 0,
  \]
  as soon as $v_{\theta}$ is large enough. 
  Since $\mathbb{E}[ |c_{n,q_n}|^2] $ decreases to $0$ faster than any power of $n$, we have 
  \[
    \mathbb{E}  [ w_n^{-2} (\log n)^{-2 v_{\theta}}  |c_{n,q_n}|^2] \underset{n \rightarrow \infty}{\longrightarrow} 0,
  \]
  and then 
  \[
    \mathbb{E}[ \min (1,  |c_{n} w_n^{-1} (\log n)^{-v_{\theta}} |^{2}) ] \underset{n \rightarrow \infty}{\longrightarrow} 0,
  \]
  which implies that 
  \[ 
    |c_{n} w_n^{-1} (\log n)^{-v_{\theta}} | \Prto[n] 0.
  \]
\end{proof}

We deduce the following result on the secular coefficients of $C \beta E$. 
\begin{theorem}\label{thm:trueCBEbound}

If $\theta \in (0,1)$, then
\[
  \sup\{  \Exp(|c^{(N)}_n/w_n|^2) : n \in \N, N \geq 2n \} < \infty.
\]
If $\theta = 1$, and $p \in (0,1)$, then
\[
  \sup\{  \Exp(|c^{(N)} _n/w_n|^{2p}) : n \in \N, N \geq 2n \} < \infty.
\]
Finally, if $\theta  \in (1,2)$, there exists $v_{\theta}, v'_{\theta} > 0$ depending only on $\theta$ so that with $N_0(n) = n^{\frac{3-2\sqrt{\theta}}{2-\theta}} (\log n)^{v'_\theta},$ 
for all $\delta > 0$, 
\[
  \sup_{N \geq N_0(n)}
\, \mathbb{P} [ (c^{(N)}_n/w_n) (\log n )^{-v_{\theta}}  \geq \delta ] \underset{n \rightarrow \infty}{\longrightarrow} 0.
\]

\end{theorem}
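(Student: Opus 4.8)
In all three regimes the common starting point is the exact identity \eqref{eq:chiNcoeff}, $c_n^{(N)} = \mathscr{M}_{n,N-1} - \eta\,\overline{\mathscr{M}_{N-n,N-1}}$, together with the martingale representation $\mathscr{M}_{m,N'} = \Exp[c_m\mid\filt_{N'}]$ (uniformly integrable, converging a.s.\ and in $L^p$ to $c_m$) and the independence of $\eta$ from $\filt_{N-1}$ with $\Exp\eta=0$. Hence $\Exp|c_n^{(N)}|^2 = \Exp|\mathscr{M}_{n,N-1}|^2 + \Exp|\mathscr{M}_{N-n,N-1}|^2$, and more generally $|c_n^{(N)}|^{2p}\le C_p\bigl(|\mathscr{M}_{n,N-1}|^{2p}+|\mathscr{M}_{N-n,N-1}|^{2p}\bigr)$, so it suffices to control each of the two martingales $\mathscr{M}_{m,N'}$ separately; at the end one recalls that $w_m$ is decreasing in $m$ and that $N-n\ge n$ once $N\ge 2n$. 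The second ingredient is that $\mathscr{M}_{m,N'}=\Exp[c_m\mid\filt_{N'}]$ inherits moment bounds from $c_m$ by Jensen: since $|\mathscr{M}_{m,N'}|\le\Exp[|c_m|\mid\filt_{N'}]$, one gets $\Exp|\mathscr{M}_{m,N'}|^{2p}\le\Exp|c_m|^{2p}$ when $2p\ge1$ (convexity of $z\mapsto|z|^{2p}$ and the tower property), and $\Exp|\mathscr{M}_{m,N'}|^{2p}\le(\Exp|\mathscr{M}_{m,N'}|)^{2p}\le(\Exp|c_m|)^{2p}$ when $2p<1$ (concavity of $t\mapsto t^{2p}$ and the $L^1$-contractivity of conditional expectation).

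For $\theta\in(0,1)$ this finishes the proof at once: $\Exp|\mathscr{M}_{m,N'}|^2\le\Exp|c_m|^2\le C_\theta(1+m)^{\theta-1}$ by \eqref{eq:2ndmom} while $w_m^2=m^{\theta-1}$, so $\Exp|c_n^{(N)}/w_n|^2$ is bounded uniformly over $n\ge1$ and $N\ge 2n$. For $\theta=1$, where $\Exp|c_m|^2=1$ does not see the cancellation, one feeds Lemma \ref{lem:limpmoment} into the bounds of the previous paragraph: for $2p\ge1$, $\Exp|\mathscr{M}_{m,N'}|^{2p}\le\Exp|c_m|^{2p}\le C_p w_m^{2p}$, and for $2p<1$, $\Exp|\mathscr{M}_{m,N'}|^{2p}\le(\Exp|c_m|)^{2p}\le (C_{1/2}w_m)^{2p}$, where $\Exp|c_m|\le C_{1/2}w_m$ is precisely the $p=\tfrac12$ case of Lemma \ref{lem:limpmoment} (the ``better than square root'' smallness of $c_m$). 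Summing the two contributions and using $w_{N-n}\le w_n$ yields $\sup\{\Exp|c_n^{(N)}/w_n|^{2p}:n\in\N,\ N\ge2n\}<\infty$ for every $p\in(0,1)$.

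For $\theta\in(1,2)$ the $L^2$ bound is too lossy — both $\Exp|\mathscr{M}_{n,N-1}|^2\sim n^{\theta-1}$ and $\Exp|c_n-c_n^{(N)}|^2$ overshoot $w_n^2=n^{2\sqrt\theta-2}(\log n)^{-\frac32\sqrt\theta}$ — so the two martingale pieces are handled by different means. For $\mathscr{M}_{n,N-1}$ one writes $\mathscr{M}_{n,N-1}=c_n-(c_n-\mathscr{M}_{n,N-1})$: the term $(\log n)^{-v_\theta}c_n/w_n$ tends to $0$ in probability by Lemma \ref{lem:limpmoment} (for a suitable $v_\theta=v_\theta(\theta)>0$, enlarged by $1$ if needed to upgrade tightness to convergence), while $\Exp|c_n-\mathscr{M}_{n,N-1}|^2\le C_\theta n^{\theta}/(N-1)$ by the increment estimate of Lemma \ref{lem:expectedbracket}, which applies because $N\ge N_0(n)\gg n$ (here $\tfrac{3-2\sqrt\theta}{2-\theta}>1$ since $(\sqrt\theta-1)^2>0$). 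For $\mathscr{M}_{N-n,N-1}$ one cannot compare with $c_{N-n}$ (the hypothesis $N-1\ge\tfrac32(N-n)$ of Lemma \ref{lem:expectedbracket} fails when $N\gg n$), so one invokes its first estimate directly: $\Exp|\mathscr{M}_{N-n,N-1}|^2\le C_\theta(N-n)^{\theta-1}\log\bigl(1+\tfrac{n}{N-n}\bigr)\le C_\theta\,n\,N^{\theta-2}$ for $N\ge N_0(n)$. The exponent $\tfrac{3-2\sqrt\theta}{2-\theta}$ is chosen exactly so that $n\,N_0(n)^{\theta-2}$ matches $w_n^2$ up to logarithmic factors; one then checks, using $(\sqrt\theta-1)^2>0$ once more, that $n^{\theta}/N_0(n)$ is in fact polynomially smaller than $w_n^2$, the residual powers of $\log n$ being killed by taking $v'_\theta$ large. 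Hence both $(\log n)^{-v_\theta}\mathscr{M}_{n,N-1}/w_n$ and $(\log n)^{-v_\theta}\mathscr{M}_{N-n,N-1}/w_n$ tend to $0$ in probability uniformly over $N\ge N_0(n)$ — the first by Lemma \ref{lem:limpmoment} plus Chebyshev, the second by Chebyshev alone — which is the asserted statement for $c_n^{(N)}$.

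The main obstacle is organisational and sits in the supercritical regime: no single estimate can be quoted, so one must decompose $c_n^{(N)}$ into its two martingale constituents, transfer control of $\mathscr{M}_{n,\cdot}$ probabilistically from the HMC coefficient $c_n$, bound the second moment of $\mathscr{M}_{N-n,\cdot}$ by hand via Lemma \ref{lem:expectedbracket}, and verify that the exponent defining $N_0(n)$ is precisely the break-even point of the latter estimate — this being the only place the exact value $\tfrac{3-2\sqrt\theta}{2-\theta}$ is forced. A smaller but genuine subtlety arises already at $\theta=1$: for $p<\tfrac12$ one must not pass $|\cdot|^{2p}$ through the conditional expectation, since the resulting inequality points the wrong way; the correct route is via $L^1$-contractivity together with the $p=\tfrac12$ instance of Lemma \ref{lem:limpmoment}.
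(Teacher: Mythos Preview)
Your proof is correct and follows essentially the same route as the paper. The only differences are organisational: for $\theta=1$ the paper simply reduces to $p\in(\tfrac12,1)$ by H\"older before applying Jensen, whereas you treat $2p<1$ separately via concavity and $L^1$-contractivity (both valid); and for $\theta\in(1,2)$ the paper quotes Lemma \ref{lem:secdiff} (which already packages the two bracket estimates into the dominant bound $\Exp|c_n-c_n^{(N)}|^2\le C_\theta n/N^{2-\theta}$), while you re-derive that lemma by bounding $\mathscr{M}_{n,N-1}$ and $\mathscr{M}_{N-n,N-1}$ separately from Lemma \ref{lem:expectedbracket}. Your explicit check that $n^\theta/N_0(n)$ is polynomially smaller than $w_n^2$ (via $(\sqrt\theta-1)^2>0$) is a detail the paper leaves implicit, since its quoted bound already subsumes that term.
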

\begin{proof}
Let us first suppose that $\theta \leq 1$: we may assume $p \in (1/2, 1)$ when $\theta = 1$, by H\"older's inequality.  
  For $r \geq 1$ and any $k,n \in \N$ we have from Jensen's inequality
  \[
    \Exp |\mathscr{M}_{n,k}|^r
    =
    \Exp |\Exp( c_n ~\vert~ \filt_k)|^r
    \leq
    \Exp ( |c_n|^r).
  \]
  Hence recalling \eqref{eq:chiNcoeff}, for any $p \geq 1,$
  \[
    (\Exp |c_n^{(N)}|^r)^{1/r}
    \leq
    (\Exp ( |c_n|^r))^{1/r}
    +
    (\Exp ( |c_{N-n}|^r))^{1/r}.
  \]
   We can now  conclude the proof from Lemma \ref{lem:limpmoment}, taking $r = 2$ for $\theta \in (0,1)$ and $ r = 2p \in (1,2)$ for $\theta = 1$: 
   notice that $w_{N-n} \leq w_n$ since $\theta \leq 1$ and $N \geq 2n$. 
   
  Let us now suppose that $\theta \in  (1,2)$. In this case, because of Lemma \ref{lem:limpmoment}, it is sufficient to 
  prove that 
  \[
    \sup_{N \geq N_0(n)}\mathbb{E} [ |c_n - c_n^{(N)} |^2 w_n^{-2} (\log n)^{-2 v_{\theta}} ] \underset{n \rightarrow \infty}{\longrightarrow} 0.
  \]
  For suitable $v'_{\theta}$, this is a consequence of the bound
  \[
    \mathbb{E} [ |c_n - c_n^{(N)} |^2] \leq C_{\theta} n/N^{2-\theta}
  \]
  given by Lemma \ref{lem:secdiff}. 
  
\end{proof}
\begin{remark}
In the supercritical phase, we compare $c_n $ with $c_n^{(N)}$ by using the $L^2$ norm, which is expected not to be optimal, since the $L^2$ norm of $c_n$ does not gives
its correct order of magnitude. Hence, we expect that the exponent $(3 - 2 \sqrt{\theta} )/(2-\theta)$ can be improved.
\end{remark} 

\section{Sharpness of the tightness}\label{sec:sharptight}
We have seen that $(c_n (\log n)^{-v_{\theta}} /w_n)_{n \geq 1}$ is a tight family for random variables for $\theta > 0$ for some $v_{\theta}>0$ depending only on $\theta$, which 
 can be taken equal to $0$ for $\theta \in (0,1]$. 
 Moreover informally,  $c_n$ has order at most $w_n$, up to a logarithmic factor. It is natural to ask if this bound is optimal, or if $c_n$ has a smaller order of magnitude. 
 In the following section, we show that $c_n$ is no smaller than $w_n$ in order of magnitude (i.e.\ that $w_n/c_n$ is tight)
 when $\theta \in (0,1]$.  
\begin{theorem}\label{thm:antitank1}
For all $\theta \in (0,1]$, and all $u \in \mathbb{C}$ such that $|u| = 1$, one has
$$\sup_{n \geq 1}  \mathbb{P} [ |\Re( u c_n)| / w_n  \leq \delta ] \underset{ \delta \rightarrow 0}{\longrightarrow} 0,$$
\end{theorem}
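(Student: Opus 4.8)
The plan is to prove this lower‑bound (anti‑concentration) statement by showing that the normalized coefficient $c_n/w_n$ does not concentrate near $0$, using a moment comparison between a small and a large positive power of $|c_n|$. The obvious obstacle is that $c_n/w_n$ has been shown above (Lemma~\ref{lem:limpmoment}) to be tight with finite second moment (or $2p$‑th moment at $\theta=1$), but we have no a priori \emph{lower} bound on $\Exp|c_n/w_n|^2$; indeed the Fyodorov--Bouchaud / Morris phenomenon (Theorem~\ref{thm:morris}) shows that low positive moments of $c_n/w_n$ converge to the moments of $\sqrt{\mathcal M_\theta}\,\mathcal Z$, which are bounded away from $0$ and $\infty$. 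So the first step is to establish, for $\theta\in(0,1)$, a uniform lower bound $\Exp|c_n|^{2p}\ge c_{p,\theta}\,w_n^{2p}$ for some fixed small $p\in(0,1)$ (e.g.\ $p<\tfrac{1}{2}$ so $p\theta<1$), and analogously an appropriate statement at $\theta=1$. The natural route is the Ewens/magic‑square machinery: by the correspondence \eqref{corres}, $\Exp|c_n|^2 = \theta^{(n)}/n! \sim w_n^2/\Gamma(\theta)$ exactly, and then one uses the lower moment bound $\Exp|c_n|^{2p}\ge (\Exp|c_n|^2)^{?}$ obtained from \eqref{eq:kthmom} by keeping only the diagonal magic squares (the $k!$ permutation‑type terms with $k=1$), which already contributes $\binom{n+\theta-1}{n}$; more efficiently, one invokes the convergence of $c_n/w_n$ in the $L^2$‑phase (Theorem~\ref{th:l2phase}) together with uniform integrability of $|c_n/w_n|^{2p}$ (from Lemma~\ref{lem:limpmoment}) to get $\Exp|c_n/w_n|^{2p}\to \Exp(\mathcal M_\theta^p)\,\Exp|\mathcal Z|^{2p}>0$, hence bounded below uniformly in $n$; for $\tfrac12\le\theta<1$ one falls back on the explicit diagonal lower bound.

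Given a uniform two‑sided bound on a small moment, the second step is a Paley--Zygmund type argument. Fix $p\in(0,1)$ small. Write $X_n := |\Re(u c_n)|/w_n$. We must also relate $\Re(u c_n)$ to $|c_n|$: since $c_n = \tilde c_n^{(\delta)} + (\text{small})$ with $\tilde c_n^{(\delta)}$ asymptotically $\sqrt{C_\delta \mathcal M_\theta}\,\mathcal N_1$ whose real/imaginary parts are symmetric and exchangeable, $\Re(u c_n)$ and $\Im(u c_n)$ are, in the limit, i.i.d.\ real Gaussians times $\sqrt{\mathcal M_\theta}$; so $\Exp|\Re(u c_n)|^{2p}$ is a fixed positive multiple of $\Exp|c_n|^{2p}$ in the limit, and in particular is bounded below uniformly in $n$. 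Combined with the upper bound $\Exp|\Re(u c_n)|^{2p'}\le \Exp|c_n|^{2p'}\le C\,w_n^{2p'}$ for some $p<p'\le 1$ (valid for $\theta<1$ with $p'=1$, and for $\theta=1$ with any $p'<1$), a Paley--Zygmund inequality applied to $Y:=X_n^{2p}$ gives, for $\lambda\in(0,1)$,
\[
  \Pr\bigl[ X_n^{2p} > \lambda\,\Exp X_n^{2p}\bigr]
  \ge (1-\lambda)^{2}\,\frac{(\Exp X_n^{2p})^{2}}{\Exp X_n^{4p}},
\]
and the right‑hand side is bounded below by a positive constant uniform in $n$ because numerator and denominator are both $\asymp w_n^{4p}$. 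Rearranging, $\Pr[X_n \le \delta] \le 1 - c_0$ for all $\delta$ below a fixed threshold $\delta_0$ — but we want $\Pr[X_n\le\delta]\to0$ as $\delta\to0$, uniformly in $n$, which is stronger.

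To upgrade from ``bounded away from $1$'' to ``$\to 0$'', the third step is to iterate/refine: either (a) run the Paley--Zygmund bound with $\lambda=\lambda(\delta)\to0$ and use the uniform two‑sided moment control to see $(\Exp X_n^{2p})^2/\Exp X_n^{4p}$ stays $\ge c_0$ while $(1-\lambda)^2\to1$, which only reproves boundedness; so instead (b) use the full distributional convergence. Indeed by Theorem~\ref{th:l2phase} we have $c_n/w_n \Wkto \sqrt{\Gamma(\theta)\mathcal M_\theta}\,\mathcal Z$ for $\theta<1/2$ (and we extend via the moment/tightness arguments of Lemma~\ref{lem:limpmoment} plus subsequential limits identified through their moments for $1/2\le\theta\le1$, since all positive moments of order $<1$ converge by Theorem~\ref{thm:morris} and uniform integrability): any subsequential weak limit $W$ of $c_n/w_n$ has $\Re(uW)$ absolutely continuous (it is $\sqrt{\mathcal M_\theta}$ times a nondegenerate Gaussian, and $\mathcal M_\theta>0$ a.s.), hence $\Pr[|\Re(uW)|\le\delta]\to0$ as $\delta\to0$; uniform tightness of the family $\{c_n/w_n\}$ then promotes this to the claimed uniform statement $\sup_n\Pr[|\Re(uc_n)|/w_n\le\delta]\to0$. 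The main obstacle, and where the real work lies, is precisely justifying this weak‑limit identification and the absolute continuity for the whole range $\theta\in(0,1]$ — for $\theta\ge1/2$ Theorem~\ref{th:l2phase} is not available, so one must argue directly that every subsequential limit is of the form $\sqrt{\mathcal M}\,\mathcal Z$ with $\mathcal M>0$ a.s., using the martingale decomposition \eqref{martingalesum}, the convergence of the bracket process to $C_\delta\mathcal M_{\theta}$ in the $\delta\to0$ limit, and the a.s.\ positivity of the GMC mass $\mathcal M_\theta$; alternatively one avoids weak convergence altogether and derives the uniform anti‑concentration purely from the two small‑moment estimates via a quantitative Carbery--Wright / Paley--Zygmund bootstrap, which is cleaner but requires a uniform bound on a \emph{fractional} moment of $1/|\Re(uc_n)|$, itself reducible to the Ewens formula.
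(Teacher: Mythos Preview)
Your approach has a genuine gap. The whole argument ultimately rests on identifying every subsequential weak limit of $c_n/w_n$ as $\sqrt{\mathcal{M}}\,\mathcal{Z}$ with $\mathcal{M}>0$ a.s.; you correctly flag this as ``where the real work lies'', but you do not supply it. For $\theta\in[\tfrac12,1]$ Theorem~\ref{th:l2phase} is unavailable (it is in fact the open Question~\ref{q:L1}), and neither the moment convergence of Theorem~\ref{thm:morris} nor tightness of $\{c_n/w_n\}$ is enough to pin down subsequential limits: convergence of moments of order $<1/\theta$ does not determine a distribution, and nothing you have written excludes a subsequential limit putting positive mass at $0$. The Paley--Zygmund step you wrote only yields $\sup_n\Pr[X_n\le\delta]\le 1-c_0$, as you noted, and your proposed upgrades (Carbery--Wright, fractional negative moments via Ewens) are left entirely unexecuted; there is no obvious route from the Ewens formula to a uniform bound on $\Exp|\Re(uc_n)|^{-\varepsilon}$.

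The paper's proof avoids all of this by exploiting the conditional Gaussian structure directly, an idea absent from your proposal. From the decomposition
\[
  c_n = c_{n,\lfloor n/2\rfloor} + \sum_{q=0}^{\lceil n/2\rceil-1}\frac{\mathcal{N}_{n-q}\sqrt{\theta}}{\sqrt{n-q}}\,c_q,
\]
the variables $(\mathcal{N}_{n-q})_{0\le q<n/2}$ are independent of $\Gfilt_{\lfloor n/2\rfloor}$, so conditionally on $\Gfilt_{\lfloor n/2\rfloor}$ the variable $c_n$ is a deterministic shift of a complex Gaussian with variance $V_n:=\sum_{q<n/2}\theta|c_q|^2/(n-q)$. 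Bounded Gaussian density gives $\Pr[|\Re c_n|\le \delta w_n\mid\Gfilt_{\lfloor n/2\rfloor}]\le C\delta w_n V_n^{-1/2}$. The point is that $V_n\ge (\theta/n)\sum_{q<n/2}|c_q|^2$, which by Lemma~\ref{lem:condvarapprox} is, up to constants, $w_n^2\,\mathcal{M}_{\theta,\lceil n/2\rceil-1}\Prto{} w_n^2\mathcal{M}_\theta>0$. Thus $\{w_n V_n^{-1/2}:n\ge1\}$ is tight for \emph{all} $\theta\in(0,1]$, and the uniform anti-concentration follows in two lines. The missing ingredient in your plan is precisely this conditional-variance computation tied to the mass approximation $\mathcal{M}_{\theta,n}$; it replaces any need for distributional convergence of $c_n/w_n$ itself.
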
 
\begin{proof}
  Since the law of $c_n$ is rotational invariant in distribution, we can assume $u = 1$. 
  We have the following equality: 
  \[
    c_n = c_{n, \lfloor n/2 \rfloor} +  \sum_{q=0}^{\lceil \tfrac{n}{2}\rceil-1}  \frac{\mathcal{N}_{n-q}\theta^{1/2}}{\sqrt{n-q}} c_q,
  \]
  and then, since $(\mathcal{N}_{n-q})_{0 \leq q < n/2}$ are independent of $c_{n, \lfloor n/2 \rfloor}$ and $(c_q)_{0 \leq q < n/2}$, 
  \[
    c_n \lawequals c_{n, \lfloor n/2 \rfloor} + 
    \mathcal{N}\biggl( \sum_{q=0}^{\lceil \tfrac{n}{2}\rceil-1} \frac{ |c_q|^2 \theta}{n-q} \biggr)^{1/2} 
  \]
  where $\mathcal{N}$ is a standard complex Gaussian independent of the other variables which are involved in the formula. 
  Hence, by the fact that the law of $\Re(\mathcal{N})$ has a bounded density with respect to the Lebesgue measure: 
  \[
    \mathbb{P} [ |\Re(c_n)|  / w_n  \leq \delta   ~\vert~ \Gfilt_{\lfloor n/2 \rfloor}]
    \leq C\delta w_n
    \biggl( \sum_{q=0}^{\lceil \tfrac{n}{2}\rceil-1} \frac{ |c_q|^2 \theta}{n-q} \biggr)^{-1/2}
    ,  
  \]
  for some constant $C > 0$, which implies 
  \begin{equation} \label{eq:antitank1}
    \mathbb{P} [ |\Re(c_n)|  / w_n  \leq \delta ]
    \leq  \mathbb{E} 
    \biggl[ \min \biggl(
      1,  
      C\delta w_n
      \biggl( \sum_{q=0}^{\lceil \tfrac{n}{2}\rceil-1} \frac{ |c_q|^2 \theta}{n-q} \biggr)^{-1/2}    
      \biggr) 
    \biggr].
  \end{equation}
  Recalling \eqref{mthetan} there is a constant $c_\theta > 0$ so that for all $n>2$ and with $n_0 = \lceil \tfrac{n}{2}\rceil-1,$ 
  \[
    \sum_{q=0}^{n_0}
    \frac{ |c_q|^2 \theta}{n-q}   
    \geq \frac{\theta}{n}  \sum_{q=0}^{n_0} |c_q|^2
    \geq c_\theta (\sqrt{\log n})^{-\one[\theta=1]} n^\theta \mathcal{M}_{\theta,n_0}.
  \]
  The expression $\mathcal{M}_{\theta,n_0}$ is an approximation to the total mass.  By Lemma \ref{gmc-conv},
  this converges in probability as $n\to \infty$ to a multiple of $\mathcal{M}_\theta.$ In particular
  \[
    \biggl\{
      \biggl( \sum_{q=0}^{\lceil \tfrac{n}{2}\rceil-1} \frac{ |c_q|^2 \theta}{n-q} \biggr)^{-1/2}
      w_n  
      :
      n \in \N
    \biggr\}
  \]
  is tight (recall when $\theta \in (0,1), w_n =n^{\theta-1}$ and when $\theta =1,$ $w_n = ( \log(1+n) )^{-1/4}$).
  Using \eqref{eq:antitank1}
  \[
    \mathbb{P} [ |\Re(c_n)|  / w_n  \leq \delta ]
    \leq C \delta^{1/2} + 
    \mathbb{P} \biggl( \biggl( \sum_{q=0}^{\lceil \tfrac{n}{2}\rceil-1} \frac{ |c_q|^2 \theta}{n-q} \biggr)^{-1/2}
    w_n  \geq \delta^{-1/2} \biggr).
  \]
  The tightness property above shows that 
  $$ \sup_{n \geq 1} \mathbb{P} [ |\Re(c_n)|  / w_n  \leq \delta ] \underset{\delta \rightarrow 0}{\longrightarrow} 0.$$
  which completes the proof.
\end{proof}

We deduce a similar result for the secular coefficients $c_n^{(N)}$ when $N$ is sufficiently large with respect to $n$:

\begin{theorem}\label{thm:antitank2}
Let $\theta \in (0,1]$, $u \in \mathbb{C}$ on the unit circle, and let $\varphi$ be any function from $\mathbb{N}$ to $\mathbb{R}$ such that 
$\varphi(n) \geq 2$ for all $n \geq 1$, $\varphi(n)$ tends to infinity with $n$, and for $\theta = 1$, $\varphi(n)/ ( \sqrt{\log n}(\log \log n))$ tends to infinity with $n$. 
Then, 
$$\sup_{n \geq 1, N \geq n \varphi(n)}  \mathbb{P} [ |\Re( u c^{(N)}_n)| / w_n  \leq \delta ] \underset{\delta \rightarrow 0}{\longrightarrow} 0.$$

\end{theorem}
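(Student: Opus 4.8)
The plan is to deduce this from Theorem~\ref{thm:antitank1} by comparing the C$\beta$E secular coefficient $c_n^{(N)}$ with the HMC coefficient $c_n$ through the $L^2$ estimate of Lemma~\ref{lem:secdiff}, with the finitely many small values of $n$ treated by hand. First, since the law of $c_n^{(N)}$ is rotationally invariant (inherited from the invariance of the C$\beta$E under $\vartheta_j \mapsto \vartheta_j + t$), one may reduce to $u = 1$. Writing $g(t) \coloneqq \sup_{m\ge1}\Pr[\,|\Re(c_m)| \le t\, w_m\,]$, which is non-decreasing and tends to $0$ as $t \to 0$ by Theorem~\ref{thm:antitank1}, the basic estimate I would use is that for any $\lambda > 0$ the triangle inequality gives
\[
  \bigl\{ |\Re(c_n^{(N)})| \le \delta w_n \bigr\}
  \subseteq
  \bigl\{ |\Re(c_n)| \le (\delta+\lambda) w_n \bigr\}
  \cup
  \bigl\{ |c_n - c_n^{(N)}| > \lambda w_n \bigr\},
\]
and hence, by Markov,
\[
  \Pr\bigl[\, |\Re(c_n^{(N)})| \le \delta w_n \,\bigr]
  \le
  g(\delta + \lambda)
  + \lambda^{-2}\, w_n^{-2}\, \Exp|c_n - c_n^{(N)}|^2 .
\]

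The main point is then to show that $\varepsilon(n) \coloneqq \sup_{N \ge n\varphi(n)} w_n^{-2}\,\Exp|c_n - c_n^{(N)}|^2$ tends to $0$ as $n \to \infty$. For $\theta \in (0,1)$ this is immediate from Lemma~\ref{lem:secdiff}, since $w_n^{-2} = n^{1-\theta}$ and $\Exp|c_n - c_n^{(N)}|^2 \le C_\theta n^\theta / N$, so that $w_n^{-2}\Exp|c_n-c_n^{(N)}|^2 \le C_\theta n/N \le C_\theta/\varphi(n)$. For $\theta = 1$, where $w_n^{-2} = \sqrt{\log(1+n)}$ and $\Exp|c_n - c_n^{(N)}|^2 \le C_1 n\log(N/n)/N$, I would write $t = N/n \ge \varphi(n) \ge 2$, use that $t \mapsto \log(t)/t$ is non-increasing for $t \ge e$ to bound the ratio by $C_1 \sqrt{\log(1+n)}\,\log(\varphi(n))/\varphi(n)$ for large $n$, and then invoke the hypothesis $\varphi(n)/(\sqrt{\log n}\,\log\log n) \to \infty$ to conclude it vanishes. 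Being convergent, the sequence $(\varepsilon(n))_n$ is in particular bounded.

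To finish, given $\epsilon > 0$ I would pick $\lambda$ with $g(2\lambda) < \epsilon/3$ and then $n_0$ with $\lambda^{-2}\varepsilon(n) < \epsilon/3$ for $n \ge n_0$; the two displays above then give $\Pr[\,|\Re(c_n^{(N)})| \le \delta w_n\,] < \epsilon$ uniformly over $n \ge n_0$, $N \ge n\varphi(n)$ and $\delta \le \lambda$. For each of the remaining, finitely many, indices $n < n_0$ the same comparison handles all $N$ beyond some $M(n,\epsilon)$ (using that for fixed $n$ one has $\Exp|c_n - c_n^{(N)}|^2 \to 0$ as $N \to \infty$ by Lemma~\ref{lem:secdiff}, and that $\Re(c_n)$ has a bounded density, visible from $c_n = \sqrt{\theta/n}\,\mathcal N_n + (\text{a }\Gfilt_{n-1}\text{-measurable term})$, cf.\ \eqref{cnsum}), while for the finitely many pairs $(n,N)$ with $N < M(n,\epsilon)$ one uses that $\Re(c_n^{(N)})$ has no atom at $0$: indeed $c_n^{(N)}$ is rotationally invariant and $\Pr[c_n^{(N)} = 0] = 0$ since $c_n^{(N)}$ is a not identically vanishing elementary symmetric function of $(e^{-i\vartheta_j})_{j\le N}$ and the C$\beta$E law is absolutely continuous.

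The hard part is the $\theta = 1$ case of the second step: one must verify that the $L^2$-distance $\Exp|c_n - c_n^{(N)}|^2$, after division by $w_n^2 = (\log(1+n))^{-1/2}$, still goes to $0$ uniformly over $N \ge n\varphi(n)$, which is precisely what the extra $\sqrt{\log n}\,\log\log n$ factor in the hypothesis on $\varphi$ is there to guarantee; the rest is bookkeeping, the only slightly delicate part being to secure genuine uniformity over all $n \ge 1$ rather than just for large $n$.
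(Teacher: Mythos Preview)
Your proposal is correct and follows essentially the same approach as the paper: reduce to $u=1$ by rotational invariance, compare $c_n^{(N)}$ to $c_n$ via the triangle inequality, Markov, and Lemma~\ref{lem:secdiff}, invoke Theorem~\ref{thm:antitank1} for the main term, and treat the finitely many small $n$ separately using that $\Re(c_n^{(N)})$ has no atom at $0$. The organizational details differ slightly (you introduce an auxiliary parameter $\lambda$ where the paper reuses $\delta$, and your handling of small $n$ splits into large versus small $N$ rather than arguing tightness of $(w_n/|\Re(c_n^{(N)})|)_{N\ge n}$ directly), but the substance is the same.
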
 
\begin{proof}
  We assume $u = 1$. 
  We know that for all $\epsilon > 0$, there exists $\delta > 0$ depending only on $\epsilon$ and $\theta$, such that for  $n \geq 1$, 
  $$\mathbb{P} [ |\Re(  c_n)| / w_n  \leq 2 \delta ] \leq \epsilon.$$
  Now, 
  $$\mathbb{P} [ |\Re(  c^{(N)}_n)| / w_n  \leq  \delta ] \leq \mathbb{P} [ |\Re(  c_n)| / w_n  \leq 2 \delta ]  + \mathbb{P} [ |c_n- c^{(N)}_n |/w_n \geq \delta],$$
  and then, using Lemma \ref{lem:secdiff} and Markov's inequality,  
  $$\mathbb{P} [ |\Re(  c^{(N)}_n)| / w_n  \leq  \delta ] \leq \epsilon +  \delta^{-2} C_{\theta} w_n^{-2} n^{\theta}/N$$
  for $\theta \in (0,1)$ and 
  $$\mathbb{P} [ |\Re(  c^{(N)}_n)| / w_n  \leq  \delta ] \leq \epsilon +  \delta^{-2} C_{1} w_n^{-2} n \log (N/n) /N$$
  for $\theta = 1$. 
  Hence, 
  $$\mathbb{P} [ |\Re(  c^{(N)}_n)| / w_n  \leq  \delta ] \leq \epsilon +  \delta^{-2} C_{\theta} n/N \leq \epsilon + \delta^{-2} C_{\theta} /\varphi(n)$$
  for $\theta \in (0,1)$ and $N \geq n \varphi(n)$, 
  and 
  \begin{align*}
    \mathbb{P} [ |\Re(  c^{(N)}_n)| / w_n  \leq  \delta ]  & \leq \epsilon +  \delta^{-2} C_{1}  (\log (1+n))^{1/2} \log (N/n) (n/N) \\ & \leq \epsilon +2  \delta^{-2} 
    C_1 (\log (1+n))^{1/2}(\log (\varphi(n)))/\varphi(n)
  \end{align*} 
  for $\theta =1 $ and $N \geq n \varphi(n)$. The factor $2$ comes from the fact that $\varphi(n) \geq 2$ by assumption and $(\log a) / a \leq 2 (\log b)/ b$ for $a \geq b \geq 2$. 
  For $\theta \in (0,1)$, we have 
  $$\delta^{-2} C_{\theta} /\varphi(n) \underset{n \rightarrow \infty}{\longrightarrow} 0$$
  since $\varphi(n) \rightarrow \infty$. 
  For $\theta = 1$, we have 
  $$\min(\varphi(n), 2 + \log n) / ( \sqrt{\log n}(\log \log n)) \underset{n \rightarrow \infty}{\longrightarrow} \infty,$$
  and then 
  \begin{align*}
    &  (\log (1+n))^{1/2}(\log (\varphi(n)))/\varphi(n) 
    \\ & \leq 2  (\log (1+n))^{1/2} \log (\min(\varphi(n), 2 + \log n)) / \min(\varphi(n), 2 + \log n), 
    \\ & \leq 2 (\log (1+n))^{1/2} \log  (2 + \log n) / \min(\varphi(n), 2 + \log n) 
  \end{align*}
  tends to zero when $n$ goes to infinity, 
  which implies 
  $$ 2  \delta^{-2} 
  C_1 (\log (1+n))^{1/2}(\log (\varphi(n)))/\varphi(n) \underset{n \rightarrow \infty}{\longrightarrow} 0.$$
  Since we take $\delta$ depending only on $\epsilon$ and $\theta$, we deduce that there exists $n(\epsilon, \theta) \geq 1$ such that for all $n \geq n (\epsilon, \theta)$, 
  $$\delta^{-2} C_{\theta} /\varphi(n)  \leq \epsilon$$
  if $\theta \in (0,1)$, and 
  $$2  \delta^{-2} 
  C_1 (\log (1+n))^{1/2}(\log (\varphi(n)))/\varphi(n) \leq \epsilon$$
  if $\theta = 1$. This gives, in any case, 
  $$\mathbb{P} [  |\Re(  c^{(N)}_n)| / w_n  \leq  \delta ] \leq 2 \epsilon$$
  for $n \geq n(\epsilon, \theta)$, $N \geq \varphi(n)$. 
  Now, for $N \geq n \geq 1$,  $c^{(N)}_n$ is an elementary symmetric function of random points on the unit circle whose joint distribution 
  is absolutely continuous with respect to the distribution of $N$ i.i.d., uniform points on the unit circle.  For $N = n$, $|c^{(N)}_n|$ is equal to $1$ and then different from $0$. 
  For $N > n$,  if we fix $N-1$ of the $N$ points on the circle, we have that  $c^{(N)}_n$ is an affine function of the last point, and then the conditional probability
  that $c^{(N)}_n = 0$ vanishes as soon as one of the two coefficients of this affine function is non-zero. Taking the constant coefficient, which is the $n$-th symmetric function of the $N-1$ points which have been fixed, we deduce, by induction on $N$, that $\mathbb{P}[ c^{(N)}_n = 0] = 0$ for all $N \geq n \geq 1$. 
  Since the law of  $c^{(N)}_n$ is rotationally invariant, we have that $\mathbb{P}[ \Re(c^{(N)}_n) = 0  |  \, |c^{(N)}_n| ] = 0$ when $|c^{(N)}_n| \neq 0$, and 
  then $\mathbb{P} [  \Re(c^{(N)}_n) = 0 ] = 0$ since $|c^{(N)}_n|$ is almost surely different from zero. 
  Moreover, we know that $\Re(c_n) \neq 0$ almost surely: since $c^{(N)}_n$ converges to $c_n$ in law for $n$ fixed and $N \rightarrow \infty$, 
  we have that for each $n \geq 1$, $(w_n / |\Re(  c^{(N)}_n)|)_{N \geq n}$ is a tight family of real-valued random variables. 

  Hence, for each $n \geq 1$, there exists $\delta_n > 0$, depending only on $n$, $\epsilon$ and $\theta$, such that 
  $$\sup_{N \geq n} \mathbb{P} [  |\Re(  c^{(N)}_n)| / w_n  \leq  \delta_n ] \leq 2 \epsilon.$$
  Let us define 
  $$\delta_0 := \min \left( \delta, \min_{1 \leq n < n(\epsilon, \theta)} \delta_n \right).$$
  We have that $\delta_0 > 0$ depends only on $\epsilon$ and $\theta$, and that 
  $$\sup_{n \geq 1, N \geq n \varphi(n)} \mathbb{P} [  |\Re(  c^{(N)}_n)| / w_n  \leq  \delta_0] \leq 2 \epsilon,$$
  which implies 
  $$\underset{\delta \rightarrow 0}{\lim \sup} \sup_{n \geq 1, N \geq n \varphi(n)} \mathbb{P} [  |\Re(  c^{(N)}_n)| / w_n  \leq  \delta] \leq 2 \epsilon.$$
  We are done, by letting $\epsilon \rightarrow 0$. 
\end{proof}

%
%

\appendix

\section{Tauberian theory}

Recall that a function $L : [0,\infty) \to \R$ is called \emph{slowly varying} if it is measurable, eventually positive and satisfies
  \[
    L(\lambda u)/L(u) \to 1
    \quad 
    \text{as}
    \quad
    u\to \infty
    \quad
    \text{for all } \lambda > 0.
  \]
  The principal examples of such functions are logarithms, iterated logarithms, and powers thereof.  These functions have many useful analytic properties (see \cite[{Section IV.2}]{Kor04} for details).
  Such functions appear frequently in Tauberian theory, and in particular in the following:
  \begin{theorem}[{Karamata--Hardy--Littlewood Tauberian Theorem \cite[{IV.1.1}]{Kor04}}]
    Let $\sum_{k=0}^{\infty}a_{k}z^{k}$ converge for $|z|<1$. Suppose that for some number $\theta \geq 0$ and some slowly varying function $L$, we have the limit from below
    \begin{equation}
      \lim_{z \to 1^{-}}L(\tfrac{1}{1-z})(1-z)^{\theta}\sum_{k=0}^{\infty}a_{k}z^{k} = M
    \end{equation}
    Then subject to the condition
    \begin{equation}
      L(k)ka_{k} \geq -Ck^{\theta}, \qquad k \geq 1
    \end{equation}
    it follows that
    \begin{equation}
      \lim_{n \to \infty}\frac{L(n)\Gamma(\theta+1)}{n^{\theta}}\sum_{k=0}^{n}a_{k} = M.
    \end{equation}
    \label{th:tauber}
  \end{theorem}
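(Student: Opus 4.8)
The plan is to use Karamata's power-series method, which converts the one-sided Abelian hypothesis into a family of integral identities against polynomial test functions and then, exploiting the Tauberian bound, promotes these to a statement about the partial sums. I would use only that the slowly varying $L$ satisfies $L(\lambda u)/L(u)\to1$ uniformly for $\lambda$ in compact subsets of $(0,\infty)$, which follows from the Karamata representation $L(u)=c(u)\exp(\int_1^u\varepsilon(t)t^{-1}\,dt)$ with $c(u)\to c_0>0$ and $\varepsilon(t)\to0$. After substituting $z=e^{-1/x}$ (so that $1-e^{-1/x}\sim 1/x$ and $L(1/(1-z))\sim L(x)$), the hypothesis reads $\sum_{k\ge0}a_k e^{-k/x}\sim M x^\theta/L(x)$ and the goal becomes $A(x):=\sum_{0\le k\le x}a_k\sim M x^\theta/(\Gamma(\theta+1)L(x))$; I would treat $\theta>0$, the case $\theta=0$ being the classical Hardy--Littlewood theorem handled by the same scheme.

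\textbf{Reduction to nonnegative coefficients.} The Tauberian hypothesis gives $a_k\ge -r_k$ for $k\ge1$, where $r_k:=Ck^{\theta-1}/L(k)$ is regularly varying of index $\theta-1$. A direct comparison with the integral $\int^n t^{\theta-1}/L(t)\,dt\sim n^\theta/(\theta L(n))$ shows $\frac{L(n)\Gamma(\theta+1)}{n^\theta}\sum_{k=1}^n r_k\to C\Gamma(\theta)$, and likewise that the generating function of $(r_k)$ satisfies the Abelian hypothesis with constant $C\Gamma(\theta)$. Hence I would replace $a_k$ by $b_k:=a_k+r_k\ge0$ (with $b_0:=a_0$): the sequence $(b_k)$ satisfies the Abelian hypothesis with $M$ replaced by $M+C\Gamma(\theta)$, and once the theorem is proved for $(b_k)$ one recovers it for $(a_k)$ by subtracting the explicit contribution of $(r_k)$. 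So from here on $a_k\ge0$.

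\textbf{Abelian input and weak convergence.} For each integer $m\ge0$, applying the hypothesis at $x/(m+1)$ and using $L(x/(m+1))\sim L(x)$ yields, as $x\to\infty$,
\[
  \frac{L(x)}{x^\theta}\sum_{k\ge0}a_k e^{-k/x}\left(e^{-k/x}\right)^m\longrightarrow\frac{M}{(m+1)^\theta}=\frac{M}{\Gamma(\theta)}\int_0^1 t^m(-\log t)^{\theta-1}\,dt,
\]
the last equality by the substitution $t=e^{-u}$. By linearity the same limit holds with $t^m$ replaced by any polynomial, and since $a_k\ge0$ the measures $\mu_x:=\frac{L(x)}{x^\theta}\sum_k a_k e^{-k/x}\delta_{e^{-k/x}}$ on $[0,1]$ have uniformly bounded total mass (take $m=0$). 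By Stone--Weierstrass they therefore converge weakly to the finite absolutely continuous measure with density $\frac{M}{\Gamma(\theta)}(-\log t)^{\theta-1}$, and by the portmanteau theorem $\mu_x(g)\to\frac{M}{\Gamma(\theta)}\int_0^1 g(t)(-\log t)^{\theta-1}\,dt$ for every bounded $g$ that is continuous Lebesgue-a.e.

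\textbf{Conclusion.} I would then apply this to the bounded, a.e.-continuous function $g(t)=t^{-1}\mathbf{1}_{[e^{-1},1]}(t)$. Since $e^{-k/x}\ge e^{-1}$ exactly when $k\le x$, one has $\mu_x(g)=\frac{L(x)}{x^\theta}A(x)$, while $\frac{M}{\Gamma(\theta)}\int_{e^{-1}}^1 t^{-1}(-\log t)^{\theta-1}\,dt=\frac{M}{\Gamma(\theta)}\int_0^1 u^{\theta-1}\,du=\frac{M}{\Gamma(\theta+1)}$; restricting $x$ to integers $n$ and subtracting off the $(r_k)$ contribution from the reduction step gives $\frac{L(n)\Gamma(\theta+1)}{n^\theta}\sum_{k=0}^n a_k\to M$. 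The step I expect to be the main obstacle is exactly the coupling of the reduction with the sign argument: without the one-sided bound $a_k\ge -Ck^{\theta-1}/L(k)$ the measures $\mu_x$ need not be of bounded variation and the passage from polynomial test functions to the discontinuous indicator $t^{-1}\mathbf{1}_{[e^{-1},1]}$ fails, so the real work is in converting that bound into genuine nonnegativity by peeling off the regularly varying sequence $(r_k)$ whose partial sums can be evaluated explicitly.
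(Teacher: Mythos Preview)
The paper does not prove this theorem; it is quoted as a classical result from \cite{Kor04}. Your argument is a correct rendering of Karamata's method: reduce to nonnegative coefficients by subtracting the regularly varying minorant $r_k=Ck^{\theta-1}/L(k)$ (whose Abel and Ces\`aro behaviour you compute explicitly), interpret the Abel sums as integrals of monomials against positive measures $\mu_x$ of bounded mass, pass from polynomials to the bounded a.e.-continuous test function $g(t)=t^{-1}\mathbf{1}_{[e^{-1},1]}(t)$ via weak convergence and portmanteau, and read off the partial sums.

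The paper does prove the probabilistic analogue (Theorem~\ref{th:tauberprob}) by the same underlying mechanism, so a comparison is still meaningful. There the coefficients are assumed nonnegative from the outset, so your reduction step is absent; and rather than invoking weak convergence and portmanteau, the paper sandwiches the indicator $g=\mathbf{1}_{[1/e,1]}$ explicitly between polynomials $P^\pm_\epsilon$ (built by first replacing $g$ by piecewise-linear continuous approximants $h^\pm_\epsilon$, dividing by $x$, and applying Weierstrass), then lets $\epsilon\to 0$. Your packaging via measures is slightly cleaner in the deterministic setting; the paper's explicit sandwich keeps the $\epsilon$-error quantitative, which is what is actually needed to push the argument through under a convergence-in-probability hypothesis.
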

  We need to justify this in a probabilistic situation:
  \begin{theorem}
    \label{th:tauberprob}
    Fix $\theta>0$ and a slowly varying function $L,$ and suppose for simplicity that $\{a_{k}\}_{k=0}^{\infty}$ are a family of non-negative random variables such that the series $\sum_{k=0}^{\infty}a_{k}z^{k}$ converges almost surely for $|z|<1$. Suppose further that we have the convergence in probability
    \begin{equation}
      L(\tfrac{1}{1-z})(1-z)^{\theta}\sum_{k=0}^{\infty}a_{k}z^{k} 
      \Prto[z][1_{-}]
      M \label{genfunconvp}
    \end{equation}
    with $M < \infty$ almost surely. Then it follows that
    \begin{equation}
      \frac{L(n) \Gamma(\theta+1)}{n^{\theta}}\sum_{k=0}^{n}a_{k} 
      \Prto[n]
      M.
    \end{equation}
  \end{theorem}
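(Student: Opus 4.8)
The plan is to revisit the classical (Karamata) proof of Theorem~\ref{th:tauber} and carry each step out with convergence in probability in place of pointwise convergence; one cannot simply invoke Theorem~\ref{th:tauber} pathwise, because \eqref{genfunconvp} is convergence in probability along the \emph{continuous} parameter $z\to1^-$, which does not upgrade to an almost sure statement along a sequence dense enough to interpolate from. The one ingredient that comes for free is the Tauberian side condition in Theorem~\ref{th:tauber}: since $a_k\ge0$ it holds with $C=0$. I will use throughout three elementary, integrability-free stability properties of convergence in probability: (i) it is preserved by finite linear combinations and by composition with a fixed continuous map; (ii) the squeeze rule, $X_n\le Y_n\le Z_n$ with $X_n\Prto a$ and $Z_n\Prto a$ implies $Y_n\Prto a$; and (iii) if $Y_n^{(k)}\Prto c_k$ as $n\to\infty$ for each $k$, with $c_k\to c$ and $\lim_k\limsup_n\Pr(|Y_n-Y_n^{(k)}|>\varepsilon)=0$ for every $\varepsilon>0$, then $Y_n\Prto c$.

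\emph{Step 1: pass to an additive scale.} For fixed $s>0$ set $z=e^{-s/t}$; then $1-z=(s/t)(1+o(1))$, and by slow variation together with the uniform convergence theorem for slowly varying functions, $\phi(z):=(1-z)^\theta L(\tfrac1{1-z})=s^\theta t^{-\theta}L(t)(1+o(1))$. Applying \eqref{genfunconvp} along $e^{-s/t}\to1^-$ and dividing by $\phi(e^{-s/t})$ gives, for every fixed $s>0$,
\[
  \frac{L(t)}{t^\theta}\sum_{k\ge0}a_k e^{-sk/t}=\frac{L(t)}{t^\theta}\,f(e^{-s/t})\Prto[t]\frac{M}{s^\theta}=M\int_0^\infty e^{-sy}\,d\rho(y),\qquad d\rho(y):=\frac{y^{\theta-1}}{\Gamma(\theta)}\,dy.
\]
Let $\mu_t$ be the (a.s.\ locally finite) random measure on $[0,\infty)$ carrying mass $\tfrac{L(t)}{t^\theta}a_k$ at the point $k/t$, and let $\nu_t(dy):=e^{-y}\mu_t(dy)$, a finite random measure of total mass $\tfrac{L(t)}{t^\theta}f(e^{-1/t})$. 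With $\rho_1(dy):=e^{-y}d\rho(y)$, which is a probability measure since $\int_0^\infty e^{-y}y^{\theta-1}\,dy=\Gamma(\theta)$, the display reads $\int_0^\infty e^{-uy}\,d\nu_t(y)\Prto[t]M\int_0^\infty e^{-uy}\,d\rho_1(y)$ for every $u\ge0$; the case $u=0$ gives in particular $\nu_t([0,\infty))\Prto[t]M$.

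\emph{Step 2: invert the Laplace transform in probability.} By (i) and linearity, $\int P(e^{-y})\,d\nu_t\Prto[t]M\int P(e^{-y})\,d\rho_1$ for every polynomial $P$. Since $y\mapsto e^{-y}$ is injective on $[0,\infty]$, the polynomials in $e^{-y}$ (together with constants) form, by Stone--Weierstrass, a uniformly dense subalgebra of $C([0,\infty])$; if $\psi\in C([0,\infty])$ and $P_k(e^{-\cdot})\to\psi$ uniformly, then $\bigl|\int\psi\,d\nu_t-\int P_k(e^{-y})\,d\nu_t\bigr|\le\|\psi-P_k(e^{-\cdot})\|_\infty\,\nu_t([0,\infty))$, and because $\nu_t([0,\infty))\Prto[t]M<\infty$ this error fits the hypothesis of (iii). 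Hence $\int\psi\,d\nu_t\Prto[t]M\int\psi\,d\rho_1$ for all $\psi\in C([0,\infty])$. Finally $\mu_t([0,1])=\int_0^\infty\one[y\le1]\,e^{y}\,d\nu_t(y)$; this integrand is bounded, continuous on $[0,\infty]$ except at $y=1$, and $\rho_1(\{1\})=0$, so choosing $g_\delta^-\le\one[y\le1]e^{y}\le g_\delta^+$ in $C([0,\infty])$ with $\int(g_\delta^+-g_\delta^-)\,d\rho_1\to0$ as $\delta\to0$ and applying (ii)--(iii) yields $\mu_t([0,1])\Prto[t]M\int_0^\infty\one[y\le1]e^{y}\,d\rho_1(y)=M\int_0^1\frac{y^{\theta-1}}{\Gamma(\theta)}\,dy=\frac{M}{\Gamma(\theta+1)}$. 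Specializing $t=n$ and using $\mu_n([0,1])=\tfrac{L(n)}{n^\theta}\sum_{k\le n}a_k$ gives $\tfrac{L(n)\Gamma(\theta+1)}{n^\theta}\sum_{k\le n}a_k\Prto[n]M$, as required.

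The crux is Step 2: in the deterministic proof one fixes $\omega$ before running the density and squeeze arguments, whereas here the approximation error $\int(\psi-P_k(e^{-\cdot}))\,d\nu_t$ must be shown small uniformly in $t$ in probability, which is precisely the role of property (iii) and the reason one must first establish the total-mass convergence $\nu_t([0,\infty))\Prto[t]M$. The structural input that makes every squeeze legitimate, and that bounds the approximation errors by the convergent total mass rather than by an uncontrolled variation, is the nonnegativity of the $a_k$ — the same hypothesis that trivializes the Tauberian side condition in Theorem~\ref{th:tauber}.
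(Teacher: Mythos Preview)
Your proof is correct and follows essentially the same Karamata-style approach as the paper: both arguments establish convergence in probability for polynomials in $e^{-y}$ (resp.\ in $z$), then sandwich the step function by deterministic approximants and control the error by the total mass $\nu_t([0,\infty))\Prto M$. Your packaging via the measures $\nu_t$, the compactification $[0,\infty]$, and Stone--Weierstrass is a slightly more abstract rendering of the paper's hands-on construction of $P^\pm_\epsilon$ via linear interpolation and Weierstrass on $[0,1]$, but the proofs are structurally identical.
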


  \begin{proof}
    Let us define
    \begin{equation}
      X_{n} = L\big(\tfrac{1}{1-e^{-1/n}}\bigr)(1-e^{-1/n})^{\theta}\sum_{k=0}^{\infty}a_{k}g(e^{-k/n})
    \end{equation}
    where
    \begin{equation}
      g(x) = \begin{cases} 0,& \quad 0 \leq x < 1/e,\\ 1,& \quad 1/e \leq x \leq 1. \end{cases}
    \end{equation}
    If we can show that $X_{n} \Prto[n] M/\Gamma(\theta+1)$ we will have proved the result. To do this we construct polynomial approximations of $g$ and bound from above and below. It is important to note that the construction of the polynomials is deterministic and does not depend on the random coefficients $a_{k}$.

    For any polynomial $P$ without constant term, it is a direct computation using \eqref{genfunconvp}, using additivity of convergence in probability, and using the definition of slow variation, that
    \begin{equation}
      L\big(\tfrac{1}{1-e^{-1/n}}\bigr)
      (1-e^{-1/n})^{\theta}\sum_{k=0}^{\infty}a_{k}P(e^{-k/n}) 
      \Prto[n]
      \frac{M}{\Gamma(\theta)}\,\int_{0}^{\infty}t^{\theta-1}P(e^{-t})\,dt.
      \label{convprobpoly}
    \end{equation}
    We claim that for any $\epsilon>0$ we can find polynomials $P^{\pm}_{\epsilon}(x)$ on $[0,1]$ without constant term, such that $g(x) \leq P^{+}_{\epsilon}(x)$ (and $P^{-}_{\epsilon}(x) \leq g(x)$) and that $P^{+}_{\epsilon}(x)-g(x)$ (and $g(x) - P^{-}_{\epsilon}(x)$) are both small. Since the coefficients $a_{k}$ are non-negative, this allows us to sandwich $X_{n}$ via
    \begin{equation}
      X^{-}_{n,\epsilon} \leq X_{n} \leq X^{+}_{n,\epsilon} \label{sandwich}
    \end{equation}
    where
    \begin{equation}
      X^{\pm}_{n,\epsilon} := 
      L\big(\tfrac{1}{1-e^{-1/n}}\bigr)
      (1-e^{-1/n})^{\theta}\sum_{k=0}^{\infty}a_{k}P^{\pm}_{\epsilon}(e^{-k/n}).
    \end{equation}
    Now by \eqref{convprobpoly} we have $X^{\pm}_{n,\epsilon} \overset{p}{\longrightarrow} X^{\pm}_{\epsilon}$ where
    \begin{equation}
      \begin{split}
	X^{\pm}_{\epsilon} &= \frac{M}{\Gamma(\theta)}\,\left(\int_{0}^{\infty}t^{\theta-1}g(e^{-t})\,dt \pm c^{\pm}(\epsilon)\right)\\
	&=\frac{M}{\Gamma(\theta+1)}\left(1\pm\theta c^{\pm}(\epsilon)\right)
      \end{split}
    \end{equation}
    and
    \begin{equation}
      c^{\pm}(\epsilon) = \int_{0}^{\infty}t^{\theta-1}|P^{\pm}_{\epsilon}(e^{-t})-g(e^{-t})|\,dt.
    \end{equation}
    We claim that $P^{\pm}_{\epsilon}(x)$ can be chosen so that $0 \leq c^{\pm}(\epsilon) \leq c_{\theta}\epsilon$ for some absolute constant $c_{\theta}$ and so $\lim_{\epsilon \to 0}c^{\pm}(\epsilon)=0$. This is enough to show convergence in probability of $X_{n}$ to the limit $X = M/\Gamma(\theta+1)$, as we now demonstrate. By \eqref{sandwich}, we have for any $\delta>0$
    \begin{equation}
      \begin{split}
	\mathbb{P}(|X_{n}-X| > \delta) &\leq \mathbb{P}(|X^{+}_{n,\epsilon}-X|>\delta) + \mathbb{P}(|X^{-}_{n,\epsilon}-X|>\delta)\\
	&\leq \mathbb{P}(|X^{+}_{n,\epsilon}-X^{+}_{\epsilon}|+|X^{+}_{\epsilon}-X| > \delta)\\
	& + \mathbb{P}(|X^{-}_{n,\epsilon}-X^{-}_{\epsilon}|+|X^{-}_{\epsilon}-X| > \delta)\\
	&\leq \mathbb{P}(|X^{+}_{n,\epsilon}-X^{+}_{\epsilon}| > \delta/2) + \mathbb{P}(|X^{+}_{\epsilon}-X| > \delta/2) \\
	&+ \mathbb{P}(|X^{-}_{n,\epsilon}-X^{-}_{\epsilon}| > \delta/2) + \mathbb{P}(|X^{-}_{\epsilon}-X| > \delta/2).
      \end{split}
    \end{equation}
    Since $X^{\pm}_{n,\epsilon} \Prto[n] X^{\pm}_{\epsilon}$, two of the terms in the final inequality above vanish in the limit $n \to \infty$ for any fixed $\epsilon$. For the remaining terms, note that by definition
    \begin{equation}
      |X^{\pm}_{\epsilon}-X| = c^{\pm}(\epsilon)M/\Gamma(\theta)
    \end{equation}
    so that in the limit $\epsilon \to 0$ we have, using the almost sure finiteness of $M$,
    \begin{equation}
      \mathbb{P}(|X^{\pm}_{\epsilon}-X| > \delta/2) = \mathbb{P}(M > \delta\Gamma(\theta)/(2c^{\pm}(\epsilon)) \to 0.
    \end{equation}

    The existence of the approximating polynomials is classical, but for completeness we give the argument. We begin by sandwiching $g(x)$ by the continuous function $h^{\pm}_{\epsilon}(x)$ where $h^{\pm}_{\epsilon}(x)$ is equal to $g(x)$ outside $[1/e\mp \epsilon,1/e]$ and is linear inside the interval. Then by the Weierstrass approximation theorem we get polynomials $P^{\pm}_{\epsilon}(x)/x$ such that
    \begin{equation}
      \bigg{|}\frac{h^{\pm}_{\epsilon}(x)}{x}\pm \epsilon-\frac{P^{\pm}_{\epsilon}(x)}{x}\bigg{|} \leq \epsilon, \qquad 0 \leq x \leq 1. \label{weierstrass}
    \end{equation}
    Then $P^{+}_{\epsilon}(x) \geq h^{+}(x) \geq g(x)$ and $P^{-}_{\epsilon}(x) \leq h^{-}(x) \leq g(x)$. Now we can bound $c^{\pm}(\epsilon)$ using
    \begin{equation}
      c^{\pm}(\epsilon) \leq \int_{0}^{\infty}t^{\theta-1}|P^{\pm}_{\epsilon}(e^{-t})-h^{\pm}(e^{-t})|\,dt+\int_{0}^{\infty}t^{\theta-1}|h^{\pm}(e^{-t})-g(e^{-t})|\,dt
    \end{equation}
    The substitution $x=e^{-t}$ shows that
    \begin{equation}
      \begin{split}
	c^{\pm}(\epsilon) \leq &\int_{0}^{1}(-\log(x))^{\theta-1}\frac{|P^{\pm}_{\epsilon}(x)-h^{\pm}(x)|}{x}\,dx\\
	&+\bigg{|}\int_{1/e \pm \epsilon}^{1/e}(-\log(x))^{\theta-1}\frac{|h^{\pm}(x)-g(x)|}{x}\,dx\bigg{|} \label{cepsbound}
      \end{split}
    \end{equation}
    The integrand in the second term of \eqref{cepsbound} is uniformly bounded, so the integral is bounded by a constant times the length of the integration interval, which is $\epsilon$. By \eqref{weierstrass} the first term in \eqref{cepsbound} is bounded by
    \begin{equation}
      2\epsilon \int_{0}^{1}(-\log(x))^{\theta-1}\,dx = 2\epsilon \Gamma(\theta),
    \end{equation}
    and so $|c^{\pm}(\epsilon)| \leq c_{\theta} \epsilon$ as desired.
  \end{proof}

  \begin{lemma}
    \label{le:l2bound}
    Let $Z_{1} = X_{1}+iY_{1}$ and $Z_{2} = X_{2}+iY_{2}$ be complex valued random variables with characteristic functions
    \begin{equation}
      \begin{split}
	\Phi_{Z_1}(s,t) &= \mathbb{E}\left(e^{isX_{1}+itY_{1}}\right)\\
	\Phi_{Z_2}(s,t) &= \mathbb{E}\left(e^{isX_{2}+itY_{2}}\right)
      \end{split}
    \end{equation}
    Then 
    \begin{equation}
      |\Phi_{Z_1}(s,t)-\Phi_{Z_2}(s,t)| \leq (|s|+|t|)\sqrt{\mathbb{E}(|Z_1-Z_2|^{2})}.
    \end{equation}
  \end{lemma}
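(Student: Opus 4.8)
The plan is to bound the difference of characteristic functions pointwise and then take expectations, using the elementary Lipschitz-type estimate for the complex exponential. First I would note that for any $s,t \in \R$ and any two complex numbers $z_1 = x_1 + iy_1$, $z_2 = x_2 + iy_2$, we have the deterministic bound
\[
  \bigl| e^{isx_1 + ity_1} - e^{isx_2 + ity_2} \bigr| \leq |s|\,|x_1 - x_2| + |t|\,|y_1 - y_2|.
\]
This follows from the fact that $w \mapsto e^{iw}$ is $1$-Lipschitz on $\R$ (so $|e^{ia} - e^{ib}| \leq |a-b|$ for real $a,b$) applied after a telescoping step: write $e^{isx_1 + ity_1} - e^{isx_2 + ity_2} = (e^{isx_1+ity_1} - e^{isx_2+ity_1}) + (e^{isx_2+ity_1} - e^{isx_2+ity_2})$, factor out the unimodular terms $e^{ity_1}$ and $e^{isx_2}$ respectively, and apply the $1$-Lipschitz bound to each piece to get $|s||x_1 - x_2|$ and $|t||y_1 - y_2|$.

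Next I would apply this with $z_1 = Z_1$, $z_2 = Z_2$ pointwise on the probability space and take expectations, giving
\[
  |\Phi_{Z_1}(s,t) - \Phi_{Z_2}(s,t)|
  \leq \Exp\bigl| e^{isX_1 + itY_1} - e^{isX_2 + itY_2} \bigr|
  \leq |s|\,\Exp|X_1 - X_2| + |t|\,\Exp|Y_1 - Y_2|.
\]
Then I would bound $\Exp|X_1 - X_2| \leq \sqrt{\Exp|X_1-X_2|^2} \leq \sqrt{\Exp|Z_1-Z_2|^2}$ by Jensen (or Cauchy–Schwarz) together with $|X_1 - X_2|^2 \leq |Z_1 - Z_2|^2$, and similarly $\Exp|Y_1 - Y_2| \leq \sqrt{\Exp|Z_1-Z_2|^2}$. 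Combining yields $|\Phi_{Z_1}(s,t) - \Phi_{Z_2}(s,t)| \leq (|s| + |t|)\sqrt{\Exp|Z_1 - Z_2|^2}$, as claimed.

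This argument is entirely routine and there is no real obstacle; the only point requiring a little care is the telescoping decomposition and being consistent about which of $y_1$ versus $y_2$ (resp.\ $x_1$ versus $x_2$) gets factored out at each stage, so that the unimodular prefactors do not contaminate the Lipschitz estimate. One could alternatively avoid the telescoping entirely by writing $e^{ia} - e^{ib} = i\int_b^a e^{iu}\,du$ for each coordinate and bounding the integrand by $1$, which gives the same constant; I would present whichever is shorter. Either way the whole proof is a few lines.
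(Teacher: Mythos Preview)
Your proof is correct and essentially the same as the paper's. The only cosmetic difference is that the paper factors out $e^{isX_2+itY_2}$ in one step to reduce to $\mathbb{E}|e^{is(X_1-X_2)+it(Y_1-Y_2)}-1| \leq \mathbb{E}|s(X_1-X_2)+t(Y_1-Y_2)|$ via $|e^{ix}-1|\leq |x|$, whereas you telescope in two steps; both routes land on $|s|\,\mathbb{E}|X_1-X_2|+|t|\,\mathbb{E}|Y_1-Y_2|$ and finish identically.
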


  \begin{proof}
    This follows from the standard inequality $|e^{ix}-1| \leq |x|$ valid for any real number $x$. We have
    \begin{align*}
      &|\Phi_{Z_1}(s,t)-\Phi_{Z_2}(s,t)| \leq \mathbb{E}\left(|e^{isX_{1}+itY_{1}}-e^{isX_{2}+iY_{2}}|\right)\\
      &= \mathbb{E}\left(|e^{is(X_{1}-X_{2})+it(Y_{1}-Y_{2})}-1|\right)\\
      &\leq \mathbb{E}\left(|s(X_{1}-X_{2})+t(Y_{1}-Y_{2})|\right)\\
      &\leq |s|\mathbb{E}\left(|X_{1}-X_{2}|\right)+|t|\mathbb{E}\left(|Y_{1}-Y_{2}|\right)\\
      &\leq |s|\sqrt{\mathbb{E}\left(|X_{1}-X_{2}|^{2}\right)}+|t|\sqrt{\mathbb{E}\left(|Y_{1}-Y_{2}|^{2}\right)}\\
      &\leq (|s|+|t|)\sqrt{\mathbb{E}\left(|Z_{1}-Z_{2}|^{2}\right)}.
    \end{align*}
  \end{proof}
  \section{Binomial sums}
  \begin{lemma}
    \label{infsumk}
    For any $0 < \theta < \frac{1}{2}$, we have
    \begin{equation}
      \sum_{k=0}^{\infty}\binom{k+\theta-1}{\theta-1}^{2} = \frac{\Gamma(1-2\theta)}{\Gamma(1-\theta)^{2}} \label{rhsident}.
    \end{equation}
  \end{lemma}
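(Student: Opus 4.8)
The plan is to recognize the left-hand side of \eqref{rhsident} as a Gauss hypergeometric series evaluated at $1$. First I would rewrite the generalized binomial coefficient in terms of the rising factorial (Pochhammer symbol): since
\[
  \binom{k+\theta-1}{\theta-1} = \binom{k+\theta-1}{k} = \frac{\Gamma(k+\theta)}{\Gamma(\theta)\,k!} = \frac{(\theta)_k}{k!},
  \qquad (\theta)_k := \theta(\theta+1)\cdots(\theta+k-1),
\]
the sum in \eqref{rhsident} becomes $\sum_{k\ge 0}\frac{(\theta)_k^2}{(k!)^2}$. Using $(1)_k = k!$, this is exactly the value at $z=1$ of
\[
  {}_2F_1(\theta,\theta;1;z) = \sum_{k\ge 0}\frac{(\theta)_k(\theta)_k}{(1)_k\,k!}\,z^k .
\]

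Next I would invoke Gauss's summation theorem: for $\Re(c-a-b)>0$ one has ${}_2F_1(a,b;c;1) = \frac{\Gamma(c)\Gamma(c-a-b)}{\Gamma(c-a)\Gamma(c-b)}$. Taking $a=b=\theta$ and $c=1$, the condition $\Re(c-a-b)>0$ reads $1-2\theta>0$, i.e.\ the standing hypothesis $\theta<\tfrac12$; this is also precisely what guarantees absolute convergence of the series, since $\frac{(\theta)_k}{k!}\sim \frac{k^{\theta-1}}{\Gamma(\theta)}$ and $\sum_k k^{2\theta-2}<\infty$ iff $\theta<\tfrac12$. Substituting,
\[
  {}_2F_1(\theta,\theta;1;1) = \frac{\Gamma(1)\,\Gamma(1-2\theta)}{\Gamma(1-\theta)\,\Gamma(1-\theta)} = \frac{\Gamma(1-2\theta)}{\Gamma(1-\theta)^2},
\]
which is the asserted identity.

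There is essentially no obstacle here; the only point requiring a word of care is the justification of convergence, which, as noted, is handled by the same restriction $\theta<\tfrac12$. If a self-contained argument avoiding the citation of Gauss's theorem is preferred, an equivalent route uses Parseval's identity: the numbers $\binom{k+\theta-1}{k}$ are the Taylor coefficients of $(1-z)^{-\theta}$, so for $r\in(0,1)$,
\[
  \sum_{k\ge 0}\binom{k+\theta-1}{k}^2 r^{2k} = \frac{1}{2\pi}\int_0^{2\pi}\bigl|1-re^{i\vartheta}\bigr|^{-2\theta}\,d\vartheta .
\]
Letting $r\to 1$, the left side increases to the target sum by monotone convergence, while the right side converges by dominated convergence (using $|1-re^{i\vartheta}|^2 = (1-r)^2 + 4r\sin^2(\vartheta/2) \ge \tfrac12|1-e^{i\vartheta}|^2$ for $r\ge\tfrac12$, and $2\theta<1$) to $\frac{1}{2\pi}\int_0^{2\pi}|1-e^{i\vartheta}|^{-2\theta}\,d\vartheta$, which is the $k=2$ Morris integral \eqref{eq:morris} and hence equals $\frac{\Gamma(1-2\theta)}{\Gamma(1-\theta)^2}$. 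Either way the remaining computation is routine.
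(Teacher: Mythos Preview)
Your proposal is correct. Your primary route---recognizing the sum as ${}_2F_1(\theta,\theta;1;1)$ and applying Gauss's summation theorem---is a genuinely different argument from the paper's. The paper instead uses your alternative route: it takes the generating function $(1-z)^{-\theta}$, applies Parseval on the circle to obtain $\frac{1}{2\pi}\int_0^{2\pi}|1-e^{i\nu}|^{-2\theta}\,d\nu$, and identifies this with the $k=2$ case of the Morris integral \eqref{eq:morris}. Your hypergeometric approach is more self-contained and arguably the most natural way to evaluate this particular sum; the paper's approach has the advantage of tying the result back to the Morris integral, which is a recurring object in the paper (it governs the moments of the GMC mass and appears in Theorem \ref{thm:morris}). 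Your careful $r\to 1$ limiting argument with explicit domination is a nice touch that the paper elides by applying Parseval directly on the boundary.
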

  \begin{proof}
    We have the generating function
    \begin{equation}
      \sum_{k=0}^{\infty}\binom{k+\theta-1}{\theta-1}z^{k} = (1-z)^{-\theta}, \qquad |z| < 1
    \end{equation}
    so that by Parseval's theorem
    \begin{equation}
      \sum_{k=0}^{\infty}\binom{k+\theta-1}{\theta-1}^{2} = \frac{1}{2\pi}\int_{0}^{2\pi}|1-e^{i\nu_1}|^{-2\theta}\,d\nu_1. \label{morrisintk2}
    \end{equation}
Now the right-hand side of \eqref{morrisintk2} coincides with that of \eqref{rhsident} by taking $k=2$ in the Morris integral \eqref{eq:morris} and using rotational invariance in the $\nu_{2}$ coordinate.
   \end{proof}

  \begin{lemma}
    \label{sumlemma}
    Let $a$ and $b$ be non-negative integers such that $b\geq a$, $c$ and $d$ real numbers such that $\binom{q+c}{d}$ is finite and well-defined on $a \leq q \leq b$. Then
    \begin{equation}
      \sum_{q=a}^{b}\binom{q+c}{d} = \binom{b+c+1}{d+1}-\binom{a+c}{d+1}. \label{sumlemmaeq}
    \end{equation}
  \end{lemma}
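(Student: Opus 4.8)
The plan is to prove \eqref{sumlemmaeq} by writing the summand as a telescoping difference. First I would recall that for the generalized binomial coefficient $\binom{x}{y} = \Gamma(x+1)/\bigl(\Gamma(y+1)\Gamma(x-y+1)\bigr)$ — with the usual convention that $1/\Gamma$ vanishes at the non-positive integers, so that the expression makes sense for all real $x,y$ — Pascal's rule
\[
  \binom{x+1}{y} = \binom{x}{y} + \binom{x}{y-1}
\]
continues to hold: this is a routine manipulation using the functional equation $\Gamma(t+1)=t\Gamma(t)$, valid whenever both sides are finite. Setting $x=q+c$ and $y=d+1$ and rearranging yields the key identity
\[
  \binom{q+c}{d} = \binom{q+c+1}{d+1} - \binom{q+c}{d+1},
\]
valid for each $q$ with $a\le q\le b$ under the stated well-definedness hypothesis; here one also uses that $\binom{q+c}{d+1}$ is finite, which is immediate in all the instances in which the Lemma is applied (e.g.\ $c=\theta-1$ and $d\in\{\theta-1,\theta\}$).

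Next I would sum this identity over $q=a,\dots,b$. Writing $g(q) \coloneqq \binom{q+c+1}{d+1}$, the right-hand side is exactly $g(q)-g(q-1)$, so the sum telescopes:
\[
  \sum_{q=a}^{b}\binom{q+c}{d}
  = \sum_{q=a}^{b}\bigl(g(q)-g(q-1)\bigr)
  = g(b)-g(a-1)
  = \binom{b+c+1}{d+1}-\binom{a+c}{d+1},
\]
which is \eqref{sumlemmaeq}. (Equivalently, the same identity follows by a one-line induction on $b\ge a$, the inductive step being a single application of Pascal's rule; or, when $d$ is a non-negative integer, it is the classical ``hockey-stick'' identity.)

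I do not anticipate any genuine obstacle here. The only point deserving a moment's care is that the coefficients $\binom{q+c}{d}$ and $\binom{q+c}{d+1}$ arising throughout the paper are true generalized binomial coefficients rather than integer ones, so one cannot merely invoke the combinatorial hockey-stick formula and must instead verify that Pascal's rule survives in the $\Gamma$-function formulation — which, as indicated above, it does.
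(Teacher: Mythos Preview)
Your proposal is correct and follows exactly the same approach as the paper: write $\binom{q+c}{d} = \binom{q+c+1}{d+1} - \binom{q+c}{d+1}$ via Pascal's rule and telescope the sum over $q$. Your additional remarks on why Pascal's rule holds for generalized (Gamma-function) binomial coefficients are a useful elaboration, but the argument is otherwise identical to the paper's one-line proof.
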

  \begin{proof}
    This follows from summing the binomial coefficient identity
    \begin{equation}
      \binom{q+c}{d} = \binom{q+c+1}{d+1} - \binom{q+c}{d+1},
    \end{equation}
    from $q=a$ to $q=b$, so that the sum telescopes and gives the right-hand side of \eqref{sumlemmaeq}.
  \end{proof}
 
  \bibliographystyle{alpha}
  \bibliography{convergence-zero-secular}
     \end{document}